\newcommand\redout{\bgroup\markoverwith
{\textcolor{red}{\rule[.5ex]{2pt}{0.9pt}}}\ULon}
\numberwithin{equation}{section}
\theoremstyle{plain}
\newtheorem{theorem}{Theorem}[section]
\newtheorem{lemma}[theorem]{Lemma}
\newtheorem{proposition}[theorem]{Proposition}
\newtheorem{prop}[theorem]{Proposition}
\newtheorem{corollary}[theorem]{Corollary}
\newtheorem{claim}[theorem]{Claim}
\theoremstyle{definition}
\newtheorem{definition}[theorem]{Definition}
\newtheorem{remark}[theorem]{Remark}
\theoremstyle{plain}
\newenvironment{customprop}[1]
  {\innercustomprop}
  {\endinnercustomprop}
\newlist{choices}{enumerate}{1} 
\setlist[choices]{label*=(\Alph*.),ref= {(\Alph*)},  resume}
\newlist{enumlemma}{enumerate}{3} 
\setlist[enumlemma]{label*={(\alph*)}, ref= {(\alph*)} }
\newlist{thislist}{enumerate}{1} 
\setlist[thislist]{itemindent= .25in, label*=(\roman*), resume}
\newcommand{\lyap}[1]{\left\vert\kern-0.25ex\left\vert\kern-0.25ex\left\vert  {#1} \right\vert\kern-0.25ex\right\vert\kern-0.25ex\right\vert}
\newcommand{\lyapno}[1]{\vert\kern-0.25ex\vert\kern-0.25ex\vert  {#1} \vert\kern-0.25ex\vert\kern-0.25ex\vert}
\newcommand{\rexp}[1]{{e}^{#1}}
\def\scrA{\mathscr A}
\def\calA{\mathcal A}
\DeclareMathOperator*{\Leb}{\mathrm{Leb}}
\def\leb{\Leb}
\def\lip{\mathrm{Lip}}
\newcommand{\diff}{\mathrm{Diff}}
\newcommand{\Diff}{\diff}
\newcommand{\Gl}{\mathrm{GL}}
\newcommand{\Sl}{\mathrm{SL}}
\newcommand{\SL}{\mathrm{SL}}
\newcommand{\id}{\mathrm{Id}}
\newcommand{\eps}{{\oldepsilon_0}}
\newcommand{\epone}{{\oldepsilon_1}}
\title[Measure rigidity for random dynamics on surfaces]{Measure rigidity for random dynamics on surfaces and related skew products}
\author[A.~Brown]{Aaron Brown}
\address{University of Chicago}
\email{awb@uchicago.edu}
\author[F.~Rodriguez~Hertz]{Federico Rodriguez Hertz}
   \address{Pennsylvania State University}
\email{hertz@math.psu.edu}
\keywords{Measure rigidity, nonuniform hyperbolicity, stiffness of stationary measures, random dynamics,  SRB measures} 
 \subjclass[2010]{Primary:  37C40, 37H99; Secondary: 37E30, 37D25, 28D15}
\long\def\symbolfootnote[#1]#2{\begingroup\def\thefootnote{\fnsymbol{footnote}}
\footnote[#1]{#2}\endgroup}
\def\I{\mathcal I}
\def\Exp{\mathbb E}
\def\Ex{\Exp}
\def\ae{a.e.\ }
\def\as{a.s.\ }
\newcounter{step}
\def\lip{\mathrm{Lip}}
\def\Lip{\lip}
\def\graph{\mathrm{graph}}
\begin{document}
\newcommand\Sigmaloc{\Sigma^{\pm}_{\text {loc}}}
\newcommand\Sigmalocu{\Sigma^{+}_{\text {loc}}}
\newcommand\Sigmalocs{\Sigma^{-}_{\text {loc}}}
\newcommand{\Fols}{\mathcal{W}^s}
\newcommand{\Folu}{\mathcal{W}^u}
\renewcommand{\L}{\mathcal{L}}
\newcommand{\Fol}{\mathcal{F}}
\newcommand{\Sal}{\mathcal{S}}

\newcommand{\aeq}{\circeq}

\newcommand{\cmt}[1]{{\color{red}{{#1}}}}

 \newcommand{\oldepsilon}{\mathchar"10F}
\renewcommand{\epsilon}{\varepsilon}
\renewcommand{\emptyset}{\varnothing}
\newcommand{\restrict}[2]{{#1}{\restriction_{{ #2}}}}
\newcommand{\restrictThm}[2]{{#1}\! \!  \restriction_{\! #2}}

\newcommand{\sm}{\smallsetminus}
\newcommand{\R}{\mathbb {R}}
\newcommand{\T}{\mathbb {T}}

\newcommand{\Q}{\mathcal {Q}}
\newcommand{\Z}{\mathbb {Z}}
\newcommand{\N}{\mathbb {N}}
\newcommand{\A}{\mathfrak {A}}
\newcommand{\B}{\mathcal  {B}}
\renewcommand{\P}{\mathcal{P}}

\newcommand{\inv}{^{-1}}
\newcommand{\Cone}{\mathcal C}

\newcommand{\note}[1]{\marginnote{{\color{red}\footnotesize \begin{spacing}{1}#1\end{spacing}}}}

\def\hol{\mathcal H}

\def\us{{u/s}}
\def\td {\tilde}

\newlength{\wideitemsep}
\setlength{\wideitemsep}{.5\itemsep}
\addtolength{\wideitemsep}{7pt}
\let\olditem\item
\renewcommand{\item}{\setlength{\itemsep}{\wideitemsep}\olditem}

\def\A{\mathcal A}
\def\F{\mathcal F}
\def\G{\mathcal G}	
	  
\def\E{\mathbb E}
	  
\def\MP{\mathcal{X}}
\renewcommand{\underbar}{\underline}
\renewcommand{\bar}{\overline}
\def\good{\mathscr G} 

\newcommand{\stab}[2]{W^s\!\left (#2, {#1}\right)}
\newcommand{\unst}[2]{W^u\!\left(#2,{#1}\right)}
\newcommand\locstab[3][r]{W^s_{ #1}\!\left( {#3},#2 \right)}
\newcommand\locunst[3][r]{W^u_{#1}\!\left({#3}, #2 \right)}

\def\calR{\mathcal R}
\def\calS{\mathcal S}

\newcommand{\stabM}[2]{W^s_{#2}\!\left({#1}\right)}
\newcommand{\unstM}[2]{W^u_{#2}\!\left({#1}\right)}
\newcommand\locstabM[3][r]{W^s_{#3,#1}\!\left(#2\right)}
\newcommand\locunstM[3][r]{W^u_{#3, #1}\!\left(#2\right)}

\newcommand{\stabp}[1]{W^s\!\left({#1}\right)}
\newcommand{\unstp}[1]{W^u\!\left({#1}\right)}
\newcommand\locstabp[2][r]{W^s_{#1}\!\left( #2\right)}
\newcommand\locunstp[2][r]{W^u_{#1}\!\left( #2\right)}

\newcommand{\Eu}[2]{E^u\left({#2, #1}\right)}
\newcommand{\Es}[2]{E^s\left({#2, #1}\right)}
\newcommand{\EuM}[2]{E^u_{#2}\left({ #1}\right)}
\newcommand{\EsM}[2]{E^u_{#2}\left({ #1}\right)}
\newcommand{\Eup}[1]{E^u\left({ #1}\right)}
\newcommand{\Esp}[1]{E^s\left({ #1}\right)}
\renewcommand{\E}{\mathcal E}

\def\Pp {\mathbb P}

\def\W{\mathcal W}
\def\scrF{\mathscr{F}}

\newcommand{\cocycle}[1][\xi]{%
  \def\ArgI{{#1}}%
  \BlahRelay
}
			\newcommand\BlahRelay[1][n]{%
			 %
			  f_{\ArgI} ^{#1}
			}

\def\nunaught{{\hat \nu}}
\def\munaught{{\hat \mu}}
\def\nuskew{{ \nu}}
\def\muskew{{ \mu}}
\def\nususp{{\boldsymbol  \nu}}
\def\mususp{{\omega}}
\def\mualt{{\hat \omega}}

\def\pt{\varsigma}

\maketitle
\begin{abstract}

Given a surface $M$ and a Borel probability measure $\nu$ on the group of $C^2$-diffeomorphisms of $M$  we study $\nu$-stationary probability measures on $M$. 
We prove for hyperbolic stationary measures the following trichotomy: either the stable distributions are non-random, the measure is SRB, or the measure is supported on a finite set and is hence almost-surely invariant.  
In the proof of the above results, we study skew products with surface fibers over a measure-preserving transformation  equipped with a decreasing sub-$\sigma$-algebra $\hat {\mathcal F}$ and derive a related result.   
A number of applications of our main theorem are presented.  



\end{abstract}

\section{Introduction}

Given an action of a one-parameter group on a manifold with some degree of hyperbolicity, there are typically many ergodic,
invariant measures with positive entropy. 
For instance, given an Anosov or Axiom A diffeomorphism of a compact manifold, the equilibrium states for H\"older-continuous potentials provide measures with the above properties \cite{MR0380889,MR0442989}.  
When passing to hyperbolic actions of larger groups, the following phenomenon has been demonstrated in many settings: the only invariant ergodic measures with positive entropy are absolutely continuous (with respect to the ambient Riemannian volume).  
For instance, consider the action of the semi-group $\N^2$ on the additive circle generated by
$$x\mapsto 2x \bmod 1\quad \quad x \mapsto 3x \bmod 1.$$
Rudolph showed for this action that the only invariant, ergodic probability measures are Lebesgue or have zero-entropy for every one-parameter subgroup \cite{MR1062766}.  
In \cite{MR1406432}, Katok and Spatzier generalized the above phenomenon to  actions of commuting toral automorphisms.  

Outside of the setting of affine actions,  Kalinin,  Katok, and Rodriguez Hertz, have recently demonstrated a version of abelian measure rigidity for nonuniformly hyperbolic, maximal-rank actions. 
In  \cite{MR2811602}, the authors consider $\Z^n$ acting by $C^{1+\alpha}$ diffeomorphisms on a $(n+1)$-dimensional manifold and prove that any  $\Z^n$-invariant measure $\mu$ is absolutely continuous assuming that 
at least one element of $\Z^n$ has positive entropy with respect to $\mu$ and that the Lyapunov exponent functionals are in \emph{general position}.  

For affine actions of non-abelian groups, a number of results have recently been obtained by  Benoist and Quint  in a series of papers  \cite{MR2831114,MR3037785,BQIII}.  For instance, consider a finitely supported measure $\nu$ on the group $\SL(n,\Z)$.  Let $\Gamma_\nu\subset \Sl(n,\Z)$ be the {(semi-)}group generated by the support of $\nu$. 
We note that $\Gamma_\nu$ acts naturally on the torus $\T^n$.  In \cite{MR2831114}, it is proved that if every finite-index subgroup of (the group generated by) $\Gamma_\nu$ acts irreducibly on $\R^n$ then every $\nu$-stationary probability measure on $\T^n$ is either finitely supported or is Haar; in particular every $\nu$-stationary probability measure is $\SL(n,\Z)$-invariant.  Similar results was obtained in \cite{MR2726604} through completely different methods.   
In \cite{MR2831114} the authors obtain similar stiffness results for groups of translations on quotients of simple Lie groups.  
More recently, in a breakthrough paper \cite{1302.3320}  Eskin and Mirzakhani consider the natural action of the upper triangular  subgroup $P\subset \Sl(2, \R)$ on  a stratum of abelian differentials on a surface.  They show that any such measures is  in fact $\Sl(2, \R)$-invariant and affine in the natural coordinates on the stratum.  Furthermore, for certain measures $\mu$ on $\Sl(2,\R)$, it is shown that all ergodic $\mu$-stationary measures  are  $\Sl(2, \R)$-invariant and affine.  


In this article, we prove a number of measure rigidity result for dynamics on surfaces.  We consider stationary measures for groups acting by diffeomorphisms on surfaces as well as skew products (or non-i.i.d.\ random dynamics) with surface dynamics in the fibers.  
All measures will be hyperbolic either by assumption or by entropy considerations.  
In this setup we prove for hyperbolic stationary measures the following trichotomy: either the stable distributions are non-random, the measure is SRB, or the measure is supported on a finite set and is hence almost-surely invariant.

In the case that $\nu$-\ae diffeomorphism preserves a common smooth measure $m$, we show for any non-atomic stationary  measure $\mu$ that  either there exists a $\nu$-almost-surely invariant $\mu$-measurable line field (corresponding to the stable distributions for \ae random composition) or the measure $\mu$ is $\nu$-almost-surely  invariant and coincides with an ergodic component of $m$. 

In the proof of the above results, we study skew products with surface fibers over a measure-preserving transformations equipped with an decreasing sub-$\sigma$-algebra $\hat \Fol$.  Given an invariant measure $\mu$ for the skew product whose fiber-wise conditional measures are non-atomic, we  assume the $\hat \Fol$-measurability of the `past dynamics' and the fiber-wise conditional measures and prove the following dichotomy: either the fiber-wise stable distributions are measurable with respect to a related decreasing sub-$\sigma$-algebra, or the measure $\mu$ is fiber-wise SRB. 


We focus here only on actions on surfaces  and  measures with non-zero exponents  though we expect the results to hold in more generality.   We rely heavily on the tools from the theory of nonuniformly hyperbolic diffeomorphisms used in \cite{MR2811602} and many ideas   developed in \cite{1302.3320} including a modified version (see  \cite[Section 16]{1302.3320}) of the ``exponential drift'' arguments from \cite{MR2831114}.   

\subsection*{Acknowledgement}
Both authors wish to express their gratitude to Alex Eskin who patiently and repeatedly explained many of the ideas  in \cite{1302.3320} to the authors.  His insights and encouragement allowed the authors to strengthen an earlier version of the paper (which  assumed positive entropy) and obtain the complete result obtained here.  
 A.~B.\ was   supported by an NSF postdoctoral research fellowship DMS-1104013.  F.~R.~H.\ was     supported    by NSF grants DMS-1201326 and DMS-1500947.  
 

\section{Preliminary definitions and constructions}
Let $M$ be a closed (compact, boundaryless) $C^\infty$ Riemannian manifold.  We write  $\diff^r(M)$ for the group of $C^r$-diffeomorphisms from $M$ to itself equipped with its natural $C^r$-topology.  
Fix $r= 2$ and consider a subgroup $\Gamma\subset \diff^2(M)$. 
We say a Borel probability measure $\mu$ on $M$ is \emph{$\Gamma$-invariant} if \begin{equation}\label{eq:inv} \mu(f\inv (A)) = \mu(A)\end{equation} for all Borel $A\subset M$ and all $f\in \Gamma$.

We note that for any continuous action by an amenable group on a compact metric space there always exists at least one invariant measure.  However, for  actions by non-amenable groups invariant measures need not exist.  
For this reason, we introduce   a weaker notion of invariance.  
Let $\nu$ be a Borel probability measure \emph{on the group} $\Gamma$.  
We say a Borel probability measure $\mu $ on $M$ is \emph{$\nu$-stationary} if 
	$$\int \mu (f\inv (A)) \ d \nu(f)= \mu (A)$$
for any Borel $A\subset M$.  	
By the compactness of $M$, it follows that  for any probability $\nu$ on $\Gamma$ there exists a $\nu$-stationary probability $\mu$ (e.g.\ \cite[Lemma I.2.2]{MR884892}.)

We note that if $\mu $ is $\Gamma$-invariant then $\mu $ is trivially $\nu$-stationary for any measure $\nu$ on  $\Gamma$.   
Given a $\nu$-stationary measure $\mu$ such that equality \eqref{eq:inv} holds for $\nu$-\ae $f\in \Gamma$, we say that $\mu$ is \emph{$\nu$-\as $\Gamma$-invariant.  }

Given a probability  $\nu$  on  $\diff^2(M)$ one defines the \emph{random walk} on the group of diffeomorphisms.  A  path in the random walk induces a sequence of diffeomorphisms from $M$ to itself.  
As in the case of a single transformation, we study the asymptotic ergodic properties of typical sequences of diffeomorphisms of $M$.  
We write $\Sigma_+ =   \left(\diff^2(M)\right)^\N$ for the space of sequences of diffeomorphisms
	$\omega= (f_0, f_1, f_2, \dots ) \in \Sigma_+.$ 
Given a Borel probability measure $\nu $ on $\diff^2(M)$, we equip $\Sigma_+$ with the product measure $\nu ^\N$. 
We remark that $\diff^2(M)$ is a Polish space, hence $\Sigma_+$ is Polish and the probability $\nu ^\N$ is Radon.
Let $\sigma\colon \Sigma_+\to \Sigma_+$ be the shift map 
$$\sigma\colon (f_0, f_1,f_2, \dots )\mapsto (f_1,f_2, \dots).$$
We have that $\nu ^\N$ is $\sigma$-invariant. 
Given a sequence $\omega= (f_0, f_1, f_2, \dots ) \in\Sigma_+$ and $n\ge 0$ we define a cocycle
	 $$\cocycle [\omega] [0]:= \id , \quad \cocycle  [\omega][\ ] = \cocycle  [\omega][1] := f_0,\quad \cocycle [\omega] [n] := f_{n-1} \circ f_{n-2} \circ \dots \circ f_1 \circ f_0.$$
We interpret $(\Sigma_+, \nu^\N)$ as a parametrization of all paths in the random walk defined by $\nu$.  
Following existing literature (\cite{MR968818}, \cite{MR1369243}), we denote by $\MP^+(M, \nu)$ the random dynamical system on $M$ defined by the random compositions $\{f^n_\omega\}_{\omega\in \Sigma_+}$.

Given a measure $\nu$ on $\diff^2(M)$ and a $\nu$-stationary measure $\mu$, we say a subset $A\subset M$ is \emph{$\MP^+(M, \nu)$-invariant} if for $\nu$-\ae $f$ and $\mu$-\ae $x\in M$
\begin{enumerate}
\item $x\in A \implies f(x)\in A$ and \item $x\in M\sm A \implies f(x) \in M\sm A.$\end{enumerate}
We say a $\nu$-stationary probability measure $\mu $ is \emph{ergodic} if, for every $\MP^+(M, \nu)$-invariant set $A$,
we have either $\mu (A) =0$ or $\mu (M\sm A) = 0$.  
We note that for a fixed $\nu$-stationary measure $\mu$  we have an ergodic decomposition of $\mu$ into ergodic, $\nu$-stationary measures  \cite[Proposition I.2.1]{MR884892}.

For a fixed $\nu$ and a fixed $\nu$-stationary probability $\mu $, one can define the $\mu$-metric entropy of the random process $\MP^+(M, \nu )$, written $h_\mu (\MP^+(M, \nu )).$  We refer to \cite{MR884892} for a definition.   

	\def\ICeq{
			 &\int \log ^+(|f|_{C^2}) + \log ^+(|f\inv|_{C^2}) \ d \nu  <\infty. \tag{$\ast$}\label{eq:IC2a} 
		} 
In the case that the support of $\nu$ is not bounded in $\diff^2(M)$, we  assume the integrability condition
\begin{align}\ICeq 
\end{align}
where $\log^+(a) = \max\{\log (x), 0\}$ and $|\cdot|_{C^2}$ denotes the $C^2$-norm.  
The integrability condition \eqref{eq:IC2a} implies the weaker  condition
\begin{equation}\int \log ^+(|f|_{C^1}) + \log ^+(|f\inv|_{C^1}) \ d \nu  <\infty \label{eq:IC1a}\end{equation} 
which guarantees  Oseledec's Multiplicative Ergodic theorem holds. 
The  $\log$-integrability of the $C^2$-norms is used later to apply tools from Pesin theory. 
\begin{proposition}[Random Oseledec's multiplicative theorem.]\label{prop:OMT}
Let $\nu$ be measure on $\diff^2(M)$ satisfying \eqref{eq:IC1a}.  Let $\mu $ be an ergodic, $\nu $-stationary probability.  

Then there are  numbers $-\infty<\lambda_1<\lambda_2<\dots<\lambda_\ell<\infty$, called \emph{Lyapunov exponents} such that for $\nu ^\N$-\ae sequence $\omega\in \Sigma_+$ and $\mu $-a.e. $x\in M$ there  is a filtration
\begin{equation}\label{eq:filtration}
\{0\} =V^0_\omega(x)\subsetneq V^1_\omega(x)\subset \dots \subsetneq V^\ell_\omega(x)= TM
\end{equation}
such that for $v\in V^k_\omega(x)\sm V^{k-1}_\omega(x)$
\[\lim _{n\to \infty} \dfrac 1 n \|D_x \cocycle  [\omega][n] (v)\| = \lambda _k.\]
\label{NEW}Moreover,  $m_i := \dim V^k_\omega(x) - \dim V^{k-1}_\omega(x)$ is constant \as and 
\begin{equation}\label{eq:jacobians}
\lim _{n\to \infty} \frac 1 n  \log | \det (D_x \cocycle )| = \sum _{i = 1}^\ell \lambda_i m_i.
\end{equation}
The subspaces $V^i_\omega(x)$ are invariant in the sense that $$D_xf_\omega V^k_\omega(x) = V^k_{\sigma(\omega)}(f_\omega(x)).$$
\end{proposition}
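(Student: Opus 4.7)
The plan is to reduce to the classical (deterministic) Multiplicative Ergodic Theorem via the canonical skew product construction. Form the map $F\colon \Sigma_+ \times M \to \Sigma_+ \times M$ defined by $F(\omega, x) = (\sigma \omega, f_0(x))$ where $\omega = (f_0, f_1, \dots)$, and equip $\Sigma_+ \times M$ with the product measure $\nu^\N \otimes \mu$. A direct computation against rectangles, using the stationarity identity $\int \mu(f\inv A)\, d\nu(f) = \mu(A)$, shows that $\nu^\N \otimes \mu$ is $F$-invariant; moreover the hypothesis that $\mu$ is ergodic as a $\nu$-stationary measure translates, via the standard equivalence recorded in \cite[Proposition I.2.1]{MR884892}, to $F$-ergodicity of $\nu^\N \otimes \mu$.

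Next I would study the linear cocycle of derivatives $A(\omega, x) := D_x f_0\colon T_x M \to T_{f_0(x)} M$ over $F$, whose $n$-th iterate under $F$ is $A_n(\omega, x) = D_x \cocycle[\omega][n]$ by the chain rule. The integrability condition \eqref{eq:IC1a} bounds
\begin{equation*}
\int \log^+ \|A^{\pm 1}\|\, d(\nu^\N \otimes \mu) \;\le\; \int \bigl(\log^+ |f|_{C^1} + \log^+ |f\inv|_{C^1}\bigr)\, d\nu(f) \;<\; \infty,
\end{equation*}
so the hypotheses of the classical one-sided Multiplicative Ergodic Theorem are met for the cocycle $A$ over the ergodic measure-preserving system $(F, \nu^\N \otimes \mu)$.

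Applying that theorem yields, at $(\nu^\N \otimes \mu)$-a.e.\ point $(\omega, x)$, a filtration $\{0\} = V^0(\omega, x) \subsetneq V^1(\omega, x) \subsetneq \dots \subsetneq V^\ell(\omega, x) = T_x M$ together with constant exponents $\lambda_1 < \dots < \lambda_\ell$ and constant multiplicities $m_k = \dim V^k - \dim V^{k-1}$ (both constant by ergodicity), realizing the stated asymptotic growth rate for vectors in $V^k \sm V^{k-1}$ and satisfying the cocycle invariance $A(\omega, x) V^k(\omega, x) = V^k(F(\omega, x))$, which rewrites as $D_x f_\omega V^k_\omega(x) = V^k_{\sigma \omega}(f_\omega(x))$. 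A Fubini argument converts the ``$(\nu^\N \otimes \mu)$-a.e.\ $(\omega, x)$'' conclusion into the quantified ``for $\nu^\N$-a.e.\ $\omega$ and $\mu$-a.e.\ $x$'' form; and the Jacobian identity \eqref{eq:jacobians} is the standard Oseledec formula $\lim n\inv \log |\det A_n| = \sum_i \lambda_i m_i$.

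The argument is essentially a dictionary between random dynamics and deterministic skew products, and does not present a serious obstacle. The only points requiring a little care are the equivalence of the two notions of ergodicity (stationarity-ergodicity for $\mu$ versus $F$-ergodicity of $\nu^\N \otimes \mu$) and keeping track of the fact that $\sigma$ is only a one-sided shift, so that one obtains a filtration rather than an Oseledec splitting; both are standard and may be cited from \cite{MR884892}.
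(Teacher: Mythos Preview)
Your proposal is correct and follows the standard route. The paper itself does not supply a proof of this proposition: it simply cites \cite[Proposition~I.3.1]{MR1369243} and moves on. Your reduction via the canonical skew product $F\colon(\Sigma_+\times M,\nu^\N\times\mu)\to(\Sigma_+\times M,\nu^\N\times\mu)$ and the derivative cocycle is exactly the argument underlying that reference (and is, in fact, the content of the paper's later Proposition~\ref{prop:charStatmeas}, which records the invariance and ergodicity equivalences you invoke).
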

For a proof of the above theorem see, for example, \cite[Proposition I.3.1]{MR1369243}. 
We  write $$E^s_\omega(x): = \bigcup _{\lambda_j<0}V^j_\omega(x)$$ for the \emph{stable Lyapunov subspace} for the word $\omega$ at the point $x$.  

A stationary measure $\mu$ is \emph{hyperbolic} if no exponent $\lambda_i$ is   zero.  

We note that the random process $\MP^+(M, \nu )$ is not invertible.  Thus, while stable Lyapunov subspaces are defined for $\nu^\N$-\ae $\omega$ and $\mu$-\ae $x$, there are no well-defined unstable Lyapunov subspaces for $\MP^+(M, \nu )$.  
However, to state  results we will need a notion of SRB-measures (also called $u$-measures) for random sequences of diffeomorphisms. We will state the precise definition (Definition \ref{def:SRB}) in Section \ref{sec:condmeas} 
after introducing fiber-wise unstable manifolds for a related skew product construction.  
Roughly speaking, a $\nu$-stationary measure $\mu$ is SRB if it has absolutely continuous conditional measures along unstable manifolds.  Since we have not yet defined unstable manifolds (or subspaces), we postpone the formal definition 
and give here an equivalent property.  The following is an adaptation of \cite{MR819556}.  

\begin{prop}[{\cite[Theorem VI.1.1]{MR1369243}}]\label{prop:SRBrandom}
Let $M$ be a compact manifold and let $\nu$ be a probability on $\diff^2(M)$ satisfying \eqref{eq:IC2a}.
Then an ergodic, $\nu $-stationary probability $\mu $ is an SRB-measure if and only if 
\[h_\mu (\MP^+(M, \nu)) = \sum_{\lambda_i>0} m_i \lambda_i.\]
\end{prop}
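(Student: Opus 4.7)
The statement is the random-dynamics version of the Ledrappier--Young characterization of SRB measures via the Pesin entropy formula. My plan is to first pass to a two-sided invertible skew-product extension, and then treat the two implications separately.

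The first step is to build the invertible extension. Let $\Sigma = \diff^2(M)^\Z$ carry the product measure $\nu^\Z$, let $\sigma$ be the left shift, and define $F(\omega,x) := (\sigma\omega, f_0(x))$ on $\Sigma\times M$. The measure $\mathbf{m} := \nu^\Z \otimes \mu$ is $F$-invariant precisely because $\mu$ is $\nu$-stationary, and the fiber-wise entropy $h_\mathbf{m}(F\mid\sigma)$ coincides with $h_\mu(\MP^+(M,\nu))$. Because past dynamics are now available, Oseledec's theorem yields unstable subspaces $E^u_\omega(x)$ to complement the stable subspaces of Proposition \ref{prop:OMT}, and the $C^2$ log-integrability \eqref{eq:IC2a} lets one run Pesin theory fiber-wise to obtain local unstable manifolds $W^u_{\mathrm{loc}}(\omega,x)$ tangent to $E^u_\omega(x)$. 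The SRB property, as will be formalized in Section \ref{sec:condmeas}, is that the conditionals of $\mathbf{m}$ along the unstable foliation are absolutely continuous with respect to the induced Riemannian volume on unstable leaves.

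For the easy direction, SRB $\Rightarrow$ entropy formula, I would choose a measurable partition $\xi$ of $\Sigma\times M$ subordinate to the unstable foliation: each atom is a precompact open neighborhood of $(\omega,x)$ inside $W^u(\omega,x)$, and $F^{-1}\xi$ refines $\xi$. A Rokhlin-type argument identifies $h_\mathbf{m}(F,\xi) = H_\mathbf{m}(F^{-1}\xi \mid \xi)$ with the full fiber entropy $h_\mu(\MP^+(M,\nu))$. When $\mu$ is SRB, the conditional measures of $\mathbf{m}$ along atoms of $\xi$ are absolutely continuous, and a change-of-variables computation gives
\[
H_\mathbf{m}(F^{-1}\xi \mid \xi) \;=\; \int \log \bigl|\det \bigl(D_xf_0|_{E^u_\omega(x)}\bigr)\bigr| \, d\mathbf{m}(\omega,x),
\]
which by \eqref{eq:jacobians} equals $\sum_{\lambda_i>0} m_i\lambda_i$.

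The hard direction, entropy formula $\Rightarrow$ SRB, is where I expect the main obstacle. The random Ruelle inequality $h_\mu(\MP^+(M,\nu)) \leq \sum_{\lambda_i>0} m_i\lambda_i$, which only requires \eqref{eq:IC1a}, is the upper bound; assuming equality forces the Jacobian identity above to hold for a subordinate partition $\xi$. The Ledrappier--Young argument then compares the conditional measures $\mathbf{m}^\xi_{(\omega,x)}$ to Lebesgue on unstable plaques at many scales: saturation of the entropy-Jacobian identity forces a transfer-operator estimate matching the absolutely continuous case, and a density-point / martingale-convergence argument upgrades this to genuine absolute continuity. The real technical difficulty is executing the Ledrappier--Strelcyn measurable-partition and density constructions compatibly with the $\sigma$-algebra coming from $\Sigma$ and with the non-uniformly hyperbolic, non-autonomous Pesin estimates on each fiber; this is exactly what is carried out in the proof of Theorem VI.1.1 in \cite{MR1369243}.
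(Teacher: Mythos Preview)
The paper does not prove this proposition; it is quoted directly from \cite[Theorem VI.1.1]{MR1369243}. Your outline follows the same Ledrappier--Young strategy that Liu--Qian implement there, so in spirit you are on the right track and there is nothing to compare against in the paper itself.

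There is, however, a genuine error in your setup. The product measure $\nu^\Z \otimes \mu$ on the two-sided space $\Sigma \times M$ is \emph{not} $F$-invariant when $\mu$ is merely $\nu$-stationary. Invariance of the product would force $f_*\mu = \mu$ for $\nu$-a.e.\ $f$, i.e.\ that $\mu$ be $\nu$-a.s.\ invariant. What is true is that $\nu^\N \times \mu$ is $\hat F$-invariant on the one-sided product $\Sigma_+ \times M$ (Proposition~\ref{prop:charStatmeas}); the correct $F$-invariant lift to the two-sided extension is the measure of Proposition~\ref{prop:mudef}, obtained as the weak-$*$ limit $\lim_{n\to\infty}(F^n)_*(\nu^\Z\times\mu)$, and its fiber conditionals $\mu_\xi$ depend nontrivially on the past coordinates of $\xi$. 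This is not cosmetic: the unstable manifolds, the subordinate partition, and the conditional measures whose absolute continuity defines the SRB property all live on this non-product measure. With that correction made, the remainder of your sketch---entropy computed via a partition subordinate to unstables, the Jacobian identity for the easy direction, and the Ledrappier--Strelcyn density argument for the converse---is exactly the route taken in \cite{MR1369243}.
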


We introduce some terminology for invariant measurable subbundles.
Given a subgroup $\Gamma\subset \diff^2(M)$, we have an induced the action of $\Gamma$ on sub-vector-bundles of the tangent bundle $TM$ via the differential.  
Consider $\nu$ supported on $\Gamma$ and a $\nu$-stationary Borel probability $\mu $ on $M$.  
\begin{enumerate}
\item 
We say a $\mu$-measurable subbundle $V\subset TM$ is  \emph{$\nu$-\as invariant} if \(Df (V(x)) = V(f(x))\)
 for  $\nu$-\ae $f\in \Gamma$ and  $\mu $-\ae $x\in M.$

\item A $(\nu^\N \times \mu) $-measurable family of subbundles $(\omega,x) \mapsto V_\omega(x)\subset T_xM$ is \emph{$\MP^+(\Gamma,\nu )$-invariant} if  for $(\nu ^\N\times \mu )$-\ae $(\omega,x)$
			\[D_xf_\omega V_\omega(x)  = V_{\sigma(\omega)}(f_\omega(x)).\]
Note that subbundles in the filtration \eqref{eq:filtration} are $\MP^+(\Gamma,\nu )$-invariant.

\item  We say a $\MP^+(M,\nu )$-invariant family of subspaces $V_\omega(x)\subset TM$ is \emph{non-random} if  there exists a  $\nu$-\as invariant $\mu $-measurable subbundle $\hat V\subset TM$ with $\hat V(x)= V_\omega(x)$  for $(\nu ^\N \times \mu)$-\ae $(\omega,x)$.
\end{enumerate}

\section{Statement of results: groups of surface diffeomorphisms} 
\label{sec:resultsRDS}
For all results in this paper, we restrict ourselves to the case that $M$ is a closed surface.  
Equip  $M$ with a background Riemannian metric. 

\label{sec:resultsStationary}
Let $\nunaught$ be a  Borel probability on the group $\diff^2(M)$ satisfying the integrability hypotheses \eqref{eq:IC2a}. 
Let $\munaught$ be an ergodic $\nunaught$-stationary measure on $M$.  
At times, we may  assume 
$ h_\munaught(\MP^+(M, \nunaught))>0. $  
By the fiber-wise Margulis--Ruelle inequality  \cite{MR1314494} 
applied to the associated skew product (see Section \ref{sec:skewRDS}), 
positivity of entropy implies
that the Oseledec's filtration \eqref{eq:filtration} is nontrivial and the exponents satisfy \begin{align}-\infty<\lambda_1<0<\lambda_2<\infty.\label{eq:pozzexp}\end{align}
In particular, the {stable Lyapunov subspace} $E^s_\omega(x)$ corresponds to the  subspace $V^1_\omega(x)$ in \eqref{eq:filtration} and is 1-dimensional.

We state our first main theorem.
\begin{theorem}\label{thm:1+}
\label{thm:main}
Let $M$ be a closed  surface and let  $\nunaught$ be a Borel probability measure on $\diff^2(M)$ 
satisfying \eqref{eq:IC2a}. Let $\munaught$ be an ergodic, hyperbolic, $\nunaught$-stationary Borel probability measure on $M$.
Then either
\begin{enumerate}
\item the stable distribution $E^s_\omega(x)$ is non-random,
\item $\munaught$ is finitely supported, and hence $\nunaught$-\as invariant,  or 
\item $\munaught$ is SRB.  
\end{enumerate}
\end{theorem}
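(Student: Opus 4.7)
The plan is to reduce the statement to the skew-product dichotomy advertised in the introduction. First I would dispose of two degenerate situations. If the two Lyapunov exponents have the same sign then $E^s_\omega(x)$ is either $\{0\}$ or all of $T_xM$, hence trivially non-random, so we land in case (1). If $\munaught$ has an atom, set $a := \max_x \munaught(\{x\})$ and $A := \{x : \munaught(\{x\}) = a\}$; the total mass being $1$ forces $A$ to be finite, and the stationarity identity $a = \int \munaught(f\inv(\{x\}))\,d\nunaught(f)$ combined with maximality of $a$ yields $f(A) = A$ for $\nunaught$-a.e.\ $f$. Ergodicity then concentrates $\munaught$ on $A$, giving case (2).

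Once these are handled I may assume $\munaught$ is non-atomic and $\lambda_1 < 0 < \lambda_2$, so that the stable distribution is a genuinely random measurable line field on $\Sigma_+ \times M$. The next step is to embed the random dynamics into the skew-product framework of the paper: form $F(\omega,x) := (\sigma\omega, f_0(x))$ on $\Sigma_+ \times M$, note that $\nunaught^\N \times \munaught$ is $F$-invariant precisely because $\munaught$ is $\nunaught$-stationary, and pass to the natural extension over the two-sided shift $\Sigma := (\diff^2(M))^\Z$ equipped with $\nunaught^\Z$. Let $\hat\F$ denote the sub-$\sigma$-algebra of events depending only on the negative-time coordinates $(\omega_i)_{i<0}$; under $F\inv$ this algebra is decreasing. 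The fiber-wise conditional measures are the constant assignment $\omega\mapsto \munaught$, so they are trivially $\hat\F$-measurable, and the past dynamics is obviously $\hat\F$-measurable, which puts us in the hypotheses of the skew-product theorem stated in the introduction.

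Applying that theorem gives the dichotomy: either $E^s_\omega(x)$ is $\hat\F$-measurable modulo null sets, or the extended invariant measure is fiber-wise SRB. The SRB alternative says, via Proposition \ref{prop:SRBrandom}, that $\munaught$ is an SRB measure for $\MP^+(M,\nunaught)$, which is case (3). In the other alternative, $E^s_\omega(x)$ depends measurably on $\omega$ only through the past, while by its very definition it depends on $\omega$ only through the future $(\omega_i)_{i\geq 0}$. Since past and future are $\nunaught^\Z$-independent, $E^s_\omega(x)$ must $\munaught$-a.s.\ be a function of $x$ alone and therefore defines a $\nunaught$-a.s.\ invariant $\munaught$-measurable line field on $M$; this is case (1).

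The step I expect to be the main obstacle is of course the skew-product dichotomy itself, which is where the real work lies. Its proof will require Pesin's stable-manifold theory in the non-invertible fiber-wise setting, the Ledrappier--Young entropy formula to identify absolute continuity of unstable conditionals with the SRB alternative, and a surface-tailored version of the Eskin--Mirzakhani exponential drift argument converting failure of $\hat\F$-measurability of $E^s$ into genuine transverse motion along unstable manifolds and ultimately absolute continuity of the unstable conditional measures. The trichotomy for $\munaught$ is then essentially a packaging of that technical theorem via the atomic/same-sign reductions and the past/future independence in the Bernoulli base $(\Sigma, \nunaught^\Z)$.
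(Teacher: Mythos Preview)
Your overall architecture is right---the paper does exactly this reduction to the two-sided skew product and then splits according to properties of the fiber-wise conditionals---but there is a genuine error in your identification of the fiber-wise conditional measures, and it hides a whole case you have not handled.

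The natural extension of $(\Sigma_+\times M,\ \nunaught^\N\times\munaught,\ \hat F)$ is \emph{not} $(\Sigma\times M,\ \nunaught^\Z\times\munaught)$. The $F$-invariant lift $\muskew$ is the weak-$*$ limit $\lim_n (F^n)_*(\nunaught^\Z\times\munaught)$ (Proposition~\ref{prop:mudef}), and its fiber conditionals are $\muskew_\xi=\lim_{n\to\infty}(f_{\xi_{-1}}\circ\cdots\circ f_{\xi_{-n}})_*\munaught$. These depend on the past of $\xi$ (so they \emph{are} $\hat\Fol$-measurable, and that hypothesis of the general skew-product theorem is met), but they are not the constant assignment $\xi\mapsto\munaught$ unless $\munaught$ is already $\nunaught$-a.s.\ invariant. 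In particular, $\munaught$ can be non-atomic while each $\muskew_\xi$ is supported on finitely many points; this is precisely the situation the trichotomy must sort into case~(2) versus case~(1).

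Because of this, your reduction ``$\munaught$ non-atomic $\Rightarrow$ apply the skew-product dichotomy'' is incomplete: the skew-product theorem (Theorem~\ref{thm:skewproductABS}) requires the $\muskew_\xi$ to be non-atomic, not $\munaught$. The paper therefore splits into two sub-theorems: Theorem~\ref{thm:skewproduct} for non-atomic $\muskew_\xi$ (your intended route), and Theorem~\ref{thm:atomicfibertoinvariant} for finitely supported $\muskew_\xi$, which needs a separate argument (Sections~11--12) combining an entropy-rigidity result (Proposition~\ref{prop:entropyrigidity}) with a variant of the drift argument to force $\munaught$ itself to be finitely supported. Your atomic reduction on $\munaught$ does not cover this, and the missing case is substantial. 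The rest of your outline---the same-sign exponent reduction, the past/future independence argument for non-randomness (which is Proposition~\ref{prop:hopf}), and the identification of the hard core as the exponential-drift dichotomy---matches the paper.
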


By the above discussion and standard facts about entropy, 
if $h_\munaught(\MP^+(M,\nunaught))>0$ then $\munaught$ is hyperbolic and has no atoms.   We thus obtain as a corollary the following  dichotomy for positive-entropy stationary measures.
\begin{corollary}\label{thm:1}
Let $M$ be a closed  surface. 
Let  $\nunaught$ be a Borel probability measure on $\diff^2(M)$ 
satisfying \eqref{eq:IC2a} and let $\munaught$ be an ergodic, $\nunaught$-stationary Borel probability measure on $M$ with $h_\munaught(\MP^+(M, \nunaught))>0.$ 
Then either
\begin{enumerate}
\item the stable distribution $E^s_\omega(x)$ is non-random, or \item $\munaught$ is SRB.  
\end{enumerate}
\end{corollary}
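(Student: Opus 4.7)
My plan is to obtain the corollary directly from Theorem \ref{thm:main} by verifying that positive entropy forces the hypothesis of that theorem (hyperbolicity of $\munaught$) and rules out the second alternative (finite support).

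First, I would check that $\munaught$ is hyperbolic. By the fiber-wise Margulis--Ruelle inequality applied to the skew product built from $\MP^+(M,\nunaught)$, one has
\[
0 < h_\munaught(\MP^+(M,\nunaught)) \leq \sum_{i:\,\lambda_i>0} m_i \lambda_i,
\]
so some Lyapunov exponent must be strictly positive. Applying the same inequality to the time-reversed random system (whose Lyapunov exponents are the negatives of the $\lambda_i$), one also gets a strictly negative exponent. Since $\dim M = 2$, there are at most two exponents with multiplicities summing to $2$, so these must be $\lambda_1 < 0 < \lambda_2$ with $m_1 = m_2 = 1$. In particular no exponent vanishes, i.e.\ $\munaught$ is hyperbolic, and $E^s_\omega(x)$ is the one-dimensional $V^1_\omega(x)$ from \eqref{eq:filtration}. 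This places us inside the hypotheses of Theorem \ref{thm:main}.

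Next I would exclude alternative (2) of Theorem \ref{thm:main}. If $\munaught$ were finitely supported on some set $F\subset M$, then by ergodicity of $\munaught$ together with its $\nunaught$-a.s.\ invariance asserted in (2), the support $F$ is $\nunaught$-a.s.\ preserved and $f_\omega|_F$ is a bijection of a finite set for $\nunaught^\N$-a.e.\ $\omega$. A random process valued in permutations of a finite set is (fiber-wise) a finite extension of the base, so its metric entropy $h_\munaught(\MP^+(M,\nunaught))$ is zero, directly contradicting the positive-entropy hypothesis. Thus option (2) cannot occur.

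Consequently, Theorem \ref{thm:main} leaves only options (1) and (3), which is exactly the stated dichotomy. There is no serious obstacle here; the whole content lies in Theorem \ref{thm:main} itself. The only minor point worth stating cleanly in the write-up is the backward Margulis--Ruelle inequality for the random system, which in the nonstationary setting is an application of the forward inequality to the inverse cocycle in the Pesin-theoretic skew-product framework of Section \ref{sec:skewRDS}, and for which the $C^2$-integrability \eqref{eq:IC2a} together with the invertibility of $\nunaught$-typical $f$ suffices.
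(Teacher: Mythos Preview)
Your proposal is correct and follows essentially the same route as the paper: the paper simply notes (in the paragraph preceding the corollary) that positive entropy forces hyperbolicity via the fiber-wise Margulis--Ruelle inequality and rules out atoms by ``standard facts about entropy,'' then invokes Theorem~\ref{thm:main}. Your argument spells out the same two steps---getting $\lambda_1<0<\lambda_2$ from Margulis--Ruelle applied to $F$ and $F^{-1}$ on the invertible skew product of Section~\ref{sec:skewRDS}, and excluding finite support because finite extensions have zero (fiber) entropy---so the content is identical; your remark that the ``backward'' inequality should be phrased at the level of the invertible skew product rather than a ``time-reversed random system'' is exactly the right way to make that step precise.
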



We also immediately obtain from Theorem \ref{thm:1+} the following corollary.
\begin{corollary}
Let  $\nunaught$  be as in Theorem \ref{thm:1+} with $\munaught$ an ergodic, hyperbolic,  $\nunaught$-stationary probability measure.   Assume that $\munaught$ has one exponent of each sign and that there are no $\nunaught$-\as invariant, $\munaught$-measurable line fields on $TM$.  Then either $\munaught$ is SRB or $\munaught$ is finitely supported.
\end{corollary}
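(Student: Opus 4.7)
The plan is to apply Theorem \ref{thm:1+} directly; the corollary is essentially a translation of the trichotomy under the one-exponent-of-each-sign hypothesis. Since $\munaught$ is an ergodic, hyperbolic, $\nunaught$-stationary Borel probability measure on the surface $M$, Theorem \ref{thm:1+} applies and produces one of three conclusions: either (1) the stable distribution $E^s_\omega(x)$ is non-random, (2) $\munaught$ is finitely supported, or (3) $\munaught$ is SRB. Conclusions (2) and (3) are exactly the two alternatives claimed by the corollary, so the only thing to do is to rule out alternative (1) using the no-invariant-line-field hypothesis.

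To rule out (1), I would observe that the hypothesis "one exponent of each sign" forces the Oseledec filtration \eqref{eq:filtration} to have the form $\{0\}\subsetneq V^1_\omega(x) \subsetneq V^2_\omega(x) = T_xM$ with the unique negative exponent $\lambda_1 < 0 < \lambda_2$ and with $\dim V^1_\omega(x) = 1$ almost surely. Consequently $E^s_\omega(x) = V^1_\omega(x)$ is a one-dimensional subspace of $T_xM$ for $(\nu^{\N}\times\munaught)$-a.e.\ $(\omega,x)$. Now if alternative (1) of Theorem \ref{thm:1+} were to hold, then by the definition of \emph{non-random} given at the end of Section~2 there would exist a $\nunaught$-\as invariant, $\munaught$-measurable subbundle $\hat V \subset TM$ with $\hat V(x) = E^s_\omega(x)$ for $(\nu^{\N}\times\munaught)$-a.e.\ $(\omega,x)$. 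Since $E^s_\omega(x)$ is one-dimensional almost surely, $\hat V$ would be a $\nunaught$-\as invariant, $\munaught$-measurable line field on $TM$, contradicting the standing hypothesis of the corollary.

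Having excluded (1), the remaining cases give exactly the dichotomy asserted, completing the proof. There is no substantive obstacle here: the content of the corollary is entirely contained in Theorem \ref{thm:1+}, and the role of the two extra hypotheses is simply to reinterpret "$E^s$ is non-random" as "there is an invariant measurable line field" (via the one-dimensionality of $E^s$) and then to forbid this option by assumption.
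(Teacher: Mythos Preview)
Your proposal is correct and matches the paper's approach: the paper states this corollary as an immediate consequence of Theorem \ref{thm:1+} without further argument, and your reasoning---using the one-exponent-of-each-sign hypothesis to ensure $E^s_\omega(x)$ is one-dimensional and then invoking the definition of non-random to produce a forbidden invariant line field---is exactly the intended justification.
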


We note that in \cite{MR2831114}, the authors prove an analogous statement.  Namely, for homogeneous actions satisfying certain hypotheses, any non-atomic stationary measure $\munaught$ is shown to be absolutely continuous along \emph{some} unstable (unipotent) direction.   
Using Ratner Theory, one concludes that the stationary measure $\munaught$ is thus the Haar measure and hence \emph{invariant} for every element of the action.  
In non-homogeneous settings, such as the one considered here and the one considered in \cite{1302.3320}, there is no analogue of Ratner Theory.  Thus, in such settings 
more structure is needed in order to promote the SRB property to absolute continuity or almost-sure invariance of the stationary measure $\munaught$.   
The next theorem demonstrates that this promotion is possible assuming the existence of an almost-surely invariant volume.   
\begin{theorem}\label{thm:3} 
Let  $\Gamma\subset \diff^2(M)$ be a subgroup and assume $\Gamma$ preserves a probability measure $m$ equivalent to the Riemannian volume on $M$.
Let $\nunaught$ be a probability measure on $\diff^2(M)$ with $\nunaught(\Gamma) = 1$ and satisfying \eqref{eq:IC2a}.  Let $\munaught$ be an ergodic, hyperbolic,  $\nunaught$-stationary Borel probability measure.  Then either 
\begin{enumerate}
\item $\munaught$ has finite support, 
 \item 
the stable distribution $E^s_\omega(x)$ is non-random, or  
\item\label{case3} $\munaught$ is absolutely continuous and  is $\nunaught$-\as $\Gamma$-invariant.
\end{enumerate}

Furthermore, in conclusion \ref{case3}, we will have that $\munaught$ is---up to normalization---the restriction of $m$ to a positive volume subset. 
\end{theorem}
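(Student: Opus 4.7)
The plan is to apply the main trichotomy, Theorem~\ref{thm:1+}, to $\munaught$, then use $\nunaught$-a.s.\ $\Gamma$-invariance of the smooth measure $m$ first to promote the SRB alternative to absolute continuity and, finally, to rigidity of $\munaught$ as a restriction of~$m$.

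First, applying Theorem~\ref{thm:1+} to $\munaught$: alternatives~(1) and~(2) there are exactly alternatives~(1) and~(2) of the present theorem, so I may assume $\munaught$ is SRB and aim for conclusion~\ref{case3}. Writing $\rho=dm/d\mathrm{Leb}$, the $\nunaught$-a.s.\ identity $|\det Df|=(\rho\circ f)/\rho$ (pointwise when $\rho$ is continuous, and in any case $m$-a.e.) telescopes the cocycle $\log|\det Df^n_\omega|$; combined with Proposition~\ref{prop:OMT}\eqref{eq:jacobians} this yields $\sum_i\lambda_i m_i=0$, so by hyperbolicity $\lambda_1=-\lambda_2<0$. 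Together with the SRB entropy equality of Proposition~\ref{prop:SRBrandom}, this produces the twin Pesin identity
$$h_\munaught(\MP^+(M,\nunaught))=\lambda_2=-\lambda_1.$$

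Second, and most substantively, I would show $\munaught\ll m$. Passing to the skew product over the two-sided shift $\Sigma=(\diff^2(M))^{\Z}$ (the natural extension of the construction of Section~\ref{sec:skewRDS}), the resulting $F$-invariant measure $\muskew$ carries fiber-wise Pesin stable and unstable manifolds over $\munaught$-typical points. The SRB property of $\munaught$ gives absolutely continuous conditional measures of $\muskew$ along fiber-wise unstables, while the ``backward'' Pesin equality $h_\munaught=-\lambda_1$, by a random-dynamics Ledrappier--Young theorem (in the spirit of Liu--Qian), gives absolutely continuous conditional measures along fiber-wise stables. A Hopf-style argument within each fiber $\{\omega\}\times M$, built on absolute continuity of the Pesin stable/unstable holonomies, then upgrades this to $\muskew_\omega\ll m$ for $\nunaught^{\Z}$-a.e.~$\omega$; averaging over $\omega$ gives $\munaught\ll m$.

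Finally, writing $\munaught=\phi m$ with $\phi\in L^1(m)$, stationarity of $\munaught$ and the $\nunaught$-a.s.\ $\Gamma$-invariance of $m$ translate into $\phi=\int \phi\circ f^{-1}\,d\nunaught(f)$ $m$-a.e., so $\phi$ is a nonnegative fixed point of the Markov operator $P\psi:=\int\psi\circ f^{-1}\,d\nunaught$. Its adjoint on $L^2(m)$ is the contraction $U\psi:=\int\psi\circ f\,d\nunaught$, and a Jensen-inequality argument (applied to $\phi\wedge k$ and passing $k\to\infty$) forces any nonnegative $L^1$ fixed point of $P$ to be $\nunaught$-a.s.\ $\Gamma$-invariant. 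Hence the support $A:=\{\phi>0\}$ is $\nunaught$-a.s.\ $\Gamma$-invariant and, by ergodicity of $\munaught$, $\phi$ is constant on~$A$; we conclude that $\munaught=\tfrac{1}{m(A)}\,m|_A$ and $\munaught$ is $\nunaught$-a.s.\ $\Gamma$-invariant, as required. The main obstacle is the second paragraph: passing from SRB to absolute continuity in the random, nonuniformly hyperbolic, volume-preserving setting requires careful control of the fiber-dependent Pesin structures, verification of absolute continuity of their holonomies in this fibered context, and a suitable random version of the Ledrappier--Young formula.
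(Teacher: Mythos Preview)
Your reduction to the SRB alternative via Theorem~\ref{thm:1+} is correct, but the second paragraph contains a circularity that breaks the argument. The identity $|\det Df|=\rho(f(\cdot))/\rho(\cdot)$, coming from $f_*m=m$, holds only $m$-a.e.\ (as you yourself note); to feed it into Proposition~\ref{prop:OMT}\eqref{eq:jacobians} and conclude $\lambda_1+\lambda_2=0$ you would need it to hold $\munaught$-a.e. Since nothing is assumed about the density $\rho$ beyond equivalence of $m$ with volume, and since at this stage $\munaught$ may well be singular with respect to $m$, you cannot transfer the identity. Thus the ``backward'' Pesin equality $h=-\lambda_1$, the ensuing absolute continuity along fiber-wise stables, and the two-sided Hopf argument built on it are all unjustified. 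The obstacle you flag at the end is real, but it sits earlier than you locate it: it is not the random Ledrappier--Young machinery but the exponent identity $\sum_i\lambda_i m_i=0$ itself.

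The paper's proof avoids this circularity entirely. Rather than first establishing $\lambda_1+\lambda_2=0$ and upgrading SRB to ``two-sided SRB,'' it uses only the one-sided SRB property (absolute continuity of conditionals along fiber-wise unstables) together with the transverse absolute continuity of the fiber-wise \emph{stable} lamination---a holonomy statement that requires no entropy input---to show that the ergodic basin $B\subset\Sigma\times M$ of $\muskew$ satisfies $(\nunaught^\Z\times m)(B)>0$. Because $m$ is $\nunaught$-a.s.\ invariant, the normalized restriction of $\nunaught^\N\times m$ to the projection $\hat B\subset\Sigma_+\times M$ is $\hat F$-invariant; Birkhoff averages over $\hat B$ force it to be ergodic and of the form $\nunaught^\N\times m_0$ for an ergodic component $m_0$ of $m$, and the same Birkhoff averages identify $m_0=\munaught$. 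This delivers absolute continuity, $\nunaught$-a.s.\ invariance, and the identification of $\munaught$ as a normalized restriction of $m$ in one stroke, without ever needing $\lambda_1+\lambda_2=0$. Your final Markov-operator/Jensen step would be a valid alternative for the identification once absolute continuity is in hand, but the basin argument makes it unnecessary.
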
 
In particular,  in Theorem \ref{thm:3} if the stable distribution $E^s_\omega(x)$ is not non-random, then we have the following stiffness result.
\begin{corollary} 
Let  $m$ be a  probability measure on $M$ equivalent to the Riemannian volume.  Let $\nunaught$ be a probability measure on $ \diff^2(M)$ satisfying \eqref{eq:IC2a} and such that $m$ is $\nunaught$-\as invariant.  Let $\munaught$ be an ergodic, hyperbolic,  $\nunaught$-stationary Borel probability measure.  Assume there are no $\munaught$-measurable, $\nunaught$-\as invariant line fields on $TM$.  Then $\munaught$ is invariant under $\nunaught$-\ae $f\in \diff^2(M)$.  
\end{corollary}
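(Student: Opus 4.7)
The plan is to deduce the corollary directly from Theorem~\ref{thm:3}. Setting $\Gamma := \{f \in \diff^2(M) : f_* m = m\}$, which has full $\nunaught$-measure by assumption, Theorem~\ref{thm:3} produces three possibilities for $\munaught$: (1) finite support; (2) non-random stable distribution $E^s_\omega(x)$; or (3) absolute continuity together with $\nunaught$-a.s.\ $\Gamma$-invariance. Case~(3) gives the conclusion immediately. In case~(1), ergodicity and stationarity combine to force all atoms of $\munaught$ to carry equal mass and $\nunaught$-a.e.\ $f$ to permute the support of $\munaught$, so $\munaught$ is again $\nunaught$-a.s.\ invariant. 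The task thus reduces to eliminating case~(2).

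The hypothesis forbids any $\nunaught$-a.s.\ invariant, $\munaught$-measurable line field on $TM$, so to derive a contradiction in case~(2) it suffices to show that the non-random subbundle $E^s_\omega(x)$ is one-dimensional. Since $M$ is a surface and $\munaught$ is hyperbolic, this is equivalent to the Lyapunov exponents $\lambda_1, \lambda_2$ of $\munaught$ having opposite signs.

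To establish this I would exploit the $\nunaught$-a.s.\ invariance of $m$. Writing $\rho = dm/d\mathrm{vol}$, the relation $f_* m = m$ yields the cocycle identity
\begin{equation*}
\log|\det Df(x)| = \log\rho(x) - \log\rho(f(x)),
\end{equation*}
holding Lebesgue-a.e.\ in $x$ for $\nunaught$-a.e.\ $f$. Integrating against $\munaught \otimes \nunaught$, the right-hand side collapses to zero by the defining stationarity relation $\int\!\!\int g \circ f\, d\munaught\, d\nunaught = \int g\, d\munaught$, while Proposition~\ref{prop:OMT} combined with the Birkhoff ergodic theorem applied to the skew product $(\omega, x) \mapsto (\sigma\omega, f_0(x))$ on $\Sigma_+ \times M$ identifies the left-hand side with $m_1 \lambda_1 + m_2 \lambda_2$. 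Hence the exponents sum to zero; together with hyperbolicity this forces $\lambda_1 < 0 < \lambda_2$, so $E^s_\omega(x)$ is a genuine line field, and case~(2) contradicts the standing hypothesis.

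The one delicate point is that the cocycle identity a priori only holds Lebesgue-a.e.\ in $x$, whereas $\munaught$ may be singular with respect to $m$. In the typical case of a smooth invariant density $\rho$, both sides of the identity are continuous on $M$ and therefore agree everywhere, making the integration against $\munaught$ legitimate. In general one can approximate $\log\rho$ by continuous functions and pass to the limit in the integrated form, using continuity of $\log|\det Df|$ and the integrability condition~\eqref{eq:IC2a} to control the resulting error terms. Once $\sum m_i \lambda_i = 0$ is established, the corollary follows at once from Theorem~\ref{thm:3}.
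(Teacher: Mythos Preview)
Your approach is the natural one and matches what the paper intends; the paper offers no proof beyond the remark that the corollary captures the situation in Theorem~\ref{thm:3} when $E^s_\omega(x)$ is not non-random. Your reduction to showing $\lambda_1<0<\lambda_2$ is exactly right: with one exponent of each sign, $E^s_\omega(x)$ is a line, and non-randomness would produce a $\nunaught$-a.s.\ invariant line field in violation of the hypothesis. The all-positive case is already handled by the invariance principle (see Remark~\ref{rem:samesign}), and your treatment of the finitely supported case is correct.

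The weak point is the fix you propose for the cocycle identity when $\rho=dm/d\mathrm{vol}$ is irregular. If $\munaught\perp m$, the function $\log\rho$ is only defined up to $m$-null sets and carries no information on a set of full $\munaught$-measure. Approximating $\log\rho$ in $L^1(m)$ by continuous $\phi_n$ gives $\int\!\!\int(\phi_n-\phi_n\circ f)\,d\munaught\,d\nunaught=0$ by stationarity, but there is no mechanism forcing these integrals to converge to $\int\!\!\int\log|\det Df|\,d\munaught\,d\nunaught$: the functions $\phi_n-\phi_n\circ f$ converge to $\log|\det Df|$ only $m$-a.e., not $\munaught$-a.e., and continuity of $\log|\det Df|$ alone does not bridge that gap. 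The identity $\sum m_i\lambda_i=0$ is immediate once $\rho$ admits a continuous positive representative (the coboundary relation then holds pointwise on $M$), which is the implicitly intended setting; absent such regularity one must either add a mild hypothesis on $m$ or dispose of the all-negative case by a separate argument.
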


\section{General skew products}
In this section, we reformulate  the results stated in Section \ref{sec:resultsRDS} in terms of  results about related skew product systems.   This allows us to convert the dynamical properties of non-invertible, random dynamics, to properties of one-parameter invertible actions and to exploit tools from the theory of  nonuniformly hyperbolic diffeomorphisms.     A result for a more general skew product systems is also introduced.  
\subsection{Canonical skew product associated to a random dynamical system}\label{sec:skewRDS}  Let $M$ and $ \nunaught$ be as in Section \ref{sec:resultsStationary}. Consider the product space $\Sigma_+\times M$ and define the (non-invertible) skew product
  $\hat F \colon \Sigma_+\times M \to \Sigma_+ \times M$ by $$\hat F\colon (\omega, x) \mapsto (\sigma(\omega), f_\omega(x)).$$
We have the following reinterpretation of $\nunaught $-stationary measures.  
\begin{proposition}\cite[Lemma I.2.3, Theorem I.2.1]{MR884892} \label{prop:charStatmeas}
For  a Borel probability measure $\munaught $ on $M$ we have that
\begin{enumerate}
\item $\munaught$ is $\nunaught$-stationary if and only if $\nunaught^\N \times \munaught$ is $\hat F$-invariant;
\item a $\nunaught$-stationary measure $\munaught$ is ergodic for $\MP^+(M, \nunaught)$ if and only if $\nunaught^\N \times \munaught$ is ergodic for $\hat F$.  
\end{enumerate}
\end{proposition}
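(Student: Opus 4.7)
The plan is to handle parts (1) and (2) separately. For (1), I would test the $\hat F$-invariance of $\nunaught^\N\times\munaught$ on the dense class of product functions $\psi(\omega,x)=\phi(\omega)h(x)$. Writing $\omega=(f_0,\omega')$ with $f_0\sim\nunaught$ independent of $\omega'\sim\nunaught^\N$ and $\sigma(\omega)=\omega'$, the integral factors:
\[
\int\psi\circ\hat F\,d(\nunaught^\N\times\munaught)
=\int\phi(\omega')\,d\nunaught^\N(\omega')\cdot\int\!\!\int h(f(x))\,d\munaught(x)\,d\nunaught(f),
\]
so the $\hat F$-invariance identity reduces exactly to the stationarity condition $\int\!\!\int h(f(x))\,d\nunaught(f)\,d\munaught(x)=\int h\,d\munaught$. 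A monotone class argument promotes the identity to all bounded measurable $\psi$, and specializing $\psi$ to depend only on $x$ supplies the converse.

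For (2), the forward direction is immediate: if $A\subset M$ is $\MP^+(M,\nunaught)$-invariant then $\Sigma_+\times A$ is $\hat F$-invariant modulo null sets, and $(\nunaught^\N\times\munaught)(\Sigma_+\times A)=\munaught(A)$, so $\hat F$-ergodicity forces $\munaught(A)\in\{0,1\}$. The reverse direction is the substantive step. Given an $\hat F$-invariant set $\hat A$, the goal is to show the indicator $\phi=\mathbf 1_{\hat A}$ depends, modulo null, only on $x$. Setting $\Phi(x)=\int\phi(\omega,x)\,d\nunaught^\N(\omega)$, the identity $\phi=\phi\circ\hat F^n$ combined with the product structure of $\nunaught^\N$ yields
\[
\Phi(x)=\int\Phi(f^n_\omega(x))\,d\nunaught^\N(\omega),
\]
so $\Phi$ is a bounded $P$-harmonic function for the Markov operator $Ph(x)=\int h(f(x))\,d\nunaught(f)$. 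A reverse martingale argument applied to $\omega\mapsto\phi(\omega,x)$ along the filtration generated by $(f_0,\dots,f_{n-1})$, combined with Kolmogorov's $0$-$1$ law on $\nunaught^\N$, should force $\phi(\omega,x)=\Phi(x)$ for $(\nunaught^\N\times\munaught)$-a.e.\ $(\omega,x)$.

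With this decoupling in hand, $\hat A=\Sigma_+\times A_*$ modulo null, where $A_*=\{\Phi=1\}$. Since $\Phi$ is $P$-harmonic and $\munaught$ is stationary, $A_*$ is $\MP^+(M,\nunaught)$-invariant, so ergodicity of $\munaught$ yields $\munaught(A_*)\in\{0,1\}$, completing the equivalence. The main obstacle is the tail-triviality step in the reverse direction of (2): making rigorous the transition from $\hat F$-invariance of $\phi$ to tail-measurability of the $\omega$-variable for $\munaught$-a.e.\ $x$, which requires care with the interplay between the $\hat F^{-n}$-pullbacks of the Borel $\sigma$-algebra on $\Sigma_+\times M$ and the natural coordinate filtration on the driving sequence space.
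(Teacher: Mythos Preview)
The paper does not supply its own proof of this proposition; it simply cites Kifer \cite{MR884892}. So there is no proof in the paper to compare against, and I will evaluate your argument on its own merits.

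Your treatment of part (1) and of the easy direction of (2) is correct and standard. For the substantive direction of (2), your overall strategy---reduce an $\hat F$-invariant indicator $\phi$ to a function of $x$ alone, then invoke $\MP^+(M,\nunaught)$-ergodicity---is exactly right, and you correctly identify that $\Phi(x)=\int\phi(\omega,x)\,d\nunaught^\N(\omega)$ is $P$-harmonic. The gap is in the mechanism you propose for showing $\phi(\omega,x)=\Phi(x)$ almost everywhere. Kolmogorov's $0$--$1$ law does not apply: for fixed $x$, the identity $\phi(\omega,x)=\phi(\sigma^n\omega,f^n_\omega(x))$ does \emph{not} make $\omega\mapsto\phi(\omega,x)$ tail-measurable, since $f^n_\omega(x)$ depends on the first $n$ coordinates of $\omega$. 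Also, the filtration $\mathcal F_n=\sigma(f_0,\dots,f_{n-1})$ is increasing, so the relevant martingale is forward, not reverse.

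The clean way to close the gap uses stationarity of $\munaught$ rather than a $0$--$1$ law. With $\mathcal F_n$ as above, $M_n(\omega):=\mathbb E_{\nunaught^\N}[\phi(\cdot,x)\mid\mathcal F_n](\omega)=\Phi(f^n_\omega(x))$ is a bounded forward martingale converging a.s.\ to $\phi(\omega,x)$. Orthogonality of martingale increments gives
\[
\int\bigl(M_n-M_0\bigr)^2\,d\nunaught^\N=\int\Phi(f^n_\omega(x))^2\,d\nunaught^\N(\omega)-\Phi(x)^2,
\]
and integrating over $x$ with respect to $\munaught$ the right side becomes $\int P^n(\Phi^2)\,d\munaught-\int\Phi^2\,d\munaught=0$ by stationarity. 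Hence $M_n=M_0$ a.e., so $\phi(\omega,x)=\Phi(x)$ a.e.\ as desired, and your final paragraph then finishes the proof. Equivalently, one can argue that $P$-harmonicity of $\Phi$ plus stationarity forces $\Phi(f(x))=\Phi(x)$ for $\nunaught$-a.e.\ $f$ and $\munaught$-a.e.\ $x$ (Jensen/variance), so the level sets of $\Phi$ are $\MP^+$-invariant and ergodicity makes $\Phi$ constant; the martingale limit then identifies that constant with $\phi$. You were right to flag this step as the obstacle---just replace the $0$--$1$ law with this $L^2$ computation.
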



\def\fthere{Writing the cocycle as $f^n_\xi$ is standard in the literature but is  somewhat ambiguous.   We write 
$(f_{\xi})\inv$ to indicate the diffeomorphism that is the inverse of $f_\xi\colon M\to M$.  The symbol $f_\xi\inv$ indicates $(f_{\theta\inv(\xi)})\inv$.}

Let $\Sigma:= (\diff^r(M))^\Z$ be the space of bi-infinite sequences and equip $\Sigma$ with the product measure  
	$\nunaught^\Z.$
We again write  $\sigma\colon \Sigma\to \Sigma$ for the left shift  
	$ (\sigma(\xi))_i = \xi_{i+1}  .$
 Given $$\xi= (\dots, f_{-2}, f_{-1}, f_0, f_1,f_2, \dots ) \in \Sigma$$ define $f_\xi := f_0$ and define
the (invertible) skew product $F \colon \Sigma \times M\to \Sigma \times M$
by \begin{equation}\label{eq:skewdefn}F\colon (\xi, x) \mapsto (\sigma(\xi), f_\xi(x)).\end{equation}

We have the following proposition  producing the measure whose properties we will study for the remainder.  
 \begin{proposition} \label{prop:mudef} Let $\munaught$ be a $\nunaught$-stationary Borel probably measure.
 There is a unique $F$-invariant Borel probability measure $\muskew$ on $\Sigma \times M$
whose image under the canonical  projection $\Sigma\times M\to \Sigma_+\times M$  is  $\nunaught^\N\times \munaught$.

Furthermore, $\muskew$ projects to $\nunaught^\Z$ and $\munaught$, respectively, under the canonical projections $\Sigma\times M\to \Sigma$ and $\Sigma\times M\to M$ and 
is equal to the weak-$*$ limit  \begin{equation}\label{eq:mulim}\muskew = \lim_{n\to \infty} (F^n)_*(\nunaught^\Z\times \munaught).\end{equation}
\end{proposition}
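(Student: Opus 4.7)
The plan is to construct $\muskew$ as the weak-$*$ limit \eqref{eq:mulim}, then verify $F$-invariance, the three marginal identities, and uniqueness. The technical engine is a simple cylinder-function argument that reduces everything to the already-established $\hat F$-invariance of $\nunaught^\N \times \munaught$ (Proposition \ref{prop:charStatmeas}).

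First, I would establish existence of the limit. Let $\phi \in C(\Sigma \times M)$ be a cylinder function depending only on coordinates $\xi_{-N},\dots,\xi_N$ and on $x$; such functions are uniformly dense in $C(\Sigma \times M)$ since $\diff^2(M)$ is Polish and $\Sigma$ carries the product topology. Since $F^n(\xi,x) = (\sigma^n\xi, \cocycle[\xi][n](x))$ and $\cocycle[\xi][n]$ depends only on $\xi_0,\dots,\xi_{n-1}$, for $n \ge N$ the composition $\phi \circ F^n$ depends only on $\xi_0,\xi_1,\dots,\xi_{n+N}$ and on $x$. Therefore $\phi \circ F^n$ factors as $\tilde\phi_n \circ \pi$, where $\pi\colon \Sigma\times M \to \Sigma_+ \times M$ is the canonical projection, and
\[\int \phi \circ F^n \, d(\nunaught^\Z \times \munaught) = \int \tilde\phi_n \, d(\nunaught^\N \times \munaught).\]
Writing $\tilde\phi_n = g \circ \hat F^{n-N}$ for a suitable $g$ depending only on $\omega_0,\dots,\omega_{2N}$ and $y$, and invoking $\hat F$-invariance of $\nunaught^\N \times \munaught$, this integral is constant in $n \ge N$. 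Density of cylinder functions then produces a positive functional on $C(\Sigma \times M)$ realized by a Borel probability measure $\muskew$, which is the weak-$*$ limit \eqref{eq:mulim}.

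Second, $F$-invariance of $\muskew$ is immediate from continuity of $F$ and the identity $F_*\muskew = \lim_n (F^{n+1})_*(\nunaught^\Z \times \munaught) = \muskew$. For the marginals, the relation $\pi \circ F = \hat F \circ \pi$ gives $\pi_*(F^n)_*(\nunaught^\Z\times\munaught) = (\hat F^n)_*(\nunaught^\N \times \munaught) = \nunaught^\N \times \munaught$ for every $n$, so $\pi_*\muskew = \nunaught^\N \times \munaught$. Projecting to $\Sigma$: the $\Sigma$-marginal of $(F^n)_*(\nunaught^\Z\times\munaught)$ is $\sigma^n_*\nunaught^\Z = \nunaught^\Z$. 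Projecting to $M$: the $M$-marginal is $\int (\cocycle[\xi][n])_*\munaught\, d\nunaught^\Z(\xi) = \munaught$ by the stationarity characterization in Proposition \ref{prop:charStatmeas}.

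Finally, for uniqueness, suppose $\nu'$ is an $F$-invariant probability with $\pi_*\nu' = \nunaught^\N \times \munaught$. For any cylinder $\phi$ as above and $n \ge N$, $F$-invariance gives
\[\int \phi\, d\nu' = \int \phi \circ F^n\, d\nu' = \int \tilde\phi_n \, d\pi_*\nu' = \int \tilde\phi_n\, d(\nunaught^\N \times \munaught) = \int \phi\, d\muskew,\]
so $\nu'$ and $\muskew$ agree on a dense subset of $C(\Sigma \times M)$ and hence coincide. There is no serious obstacle here; the one point requiring mild care is the approximation by cylinder functions on the Polish space $\Sigma$, which is standard. Everything is an application of the fact that $(\Sigma \times M, F, \muskew)$ is essentially the natural extension of the non-invertible system $(\Sigma_+ \times M, \hat F, \nunaught^\N \times \munaught)$, and the argument above is the direct verification of this at the level of the measure.
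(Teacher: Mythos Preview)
Your argument is essentially correct and is the standard natural-extension construction; the paper itself does not give a proof but simply cites \cite[Proposition~I.1.2]{MR1369243}, so there is no ``paper's own proof'' to compare against beyond noting that your approach is the expected one.

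One point deserves a bit more care than you indicate. The space $\Sigma = (\diff^2(M))^\Z$ is Polish but not compact, so continuous cylinder functions are \emph{not} uniformly dense in $C_b(\Sigma\times M)$, and you cannot directly invoke Riesz to produce $\muskew$ from a functional on cylinder functions. The cleanest fix is to observe that all the measures $(F^n)_*(\nunaught^\Z\times\munaught)$ project to the fixed measure $\nunaught^\Z$ on $\Sigma$, hence form a tight family (tightness of $\nunaught^\Z$ on the Polish space $\Sigma$, together with compactness of $M$); Prokhorov then gives weak-$*$ accumulation points. Your cylinder computation shows that any two accumulation points agree on all cylinder \emph{sets}, which form a $\pi$-system generating the Borel $\sigma$-algebra, so the accumulation point is unique and the full sequence converges. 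After that, everything you wrote (continuity of $F$, the marginal checks, and the uniqueness argument) goes through verbatim.
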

See for example {\cite[Proposition I.1.2]{MR1369243}} for a proof of the proposition in this setting.  
Let $\{\mu_\xi\}_\xi\in \Sigma$ be a family of conditional measures of $\mu$ relative to the partition into fibers of $\Sigma \times M\to \Sigma$.  By a slight abuse of notation, consider $\mu_\xi$ as a measure on $M$ for each $\xi$.  It follows that for $\nunaught^\Z$-\ae $\xi\in \Sigma$ and 
$\eta\in \Sigma$ with $  \eta_i = \xi_i \text{ for all } i< 0$ that $\mu_\eta= \mu_\xi$.  

Write  $\pi \colon \Sigma\times M \to \Sigma$ for the canonical projection.  We write  $h_{\muskew} (F \mid\pi)$ for the conditional metric entropy of $(F, \muskew)$ conditioned on the sub-$\sigma$-algebra generated by $\pi\inv$.  
\begin{proposition}[{\cite[Theorem II.1.4]{MR884892}, \cite[Theorem I.2.3]{MR1369243}}]
We have the equality of entropies
\label{prop:entropiessame}
$h_\munaught(\MP^+(M, \nunaught))= h_{\muskew} (F \mid\pi).$
\end{proposition}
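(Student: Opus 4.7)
My approach is to recognize $(F, \mu)$ as the natural (invertible) extension of the one-sided skew product $(\hat F, \nunaught^\N \times \munaught)$, reducing the proposition to a pair of standard entropy identities.

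First I would unpack the two sides. On the right, the fiber entropy $h_\mu(F \mid \pi)$ is the supremum over finite partitions $P$ of $M$ (lifted to $\Sigma \times M$) of
\[
h_\mu(F \mid \pi, P) = \lim_{n \to \infty} \frac{1}{n} H_\mu\!\left(\bigvee_{i=0}^{n-1} F^{-i} P \,\Big|\, \pi^{-1}\mathcal{B}_\Sigma\right) = \lim_{n \to \infty} \frac{1}{n} \int H_{\mu_\xi}\!\left(\bigvee_{i=0}^{n-1} (f^i_{p(\xi)})^{-1} P\right) d\nunaught^\Z(\xi),
\]
where $p : \Sigma \times M \to \Sigma_+ \times M$ denotes the coordinate-forgetting factor map $(\xi, x) \mapsto ((\xi_0, \xi_1, \ldots), x)$, and the second equality uses the disintegration $\mu = \int \mu_\xi \, d\nunaught^\Z(\xi)$. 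On the left, Kifer's definition gives $h_\munaught(\MP^+(M, \nunaught)) = \sup_P \lim_n \frac{1}{n} \int H_\munaught\bigl(\bigvee_{i=0}^{n-1} (f^i_\omega)^{-1} P\bigr) \, d\nunaught^\N(\omega)$, and the analogous disintegration formula identifies the inner integral with $H_{\nunaught^\N \times \munaught}(\bigvee_{i=0}^{n-1} \hat F^{-i} P \mid \hat\pi^{-1}\mathcal{B}_{\Sigma_+})$ (the conditional measures of $\nunaught^\N \times \munaught$ over $\hat\pi$ being $\munaught$ on each fiber). So the question reduces to showing $h_\mu(F \mid \pi) = h_{\nunaught^\N \times \munaught}(\hat F \mid \hat\pi)$.

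For the natural extension argument, the factor map $p$ satisfies $p \circ F = \hat F \circ p$ and $p_* \mu = \nunaught^\N \times \munaught$ (by Proposition \ref{prop:mudef}), while $\bigvee_{n \geq 0} F^n p^{-1} \mathcal{B}_{\Sigma_+ \times M}$ generates $\mathcal{B}_{\Sigma \times M}$ modulo $\mu$-null sets, identifying $F$ as the natural extension of $\hat F$. Similarly $(\Sigma, \sigma, \nunaught^\Z)$ is the natural extension of $(\Sigma_+, \sigma, \nunaught^\N)$. Invoking the classical facts that natural extensions preserve Kolmogorov--Sinai entropy, together with the Abramov--Rokhlin formula $h_\mu(F) = h_{\nunaught^\Z}(\sigma) + h_\mu(F \mid \pi)$ and its analogue for $\hat F$, subtracting the resulting identities yields the desired equality.

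The main obstacle I anticipate is that $h_{\nunaught^\Z}(\sigma)$ may be infinite (for example when $\nunaught$ has infinite support of unbounded entropy), which makes the subtraction above only formal. To circumvent this I would work at the level of finite partitions: for each finite partition $P$ of $M$ both fiber entropies $h_\mu(F \mid \pi, P)$ and $h_{\nunaught^\N \times \munaught}(\hat F \mid \hat\pi, P)$ are finite, and since $\bigvee_{i=0}^{n-1} F^{-i} P$ depends only on the future coordinates $(\xi_0, \dots, \xi_{n-1})$ of $\xi$ while $\munaught = \int \mu_\xi \, d\nunaught^\Z(\xi \mid p(\xi) = \omega)$ for $\nunaught^\N$-a.e.~$\omega$, one checks via the Shannon--McMillan--Breiman theorem applied to the random process $\MP^+(M, \nunaught)$ that the Ces\`aro limits of the two displayed integrals coincide partition by partition. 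Taking suprema over $P$ then completes the argument.
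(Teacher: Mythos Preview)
The paper does not supply its own proof of this proposition; it is stated with citations to Kifer and Liu--Qian and used as a black box. So there is no ``paper's proof'' to compare against, only your attempt.

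Your overall strategy---identify $(F,\mu)$ as the natural extension of $(\hat F,\nunaught^\N\times\munaught)$ and transfer the fiber entropy across---is the right one and is essentially how the cited references proceed. The gap is in your final paragraph. You correctly flag that the Abramov--Rokhlin subtraction is only formal when $h_{\nunaught^\Z}(\sigma)=\infty$ (which is the generic situation here, since $\nunaught$ need not be atomic), but your workaround does not close the gap. From $\munaught=\int\mu_\xi\,d(\text{past})$ and concavity of $H$ you get, for each $n$ and each fixed future $\omega$,
\[
H_{\munaught}\Bigl(\bigvee_{i=0}^{n-1}(f_\omega^i)^{-1}P\Bigr)\ \ge\ \int H_{\mu_\xi}\Bigl(\bigvee_{i=0}^{n-1}(f_\omega^i)^{-1}P\Bigr)\,d(\text{past}),
\]
hence $h_{\munaught}(\MP^+,P)\ge h_\mu(F\mid\pi,P)$. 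That is the easy direction. The reverse inequality is where the content lies, and your sentence ``one checks via the Shannon--McMillan--Breiman theorem \dots\ that the Ces\`aro limits coincide'' does not explain the mechanism: SMB gives a.e.\ convergence of $-\tfrac1n\log\mu_\xi(\cdot)$ and of $-\tfrac1n\log\munaught(\cdot)$ separately to the respective fiber entropies, but says nothing about why those two limits agree.

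The clean route, which avoids Abramov--Rokhlin entirely and is what the references do, is to work with the identity $h(T\mid\mathcal A,P)=H\bigl(P\mid\bigvee_{i\ge1}T^{-i}P\vee\mathcal A\bigr)$ on both sides and compare the conditioning $\sigma$-algebras directly. Since $\bigvee_{i\ge1}F^{-i}P$ is the pullback under $p$ of $\bigvee_{i\ge1}\hat F^{-i}P$, the question reduces to showing that adjoining the \emph{past} coordinates of $\xi$ to the conditioning does not decrease the conditional entropy in the limit; this is where one uses that $F$ is the natural extension of $\hat F$ together with a martingale argument (increasing the conditioning $\sigma$-algebra along $F^{-n}p^{-1}\mathcal B_{\Sigma_+\times M}$), not SMB. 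If you want to salvage your outline, replace the SMB appeal with this relative-generator argument.
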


\subsection{General skew products}\label{sec:absSkew}
We give a generalization of the setup introduced in Section \ref{sec:skewRDS}.  
Let $(\Omega, \B_\Omega, \nuskew)$ be a Polish probability space; that is, $\Omega$ has the topology of  a complete separable metric space, $\nuskew$ is a Borel probability measure, and $\B_\Omega$ is {the $\nuskew$-completion} of the Borel $\sigma$-algebra.   
Let $\theta\colon (\Omega, \B_\Omega, \nuskew)\to (\Omega, \B_\Omega, \nuskew)$ be an invertible, ergodic, measure-preserving transformation. 
Let $M$ be a closed $C^\infty$ manifold.  Fix a background $C^\infty$ Riemannian metric on $M$ and write $\|\cdot\|$ for the norm on the tangent bundle $TM$ and $d(\cdot, \cdot)$ for the induced distance on $M$.  We note that compactness of $M$ guarantees all metrics are equivalent, whence all dynamical objects structures defined below are independent of the choice of metric.

\def\fthere{Writing the cocycle as $f^n_\xi$ is standard in the literature but is  somewhat ambiguous.   We write 
$(f_{\xi})\inv$ to indicate the diffeomorphism that is the inverse of $f_\xi\colon M\to M$.  The symbol $f_\xi\inv$ indicates $(f_{\theta\inv(\xi)})\inv$.}

We  consider  a $\nuskew$-measurable mapping $\Omega\ni\xi\mapsto f_\xi\in \diff^2(M)$.  
Define\footnote{\fthere} a cocycle $\scrF\colon\Omega \times \Z\to \diff^r(M)$ over $\theta$, written $\scrF \colon (\xi, n) \mapsto \cocycle$,  by 
\begin{enumerate}
	\item $\cocycle    [\xi] [0]:= \id$, $\cocycle    [\xi] [1] := f_\xi$,
	\item $\cocycle    [\xi] [n]:= f_{\theta^{n-1}(\xi)} \circ \dots \circ f_{\theta(\xi)}\circ f_\xi$ for $ n>0$, and
	\item $\cocycle    [\xi] [n] := (f_{\theta^{n}(\xi)})\inv \circ \dots \circ (f_{\theta\inv(\xi)})\inv = (f_{\theta^{n}(\xi)}^{|n|}) \inv $ for $ n<0.$

\end{enumerate}
As above, we will always assume the following integrability condition 
\begin{align}
&\int \log ^+(|f_\xi|_{C^2}) + \log ^+(|f\inv_\xi|_{C^2}) \ d \nuskew(\xi) <\infty. \tag{IC}\label{eq:IC2} 
\end{align}

Write $X:= \Omega\times M$ with canonical projection $\pi\colon X\to \Omega$.   For $\xi \in \Omega$, we  write $$M_\xi:= \{\xi\}\times M = \pi\inv (\xi)$$ for the fiber of $X$ over $\xi$.  
On $X$, we define the skew product $F\colon X \to X$  
$F\colon(\xi,x)\mapsto(\theta(\xi), f_\xi(x)).$
 
Note that $X= \Omega\times M$ has a natural Borel structure.  The main object of study for the  remainder will be $F$-invariant Borel probability measures on $X$ with marginal $\nuskew$.  
 \begin{definition}
A probability  measure $\muskew$ on $X$ is called 
\emph{$\scrF$-invariant} 
if it is $F$-invariant and  satisfies $$\pi_*\muskew = \nuskew.$$  
Such a measure $\muskew$ is said to be \emph{ergodic} if it is $F$-ergodic.  
\end{definition}

Let $\{\muskew_\xi\}_{\xi\in \Omega}$ denote the family of conditional probability measures with respect to the partition induced by the projection $\pi\colon X\to \Omega$.  Using the canonical identification of fibers $M_\xi= \{\xi\}\times M$ in $X$ with $M$, by an abuse of notation we consider the map $\xi\mapsto \muskew_\xi$ as a measurable map from $\Omega$ to the space of Borel probabilities on $M$.

\subsubsection{Fiber-wise Lyapunov exponents}
 We define $TX$ to be the fiber-wise tangent bundle 
 $$TX:= \Omega\times TM$$ and $DF\colon TX\to TX$ to be the fiber-wise differential
 $$DF \colon (\xi, (x,v)) \mapsto (\theta(\xi), (f_\xi (x), D_xf_\xi v)).$$

Let $\muskew$ be an ergodic, $\scrF$-invariant probability.  We have that $DF$ defines a linear cocycle over the (invertible) measure-preserving system $F\colon (X, \muskew)\to (X, \muskew)$.  
By the integrability condition \eqref{eq:IC2}, we can apply Oseledec's Theorem to $DF$ to obtain a $\muskew$-measurable splitting
\begin{equation}\label{eq:OscSplitting}T_{(\xi,x)}X:= \{\xi\}\times T_xM  = \bigoplus_j E^j(\xi,x)\end{equation} and numbers $\lambda_{\muskew}^j$ so that  
for $\muskew$-a.e. $(\xi,x)$, and every $v\in E^j(\xi,x)\sm\{0\}$
$$\lim_{n\to \pm \infty} \dfrac 1 n \log  \|DF^n( v)\|= \lim_{n\to \pm \infty} \dfrac 1 n \log  \|D_x \cocycle    [\xi] [n] v\| = \lambda^j_{\muskew}.$$

It follows from standard arguments that if the fiber-wise exponents of $DF$ are all positive (or negative) then the fiber-wise conditional measure $\mu_\xi$ are purely atomic.

\subsection{Reformulation of Theorem \ref{thm:main}}\label{sec:skewreint}
Let   $M$ and $ \nunaught$ be as in Section \ref{sec:resultsStationary} and let $ \munaught$ be an ergodic, hyperbolic, $ \nunaught$-stationary measure.  
  Let $F\colon \Sigma \times M\to \Sigma\times M$ denote the canonical skew product and let $\muskew$ be the measure given by Proposition \ref{prop:mudef}.  
We have a $\muskew$-measurable splitting of $\Sigma\times TM$ into measurable bundles $$
\{\xi\}\times T_xM= E^s{(\xi,x)} \oplus E^u{(\xi,x)} .$$
Note that, a priori, one of the bundles $E^s{(\xi,x)}$ or $E^u{(\xi,x)}$ might be trivial; however by Remark \ref{rem:samesign} below, Theorem \ref{thm:main} follows trivially in these cases.

For $\sigma\in \{s,u\}$ and $(\xi,x) \in \Sigma\times M$  we write $E^\sigma_\xi(x)\subset TM$ for the subspace with $E^\sigma{(\xi,x)} = \{\xi\} \times E^\sigma_\xi(x)$.  Projectivizing the tangent bundle $TM$, we obtain a measurable function  $$(\xi,x)\mapsto E^\sigma_\xi(x).$$

\newcommand\Gol{\mathcal G}

For $\xi = (\dots, \xi_{-2}, \xi_{-1}, \xi_0, \xi_1, \xi_2,\dots ) \in \Sigma$ write $\Sigmalocs(\xi)$ and $\Sigmalocu(\xi)$  for the \emph{local stable} and \emph{unstable sets}
$$\Sigmalocs(\xi): = \{ \eta\in \Sigma \mid \eta_i = \xi_i \text{ for all } i\ge 0\}$$
$$\Sigmalocu(\xi): = \{ \eta\in \Sigma \mid \eta_i = \xi_i \text{ for all } i< 0\}.$$

Write $\hat \Fol$ for  (the completion of) the Borel sub-$\sigma$-algebra of $\Sigma$ containing sets that are \as saturated by local unstable sets:
$C\in \hat \Fol$ if and only if $C= \hat C \mod \nunaught^\Z$ where $\hat C$ is Borel in $\Sigma$ with $$\hat C  = \bigcup_{\xi\in \hat C }\Sigmalocu(\xi).$$
Similarly, we define $\hat \Gol$ to be the sub-$\sigma$-algebra of $\Sigma$ whose atoms are local stable sets.  
Writing $\B_M$ for the Borel $\sigma$-algebra on $M$ we define sub-$\sigma$-algebras on $\Fol$ and $\Gol$ on $X$ to be, respectively,  the $\muskew$-completions of the $\sigma$-algebras $\hat \Fol \otimes \B_M$ and $\hat \Gol \otimes \B_M$.  

We note that, by construction, the assignments $\Omega\to \diff^2(M)$ given by $\xi \mapsto f_\xi$  and $\xi\mapsto \cocycle  [\xi][-1]$ are, respectively,  $\hat \Gol$- and $\hat \Fol$-measurable.  Furthermore, observing that the stable line fields $E^s_\xi(x)$ depend only on the value of $\cocycle$ for $n\ge 0$, we have the following  straightforward but crucial observation. 
\begin{proposition}\label{prop:measurability} 
The map  
$(\xi,x)\mapsto E^s_\xi(x)$ is $\Gol$-measurable and  the map 
$(\xi,x)\mapsto E^u_\xi(x)$ is $\Fol$-measurable.  
\end{proposition}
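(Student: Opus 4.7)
The plan is to show that $E^s_\xi(x)$ is determined by the forward tail $(\xi_0,\xi_1,\ldots)$ and $E^u_\xi(x)$ by the backward tail $(\ldots,\xi_{-2},\xi_{-1})$; once this is established, constancy on the atoms of $\hat\Gol$ (resp.\ $\hat\Fol$) yields the measurability assertions essentially for free.

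First I would record the one-sided characterization of the Oseledec subspaces. Applied to the invertible cocycle $DF$ over $(X,\muskew)$, Oseledec's theorem guarantees that for $\muskew$-a.e.\ $(\xi,x)$,
\[
E^s(\xi,x) = \Bigl\{v\in T_xM:\ \limsup_{n\to\infty} \tfrac1n \log\|D_x \cocycle[\xi][n] v\|<0\Bigr\},
\]
and symmetrically
\[
E^u(\xi,x) = \Bigl\{v\in T_xM:\ \limsup_{n\to\infty} \tfrac1n \log\|D_x \cocycle[\xi][-n] v\|<0\Bigr\}.
\]
These are standard consequences of the definition of the hyperbolic Oseledec splitting; the point is that although a priori the invertible Oseledec subspaces could use information from both sides, for hyperbolic exponents the stable subspace is already pinned down by the forward asymptotics alone (and the unstable by the backward asymptotics).

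Next I would exploit the product structure of the cocycle. For $n\ge 0$,
$\cocycle[\xi][n] = f_{\xi_{n-1}}\circ\cdots\circ f_{\xi_0}$ depends only on $(\xi_0,\xi_1,\ldots)$, so if $\eta\in\Sigmalocs(\xi)$ then $\cocycle[\eta][n]=\cocycle[\xi][n]$ for every $n\ge 0$, and therefore $E^s_\eta(x)=E^s_\xi(x)$. Thus $\xi\mapsto E^s_\xi(x)$ is constant on the atoms of $\hat\Gol$, hence is $\hat\Gol$-measurable. For each fixed $\xi$, the map $x\mapsto E^s_\xi(x)\in \mathbb P(TM)$ is Borel (Oseledec subspaces are Borel in the base point on a full-measure Borel set, which we extend arbitrarily on the null complement). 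Joint measurability in $\hat\Gol\otimes\B_M$, and therefore in the completion $\Gol$, follows from a standard approximation argument (one can either invoke Fubini-type measurability for subbundles, or approximate $E^s_\xi(x)$ by the Borel maps $\xi\mapsto E^s_{\xi^{(N)}}(x)$ obtained by truncating $\xi$ to its forward tail and observing that these are $\hat\Gol$-measurable). The analogous argument applies verbatim to $E^u$: for $n\le 0$, $\cocycle[\xi][n]$ depends only on $(\ldots,\xi_{-2},\xi_{-1})$, so $E^u_\xi(x)$ is constant on atoms of $\hat\Fol$, giving $\Fol$-measurability.

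The only point that takes a moment to verify is the one-sided characterization of the Oseledec subspaces, since in an invertible system one customarily defines them via two-sided limits. I expect this to be the main (and essentially only) obstacle, but it is entirely standard: the subspace of vectors with negative forward Lyapunov exponent coincides with the sum of Oseledec subspaces carrying negative exponents (this is precisely the content of Proposition \ref{prop:OMT} applied to the forward cocycle, together with the observation that the invertible Oseledec splitting refines any one-sided one). With that identification in hand, the proof reduces to the tautology that the forward cocycle depends only on the forward tail of $\xi$ and the backward cocycle only on the backward tail.
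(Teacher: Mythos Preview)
Your proposal is correct and follows exactly the paper's reasoning: the paper does not give a formal proof but simply remarks that the stable line fields $E^s_\xi(x)$ depend only on $\cocycle[\xi][n]$ for $n\ge 0$ (and dually for $E^u$), which is precisely the one-sided characterization you spell out. Your write-up is considerably more detailed than the paper's one-line justification, but the underlying argument is the same.
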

We have the following claim, which follows from the explicit construction of $\muskew$ in \eqref{eq:mulim}.  
\begin{proposition} \label{prop:hopf}
The intersection $\Fol\cap \Gol$ is equivalent modulo $  \muskew$ to the $\sigma$-algebra $\{\emptyset, \Sigma\}\otimes \B_M.$
\end{proposition}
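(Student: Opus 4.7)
The plan is to use two features of $\muskew$ recorded in and just after Proposition \ref{prop:mudef}: its $\Sigma$-marginal equals the product $\nunaught^\Z$, and its fiber-wise conditionals $\muskew_\xi$ depend only on the past coordinates $\xi^-:=(\xi_i)_{i<0}$. Writing $\xi^+:=(\xi_i)_{i\geq 0}$ and setting $\nunaught^- := \nunaught^{\Z_{<0}}$, $\nunaught^+:=\nunaught^{\Z_{\geq 0}}$, these two facts together yield the disintegration
\begin{equation*}
d\muskew(\xi,x) \;=\; d\nunaught^-(\xi^-)\, d\nunaught^+(\xi^+)\, d\muskew_{\xi^-}(x),
\end{equation*}
so that under $\muskew$ the string $\xi^+$ is \emph{independent} of the pair $(\xi^-,x)$. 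The proof is then a short Hopf-type argument exploiting this independence.

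One inclusion is immediate: any set $\Sigma\times B$ with $B\in\B_M$ already lies in both $\hat\Fol\otimes \B_M$ and $\hat\Gol\otimes\B_M$, so $\{\emptyset,\Sigma\}\otimes \B_M \subseteq \Fol \cap \Gol$. For the reverse inclusion, fix $A\in \Fol\cap\Gol$ and unwind the $\muskew$-completions to produce $\{0,1\}$-valued Borel representatives
\begin{equation*}
\chi_A(\xi,x) \;=\; \phi^-(\xi^-,x) \;=\; \phi^+(\xi^+,x) \qquad \text{$\muskew$-a.e.}
\end{equation*}
Applying Fubini to the disintegration, for $\nunaught^-$-a.e.\ $\xi^-$ the identity $\phi^-(\xi^-,x) = \phi^+(\xi^+,x)$ holds on a set of full $(\nunaught^+ \otimes \muskew_{\xi^-})$-measure. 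Fix such a $\xi^-$. Since the left-hand side is a function of $x$ alone, for $\muskew_{\xi^-}$-a.e.\ $x$ the map $\xi^+\mapsto \phi^+(\xi^+,x)$ is $\nunaught^+$-a.s.\ constant, with constant value $\phi^-(\xi^-,x)\in\{0,1\}$.

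Defining $c(x) := \int \phi^+(\xi^+,x)\, d\nunaught^+(\xi^+)$, the previous step identifies this integral with $\phi^-(\xi^-,x)\in\{0,1\}$ for $\muskew_{\xi^-}$-a.e.\ $x$; averaging over $\xi^-$ against $\nunaught^-$ (and using that the $M$-marginal of $\muskew$ is $\munaught$) shows that $c$ is $\{0,1\}$-valued $\munaught$-a.e.\ and $\chi_A(\xi,x) = c(x)$ for $\muskew$-a.e.\ $(\xi,x)$. Hence $A = \Sigma \times \{c=1\}$ modulo $\muskew$, placing $A$ in $\{\emptyset,\Sigma\}\otimes\B_M$ and completing the proof. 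The only step needing real care is securing the Borel representatives $\phi^\pm$ within the $\muskew$-completed $\sigma$-algebras $\Fol,\Gol$; once these are in hand, the independence of $\xi^+$ from $(\xi^-,x)$ under $\muskew$ reduces everything to a single application of Fubini.
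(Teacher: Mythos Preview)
Your proof is correct and follows essentially the same approach as the paper's. Both arguments exploit that $\muskew$ has $\Sigma$-marginal $\nunaught^\Z$ with fiber conditionals depending only on $\xi^-$, so that $\xi^+$ is independent of $(\xi^-,x)$; the paper phrases this via the explicit form \eqref{eq:thisone} of the conditional measures $\muskew^\Fol_{(\xi,x)}$ and observes that $\muskew^\Fol_{(\xi,x)}(\hat A)$ is actually a function of $x$ alone (your $c(x)$), while you package the same computation as a direct application of Fubini to the product disintegration.
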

\begin{proof}
Let $A\in \Fol \cap  \Gol$.  Since $A\in \Gol$, we have that $A \circeq \hat A$ where $\hat A$ is a Borel subset of $\Sigma\times M$ such that for any $(\xi,y)\in \hat A $ and $\eta \in \Sigmalocs(\xi)$, $$(\eta,y) \in \hat A.$$   

We write $\{\muskew^\Fol_{(\xi,x)}\}$ and  $\{\muskew^\Sigma_{(\xi,x)}\}$, respectively, for  families of conditional probabilities given by  the partition of $\Sigma\times M$ into atoms of $\Fol$  and the partition $\{ \Sigma \times \{x\} \mid x\in M\}$ of $\Sigma\times M$. 
It follows from the construction of $\muskew$ given by  \eqref{eq:mulim} that $\muskew^\Fol_{(\xi,x)}$ may be taken to be the form 
\begin{align}\label{eq:thisone} d\muskew^\Fol_{(\xi,x)}(\eta,y) = d\nunaught ^\N(\eta_0, \eta_1,  \dots) \delta_x(y) \delta _{(\xi_{-1})} (\eta_{-1})
 \delta _{(\xi_{-2})} (\eta_{-2})\dots \end{align}
for every $(\xi,x) \in X$.   

Since $A\in \Fol$ we have $\hat A \in \Fol$.  Thus, for $\muskew$-\ae $(\xi,x)\in \hat A$, $$\muskew^\Fol_{(\xi,x)}(\hat A)= 1.$$
Furthermore, it follows from \eqref{eq:thisone} and the form of $\hat A$ that if $$\muskew^\Fol_{(\xi,x)}(\hat A)= 1$$ then $$\muskew^\Fol_{(\xi',x)}(\hat A)= 1$$ for any $\xi' \in \Sigma$.  It follows that $$\muskew^\Sigma_{(\xi,x)}\hat A = 1$$ for \ae $(\xi,x) \in \hat A$. 
In particular, $\hat A\circeq   \Sigma \times \td A$ for some set $\td A\in \B_M$. 
\end{proof}

We remark that if $\xi$ projects to $\omega$ under the natural projection $\Sigma\to \Sigma_+$, then the subspace $E^s_\xi(x)$ and the subspace  $E^s_\omega(x)$ given by Proposition \ref{prop:OMT} coincide almost surely.  
It then follows from Proposition \ref{prop:hopf} that  the bundle $E_\omega^s(x)$ in  Theorem \ref{thm:main} is non-random if and only if the bundle $E^s_\xi(x)$ is $\Fol$-measurable.  
Thus, Theorem \ref{thm:main} follows from the following 2 results.

\begin{theorem} \label{thm:main2}\label{thm:skewproduct}
Let $ \nunaught$ and $ \munaught$ be as in Theorem \ref{thm:main}.  
Let $F\colon \Sigma\times M \to \Sigma \times M$ be the canonical skew product and let $\muskew$ be as in Proposition \ref{prop:mudef}.  Assume  the fiber-wise conditional measures $\muskew_\xi$ are non-atomic.  Then  either $(\xi, x) \mapsto E^s_\xi(x)$ is $\F$-measurable or $\muskew$ is fiber-wise SRB.
\end{theorem}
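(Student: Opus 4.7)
I would prove the contrapositive: assuming the stable distribution $(\xi,x)\mapsto E^s_\xi(x)$ is not $\F$-measurable, show that $\muskew$ is fiber-wise SRB. The strategy is to adapt the exponential drift method of Benoist--Quint, as refined by Eskin--Mirzakhani for non-homogeneous settings, to the fiber-wise random framework developed in Section 4.

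First I would invoke Pesin theory along the cocycle. Under the integrability assumption \eqref{eq:IC2}, and since $\muskew$ is fiber-wise hyperbolic with exponents $\lambda_1<0<\lambda_2$, one obtains for $\muskew$-a.e.\ $(\xi,x)$ local stable and unstable manifolds $W^s_\xi(x), W^u_\xi(x)$ in the fiber $M_\xi$ and an exhaustion by Pesin blocks on which the sizes of local invariant manifolds, the angles between $E^s$ and $E^u$, the Lyapunov charts, and all Hölder/tempered constants are uniform. Non-atomicity of $\muskew_\xi$ together with the splitting into a contracting and expanding direction allows disintegration of $\muskew_\xi$ along the (one-dimensional) unstable foliation to produce conditional measures $\muskew^u_\xi$; the fiber-wise SRB conclusion is then equivalent to $\muskew^u_\xi \ll \Leb$ on \ae\ leaf. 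Next I would quantify failure of $\F$-measurability: since by \eqref{eq:thisone} the $\F$-conditional measure at $(\xi,x)$ is $\nunaught^{\N}\otimes\delta_x$ on $\Sigmalocu(\xi)\times\{x\}$, define $Y(\xi,x)$ to be the $\F$-conditional variance of $E^s$ as a $\mathbb{P}T_xM$-valued random variable; by hypothesis $Y>0$ on a set of positive $\muskew$-measure. A Luzin-type argument then produces a compact ``good'' set $\good$, inside a Pesin block, of large $\muskew$-measure on which $E^s$, $E^u$, and $Y$ are continuous, and to which both forward and backward orbits return with positive density.

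The core of the proof is a coupling / drift argument on $\good$. For typical $(\xi,x)\in\good$, draw a second sample $\xi'$ from the $\F$-conditional measure so that $\xi'\in\Sigmalocu(\xi)$ and the lines $E^s_\xi(x), E^s_{\xi'}(x)$ differ by at least a definite angle $\alpha_0$. Iterating $F^n$, the two orbits $(\sigma^n\xi,f^n_\xi(x))$ and $(\sigma^n\xi',f^n_{\xi'}(x))$ share all past coordinates up to index $-n$ and their fiber components separate; in a Lyapunov chart centered at $f^n_\xi(x)$ the separation decomposes along $E^s$ and $E^u$, and the initial stable mismatch drives a genuinely nonzero component in the unstable direction that grows at rate $\lambda_2-\lambda_1$ relative to the infinitesimal defect. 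Introducing a random stopping time $\tau$ at which the unstable component reaches a prescribed scale $\epsilon_0$ and both orbits simultaneously lie in $\good$, weak-$*$ compactness in the good set produces subsequential limits that identify, up to a translation of magnitude $\epsilon_0$ along the unstable leaf, two conditional measures $\muskew^u$ over a positive-measure set of base points. Varying $\alpha_0$ and $\epsilon_0$ over a dense range of scales then forces $\muskew^u_\xi$ to be invariant under translations by arbitrarily small amounts, hence absolutely continuous, which is the fiber-wise SRB property as in Proposition \ref{prop:SRBrandom}.

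The main obstacle is the rigorous execution of the drift: controlling nonlinear distortion when transporting information between two coupled orbits via Pesin charts anchored at distinct (but close) basepoints; justifying extraction of the limit when the base $\Sigma$ is non-compact; and ensuring the resulting translation is genuinely nontrivial rather than a degenerate coincidence forced by the coupling itself. These are handled by working entirely inside $\good$ so that all nonlinear terms are controlled by the uniform Pesin estimates, by exploiting temperedness of the relevant constants along the cocycle, and by using the product structure of $\nunaught^{\Z}$ on the base, which provides the independence needed to run the stopping time argument and take convergent subsequences of pairs in a measurable way.
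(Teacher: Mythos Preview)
Your overall strategy---assume $(\xi,x)\mapsto E^s_\xi(x)$ is not $\F$-measurable and run an exponential-drift argument in the style of Benoist--Quint and Eskin--Mirzakhani to force the unstable conditionals to be Lebesgue---matches the paper's (which formally deduces Theorem~\ref{thm:skewproduct} from the more general Theorem~\ref{thm:skewproductABS} and then proves the latter via Proposition~\ref{prop:main} and Lemma~\ref{lem:translation1}). But the drift mechanism you describe has a genuine gap. You propose to take a \emph{single} fiber point $x$, sample two futures $\xi,\xi'$ in the same $\hat\F$-atom with $E^s_\xi(x)\neq E^s_{\xi'}(x)$, and iterate both forward. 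The problem is twofold. First, $f^n_\xi(x)$ and $f^n_{\xi'}(x)$ are computed with \emph{different} cocycles, so their separation is not governed by the Oseledec splitting of either word; there is no reason the displacement between them lies on an unstable leaf or grows at rate $\lambda_2-\lambda_1$. Second, after $n$ forward steps the two points lie in \emph{different} $\F$-atoms (they agree only on coordinates below index $-n$), so you have lost the identification of their unstable conditional measures $\bar\muskew$---which is precisely what the final translation-invariance step requires.

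The paper's mechanism (Sections~\ref{sec:lemmas}--\ref{sec:lemmaproof}) is geometrically different. One fixes $p=(\xi,x)$ and a \emph{second} nearby fiber point $q_j=(\xi,y_j)$ with $y_j\notin W^u_\xi(x)$, so the displacement has a nonzero \emph{stable} component $\delta_j\to 0$. One first flows \emph{backward} to a carefully chosen time $m_j\in[M_{\delta_j},m_{\delta_j}]$ at which the stable component dominates (Claim~\ref{claim:GP}); only then does one replace the future $\hat\xi_j$ by some $\hat\eta_j$ in the same $\hat\F$-atom with a different stable line (Claim~\ref{claim:eta}). Because only the future changes, the unstable manifolds, affine parameters, and the measures $\bar\muskew$ are preserved; but the backward image $\hat y_j$, which was essentially on the $\hat\xi_j$-stable manifold of $\hat x_j$, now acquires a definite component along the $\hat\eta_j$-unstable direction (Lemma~\ref{prop:controlledintersections}\ref{itme3}). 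Flowing both points forward with the \emph{same} new word to a stopping time calibrated so this unstable component reaches scale $\epsilon$ produces the comparison in Lemma~\ref{lem:key}: two points whose $\bar\muskew$-measures coincide up to bounded dilation yet are displaced by order $\epsilon$ along $W^u$. The paper also routes everything through a suspension flow with a Lyapunov-norm time change (Section~\ref{sec:timechange}) so that a reverse-martingale convergence argument (Section~\ref{sec:MCT}) can be used to select $\hat\eta_j$ with the required properties. The essential missing idea in your sketch is this ``second fiber point plus backward--switch--forward'' scheme: the change of future converts \emph{stable} displacement into \emph{unstable} displacement \emph{while keeping the $\F$-atom fixed}, which is what makes the comparison of unstable conditionals legitimate.
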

Recall that a measure is \emph{non-atomic} if there is no point with positive mass.  By the ergodicity of $\mu$ under the dynamics of $F$ it follows that either $\muskew_\xi$ is non-atomic \as or there is a $N\in \N$ such that  $\muskew_\xi$ is supported on exactly $N$ points a.s.  We consider  the case that $\muskew_\xi$ is finitely supported separately.  
\begin{theorem} \label{thm:main2'}\label{thm:atomicfibertoinvariant}\label{thm:mainAtominv}
Let $ \nunaught$ and $ \munaught$ be as in Theorem \ref{thm:main}.  Assume the fiber-wise conditional measures $\muskew_\xi$ are  finitely supported $ \nunaught^\Z$-a.s.  Then either $(\xi, x) \mapsto E^s_\xi(x)$ is $\F$-measurable or the measure $\munaught$ is finitely supported and $\nunaught$-\as invariant.  
\end{theorem}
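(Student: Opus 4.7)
The plan is to reduce to the case of labeled invariant ``random sections'' of the skew product and then split into a constant/non-constant dichotomy. By $F$-ergodicity of $\muskew$ and the a.s.\ finiteness of $\supp(\muskew_\xi)$, the cardinality $N:=|\supp(\muskew_\xi)|$ is $\nunaught^\Z$-a.s.\ constant, and $F$-ergodicity together with $(f_\xi)_*\muskew_\xi=\muskew_{\theta\xi}$ forces all atoms to have equal mass $1/N$. Using that $\xi\mapsto\muskew_\xi$ is $\hat\Fol$-measurable, apply a measurable selection to obtain $\hat\Fol$-measurable maps $x_1,\dots,x_N\colon\Sigma\to M$ with $\muskew_\xi=\tfrac{1}{N}\sum_i\delta_{x_i(\xi)}$. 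The invariance produces a measurable permutation cocycle $\tau\colon\Sigma\to S_N$ with $f_\xi(x_i(\xi))=x_{\tau(\xi)(i)}(\theta\xi)$. Replace $(\muskew,F)$ by an ergodic component of its finite extension on $\Sigma\times\{1,\dots,N\}$ that trivializes $\tau$; this is harmless for the conclusions since finite extensions preserve both $\F$-measurability of $E^s$ and the finite-support/a.s.-invariance of $\munaught$. After relabeling we may therefore assume $f_\xi(x_i(\xi))=x_i(\theta\xi)$ for each $i$, $\nunaught^\Z$-a.s.

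Next I would observe that each pushforward $\munaught_i:=(x_i)_*\nunaught^\Z$ is a $\nunaught$-stationary probability on $M$ (the equivariance combined with the independence of $\hat\Fol$ from $\xi_0$ gives exactly the stationarity identity). Since $\munaught=\tfrac{1}{N}\sum_i\munaught_i$ and $\munaught$ is ergodic, uniqueness of the ergodic decomposition forces $\munaught_i=\munaught$ for every $i$. This sets up a clean dichotomy: \emph{either} every $x_i$ is essentially constant, $x_i(\xi)\equiv c_i$, \emph{or} each $x_i$ (equivalently, any one of them) is genuinely non-constant as a function of $\xi$.

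In the constant case the proof closes easily: $\munaught=\tfrac{1}{N}\sum_i\delta_{c_i}$ is finitely supported, and $f_\xi(c_i)=c_{\tau(\xi)(i)}$ (after reinstating the cocycle $\tau$ from Step~1) shows that $\nunaught$-a.e.\ $f$ permutes the finite set $\{c_1,\dots,c_N\}$, so $\munaught$ is $\nunaught$-a.s.\ invariant. This is conclusion~(B) of the theorem.

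The non-constant case is the main obstacle and is where $\F$-measurability of $E^s$ must be extracted. By Proposition~\ref{prop:hopf}, $E^s$ being $\F$-measurable is equivalent to $E^s_\xi(x)$ being, modulo $\muskew$, a deterministic line field depending only on $x$. I would argue this by combining three ingredients: (a) since $x_i$ is $\hat\Fol$-measurable while $E^s_\xi(\cdot)$ is $\hat\Gol$-measurable (Proposition~\ref{prop:measurability}), two sequences $\xi,\xi'$ with identical past produce the same atom $x_i(\xi)=x_i(\xi')$ but, a priori, possibly different stable directions; (b) backward-recurrence of the orbit of $(\xi,x_i(\xi))$ under $F^{-1}$ through Pesin regular sets, together with the uniform-size Pesin local stable manifolds at the recurrence times, propagates any discrepancy in stable directions into macroscopically distinct stable curves through the \emph{same} finite set of atoms $\supp(\muskew_{\theta^{-n}\xi})$; (c) because $\muskew_\eta$ has only $N$ atoms and moves measurably with the past, this multiplicity of stable curves through a single atom is incompatible with the finite-support structure unless the stable directions coincide, i.e.\ depend only on $x$. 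I expect the rigorous execution of (b)--(c) to mirror the ``two-point/exponential drift'' argument of \cite{1302.3320} and of Theorem~\ref{thm:main2}, but simplified by the fact that in the atomic setting one compares stable directions at the \emph{same} atom rather than at nearby points on a stable leaf; the principal technical difficulty is ensuring that the measurable selection of atoms along backward orbits is compatible with the Pesin charts used to compare directions.
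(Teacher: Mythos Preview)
Your reduction in Step~2 does not work: passing to an ergodic component of the finite extension $\Sigma\times\{1,\dots,N\}$ does \emph{not} trivialize the permutation cocycle $\tau$ (a non-coboundary $\tau$ yields an ergodic extension on which $\tau$ is still nontrivial). In fact, if one could produce $N$ individually $F$-invariant measurable sections $\xi\mapsto x_i(\xi)$ with pairwise disjoint graphs, each graph would be an $F$-invariant set of $\muskew$-measure $1/N$, contradicting the ergodicity of $\muskew$ whenever $N>1$. So the labeling cannot be made equivariant in the way you need, and your subsequent dichotomy (constant versus non-constant sections) is not well-posed.

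The deeper gap is in the non-constant case. Your sketch (a)--(c) does not produce a contradiction: two distinct stable lines $E^s_\xi(x),E^s_{\xi'}(x)$ at the same atom give two stable curves through $x$, but these curves lie in $M$, not in the (finite) support of $\muskew_\eta$, so there is nothing ``incompatible with the finite-support structure''; backward iteration only makes stable curves longer. What is missing is the mechanism the paper actually uses. The paper argues by contradiction (assume $E^s$ not $\F$-measurable and $\munaught$ not $\nunaught$-a.s.\ invariant) and invokes the entropy-rigidity Proposition~\ref{prop:entropyrigidity}: if $\munaught$ is not a.s.\ invariant then the conditional measures of $\muskew$ along the partition $\{\Sigmalocs(\xi)\times W^s_\xi(x)\}$ are not supported on $\Sigmalocs(\xi)\times\{x\}$, which furnishes points $p=(\varsigma,\xi,x)$ and $q=(\varsigma,\zeta,y)$ with $\zeta\in\Sigmalocs(\xi)$ and $y\in\locstabM[r_1]{x}{\xi}\smallsetminus\{x\}$, both in a good recurrence set. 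Only then does a (forward-time, simplified) version of the drift argument of Section~\ref{sec:lemmaproof} apply: using Lemma~\ref{lem:goodangle} to select $\eta_j\in A_{\gamma_2}(\hat p_j)$ at well-chosen times, one manufactures two \emph{distinct} atoms of a single fiber measure $\muskew_{\xi_0}$ within distance $C_2C_3M_0^6\delta$ of each other, contradicting the uniform separation of atoms on the set $\Lambda'''$. Your outline neither locates the pair $(x,y)$ on a common fiberwise stable leaf (this is exactly what Proposition~\ref{prop:entropyrigidity} supplies) nor explains how the discrepancy in stable directions would ever be converted into two nearby atoms of one $\muskew_\xi$.
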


\begin{remark}\label{rem:samesign}
In the proof of Theorem \ref{thm:main} below, we may assume that $\munaught$ has one exponent of each sign.  Indeed if $\munaught$ has only negative exponents then the measurability of $(\xi, x) \mapsto E^s_\xi(x)$ is trivial.  Furthermore, if $\munaught$ has only positive exponents then a standard argument  shows that $\munaught$ is finitely supported and $\nunaught$-\as invariant.  Indeed, if all exponents are positive, then the measures $\muskew_\xi$ are finitely supported for \ae $\xi$.  That $\munaught$ is $\nunaught$-\as invariant follows, for instance, from the invariance principle in \cite{MR2651382}, the $\hat \Fol$-measurability of the measure $\muskew_\xi$, and an argument similar to Proposition \ref{prop:hopf} above.  
\end{remark}



\subsection{Statement of results: general skew products}
We introduce a generalization of Theorem \ref{thm:main2}, the proof of which consumes Sections \ref{sec:mainprop}--\ref{sec:lemmaproof}. 
Let $\theta\colon (\Omega, \B_\Omega, \nuskew) \to (\Omega, \B_\Omega, \nuskew) $ be as in Section \ref{sec:absSkew}.
Let $M$ be a closed $C^\infty$ surface and let $\scrF$ be a cocycle generated by a $\nuskew$-measurable map $\xi\mapsto f_\xi$ satisfying the integrability  hypothesis 
\eqref{eq:IC2}.  
Fix $\muskew$ an ergodic, $\scrF$-invariant, hyperbolic, Borel probability measure on $X= \Omega\times M$.  
For the general setting we will further assume the measures $\mu_\xi$ are non-atomic $\nu$-a.s.  It follows that 
from the hyperbolicity and non-atomicity of the fiber-wise  measures $\mu_\xi$ that the fiber-wise derivative $DF$ has two exponents $\lambda^s$ and $\lambda^u$, one of each sign.  

We say a sub-$\sigma$-algebra $\hat \F \subset \B_\Omega$ is \emph{decreasing} (for $\theta$) if $$\theta(\hat\Fol )= \{ \theta(A)\mid A\in \hat \Fol\}\subset \hat \Fol.$$ (Note that  $\hat \Fol$ is decreasing under the forwards dynamics if the partition into atoms is an increasing partition in the sense of \cite{MR819556}.  Alternatively,  $\hat \Fol$ is  decreasing if  the map $\theta\inv\colon \Omega\to \Omega$ is $\hat\Fol$-measurable.)
As a primary example, the sub-$\sigma$-algebra of $\Sigma$ generated by local unstable sets is decreasing (for $\sigma\colon \Sigma \to \Sigma$).  

Let $\hat \F$ be an decreasing sub-$\sigma$-algebra and write $\F$ for the $\muskew$-completion of $\hat \F \otimes \B_M$ where $\B_M$ is the Borel algebra on $M$.  
As in the previous section, to compare stable distributions in different fibers over $\Omega$ write $E_\xi^s(x)\subset T_xM$ for the subspace with $E^s(\xi,x) = \{\xi\}\times E_\xi^s(x)$.  We then consider $(\xi,x) \mapsto 
E_\xi^s(x)$ as a measurable map from $X$ to the projectivization of $TM$.

With the above setup, we now state the following result which generalizes Theorem \ref{thm:main2}.

\label{sec:MainTheoremABSskew}
\begin{theorem} \label{thm:main3}\label{thm:skewproductABS}
Assume  $\muskew$ is hyperbolic and the conditional measure $\{\muskew_\xi\}$ are non-atomic a.s.  
Further assume
\begin{enumerate}
\item $\xi\mapsto \cocycle    [\xi] [-1]$ is $\hat \F$-measurable, and
\item $\xi\mapsto \muskew_\xi$ is $\hat \F$-measurable.
\end{enumerate}
Then either $(\xi,x)\mapsto E^s_{\xi}(x)$ is $\F$-measurable or $\muskew$ is fiber-wise SRB.  (See Definition \ref{def:fiberwiseSRB}.) 
\end{theorem}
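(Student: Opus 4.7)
The plan is to prove the contrapositive: assuming $(\xi,x)\mapsto E^s_\xi(x)$ is not $\F$-measurable, I will show that the fiber-wise conditional measures of $\muskew$ along the Pesin unstable manifolds $W^u_\xi(x)$ are absolutely continuous with respect to arclength, which is the defining property of fiber-wise SRB. Two consequences of the hypotheses orient the whole argument. Since $\hat\F$ is decreasing, $\theta^{-n}$ is $\hat\F$-measurable for every $n\ge 0$, so the $\hat\F$-measurability of $\xi\mapsto\cocycle[\xi][-1]$ propagates to that of $\xi\mapsto\cocycle[\xi][-n]$ for all $n\ge 0$. Hence the unstable subspace $E^u_\xi(x)$ and the Pesin unstable manifold $W^u_\xi(x)$, both defined by backward iteration, are automatically $\F$-measurable. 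The $\hat\F$-measurability of $\xi\mapsto\muskew_\xi$ will be crucial when coupling orbits over different $\xi$ in the same $\hat\F$-atom.

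First I set up the Pesin architecture of the fiber-wise dynamics. The integrability assumption \eqref{eq:IC2} together with fiber-wise hyperbolicity gives, via standard Pesin theory, $\muskew$-a.e.\ one-dimensional $C^{1,\alpha}$ local stable and unstable manifolds $W^s_\xi(x)$ and $W^u_\xi(x)$ on the surface fibers. I fix Pesin blocks $\Lambda$ of positive $\muskew$-measure on which the sizes of these manifolds, the graph-transform estimates, and the H\"older regularity and absolute continuity of both stable and unstable holonomies are uniform. Disintegrating $\muskew_\xi$ along the measurable hull of the partition into Pesin unstable manifolds produces conditionals $\muskew^u_{\xi,x}$ on $W^u_\xi(x)$, and the target reduces to $\muskew^u_{\xi,x}\ll\Leb_{W^u_\xi(x)}$ on a set of positive $\muskew$-measure.

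The core of the proof is an exponential drift argument modelled on \cite[Section 16]{1302.3320}, itself an adaptation of the Benoist--Quint technique. The failure of $\F$-measurability of $E^s$ lets one find, on a Luzin-regular sub-block of $\Lambda$, companion pairs $\xi,\xi'$ lying in a common $\hat\F$-atom---so that $\cocycle[\xi][-n]=\cocycle[\xi'][-n]$ for every $n\ge 0$ and $\muskew_\xi=\muskew_{\xi'}$---with $E^s_\xi(x)$ and $E^s_{\xi'}(x)$ separated by a definite angle at a common base point $x$. Iterating forward and coupling through the shared past, the mismatch in stable directions translates, via Pesin charts, into a relative transverse displacement which, after time rescaling and passage to a recurrence subsequence returning to $\Lambda$, converges to a nontrivial measure-preserving affine map of unstable leaves. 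The standing non-$\F$-measurability hypothesis is precisely what guarantees the limiting displacement is nonzero and transverse to $E^s$.

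The final step converts this drift into absolute continuity: a finite Borel measure on a $C^{1,\alpha}$ arc quasi-invariant under a nontrivial one-parameter family of affine motions in a Lyapunov chart must be absolutely continuous with respect to arclength, yielding the SRB conclusion. The hard part is the drift step. The delicate points are (i)~coupling the two orbits when only the past, not the future, is $\hat\F$-shared, which requires both $\hat\F$-measurability hypotheses in tandem; (ii)~ensuring the limiting displacement is transverse to $E^s$, which is exactly where the non-$\F$-measurability hypothesis on $E^s$ enters in an essential way; and (iii)~controlling error terms along recurrence times using H\"older regularity of stable holonomy on Pesin blocks. All remaining ingredients---Pesin theory under \eqref{eq:IC2}, absolute continuity of unstable holonomies, and the characterization of SRB via conditional absolute continuity---are classical and used as black boxes.
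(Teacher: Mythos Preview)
Your overall strategy---exponential drift in the style of \cite{1302.3320} to produce affine self-maps of the unstable conditionals, then conclude Lebesgue---is exactly what the paper does. The ingredients you list (the $\hat\F$-measurability of the past forcing $\F$-measurability of $E^u$, $W^u$, and the unstable conditionals; the dichotomy of Lemma~\ref{lem:VFdichot} turning non-$\F$-measurability of $E^s$ into a definite angle separation on $\hat\F$-atoms) are the right ones.

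However, the drift mechanism as you describe it has a genuine gap. You propose to take a \emph{single} base point $x$ with two futures $\xi,\xi'$ in the same $\hat\F$-atom and ``iterate forward.'' But since $W^u_\xi(x)=W^u_{\xi'}(x)$ and $\bar\mu_{(\xi,x)}=\bar\mu_{(\xi',x)}$, forward iteration under either future merely rescales the same measure by the unstable derivative; there is nothing producing a \emph{translation} component, which is the entire content of Proposition~\ref{prop:main}. The actual mechanism in the paper needs a second ingredient you do not mention: one begins with two nearby points $p=(\xi,x)$ and $q_j=(\xi,y_j)$ in the \emph{same} fiber with $y_j\notin W^u_\xi(x)$, so their separation has a nonzero stable component $\delta_j$. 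One then iterates \emph{backward} into a carefully chosen time window $[M_{\delta_j},m_{\delta_j}]$ (see Section~\ref{sec:times} and Claim~\ref{claim:GP}) where the stable separation dominates. Only at that backward time does one switch to $\eta$ in the $\hat\F$-atom with $E^s_\eta\neq E^s_\xi$; the point $\hat v_j=W^u_{\hat\xi}(\hat x_j)\cap W^s_{\hat\eta}(\hat y_j)$ then sits at unstable distance from $\hat x_j$ comparable to the stable separation (Lemma~\ref{prop:controlledintersections}\ref{itme3}). A forward stopping time $\tau_{p,\delta,\epsilon}$ normalizes this unstable distance to $\epsilon$, while $\hat y_j$ and $\hat v_j$, lying on the same $W^s_{\hat\eta}$-leaf, collapse together. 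This is what produces, in the limit, a point $q_0\in W^u(p_0)$ at distance $\asymp\epsilon$ from $p_0$ with $\bar\mu_{q_0}\simeq\bar\mu_{p_0}$---i.e.\ the translation part of $\psi$.

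Two further technical points are also glossed over. First, the paper runs the argument on a suspension flow with a time change by the one-sided Lyapunov norm (Sections~\ref{sec:lyapnorm}--\ref{sec:timechange}); this makes the forward stopping time well-defined and, crucially, $\Sal$-measurable, which is what allows the martingale convergence argument of Section~\ref{subsec:MCT} to guarantee that the switch $\xi\to\eta$ can be made at \emph{good} backward times for a set of positive measure. Second, the output of the drift is not a one-parameter family of affine symmetries but a sequence of affine maps at scales $\epsilon\to 0$ with bounded dilation and translation $\asymp\epsilon$; Lemma~\ref{lem:translation1} is a separate argument (using Poincar\'e recurrence to rule out the scaling-type identity component) showing this forces the full translation group.
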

Note that the hypothesis that  $\xi\mapsto \cocycle    [\xi] [-1]$ is $\hat \F$-measurable combined with the fact that $\hat \Fol$ is decreasing  implies $\xi\mapsto \cocycle    [\theta^{-j}(\xi)] [-1]$ is $\hat \Fol$-measurable for all $j\ge 0$.  It follows that $\xi\mapsto \cocycle    [\xi] [n]$ is $\hat \Fol$-measurable for all $n\le 0$.  It then   follows that  $\F$ is an decreasing sub-$\sigma$-algebra of $\B_X$.

We recall that in the case that $F$ is the canonical skew product for a random dynamical system and $\hat \Fol$ is the sub-$\sigma$-algebra generated by local unstable sets, writing $\xi = (\dots, f_{-1}, f_0, f_1, \dots)$ the $\hat\F$-measurability of $\xi\mapsto \cocycle    [\xi] [-1] = (f_{-1})^{-1}$ follows from construction.  The $\hat\F$-measurability of $\xi\mapsto \muskew_\xi$ follows from the construction of the measure $\muskew$ given by \eqref{eq:mulim} in Proposition \ref{prop:mudef}.  
Theorem \ref{thm:skewproduct} 
then follows immediately from Theorem \ref{thm:skewproductABS}.    


\section{Some Applications}
We present a number of applications of our main theorems.  
\subsection{Groups of measure-preserving diffeomorphisms}
Fix $M$ a closed surface.  Let $\mu$ be a Borel probability measure on $M$. 
Let $\diff^2_\mu (M)$ denote the group of $C^2$, $\mu$-preserving diffeomorphisms of $M$. 
Given $f\in \diff^2_\mu(M)$ write   $\lambda^i(f,\mu,x)$ for the $i$th Lyapunov exponents of $f$ with respect to the measure $\mu$ at the point $x$.  If $f$ is ergodic (for $\mu$) we write  $\lambda^i(f,\mu)$ for the $\mu$-almost surely constant value of $\lambda^i(f,\mu,x)$.  
A diffeomorphism $f\in \diff^2_\mu(M)$ is 
\emph{hyperbolic (relative to  $\mu$)} if $\lambda^i(f,\mu,x)\neq 0$ for almost every $x$ and every $i$.  

Note that if $f\in \diff^2_\mu(M)$ is  hyperbolic and $\mu$ contains no atoms, then $(f,\mu)$ has one exponent of each sign $\lambda^s(f,\mu,x)<0< \lambda^u(f,\mu,x).$ 
For such $f$, we write  $T_xM=E^s_f(x)\oplus E^u_f(x)$ for the $\mu $-measurable Oseledec's splitting induced by  $(f, \mu)$.

%

\begin{theorem}\label{thm:4}
Let $\mu$ be a  Borel probability measure on $M$ with no atoms.    Suppose $\diff_\mu^2(M)$ contains an ergodic, hyperbolic element $f$.  
Write $\Gamma= \diff^2_\mu(M)$.
\begin{enumerate}[label=({\alph*})]
\item \label{thm:4:1} If the union $E^u_f\cup E^s_f$ is not $\Gamma$-invariant and neither $E^s_f$ nor $E^u_f$ is $\Gamma$-invariant, then $\mu$ is absolutely continuous.  
\item  \label{thm:4:2} If the union $E^u_f\cup E^s_f$ is not $\Gamma$-invariant and  $E^u_f$ is $\Gamma$-invariant, then $\mu$ is an  SRB measure for $f$.
\item  \label{thm:4:3} If the union $E^u_f\cup E^s_f$ is not $\Gamma$-invariant and  $E^s_f$ is $\Gamma$-invariant, then $\mu$ is an  SRB measure for $f\inv$.
\end{enumerate}
\end{theorem}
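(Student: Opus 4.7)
The strategy is to invoke Theorem~\ref{thm:main} (and its time-reversed analogue) with $\mu$ viewed as a $\nu$-stationary measure for a carefully chosen finitely supported probability $\nu$ on $\Gamma=\diff_\mu^2(M)$. Since $\mu$ is $\Gamma$-invariant, it is $\nu$-a.s.\ invariant—hence in particular $\nu$-stationary—for every such $\nu$. In each case I take $\nu=(1-\epsilon)\delta_f+\epsilon\rho$ with $\epsilon>0$ small and $\rho$ a convex combination of $\delta_{g_s}$ and/or $\delta_{g_u}$, chosen so that $Dg_s$ fails to preserve $E^s_f$ whenever $E^s_f$ is not $\Gamma$-invariant, and similarly $g_u$ breaks $E^u_f$ whenever it is not $\Gamma$-invariant. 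For $\epsilon$ small the walk $\MP^+(M,\nu)$ is hyperbolic for $\mu$: the sum of walk-exponents, and (when applicable) the exponent along a $\nu$-a.s.\ invariant one-dimensional subbundle, are linear $\nu$-integrals computed from the $\Gamma$-invariance of $\mu$, and stay close to those of $\delta_f$.

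Theorem~\ref{thm:main} applied to this $\nu$ yields a trichotomy which I proceed to collapse. The finite-support case is immediate from $\mu$ being non-atomic. The ``non-random stable'' case is excluded as follows: if $E^s_\omega(x)$ equals an $\omega$-independent measurable line field $L(x)$, then $L$ is $\nu$-a.s.\ invariant, in particular $f$-invariant; by $f$-ergodicity combined with the classical Poincar\'e-recurrence / exponential-growth argument in Oseledec coordinates of $Df$, $L(x)$ must agree $\mu$-a.e.\ with either $E^s_f(x)$ or $E^u_f(x)$. In case (a) both options contradict the choice of $g_s$ or $g_u$ in the support of $\nu$; in case (b), $L=E^u_f$ is ruled out because $E^u_f$ carries the (positive) walk-exponent of the \emph{unstable} direction, and $L=E^s_f$ is ruled out by $g_s$; case (c) is symmetric. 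Hence Theorem~\ref{thm:main} forces $\mu$ to be SRB for $\MP^+(M,\nu)$, i.e.\ to have absolutely continuous conditionals along the walk-unstable manifolds in the associated skew product.

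In case (b), $E^u_f$ is $\Gamma$-invariant, hence $\nu$-a.s.\ invariant, and has positive walk-exponent; since the walk-unstable bundle is the unique one-dimensional bundle with positive walk-exponent, $E^u_\omega=E^u_f$ almost surely, so the walk-unstable foliation coincides with the $f$-Pesin unstable foliation and SRB for the walk is exactly $f$-SRB of $\mu$. For case (c), the same argument applied to the time-reversed walk $\check\nu$ (the pushforward of $\nu$ under $g\mapsto g^{-1}$) identifies the reverse-walk unstable direction with $E^s_f=E^u_{f^{-1}}$ and gives $f^{-1}$-SRB. For case (a), I run the argument both for $\nu$ and for $\check\nu$: since neither $E^s_f$ nor $E^u_f$ is $\Gamma$-invariant, both $g_s$ and $g_u$ are in play in each run and the ``non-random stable'' alternative is killed in both directions. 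Forward walk-SRB gives AC conditionals of $\mu$ on the forward walk-unstable foliation $E^u_\omega$, and reverse walk-SRB gives AC conditionals on the reverse walk-unstable foliation, which is naturally identified with the forward walk-stable foliation $E^s_\omega$.

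Since $E^s_\omega$ and $E^u_\omega$ are transverse one-dimensional Pesin foliations with absolutely continuous holonomies, AC conditionals on both imply, by a Fubini-type argument, that $\mu$ is absolutely continuous with respect to Riemannian volume on $M$, proving (a). The main obstacle is exactly this last Fubini step in case (a): the two foliations depend on $\omega$ and are only nonuniformly hyperbolic, so some care is needed to promote AC conditionals on two transverse, $\omega$-dependent Pesin foliations to global $2$-dimensional absolute continuity of $\mu$. A secondary technical point is verifying walk-hyperbolicity in case (a), where the absence of any $\Gamma$-invariant one-dimensional subbundle means the walk-exponents are no longer clean linear functions of $\nu$ and one must instead rely on semicontinuity arguments near $\delta_f$.
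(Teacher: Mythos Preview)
Your overall architecture matches the paper's: pick a finitely supported $\nu$ containing $f$ and symmetry-breaking elements, eliminate the finite-support and non-random-stable alternatives of Theorem~\ref{thm:main}, and in case~\ref{thm:4:1} run this for both the forward and reverse walk to upgrade two-sided SRB to absolute continuity via transverse absolute continuity of Pesin holonomies. The exclusion of the non-random alternative (any $Df$-invariant measurable line field equals $E^s_f$ or $E^u_f$ a.e., which the $g_i$ then break) is also the paper's.

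There is, however, a real gap where you flag only a ``secondary technical point'': hyperbolicity of the walk in case~\ref{thm:4:1}. Semicontinuity of Lyapunov exponents goes the wrong way---the top exponent is \emph{upper} semicontinuous in $\nu$, so proximity to $\delta_f$ gives no lower bound on $\lambda^2_\nu$---and the paper does something substantially different. It works on the symmetric simplex $\Delta$ spanned by $\delta_{f^{\pm1}},\delta_{g_1^{\pm1}},\delta_{g_2^{\pm1}}$ and shows that the case~\ref{thm:4:1} hypotheses forbid $\mathbb{P}DF$-invariant $su$-measures over any $\nu\in\inter{\Delta}$ (Lemma~\ref{lem:nosu}); by Proposition~\ref{prop:discont} this forces continuity of both exponents on $\inter{\Delta}$, and by Proposition~\ref{prop:ledrappier} it excludes $\lambda^1_\nu=\lambda^2_\nu=0$ there. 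Since $\lambda^1_\nu\le 0\le\lambda^2_\nu$ always (non-atomicity of $\mu$), the sets $\{\lambda^2_\nu>0\}$ and $\{\lambda^1_\nu<0\}$ are an open cover of $\inter{\Delta}$ interchanged by the involution $\nu\mapsto\theta_*\nu$, and connectedness produces a hyperbolic $\nu_0$---which need not lie anywhere near $\delta_f$.

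A second issue arises in case~\ref{thm:4:2}: from $E^u_\omega=E^u_f$ you infer that the walk-unstable \emph{lamination} coincides with the $f$-Pesin unstable lamination, but $W^u_\xi(x)$ is defined by backward contraction under a $\xi$-dependent cocycle, not as an integral curve of the merely measurable field $E^u_f$, so this identification needs justification you do not supply. The paper instead uses an entropy--dimension trick: walk-SRB gives $\lambda^2_{\nu_t}=(D-1)(-\lambda^1_{\nu_t})$ with $D=\dim\mu$; both exponents are affine in $t$ (one is computed along the $\nu_t$-a.s.\ invariant bundle $E^u_f$, the other via the Jacobian), and letting $t\to 1$ forces the $f$-unstable dimension to equal $1$, which is $f$-SRB.
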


%
%


In the case that $\lambda^s(f,\mu)\neq - \lambda^u(f, \mu)$ we can give more precise results using the following lemma.  
\begin{lemma}\label{prop:stiffdistributions}
Let $\mu$ be non-atomic and let  $f\in \diff^2_\mu(M)$ be ergodic and hyperbolic.  
Suppose $\lambda^s(f,\mu)\neq - \lambda^u(f, \mu)$.  Then any $g\in \diff^2_\mu(M)$ that preserves the union $E^u_f\cup E^s_f$ preserves the individual distributions $E^s_f$ and $E^u_f$.
\end{lemma}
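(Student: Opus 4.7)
The plan is to argue by contradiction. Suppose the swap set $B := \{x \in M : Dg_x(E^s_f(x)) = E^u_f(g(x))\}$ has $\mu(B) > 0$; the goal is to derive $\lambda^s(f,\mu) + \lambda^u(f,\mu) = 0$, contradicting the hypothesis. The first step is to reduce to the swap-everywhere case, in which $Dg_x(E^s_f(x)) = E^u_f(g(x))$ at $\mu$-every $x$. This reduction is the main technical obstacle; one strategy is to pass to the measurable $\Z/2$-extension $\hat M := M \times \{\pm 1\}$ with lift $\hat g(x,s) := (g(x), s\,\epsilon(x))$, where $\epsilon(x) = -1$ on $B$ and $+1$ on $M \setminus B$, and then work with a canonically defined measurable line field on $\hat M$ that interchanges the roles of $E^s_f$ and $E^u_f$ between the two sheets, transferring the argument back to $M$ after an appropriate twist.

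In the swap-everywhere case, set $h := g f g^{-1} \in \diff^2_\mu(M)$; then $h$ is ergodic with the same exponents as $f$, but with stable and unstable distributions swapped: $E^s_h(y) = E^u_f(y)$ and $E^u_h(y) = E^s_f(y)$ at $\mu$-a.e.\ $y$. Consider the composition $\tilde h := h \circ f = g f g^{-1} f$, a $\mu$-preserving diffeomorphism. A direct chain-rule computation, using that $Df$ preserves the splitting $E^s_f \oplus E^u_f$ (with the usual stretches $\lambda^s(\cdot), \lambda^u(\cdot)$) while $Dg$ and $Dg^{-1}$ swap the two line fields in inverse ways (so that the corresponding swap-stretches satisfy the relation $\log\|Dg^{-1}_y|_{E^s_f(y)}\| + \log\|Dg_{g^{-1}y}|_{E^u_f(g^{-1}y)}\| = 0$), shows that $D\tilde h$ preserves the splitting $E^s_f \oplus E^u_f$ at every point. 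Integrating the pointwise log-stretches against $\mu$ and using the $f$- and $g$-invariance of $\mu$ to cancel the swap-stretch contributions, one finds that both Lyapunov exponents of $\tilde h$ with respect to $\mu$ equal $\lambda^s + \lambda^u \neq 0$.

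Suppose without loss of generality that $\lambda^s + \lambda^u > 0$, so that both Lyapunov exponents of $\tilde h^{-1}$ are strictly negative. Pesin's stable manifold theorem applied to $\tilde h^{-1}$ then produces, for $\mu$-typical $x$, a local $\tilde h^{-1}$-stable set that is a full two-dimensional open neighborhood of $x$. Choosing a Pesin block $R$ of positive $\mu$-measure with uniform radius $r_0 > 0$ so that $B_{r_0}(x) \subseteq W^s_{\tilde h^{-1}}(x)$ and the contraction under $\tilde h^{-1}$ is uniformly exponential on $R$, one has $\mathrm{diam}(\tilde h^{-n}(B_{r_0}(x))) \to 0$ exponentially while $\mu$-invariance gives $\mu(\tilde h^{-n}(B_{r_0}(x))) \equiv \mu(B_{r_0}(x)) > 0$ for $x \in R \cap \mathrm{supp}(\mu)$. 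Applying Birkhoff's theorem to $\tilde h^{-1}$ and the bounded function $y \mapsto \mathbf 1\{\mu(B_{\rho_0}(y)) \geq \mu(B_{r_0}(x))\}$, for each fixed $\rho_0 > 0$, yields $\mu(B_{\rho_0}(y)) \geq \mu(B_{r_0}(x))$ for $\mu$-a.e.\ $y$ and every $\rho_0 > 0$; sending $\rho_0 \to 0$ forces $\mu$ to have an atom of mass at least $\mu(B_{r_0}(x)) > 0$, contradicting non-atomicity.

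The main obstacle is the reduction to the swap-everywhere case in step one; once this is in place, the chain-rule computation in step two is routine, and the contradiction in step three is a standard non-uniformly hyperbolic argument exploiting Pesin's theorem together with the non-atomicity of $\mu$.
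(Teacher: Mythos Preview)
Your proposal has two genuine gaps.

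\textbf{Step 1 is not carried out.} You acknowledge that the reduction to the swap-everywhere case is the ``main technical obstacle,'' and your sketch via the $\Z/2$-extension does not actually perform it. On $\hat M = M\times\{\pm1\}$ with the line field $\hat E(x,+1)=E^s_f(x)$, $\hat E(x,-1)=E^u_f(x)$, the lift $\hat g$ \emph{preserves} $\hat E$ everywhere, so you land in the no-swap situation rather than the swap-everywhere one; there is no contradiction to extract. No further mechanism for ``transferring back'' is supplied.

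\textbf{Step 2 conflates integrated and pointwise exponents.} Even granting swap-everywhere, your chain-rule computation (which is correct) shows only that
\[
\int \log\|D\tilde h_x|_{E^s_f(x)}\|\,d\mu = \int \log\|D\tilde h_x|_{E^u_f(x)}\|\,d\mu = \lambda^s+\lambda^u.
\]
These are integrals, not Lyapunov exponents. To conclude that both exponents of $\tilde h$ equal $\lambda^s+\lambda^u$ at $\mu$-a.e.\ point you would need $\tilde h=gfg^{-1}f$ to be $\mu$-ergodic, and nothing in your argument gives this. Without it, on some ergodic component $\tilde h$ may be genuinely hyperbolic with one exponent of each sign, and your Step 3 (which needs both exponents of $\tilde h^{-1}$ strictly negative on a set of positive measure to get open Pesin stable sets) does not apply.

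The paper's argument avoids both problems by passing to a random product rather than a single composition. One sets $\nu = t\delta_g + (1-t)\delta_f$ and looks at the $\Pp DF$-invariant measure
\[
d\eta(\xi,x,E)= d\nu^{\Z}(\xi)\,d\mu(x)\,d\!\left(\tfrac12\delta_{E^u_f(x)}+\tfrac12\delta_{E^s_f(x)}\right)(E)
\]
on $\Sigma\times \Pp TM$. The swap on a positive-measure set is exactly what makes $\eta$ ergodic (it connects the two lines), so no reduction to swap-everywhere is needed. Birkhoff then gives, for \emph{every} $v\in E^s_f(x)\cup E^u_f(x)$, the common fiberwise exponent
\[
(1-t)\cdot\tfrac12(\lambda^u+\lambda^s) \;+\; \tfrac{t}{2}\int\!\!\sum_{E\in\{E^s_f(x),E^u_f(x)\}}\!\!\log\|D_xg|_E\|\,d\mu(x),
\]
which for small $t>0$ has the sign of $\lambda^s+\lambda^u\neq 0$. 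Hence all fiberwise exponents of $DF$ have the same sign, which forces $\mu$ to be atomic---the desired contradiction. This use of a random walk with a small weight on $g$ simultaneously fixes the ergodicity issue and eliminates the need for the swap-everywhere reduction.
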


\begin{proof}
Suppose $g\in \diff^2_\mu(M)$  preserves the union $E^u_f\cup E^s_f$ almost surely but \begin{equation}\label{eqjjjjklkj}D_x g(E^s_f(x))= E^u_f(g(x))\end{equation}  for a positive measure set of $x$.  
Let $\Pp TM$ denote the projectivized tangent bundle.  Let $\nu= t\delta_g + (1-t) \delta_f$ and  $\Sigma = \{f,g\}^\Z$.   On $\Sigma \times \Pp TM$ consider the measure $$d\eta(\xi,x, E) = d \nu^\Z(\xi)\  d\mu(x) \ d (.5 \delta_{E^u_f(x)} + . 5\delta_{E^s_f(x)})(E).$$  Write 
 $DF$ for  the derivative skew product 
 $DF \colon \Sigma \times TM\to \Sigma \times TM$
 and $ \Pp DF$ for the projectivized derivative skew product 
  $\Pp DF \colon \Sigma \times \Pp TM\to \Sigma \times \Pp TM$.  
 Then $\eta$ is $\Pp DF$-invariant and is ergodic by   \eqref{eqjjjjklkj}.  Let $\Phi\colon \Sigma \times \Pp TM\to \R$ be $$\Phi(\xi,x,E) = \log \| \restrict{D_xf_\xi }{E}\|.$$
Then for $\mu$-\ae $x$ and  $v\in E^s_f(x)\cup E^u_f(x)$  with $v\neq0$ we have 
\begin{align*}\lim_{n\to \infty} \frac 1 n &\log \| D_x \cocycle(v) \| 
= \lim_{n\to \infty} \frac 1 n \sum_{j=1}^{n-1}\Phi (\Pp D F ^j(\xi,x,[v]))
= \int \Phi \ d \eta 
\\& = (1-t) \left( .5 \lambda^u(f, \mu) + .5 \lambda^s(f, \mu) \right) + 
. 5 t \int \sum _{E \in\{ E^u_f(x), E^s_f(x)\}} \log  \| \restrict{D_xg }{E}   \| \ d \mu(x). 
\end{align*}
As  the $C^1$ norm of $g$ is bounded,  for $t>0$ sufficiently small,  either all fiber-wise exponents of $DF$ are negative or all fiber-wise  exponents of $DF$  are positive.  This contradicts that $\mu$ is non-atomic.  
\end{proof}

From the above lemma we have 
\begin{theorem}
Let $\mu$ be Borel probability measure on $M$ with no atoms.    Suppose $\diff_\mu^2(M)$ contains an ergodic, hyperbolic element $f$ with 
 $\lambda^s(f,\mu)\neq - \lambda^u(f, \mu)$. Then with $\Gamma= \diff_\mu^2(M)$ either
\begin{enumerate}[label=({\alph*})]
\item both $E^u_f$ and $E^s_f$ are $\Gamma$-invariant
\item exactly one  $E^u_f$ and $E^s_f$ is $\Gamma$-invariant in which case $\mu$ is SRB for $f$ or $f\inv$.
\end{enumerate}
\end{theorem}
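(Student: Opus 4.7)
The plan is to reduce the conclusion to the three-case statement of Theorem~\ref{thm:4} via the contrapositive of Lemma~\ref{prop:stiffdistributions}, and then use the exponent-asymmetry hypothesis to rule out the residual case in which neither stable nor unstable line field is $\Gamma$-invariant.

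First I would observe that Lemma~\ref{prop:stiffdistributions}, under the standing hypothesis $\lambda^s(f,\mu)\neq-\lambda^u(f,\mu)$, forces the following dichotomy for every $g\in\Gamma$: either $g$ preserves both line fields $E^s_f$ and $E^u_f$ individually, or $g$ fails to preserve the union $E^s_f\cup E^u_f$. Consequently the union $E^s_f\cup E^u_f$ is $\Gamma$-invariant if and only if both $E^s_f$ and $E^u_f$ are. If both are $\Gamma$-invariant we are in conclusion~(a) and there is nothing more to do; otherwise, the union is not $\Gamma$-invariant and Theorem~\ref{thm:4} applies. Parts~\ref{thm:4:2} and~\ref{thm:4:3} of Theorem~\ref{thm:4} then yield exactly conclusion~(b): if $E^u_f$ is $\Gamma$-invariant and $E^s_f$ is not, then $\mu$ is SRB for $f$; symmetrically, $E^s_f$ alone $\Gamma$-invariant yields $\mu$ SRB for $f^{-1}$.

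It remains to exclude the possibility that neither $E^s_f$ nor $E^u_f$ is $\Gamma$-invariant. In that situation Theorem~\ref{thm:4}\ref{thm:4:1} forces $\mu$ to be absolutely continuous with respect to Riemannian volume. Since $f\in\diff^2_\mu(M)$ is hyperbolic on a surface, absolute continuity of $\mu$ combined with Fubini applied to the absolutely continuous Pesin unstable and stable laminations implies that $\mu$ has absolutely continuous conditionals along both unstable and stable manifolds, i.e.\ $\mu$ is simultaneously SRB for $f$ and for $f^{-1}$. The Pesin entropy formula applied to each then gives
\begin{equation*}
h_\mu(f)=\lambda^u(f,\mu)\qquad\text{and}\qquad h_\mu(f^{-1})=-\lambda^s(f,\mu),
\end{equation*}
and since $h_\mu(f)=h_\mu(f^{-1})$ for any invertible measure-preserving transformation, we deduce $\lambda^u(f,\mu)=-\lambda^s(f,\mu)$, contradicting the standing hypothesis.

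The main obstacle I anticipate is the last step: making rigorous the implication ``$\mu$ absolutely continuous $\Rightarrow$ SRB for both $f$ and $f^{-1}$'' requires absolute continuity of the Pesin (un)stable holonomies in order to legitimately disintegrate $\mu$ along each lamination. The first two paragraphs are essentially bookkeeping once Lemma~\ref{prop:stiffdistributions} and Theorem~\ref{thm:4} are in hand.
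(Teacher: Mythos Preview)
Your proposal is correct and follows essentially the same route as the paper: use Lemma~\ref{prop:stiffdistributions} to reduce to the trichotomy of Theorem~\ref{thm:4}, then eliminate case~\ref{thm:4:1} by observing that absolute continuity forces $\lambda^s(f,\mu)=-\lambda^u(f,\mu)$. The paper states this last implication as a one-line remark after the theorem without justification, whereas you spell it out via the Pesin entropy formula applied to $f$ and $f^{-1}$; that argument is standard and your concern about the ``absolute continuity $\Rightarrow$ SRB'' step is unfounded in this context, since Pesin's entropy formula for $C^{1+\alpha}$ diffeomorphisms holds directly for absolutely continuous invariant hyperbolic measures.
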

Note that the hypotheses that $\lambda^s(f,\mu)\neq - \lambda^u(f, \mu)$ implies that $\mu$ is not absolutely continuous.

\subsection{Smooth stabilizers of measures invariant by Anosov maps} 

As a consequence of the results in the previous section, we obtain a strengthening of the   result from \cite{AWB:SS}.  

Let $f\colon \T^2\to \T^2$ be Anosov.  Then there is a hyperbolic  $A\in \Gl(2,\Z)$  such that any lift $\td f\colon \R^2\to \R^2$ of $f$ is of the form $f(x) = Ax + \eta(x)$ where $\eta\colon \R^2\to \R^2$ is $\Z^2$ periodic. Given $B\in \Gl(2,\Z)$ let $L_B\colon \T^2\to \T^2$ be the induced map.  
Then there is a (non unique) homeomorphism $h\colon \T^2\to \T^2$ with $h\circ f = L_A\circ h$.  

Let $\mu$ be a fully supported, ergodic, $f$-invariant measure.  Let   $K\subset \R^2$ be the set $K = \{ v :  h_*\mu \text{ is $T_v$-invariant}\}\subset \R^2$ where $T_v\colon \T^2\to \T^2$ is the translation by $v$.  Then $K$ descends to a closed $L_A$-invariant subgroup of $\T^2$ so is either discrete or is all of $\R^2$. The latter case can happen only if the measure $\mu$ is the measure of maximal entropy for $f$.   It follows that the group $(A-I)\inv K$ is either discrete or is all of $\R^2$.  Let $\hat K$ be the smallest subgroup of $\R^2$ that is invariant under the centralizer $C_{\Gl(2,\Z)}(A)$ of $A$ in $\Gl(2,\Z)$ and contains $(A-I)\inv K$.  Note that $\hat K$ descends to a subgroup of $\T^2$.   Then $\hat K$ is either $\R^2$ or is discrete.  Let $T_{\hat K}$ denote the corresponding group of translations on $\T^2$.  Then $T_{\hat K}$  is finite if $\mu$ is not the measure of maximal entropy.  

Recall that the centralizer  of $A$ is of the form $C_{\Gl(2,\Z)}(A) = \langle \pm M\rangle$ for some $M\in \Gl(2,\Z)$.

\begin{theorem}\label{thm:5}
Let $f\colon \T^2\to \T^2$ be a $C^{2}$ Anosov diffeomorphism and let $\mu$ be a fully supported, ergodic, $f$-invariant measure.  
If $\mu$ is not absolutely continuous  
then for every $g\in \diff^2_\mu(M)$ there is a $B\in C_{\Gl(2,\Z)}(A)$ and $v\in (A-I)\inv(K)$ with $$h\circ g\circ h\inv (x)= L_B(x) + v;$$
in particular, $\diff_\mu^2(\T^2)$ is isomorphic to a subgroup of $$ C_{\Gl(2,\Z)}(A) \ltimes T_{\hat K}$$

Moreover, if $\mu$ is not the measure of maximal entropy (for $f$) then $T_{\hat K}$ is finite, whence $\diff_\mu^2(\T^2) $ is virtually-$\Z$.  
\end{theorem}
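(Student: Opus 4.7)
Given $g \in \diff^2_\mu(\T^2)$, the plan is to reduce Theorem \ref{thm:5} to a topological-rigidity statement: after conjugation by $h$, the map $\bar g := h \circ g \circ h\inv$ must preserve both the stable and unstable linear foliations of $L_A$, hence must be affine; the definitions of $K$ and $\hat K$ then pin down the allowed translational parts. The first and hardest step is this passage to foliation-preservation via Theorem \ref{thm:4}. Taking $\Gamma = \langle f, g \rangle$, since $\mu$ is not absolutely continuous case \ref{thm:4:1} is excluded; combining the remaining cases with an entropy argument (being SRB for both $f$ and $f\inv$ would force $\mu$ absolutely continuous), with Lemma \ref{prop:stiffdistributions}, and with the rigidity of the Anosov splitting $E^s_f \oplus E^u_f$ (which is continuous and preserves a fully-supported measure), one upgrades the conclusion to: $g$ permutes the pair $\{E^s_f, E^u_f\}$ on all of $\T^2$, hence permutes the pair of foliations $\{\Fols_f, \Folu_f\}$.

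Second, the conjugacy $h$ carries $\Fols_f$ and $\Folu_f$ onto the two families of parallel lines of $L_A$ in the eigendirections of $A$. Thus $\bar g$ is a homeomorphism of $\T^2$ preserving the unordered pair of these two transverse linear foliations. Lifting to $\R^2$ and using the standard rigidity that a homeomorphism of $\R^2$ preserving two transverse families of parallel lines must be affine, one obtains $\widetilde{\bar g}(x) = Bx + v$ with $B \in \GL(2,\R)$ stabilising the eigendirections of $A$; descending to $\T^2$ forces $B \in \GL(2,\Z)$. Either $B$ centralises $A$ or it conjugates $A$ to $A\inv$, but the latter is incompatible with $g$ preserving $\mu$ together with the exponent structure of $(f,\mu)$; thus $B \in C_{\GL(2,\Z)}(A)$.

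Third, using $BA = AB$ one computes $\bar g \circ L_A \circ \bar g\inv \circ L_A\inv = T_{(A\inv - I)v}$; since both $\bar g$ and $L_A$ preserve $h_*\mu$, this translation also does, giving $(A\inv - I)v \in K$, and multiplying by $-A$ (which preserves $K$) yields $(A-I)v \in K$, so $v \in (A-I)\inv K \subseteq \hat K$. The assignment $g \mapsto (B, v + \Z^2)$ is then an injective group homomorphism $\diff^2_\mu(\T^2) \hookrightarrow C_{\GL(2,\Z)}(A) \ltimes T_{\hat K}$. Finally, if $\mu$ is not the measure of maximal entropy then $K$ is discrete, so $(A-I)\inv K$ is discrete; since $C_{\GL(2,\Z)}(A) = \langle \pm M\rangle$ is virtually cyclic, the smallest $C_{\GL(2,\Z)}(A)$-invariant subgroup $\hat K$ containing it remains a discrete subgroup of $\R^2$, whence $T_{\hat K}$ is finite and $\diff^2_\mu(\T^2)$ is virtually-$\Z$.

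The main obstacle is the first step — deducing from Theorem \ref{thm:4} that $g$ in fact permutes the pair $\{E^s_f, E^u_f\}$, rather than merely preserving one of them while mapping the other to a third continuous direction, as cases \ref{thm:4:2} and \ref{thm:4:3} a priori allow. This step essentially encapsulates the earlier result of \cite{AWB:SS}; the novelty in the present theorem is that the strengthened Theorem \ref{thm:4} removes prior positive-entropy hypotheses, so the same topological-rigidity machinery now applies under the weaker non-absolute-continuity assumption.
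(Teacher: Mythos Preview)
Your proposal follows essentially the same route as the paper: use Theorem~\ref{thm:4} to show $g$ preserves at least one of $E^s_f, E^u_f$ (hence one of the Anosov foliations), invoke \cite{AWB:SS} to upgrade to preservation of both, and then run the affine-rigidity argument on $\T^2$ that you sketch in your second and third paragraphs. The paper is terser --- it cites \cite{AWB:SS} both for the ``one foliation $\Rightarrow$ both'' step and for the affine conclusion --- but the skeleton is identical, and you correctly identify the first step as the main obstacle deferred to \cite{AWB:SS}.

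Two small inaccuracies in your first paragraph. Your appeal to Lemma~\ref{prop:stiffdistributions} is misplaced: its hypothesis $\lambda^s(f,\mu)\neq -\lambda^u(f,\mu)$ is not assumed here. And the ``SRB for both $f$ and $f\inv$'' entropy argument does not arise directly from Theorem~\ref{thm:4}, since cases \ref{thm:4:2} and \ref{thm:4:3} are mutually exclusive (each presupposes the union $E^s_f\cup E^u_f$ is \emph{not} $\Gamma$-invariant, which fails once both bundles are individually invariant). The actual mechanism --- preservation of one foliation forces preservation of both --- is precisely what is imported from \cite{AWB:SS}. Finally, your exclusion of the case $BAB\inv=A\inv$ via ``the exponent structure'' is not fully justified; the paper does not rule out the interchanging case directly but instead passes to an index-$2$ subgroup and again defers the fine structure to \cite{AWB:SS}.
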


Recall that a group is \emph{virtually-$\Z$} if it contains a finite-index subgroup isomorphic to $\Z$. 
Theorem \ref{thm:5} follows  exactly from the argument in  \cite{AWB:SS}  with  only  minor  modifications coming from Theorem \ref{thm:4}. 
\begin{proof}
Recall that if $f$ is Anosov then the measurable distributions $E^s_f$ and $E^u_f$ appearing in Oseledec's splitting coincide with  continuous transverse distributions. 

Consider first $g\in \diff^2_\mu(M)$ such that 
$Dg$ does not interchange $E^s_f$ and $E^u_g$ on a set of full measure (and hence at every point).  
Then, if $\mu$ is not absolutely continuous, by Theorem \ref{thm:4} at least one of the (continuous) distributions $E^s_f$ or $E^u_f$ is preserved (on a set of full measure and hence everywhere) by $g$.  
Then, as the integral foliations to $E^s_f$ and $E^u_f$ are unique, it follows that either the stable or unstable foliation of $f$ is preserved by every such  $g$.   

It is then shown in  \cite{AWB:SS} that $g$ necessarily preserves both the stable and unstable foliations for $f$ and hence preserves the corresponding  tangent line-fields $E^s_f$ and $E^u_f$. 
If there exists $g\in \diff^2_\mu(M)$ such that $g$ interchanges $E^s_f$ and $E^u_f$ then we may restrict to  an  index-2 subgroup preserving $E^s_f$ and $E^u_f$ and the corresponding foliations.  

The remainder of the proof of Theorem \ref{thm:5} and a more detailed description of the structure of $\diff^2_\mu(\T^2)$ proceeds exactly as in \cite{AWB:SS} and will not be repeated here.  
\end{proof}

\subsection{Perturbations of algebraic systems}\label{sec:alge}

Let $A,B\in \Gl(2,\Z)$ be  hyperbolic matrices.  Write  $E^{s}_A$ and $E^{u}_A$, respectively,  for the stable and unstable eigenspaces of $A$.  We say that $\{A,B\}$  satisfy a  \emph{joint cone condition} if there  are disjoint  open cones $C^s$  and $C^u$, containing  $\{E^s_A, E^s_B\}$ and $\{E^u_A, E^u_B\}$, respectively, with
$A\inv C^s\subset C^s$, %
$B\inv C^s\subset C^s$, 
$A C^u\subset C^u$, and 
$B C^u\subset C^u$
and a number $\kappa>1$ such that if $v\in C^u$ then $\|Bv \| > \kappa \| v\|$ and $\|Av\|> \kappa \|v\|$ and if $w\in C^s$ then $\|B\inv w \| > \kappa \| w\|$ and $ \|A \inv w\|> \kappa \|w\|.$

Given $A\in \Gl(2,\Z)$ let $L_A\colon \T^2\to \T^2$ be the induced diffeomorphism.  

\begin{prop}\label{thm:perturb}
Suppose that $A$ and $B$  do not commute and satisfy a joint cone condition.  Then for sufficiently small $C^2$ perturbations $f$ of $L_A$ and $g$ of $L_B$, for  $\nu= p\delta_f+ (1-p)\delta_g$ with $p\in (0,1)$ the only  ergodic, $\nu$-stationary measures are SRB or  finitely supported.  

Moreover for every such  $f$ and a generic $g$, the only $\nu$-stationary measure is SRB.
\end{prop}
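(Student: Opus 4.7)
The plan is to apply the trichotomy of Theorem \ref{thm:1+} to each ergodic $\nu$-stationary measure $\mu$ and to rule out the ``non-random stable distribution'' case using structural stability of Anosov systems together with a small arithmetic observation about non-commuting elements of $\Gl(2,\Z)$. For the second assertion I additionally exclude finitely supported $\mu$ by a Baire category argument.

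First, for sufficiently small $C^2$ perturbations $f$ of $L_A$ and $g$ of $L_B$, slight enlargements $\widetilde C^s\supset\{E^s_A,E^s_B\}$ and $\widetilde C^u\supset\{E^u_A,E^u_B\}$ of the cones from the joint cone condition remain jointly backward- and forward-invariant under $Df^{\pm 1}$ and $Dg^{\pm 1}$ with uniform expansion/contraction bounded away from $1$. Every random composition is therefore uniformly hyperbolic, so any ergodic $\nu$-stationary $\mu$ is hyperbolic with $E^s_\omega(x)\subset\widetilde C^s(x)$ and $E^u_\omega(x)\subset\widetilde C^u(x)$ for $\nu^{\mathbb N}\times\mu$-a.e.\ $(\omega,x)$, and by Theorem \ref{thm:1+} it is SRB, finitely supported (and $\nu$-a.s.\ invariant), or has non-random stable distribution.

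Now assume we are in the non-random case: there exists a $\mu$-measurable line field $V(x)\subset\widetilde C^s(x)$ with $Df\cdot V(x)=V(f(x))$ and $Dg\cdot V(x)=V(g(x))$ $\mu$-a.e. Because $f$ is itself Anosov (by structural stability), its stable bundle $E^s_f$ is the unique $Df$-invariant continuous line field in $\widetilde C^s$; forward-iterating $V(f^nx)=Df^n V(x)\subset\widetilde C^s(f^n x)$ gives
\[
V(x)\in\bigcap_{n\geq 0}Df^{-n}\widetilde C^s(f^n x)=E^s_f(x)\qquad \mu\text{-a.e.,}
\]
and the symmetric argument gives $V(x)=E^s_g(x)$ $\mu$-a.e., so $E^s_f(x)=E^s_g(x)$ on a $\mu$-conull set. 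But non-commuting hyperbolic $A,B\in\Gl(2,\Z)$ never share an eigendirection: if $E^s_A=E^s_B=\R v$, then in an eigenbasis of $A$ the matrix $B$ is upper triangular with entries in $\mathbb{Q}(\sqrt{\operatorname{disc} A})$, and $\operatorname{Gal}(\mathbb{Q}(\sqrt{\operatorname{disc} A})/\mathbb{Q})$-invariance (the Galois element swaps the two eigendirections of $A$, while $B$ has rational entries) forces the off-diagonal entry to vanish, whence $AB=BA$---a contradiction. Hence $E^s_A\neq E^s_B$ as lines in $\R^2$, and by structural stability $E^s_f,E^s_g$ are $C^0$-close to the constants $E^s_A,E^s_B$. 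For $f, g$ sufficiently $C^2$-close to $L_A,L_B$ we obtain $E^s_f(x)\neq E^s_g(x)$ for \emph{every} $x\in\T^2$, contradicting $\mu\{E^s_f=E^s_g\}=1$ and ruling out the non-random case.

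For the generic-$g$ assertion, it remains to exclude finitely supported $\mu$. Such $\mu$ is carried by a common finite $f$- and $g$-invariant set $F$, necessarily a union of finitely many $f$-periodic orbits; there are only countably many such $F$. For each fixed $F$ the locus $\{g\in\mathcal U_{L_B}: g(F)=F\}$ is closed with empty interior in the $C^2$-neighborhood $\mathcal U_{L_B}$ of $L_B$ (a small $C^\infty$ bump near any $p\in F$ displaces $g(p)$ off the finite set $F$), and intersecting the open dense complements yields a residual set of $g$ admitting no common finite invariant set with $f$, leaving only the SRB option. The main obstacle in the plan is the arithmetic separation $E^s_A\neq E^s_B$: without the integrality of $A, B$ one could produce non-commuting hyperbolic $B$'s sharing a stable direction with $A$, and the $C^0$-separation underlying the contradiction above would fail.
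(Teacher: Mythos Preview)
Your proof is correct and reaches the same conclusion as the paper, but the mechanism for excluding the non-random alternative is different. The paper works directly with words: since $A,B$ do not commute one has $E^s_A\neq E^s_B$, so for large $n$ the sub-cones $D_{f^n(x)}f^{-n}C^s$ and $D_{g^n(x)}g^{-n}C^s$ are disjoint for all $x$; as the cylinder sets of words beginning with $n$ copies of $f$ (resp.\ $g$) have positive $\nu^{\mathbb N}$-measure and force $E^s_\omega(x)$ into the first (resp.\ second) sub-cone, the map $\omega\mapsto E^s_\omega(x)$ is visibly non-constant. Your route instead identifies a non-random $V$ with both $E^s_f$ and $E^s_g$ and invokes $E^s_f(x)\neq E^s_g(x)$ everywhere. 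This is valid, but note two points you pass over: the iteration $V(f^nx)=Df^nV(x)$ for all $n\ge 0$ requires $f_*\mu\ll\mu$, which follows from $\mu=p\,f_*\mu+(1-p)\,g_*\mu$; and your appeal to ``the unique $Df$-invariant continuous line field in $\widetilde C^s$'' is not quite what you use, since $V$ is only $\mu$-measurable --- the cone-intersection argument you actually write is what does the work. The paper's approach is a bit more robust in that it never touches $\mu$ or $f$-invariance of $V$, only the cone geometry. On the other hand, you supply the Galois argument for $E^s_A\neq E^s_B$ (which the paper asserts without proof), correctly isolating that integrality of $A,B$ is essential here. Your generic-$g$ argument via Baire category over finite unions of $f$-periodic orbits is essentially the paper's, stated with more detail.
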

Note that in the setting of the  above proposition,   stationary measures with the SRB property are unique.  The proof of the proposition will be given  in Section \ref{sec:left3}.

\begin{theorem}\label{thm:perturbSRB}
Let $\Gamma\subset \Sl(2,\Z)$ be an infinite subgroup that is not virtually-$\Z$. 
Let $S=\{A_1,\dots, A_n\}$ be a finite set generating $\Gamma$.  
Consider $0< p_k< 1$ with $\sum_{k=1}^np_k=1$ and let $\nu_0= \sum p_k \delta_{L_{A_k}}$.  Then there is an open set $U\subset \diff^2(\T^2)$ with $\nu_0(U) =1$  such that for every  probability $\nu$ on $U$ sufficiently close to $\nu_0$,   
 any ergodic, $\nu$-stationary measure is either atomic, or is  hyperbolic with one exponent of each sign and is  SRB.  
 \end{theorem}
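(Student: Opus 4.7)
The plan is to apply Theorem~\ref{thm:main} to each ergodic $\nu$-stationary probability $\mu$, using the non-virtually-$\Z$ hypothesis both to ensure hyperbolicity with one exponent of each sign (when $\mu$ is non-atomic) and to exclude the non-random stable distribution alternative. Take $U \subset \diff^2(\T^2)$ to be a $C^2$-neighborhood of $\{L_{A_1},\dots,L_{A_n}\}$; integrability \eqref{eq:IC2a} then holds automatically for any $\nu$ supported in $U$. It suffices to show that for $\nu$ supported in $U$ sufficiently close to $\nu_0$ (weak-$*$) and $\mu$ a non-atomic ergodic $\nu$-stationary measure, $\mu$ must be SRB.

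Uniform hyperbolicity comes from a joint cone argument. Because $\Gamma$ is infinite and not virtually-$\Z$, by the Tits alternative it contains a non-abelian free subgroup; equivalently, the matrices $\{A_k\}$ admit no common invariant line in $\R^2$, so $\nu_0^{\mathrm{mat}} := \sum p_k \delta_{A_k}$ is strongly irreducible on $\R^2$. By Furstenberg's theorem, the top Lyapunov exponent $\lambda^+$ of this random matrix walk is strictly positive and the $\nu_0^{\mathrm{mat}}$-stationary measure $\mu_F$ on $\mathbb{P}^1$ is unique and non-atomic. Passing to sufficiently high powers of appropriate words in $\{A_k\}$, one finds matrices satisfying the joint cone condition of Section~\ref{sec:alge}. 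Since this condition is $C^1$-open in the derivatives, for $f \in U$ the fiber-wise derivative cocycle still preserves and uniformly expands (resp.\ contracts) the unstable (resp.\ stable) cone along concatenations representing those words. Because such concatenations appear with positive density in $\nu^\N$-typical words, any non-atomic ergodic $\nu$-stationary $\mu$ has top fiber-wise exponent $\lambda^u(\mu)\ge c>0$; the approximate area-preservation $|\det Df|\approx 1$ on $U$ then forces $\lambda^s(\mu)<0$. Thus $\mu$ is hyperbolic with one exponent of each sign, and Theorem~\ref{thm:main} reduces the proof to ruling out the non-random stable distribution alternative.

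For that, assume for contradiction a sequence $\nu_j\to\nu_0$ supported in $U$, non-atomic ergodic $\nu_j$-stationary measures $\mu_j$, and $\mu_j$-measurable $\nu_j$-\as invariant line fields $V_j\colon\T^2\to\mathbb{P}^1$. By the hyperbolicity step, $V_j$ agrees $\mu_j$-\as with the fiber-wise stable line field, so
\[
I_j := \int \log\lVert D_x f_{\omega_0}\,V_j(x)\rVert\,d(\nu_j^\Z\times\mu_j)(\omega,x) \;=\; \lambda^s(\mu_j)\;\le\; -c\;<\;0.
\]
Lift each $\mu_j$ to $\hat\mu_j := (\id\times V_j)_*\mu_j$, a $\nu_j$-stationary measure on $\T^2\times\mathbb{P}^1$ for the projectivized derivative random walk, and extract a weak-$*$ subsequential limit $\hat\mu_j\to\hat\mu_*$. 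Since $(f,x,v)\mapsto\log\lVert D_x f\,v\rVert$ is continuous and uniformly bounded on $U\times T\T^2$, and since $Df(x)\to A_k$ uniformly in $x$ as $f\to L_{A_k}$, $I_j$ converges to $\int \log\lVert A_{\omega_0}\,v\rVert\,d(\nu_0^\Z\times\hat\mu_*)(\omega,x,v)$, which depends only on the projection $(\pi_{\mathbb{P}^1})_*\hat\mu_*$. This projection is $\nu_0^{\mathrm{mat}}$-stationary on $\mathbb{P}^1$, hence equals $\mu_F$ by Furstenberg's uniqueness, and by Furstenberg's formula the limit integral equals $\lambda^+>0$, contradicting $I_j\le -c$.

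The main obstacle is justifying the last paragraph's limit-of-integrals step: ensuring weak-$*$ compactness of $\{\hat\mu_j\}$, verifying that convergence $\nu_j\to\nu_0$ together with uniform $C^1$-bounds on $U$ suffices to pass limits inside the Lyapunov-exponent integral, and ruling out that $V_j$ could in the limit align with the unstable direction (which would give the wrong sign). The last point is controlled by applying Theorem~\ref{thm:main} to $\mu_j$ for each $j$, where the invariant line field must be the stable one; the compactness and continuity issues are classical but demand careful book-keeping, particularly when $\mu_j$ drifts toward an atomic limit on torsion points of $\T^2$.
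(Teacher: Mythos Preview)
Your contradiction argument for excluding a non-random stable distribution is essentially the paper's own: the paper formalizes it as $\nu$ being \emph{strongly expanding} (any $\nu$-stationary lift to $\Pp T\T^2$ has positive Furstenberg integral) and proves that every $\nu$ near $\nu_0$ has this property by the same weak-$*$ limit plus Furstenberg-uniqueness reasoning you give. Your worry about $\mu_j$ drifting to an atomic limit is harmless here, since only the $\R\Pp^1$-marginal of the limiting lift matters.

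The genuine gap is the hyperbolicity step. The assertion that $\lambda^u(\mu)\ge c>0$ follows because cone-preserving concatenations appear with positive density is not valid: the joint cone condition holds only for the special words $B_1,B_2$ you construct, not for the individual generators $A_k$, and the random products occurring between consecutive appearances of those words are under no obligation to preserve the cone---they can rotate it or contract vectors inside it, so no expansion estimate survives the concatenation. The paper establishes a positive exponent by exactly the weak-$*$ limit device you already use later: if $\nu_k\to\nu_0$ carried ergodic stationary $\mu_k$ with no positive exponent, a limit of $\nu_k$-stationary lifts to $\Pp T\T^2$ would project to a $\tilde\nu_0$-stationary measure on $\R\Pp^1$ with nonpositive Furstenberg integral, contradicting Furstenberg's theorem. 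That argument in fact yields a \emph{uniform} bound $\lambda^u\ge c$ near $\nu_0$, after which your $|\det Df|\approx 1$ shortcut does force $\lambda^s<0$; the paper instead handles the negative exponent via the Avila--Viana invariance principle (both exponents $\ge 0$ forces $\mu$ to be $\nu$-a.s.\ invariant) combined with Theorem~\ref{thm:4} applied to Anosov elements $f_1,f_2$ built from the cone words---so the cone-preserving words do appear in the paper's argument, but to supply Anosov maps for Theorem~\ref{thm:4}, not to bound exponents.
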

 The proof of the theorem   will be given  in Section \ref{sec:left3}.
  
Let $m$ denote the Lebesgue area on $\T^2$.  If   we restrict the above   to the setting of  area-preserving perturbations, we obtain the following nonlinear counterpart to \cite{MR2831114}.  Note in particular that we obtain stiffness of all stationary measures.  
\begin{theorem}\label{thm:perturbSRBtoVolume}
Let $\Gamma\subset \Sl(2,\Z)$ be an infinite subgroup that is not virtually-$\Z$. 
Let $S=\{A_1,\dots, A_n\}$ be a finite set generating $\Gamma$.  
Consider $0< p_k< 1$ with $\sum_{k=1}^np_k=1$ and let $\nu_0= \sum p_k \delta_{L_{A_k}}$.  Then 
 there is an open set $U\subset \diff^2_m(\T^2)$ with $\nu_0(U) =1$  such that for every  probability $\nu$ on $U$ sufficiently close to $\nu_0$,  
 any ergodic, $\nu$-stationary measure is hyperbolic with one exponent of each sign and either coincides with  $m$ or is atomic.  
 
In particular, every $\nu$-stationary measure is preserved by every $g\in \diff_m^2(\T^2)$ in the support of $\nu$.  
 \end{theorem}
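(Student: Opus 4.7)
The plan is to combine Theorem \ref{thm:perturbSRB} with the area-preserving trichotomy of Theorem \ref{thm:3}. Taking $U' := U \cap \diff^2_m(\T^2)$, which is open in $\diff^2_m(\T^2)$ and satisfies $\nu_0(U') = 1$ since each $L_{A_k}$ preserves $m$, Theorem \ref{thm:perturbSRB} guarantees that for any probability $\nu$ supported on $U'$ and sufficiently close to $\nu_0$, every ergodic $\nu$-stationary measure $\mu$ is either atomic (hence finitely supported by ergodicity) or hyperbolic with one exponent of each sign and SRB.

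In the non-atomic case I would then invoke Theorem \ref{thm:3}, applicable since every $f \in \supp \nu$ preserves $m$. Its trichotomy yields: $\mu$ has finite support, $E^s_\omega(x)$ is non-random, or $\mu$ is absolutely continuous, $\nu$-a.s.\ $\Gamma$-invariant, and coincides (up to normalization) with $m$ restricted to a positive-volume invariant subset $A \subset \T^2$. The first case contradicts non-atomicity. The second case must be excluded by the same mechanism producing the SRB conclusion in Theorem \ref{thm:perturbSRB}: for sufficiently small $C^2$-perturbations of the generators $L_{A_k}$, the joint cone condition and the non-commutativity of the $A_k$ persist, and a $\nu$-a.s.\ invariant measurable line field would have to align simultaneously with the distinct stable (or unstable) cones of perturbations of two different $L_{A_k}$, violating the joint cone hypothesis. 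Hence only the third case survives, and $\mu = c\, m|_A$ for some $\nu$-a.s.\ invariant positive-volume $A$.

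To upgrade $\mu = c\, m|_A$ to $\mu = m$, I would use structural stability: each $f \in U'$ sufficiently $C^2$-close to some $L_{A_k}$ is Anosov, and any $C^2$ area-preserving Anosov diffeomorphism of $\T^2$ is $m$-ergodic (in fact Kolmogorov) by classical theory. Picking any $f_0 \in \supp \nu$, a $\nu$-a.s.\ invariant set is in particular $f_0$-invariant modulo $m$ and therefore has $m$-measure $0$ or $1$; hence $A = \T^2$ modulo $m$ and $\mu = m$. The concluding stiffness assertion is then immediate: atomic ergodic stationary measures are finite orbits permuted by every $g \in \supp \nu$, and $m$ is preserved by every $g \in \supp \nu \subset \diff^2_m(\T^2)$ by construction. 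The main obstacle is the exclusion of the non-random stable case; rather than redoing the cone and non-commuting argument, I would extract this directly from (or in parallel with) the proof of Theorem \ref{thm:perturbSRB}, where the same configuration of generators is exploited.
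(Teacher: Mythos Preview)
Your overall route---Theorem \ref{thm:perturbSRB} for the dichotomy, then Theorem \ref{thm:3} in the area-preserving case, then ergodicity of $m$ via an Anosov element in the support---is exactly the paper's. Two points need repair.

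First, the theorem asserts that \emph{every} ergodic $\nu$-stationary measure is hyperbolic with one exponent of each sign, including atomic ones. Theorem \ref{thm:perturbSRB} as stated only yields hyperbolicity in the non-atomic branch of its dichotomy, so you have not covered the atomic case. The paper fixes this by going inside the proof of Theorem \ref{thm:perturbSRB}: Lemma \ref{lem:skylight} there shows that for all $\nu$ close to $\nu_0$, every ergodic $\nu$-stationary measure has a positive exponent. Since each $g\in\supp\nu$ preserves $m$, the determinants are identically $1$, and \eqref{eq:jacobians} forces the exponents to sum to zero, giving a negative exponent as well. This establishes hyperbolicity uniformly.

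Second, your justification that atomic stationary measures are invariant---``finite orbits permuted by every $g\in\supp\nu$''---is not valid: stationarity does not imply that individual maps permute atoms. The correct argument is: once you know the atomic measure is hyperbolic and (from the strongly-expanding property established in the proof of Theorem \ref{thm:perturbSRB}) that its stable bundle is not non-random, Theorem \ref{thm:1+} forces it into the ``finitely supported, hence $\nu$-a.s.\ invariant'' alternative. With these two corrections your argument is complete and coincides with the paper's.
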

 The theorem follows from Theorem \ref{thm:perturbSRB} and (the proof of) Theorem \ref{thm:3}.  In the proof of Theorem \ref{thm:perturbSRB}, it is shown that for all $\nu$ sufficiently close to $\nu_0$, every ergodic $\nu$-stationary measure $\mu$ has a positive exponent.  That $\mu$ also has a negative exponent follows from \eqref{eq:jacobians}.  
 Moreover, for such $\nu$,  a positive $\nu$-measure set of $f\in \diff^2_m(\T^2)$ are Anosov, whence $m$ is ergodic for such $f$ and hence ergodic for $\nu$.  
 
 Finally, we consider stationary measures for perturbations of rotations.
Let $R_1,\dots, R_\ell$ be $\ell$ rotations  in $\R^3$  generating a dense subgroup of $\mathrm{SO}(3, \R)$.  We identify each $R_i$ with a diffeomorphism of $S^2\subset \R^3$.   Let $m$ denote the unique $\mathrm{SO}(3, \R)$ invariant measure on $S^2$.  
 \begin{theorem}
For $k\in \N$ sufficiently large,  for each $1\le i\le \ell$ there is a neighborhood $R_i\in U_i\subset \diff^k_m(S^2) $ such that given  any $g_i\in U_i$ and $\nu= \frac 1  \ell \sum _{i=1}^ \ell \delta_{g_i}$, any ergodic $\nu$-stationary measure on $S^2$ is either  finitely supported or coincides with  $m$.  
\end{theorem}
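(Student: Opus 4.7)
The plan is to reduce to Theorem \ref{thm:3} applied with $\Gamma=\langle g_1,\dots,g_\ell\rangle\subset \diff^k_m(S^2)$ and with the smooth invariant measure equal to $m$.  Since each $g_i$ preserves $m$ we have $|\det Dg_i|\equiv 1$, so by the Jacobian identity \eqref{eq:jacobians} the fiber-wise Lyapunov exponents of any ergodic $\nu$-stationary measure sum to zero; consequently a non-atomic ergodic $\nu$-stationary measure $\mu$ is hyperbolic with one exponent of each sign as soon as its top fiber-wise Lyapunov exponent is strictly positive.

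Establishing this positivity is the main obstacle.  For the unperturbed walk $\nu_0=\frac{1}{\ell}\sum\delta_{R_i}$ the derivative cocycle takes values in $\mathrm{SO}(3)$ and all Lyapunov exponents vanish identically, so one must show that every sufficiently small $C^k$-perturbation in the area-preserving category breaks this degeneracy.  I would invoke a result of Dolgopyat--Krikorian type, asserting positivity of the top Lyapunov exponent for small volume-preserving perturbations of random isometries: for $k$ sufficiently large and the $U_i$ sufficiently small, every non-atomic ergodic $\nu$-stationary measure on $S^2$ has strictly positive top fiber-wise Lyapunov exponent.  The regularity $k$ must be large enough to run the perturbative scheme behind that result; this is where the ``sufficiently large $k$'' hypothesis of the theorem is used.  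Combined with the first paragraph, any non-atomic $\mu$ is hyperbolic, so Theorem \ref{thm:3} produces three alternatives.

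Finite support is one of the allowed conclusions of the theorem, so suppose $\mu$ is non-atomic.  To rule out the non-random stable distribution alternative, note that such a distribution is a $\mu$-measurable, $\nu$-a.s.\ invariant line field $L\colon S^2\to P(TS^2)$, and hence gives a $\nu$-stationary graph measure on the projectivized tangent bundle $P(TS^2)$.  For $\nu_0$ the induced walk on $P(TS^2)\cong\mathrm{SO}(3)/\{\pm I\}$ is by left multiplication of a dense subgroup of $\mathrm{SO}(3,\R)$, so normalized Haar is the only $\nu_0$-stationary probability on $P(TS^2)$ and no graph stationary measure exists; a persistence/rigidity argument on the projective cocycle, valid for sufficiently small $C^k$-perturbations, transfers this obstruction to the perturbed setting.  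The only remaining alternative of Theorem \ref{thm:3} is that $\mu$ is absolutely continuous and equals $m|_A/m(A)$ for some $\Gamma$-invariant $A\subset S^2$ with $m(A)>0$.  Finally, to conclude $A=S^2$ modulo null sets, I would decompose $L^2(S^2,m)=\bigoplus_{l\ge 0}H_l$ into spherical harmonics and use density of $\langle R_1,\dots,R_\ell\rangle$ in $\mathrm{SO}(3,\R)$ to see that each non-trivial $H_l$ contains no vector jointly fixed by the $R_i$; a finite-dimensional continuity argument on each $H_l$ then transfers this joint non-invariance to the perturbed generators, so $\chi_A$ (being $\Gamma$-invariant) must be constant and hence $\mu=m$.
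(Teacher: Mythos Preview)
Your overall plan---reduce to Theorem~\ref{thm:3} using area preservation to get exponents summing to zero---matches the paper's approach, but two of your three inputs have genuine gaps.

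First, the positivity claim is false as stated. You assert that for all sufficiently small area-preserving perturbations $(g_i)$ the top exponent of every non-atomic stationary measure is strictly positive. But if $h\in\diff^k_m(S^2)$ and $g_i=hR_ih^{-1}$, the derivative cocycle is conjugate to one taking values in $\mathrm{SO}(2)$ and all exponents vanish. What Dolgopyat--Krikorian actually prove is a \emph{dichotomy}: either the $g_i$ are simultaneously smoothly conjugate to the $R_i$, or every $\nu$-stationary measure is hyperbolic. The paper handles the first branch separately---under the conjugacy, stationary measures for $(g_i)$ correspond to stationary measures for $(R_i)$, and since the conjugacy preserves $m$ the only one is $m$. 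You need this case split; you cannot absorb it into a uniform positivity statement.

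Second, your ergodicity argument via spherical harmonics does not work. The decomposition $L^2(S^2,m)=\bigoplus H_l$ is invariant under $\mathrm{SO}(3)$, but an arbitrary $g_i\in\diff^k_m(S^2)$ does \emph{not} preserve the subspaces $H_l$, so you cannot project a $\Gamma$-invariant function to $H_l$ and retain $\Gamma$-invariance. The paper instead cites ergodicity of $m$ for the perturbed system as another output of Dolgopyat--Krikorian.

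Finally, your ``persistence/rigidity'' argument ruling out a non-random stable line field is only a sketch; the paper again invokes a specific result (Corollary~4 of Dolgopyat--Krikorian) rather than a soft limiting argument on $P(TS^2)$. A weak-$*$ limit of graph measures need not be a graph measure, so your perturbation argument here would also need justification.
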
  
\begin{proof}
In \cite{MR2309172} it is shown that either the diffeomorphisms  $g_i$ are simultaneously smoothly conjugated to $R_i$ or every $\nu$-stationary measure is hyperbolic.  
In the first case, the only stationary measures for the corresponding  $R_i$ is $m$ and thus using the conjugacy and that each $g_i$ preserves $m$, the only $\nu$-stationary measure is $m$.  

In the latter case, it is also shown in  \cite[Corollary 4]{MR2309172} that the stable line-field is not non-random.  The result in this case follows from Theorem \ref{thm:3} and, as is also shown in \cite{MR2309172}, that $m$ is ergodic for the perturbed system.  \end{proof}
 
\subsection{Other applications}
From Theorem \ref{thm:3}, we immediately obtain the main results of \cite{MR2831114,MR2726604} for measures $\nu$ on $\Sl(2,\Z)$ acting on $\T^2$ that satisfy a $\log$-integrability condition $\int \log\| A\| \ d \nu(A)<\infty $.  In  \cite{MR2831114} the measure $\nu$ is assumed finitely supported.  In \cite{MR2726604} a stronger integrability hypothesis is needed.  Using the methods of this paper, the results of  \cite{MR2831114} are expected to hold under $\log$-integrability hypothesis.  

Consider a flat surface $S$ with Veech group $\Gamma\subset \SL(2,\R)$.  As was pointed out to the authors by J. Athreya, Theorem \ref{thm:3} implies that if the Veech group is infinite and non-elementary then for any finitely supported measure $\nu$ generating $\Gamma$, all ergodic $\nu$-stationary measures $\mu$ on $S$ are either finitely-supported or are the invariant area.  There are technicalities  in applying Theorem \ref{thm:3} directly as the action is non-differentiable at the cone points.  This mild difficulty won't be addressed here.  

\section{Background and notation} \label{sec:BG}
In this section, we continue to work in the setting introduced in Sections \ref{sec:absSkew} and \ref{sec:MainTheoremABSskew}.  
We outline extensions of a number of standard facts from the theory of nonuniformly hyperbolic diffeomorphisms to the setting of the fiber-wise dynamics for skew products.  
As previously remarked, Theorem \ref{thm:main} holds trivially if the fiber-wise exponents are all of the same sign.  Moreover, the hypotheses of Theorem \ref{thm:skewproductABS} rule out that all exponent are of the same sign.  
We thus assume for the remainder  
that we have   one Lyapunov exponent of each sign
$\lambda^s<0<\lambda^u$.  
For the remainder, fix $0< \eps < \min\{1,\lambda^u/200, -\lambda^s/200\} .$ 

\subsection{Fiber-wise nonuniformly hyperbolic dynamics} \label{sec:PT}
We present a number of  extensions of the  theory of nonuniformly hyperbolic diffeomorphisms 
to the fiber-wise dynamics of skew products.  

\subsubsection{Subexponential estimates} 
We have the following standard results that follow from the integrability hypothesis 
\eqref{eq:IC2} and 
tempering kernel arguments  (c.f.\ \cite[Lemma 3.5.7]{MR2348606}.)

\begin{lemma}\label{prop:tempered}
There is a subset $\Omega_0\subset \Omega$ with $\nu(\Omega_0)= 1$ and  a measurable function 
 $D\colon \Omega_0\to [1,\infty)$ such that for $\nu$-\ae $\xi\in \Omega_0$ and $n\in \Z$.
\begin{enumerate}
\item $|f_{\xi}|_{C^1} \le  D(\xi)$
\item $|f\inv_{\xi}|_{C^1} \le  D(\xi)$
\item $\lip(Df_{\xi}) \le   D(\xi)$ and  $\lip(Df\inv_{\xi}) \le   D(\xi)$ 
\item  $D(\theta^n(\xi)) \le e^{|n|\eps} D(\xi)$ for all $n\in \Z$.  
\end{enumerate}
\end{lemma}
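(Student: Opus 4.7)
The plan is to construct $D$ via a standard tempering-kernel argument applied to the raw pointwise bound on the cocycle data. First I would set
\[
D_0(\xi) := \max\bigl\{\,|f_\xi|_{C^1},\ |f_\xi\inv|_{C^1},\ \lip(Df_\xi),\ \lip(Df_\xi\inv),\ 1\,\bigr\}.
\]
Because $|\cdot|_{C^2}$ dominates both $|\cdot|_{C^1}$ and the Lipschitz constant of the first derivative (with a constant depending only on $M$), the integrability hypothesis \eqref{eq:IC2} gives $\int \log D_0\, d\nu < \infty$.

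Next I would use this $\log$-integrability to extract subexponential growth along $\theta$-orbits. By the Birkhoff ergodic theorem applied to $\log D_0$ (or, equivalently, by a Borel--Cantelli argument using $\nu\{\log D_0 > \eps |n|\} \le \eps|n|)\inv \int \log^+ D_0\, d\nu$ and invariance of $\nu$), for $\nu$-a.e.\ $\xi$ one has
\[
\lim_{|n|\to\infty} \frac{1}{|n|} \log D_0(\theta^n \xi) = 0.
\]
Let $\Omega_0$ be the full-measure set on which this holds and on which $D_0$ itself is finite.

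Then I would define the tempered dominating function
\[
D(\xi) := \sup_{n\in\Z} D_0(\theta^n \xi)\, e^{-|n|\eps}.
\]
The supremum is finite on $\Omega_0$ precisely because $\log D_0(\theta^n\xi) - |n|\eps \to -\infty$ for $\xi\in\Omega_0$, and $D(\xi) \ge D_0(\xi) \ge 1$ by taking $n=0$, which immediately gives conclusions (1), (2), (3). For the equivariance estimate (4), the change of variable $m = n+k$ and the elementary inequality $|m-k| \ge |m| - |k|$ yield
\[
D(\theta^k \xi) = \sup_{m\in\Z} D_0(\theta^m \xi)\, e^{-|m-k|\eps} \le e^{|k|\eps}\sup_{m\in\Z} D_0(\theta^m \xi)\, e^{-|m|\eps} = e^{|k|\eps} D(\xi).
\]
Measurability of $D$ is automatic as a countable supremum of measurable functions.

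There is essentially no obstacle beyond verifying the subexponential limit; this is the only place the full strength of \eqref{eq:IC2} enters (via $\log$-integrability of the $C^1$ data together with the Lipschitz constant of $Df$), and once it is in hand the tempering sup-construction is purely formal.
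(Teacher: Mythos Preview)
Your argument is correct and is precisely the standard tempering-kernel construction that the paper invokes (the paper does not spell out a proof but simply refers to \cite[Lemma~3.5.7]{MR2348606}). The only cosmetic issue is a stray unmatched parenthesis in your Borel--Cantelli estimate; mathematically nothing is missing.
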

\noindent Here $\lip(Df_\xi)$ denotes the Lipschitz constant of the map $x\mapsto D_xf_\xi$ for fixed $\xi.$
\begin{lemma}\label{prop:growth}
There is a measurable  function $L\colon X\to [1,\infty)$   
such that for $\muskew$-\ae $(\xi,x)\in X$ and $n \in \Z$
\begin{enumerate}
\item For $v\in E^s_\xi( x)$, 
	\[	 L(\xi,x)\inv \exp(n\lambda^s- |n|\tfrac 1 2 \eps)\|v\|\le  \|Df_{\xi}^n v\|
		\le L(\xi,x) \exp(n\lambda^s+ |n|\tfrac 1 2 \eps)\|v\| . \]
\item For $v\in E^u_\xi( x)$, 
\[	L(\xi,x)\inv  \exp(n\lambda^u- |n|\tfrac 1 2 \eps)\|v\|\le  \|Df_{\xi}^n v\|
		\le L(\xi,x) \exp(n\lambda^u+ |n|\tfrac 1 2 \eps)\|v\|.   \]
\item $\angle\left(E^s_{\theta^n(\xi)}(D\cocycle(x)), E^u_{\theta^n(\xi)}(D\cocycle(x))\right)  >\dfrac{1}{L(\xi, x)} \exp (-|n|\eps ) $.
\end{enumerate}
Furthermore for $n\in \Z$
$$L(F^n(\xi,x)) \le L(\xi,x) e^{\eps |n|}.$$   
\end{lemma}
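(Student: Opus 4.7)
The plan is to apply the two-sided Oseledec multiplicative ergodic theorem to the fiber-wise derivative cocycle $DF$ over $(F,\muskew)$, obtaining pointwise (non-uniform) exponential rates on $E^s$ and $E^u$, and then to promote these to the stated $\eps$-tempered bounds via a standard tempering-kernel construction; the subexponential growth along orbits needed for tempering will be supplied by Lemma \ref{prop:tempered}(4).

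First I would verify that \eqref{eq:IC2} together with Lemma \ref{prop:tempered} gives $\log\|DF\|,\log\|DF\inv\|\in L^1(\muskew)$, so that Oseledec applies to the invertible measure-preserving system $(F,\muskew)$. Under the standing assumption of one exponent of each sign, this produces the measurable splitting $E^s\oplus E^u$ and the pointwise convergence
\[ \lim_{n\to\pm\infty}\tfrac1n\log\|D_x\cocycle v\|=\lambda^\sigma,\qquad v\in E^\sigma_\xi(x)\sm\{0\},\ \sigma\in\{s,u\}, \]
together with a.s.\ positivity of $\alpha(\xi,x):=\angle(E^s_\xi(x),E^u_\xi(x))$.

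Next I would define an auxiliary measurable function $L_0\colon X\to[1,\infty)$ to be the maximum, over $\sigma\in\{s,u\}$, $n\in\Z$, and $v\in E^\sigma_\xi(x)\sm\{0\}$, of the three quantities
\[ \frac{\|D_x\cocycle v\|}{\|v\|}e^{-n\lambda^\sigma-|n|\eps/4},\qquad \frac{\|v\|}{\|D_x\cocycle v\|}e^{n\lambda^\sigma-|n|\eps/4},\qquad \alpha(F^n(\xi,x))^{-1}e^{-|n|\eps/4}. \]
The first two suprema are a.s.\ finite directly from Oseledec. The angle supremum is a.s.\ finite by a standard Pesin-type argument: since $\|D_x\cocycle\|$ grows subexponentially in $|n|$ by Lemma \ref{prop:tempered}, any superexponential decay of $\alpha\circ F^n$ would be incompatible with $D_x\cocycle$ acting on $E^u$ and $E^s$ at the Oseledec rates $e^{n\lambda^u}$ and $e^{n\lambda^s}$.

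Finally I would set $L(\xi,x):=\sup_{n\in\Z}L_0(F^n(\xi,x))e^{-|n|\eps/2}$. This supremum is finite a.s.\ because $L_0\circ F^n$ inherits subexponential growth in $n$ from the subexponential control of each of its constituent suprema, and by construction $L(F^m(\xi,x))\le L(\xi,x)e^{|m|\eps}$. Conclusions (1), (2), (3) then read off from the three terms in the definition of $L_0$, with the $\eps/4$ in $L_0$ and the $\eps/2$ in the tempering sup combining to give the advertised $\eps/2$. The main technical obstacle is the angle estimate in the third term of $L_0$; once that is in place, both finiteness of $L_0$ and its subexponential growth along orbits reduce to Oseledec plus the bounds of Lemma \ref{prop:tempered}, and the tempering sup defining $L$ converges.
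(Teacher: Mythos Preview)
Your approach is precisely the standard tempering-kernel construction the paper has in mind; the paper itself gives no proof and simply cites ``tempering kernel arguments (c.f.\ \cite[Lemma 3.5.7]{MR2348606}).'' The architecture---take an untempered $L_0$ built from Oseledec data, then set $L=\sup_n L_0\circ F^n\cdot e^{-|n|\eps/2}$---is correct, and your $\eps/4$ versus $\eps/2$ bookkeeping checks out (in fact your construction yields the stronger slow-variation bound $L\circ F^m\le e^{|m|\eps/2}L$).

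There is one genuine misstatement you should fix. Lemma~\ref{prop:tempered} does \emph{not} say that $\|D_x\cocycle\|$ grows subexponentially in $|n|$; it only tempers the one-step data $|f_{\theta^k(\xi)}|_{C^1}\le D(\theta^k(\xi))\le e^{|k|\eps}D(\xi)$, and the $n$-fold product of these is certainly exponential (of rate $\lambda^u$ by Oseledec). So your stated reason for finiteness of the angle term in $L_0$ is wrong as written. The correct justification is that the subexponential behavior of $\angle\bigl(E^s_{F^n},E^u_{F^n}\bigr)$ is already part of the two-sided Oseledec theorem (regularity of the splitting). Alternatively, since the fibers are $2$-dimensional here, you can read it off directly from the area identity
\[
\sin\angle\bigl(E^s_{F^n(\xi,x)},E^u_{F^n(\xi,x)}\bigr)\;=\;\sin\angle\bigl(E^s_\xi(x),E^u_\xi(x)\bigr)\cdot\frac{|\det D_x\cocycle|}{\bigl\|\restrict{D_x\cocycle}{E^s_\xi(x)}\bigr\|\,\bigl\|\restrict{D_x\cocycle}{E^u_\xi(x)}\bigr\|},
\]
each factor on the right having exponential rate $\lambda^s+\lambda^u$, $\lambda^s$, $\lambda^u$ respectively with subexponential error. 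With this correction in place, the rest of your argument (finiteness of $L_0$, subexponential growth of $L_0\circ F^n$, and hence finiteness and tempering of $L$) goes through unchanged.
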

\noindent Here $\angle$ denotes the Riemannian angle between two subspaces.

\subsubsection{Lyapunov charts}
\label{sec:stabmanifold}
We introduce families of two-sided Lyapunov charts.  
The construction depends on the construction of a  Lyapunov norm which we present in    Section \ref{sec:lyapnorm}.  
We note that in Section \ref{sec:stabmanifold}, in the case that $\Omega =( \Diff^2(M))^\Z$ we will need one-sided charts that  depend only on the future itinerary of $\xi\in (\Diff^2(M))^\Z$.  
 Given $v\in \R^2$ decompose $v = v_1+ v_2$ according to the standard basis and write $|v|_i = |v_i|$ and $|v| = \max\{|v|_i\}$.
Write $\R^2(r)$ for the ball of radius $r$ centered at $0$.  
 
 \def\wtd{\widetilde}
\label{sec:charts2sided}
From standard constructions (see \cite[Appendix]{MR819556}, \cite [VI.3]{MR1369243})
for every  $0<\epone<\eps$, there is a measurable function $\ell\colon \Omega \times M\to [1,\infty)$  and a full  measure set $\Lambda \subset \Omega \times M$ such that  
\begin{enumerate}
\item for $(\xi,x)\in \Lambda$ there is a neighborhood $U_{(\xi,x)}\subset M $ of $x$ and a $C^\infty$ diffeomorphism $\phi(\xi,x)\colon U_{(\xi,x)} \to \R^2(\ell(\xi,x)\inv) $
with
\begin{enumerate}
	\item $\phi(\xi,x) (x) = 0$;
	\item $D\phi (\xi,x) E^s_\xi (x) = \R \times \{0\}$;
	\item $D\phi(\xi,x) E^u_\xi (x)  = \{0\} \times \R$;
\end{enumerate}
\item writing $$\td f(\xi, x)= \phi (F(\xi, x)) \circ f_{\xi} \circ \phi (\xi, x) \inv, \quad 
 \td f\inv(\xi, x)= \phi (F^{-1}(\xi, x)) \circ f_{\xi}\inv \circ \phi (\xi, x) \inv$$ 
where defined we have 
\begin{enumerate}
	\item \label{1111}$\td f (\xi, x) (0) = 0$;
	\item $D_0  \td f (\xi, x) =  \left(\begin{array}{cc}\alpha & 0 \\0 & \beta \end{array}\right)$
	where $e^{\lambda^s-\epone} \le \alpha \le e^{\lambda^s+\epone}$
	 and  $e^{\lambda^u-\epone} \le \beta \le e^{\lambda^u+\epone}$;
\end{enumerate}
writing $\lip(\cdot)$ for  the Lipschitz constant of a map on its domain
\begin{enumerate}[resume]
	\item $\lip(\td f (\xi, x) - D_0  \td f  (\xi, x))<\epone$; 
\item \label{4444} $\lip(D  \td f (\xi, x))<\ell(\xi, x)$;   

\end{enumerate} 
\item similar properties to \ref{1111}--\ref{4444} hold for $\td f\inv(\xi,x)$.
\item there is a uniform $k_0$ with  $k_0\inv \le \lip(\phi(\xi,x))\le \ell(\xi, x) $.
\item $\ell(F^n(\xi,x)) \le \ell(\xi,x) e^{|n|\epone}$ for all $n\in \Z$.  
\end{enumerate}
Let $$\lambda_0 = \max\{\lambda^u, -\lambda^s\} + 2\epone.$$  
Then, the domains of $\td f(\omega, x)$ and $\td f\inv(\omega, x)$ contain the ball in $\R^2$ of norm $\ell(\xi,x) \inv e^{-\lambda_0-\epone}$.  
Note also that the domain of $\phi(\xi,x)$ contains a ball of radius $\ell(\xi, x)^{-2}$ centered at $x$.  


Write $\R^s=\R \times \{0\}$ and $\R^u =  \{0\} \times \R$.  
Recall that $g\colon D\subset \R\to \R$ is $k$-Lipschitz if $|g(x)- g(y)| \le k |x-y|$ for $x,y\in D$.  We have the following observation.  
\begin{lemma}
\label{lemm:incling}
Let $D\subset \R^s(e^{-\lambda_0-\epone}\ell(\xi,x)\inv )$.  
Let $g\colon D \to \R^u(e^{-\lambda_0-\epone}\ell(\xi,x)\inv )$ be a $1$-Lipschitz function.  Then 
$$\td f \inv(\xi, x) (\graph (g))$$ is  the graph of a $1$-Lipschitz function $$\hat g\colon \hat D \to \R^u( e^{-\lambda_0-\epone} \ell(F\inv(\xi,x))\inv )$$  
for some $\hat D \subset \R^s\left( \ell(F\inv(\xi,x))\inv\right).$
\end{lemma}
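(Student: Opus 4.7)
The plan is to treat this as a standard graph-transform / cone-invariance estimate inside the two-sided Lyapunov chart. By the construction in Section \ref{sec:charts2sided}, on its domain $\td f\inv(\xi,x)$ has the form $D_0\td f\inv(\xi,x) + R$, where the linear part is the diagonal map with eigenvalues $\alpha\inv \ge e^{|\lambda^s|-\epone}$ on $\R^s$ (expansion) and $\beta\inv \le e^{-\lambda^u+\epone}$ on $\R^u$ (contraction), and the nonlinear remainder $R$ is $\epone$-Lipschitz on the chart domain. Since we are pulling back a graph over $\R^s$ by a map that expands $\R^s$ and contracts $\R^u$, the conclusion that ``horizontal graphs remain flatter horizontal graphs'' is the natural one; the proof is just bookkeeping on slopes and on the sizes of domain and range.

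First I would establish the cone (slope) bound. Since $R$ is $\epone$-Lipschitz, by Rademacher's theorem, at a.e.\ point $p$ in the chart one may write $D_p\td f\inv = D_0\td f\inv + R'(p)$ with $\|R'(p)\|_{\mathrm{op}}\le \epone$ (in the max norm). A tangent vector $v=v_1+v_2$ to $\graph(g)$ satisfies $|v|_2\le |v|_1$ by the $1$-Lipschitz hypothesis, so
\[
\frac{|D_p\td f\inv v|_2}{|D_p\td f\inv v|_1}\ \le\ \frac{\beta\inv|v|_2+\epone|v|_1}{\alpha\inv|v|_1-\epone|v|_1}\ \le\ \frac{e^{-\lambda^u+\epone}+\epone}{e^{|\lambda^s|-\epone}-\epone},
\]
which, for $\epone$ as fixed in Section \ref{sec:BG}, is much smaller than $1$. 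In particular $\pi_1\circ \td f\inv(\xi,x)$ is strictly monotone expanding along $\graph(g)$, hence injective, so $\td f\inv(\xi,x)(\graph(g))$ is the graph of some function $\hat g\colon \hat D\to \R^u$ over $\hat D:=\pi_1\bigl(\td f\inv(\xi,x)(\graph(g))\bigr)$, and the slope bound above shows $\hat g$ is $1$-Lipschitz (in fact much better).

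Next I would verify the size estimates. Because $\td f\inv(\xi,x)(0)=0$, the Lipschitz bound on $R$ gives $|\td f\inv(\xi,x)(p)-D_0\td f\inv(\xi,x)(p)|\le \epone |p|$ for every $p$ in the chart domain. Applying this at $p=(s,g(s))$, for which $|p|=|s|\le e^{-\lambda_0-\epone}\ell(\xi,x)\inv$,
\[
|\td f\inv(\xi,x)p|_1 \le (e^{|\lambda^s|+\epone}+\epone)\,e^{-\lambda_0-\epone}\ell(\xi,x)\inv,\qquad |\td f\inv(\xi,x)p|_2 \le (e^{-\lambda^u+\epone}+\epone)\,e^{-\lambda_0-\epone}\ell(\xi,x)\inv.
\]
Combining the choice $\lambda_0=\max\{\lambda^u,|\lambda^s|\}+2\epone$, the hypothesis $\lambda^u>0$, and the tempering estimate $\ell(F\inv(\xi,x))\inv \ge e^{-\epone}\ell(\xi,x)\inv$, for $\epone$ sufficiently small the first right-hand side is bounded by $\ell(F\inv(\xi,x))\inv$ and the second by $e^{-\lambda_0-\epone}\ell(F\inv(\xi,x))\inv$, yielding the claimed containments $\hat D\subset \R^s(\ell(F\inv(\xi,x))\inv)$ and $\hat g(\hat D)\subset \R^u(e^{-\lambda_0-\epone}\ell(F\inv(\xi,x))\inv)$.

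I do not anticipate a real obstacle; this is routine Pesin-theoretic bookkeeping. The one point requiring care is that the additive $\epone$-error from the nonlinear part of $\td f\inv$ must not swallow the exponential hyperbolic gaps $e^{|\lambda^s|}$ and $e^{-\lambda^u}$, which is guaranteed by the standing convention $0<\epone<\min\{1,\lambda^u/200,-\lambda^s/200\}$ (and by freely shrinking $\epone$ further in the chart construction if the universal constants force it).
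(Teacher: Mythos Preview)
The paper states this lemma as an ``observation'' and gives no proof; your graph-transform/cone argument is exactly the standard justification and is correct in all essentials.

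One small slip: for $p=(s,g(s))$ you write $|p|=|s|$, which would require $|g(s)|\le |s|$; since $g$ is not assumed to fix $0$ this need not hold. However, the only thing you actually use is $|p|\le e^{-\lambda_0-\epone}\ell(\xi,x)^{-1}$, and that follows directly from the hypotheses $D\subset \R^s(e^{-\lambda_0-\epone}\ell(\xi,x)^{-1})$ and $g(D)\subset \R^u(e^{-\lambda_0-\epone}\ell(\xi,x)^{-1})$, so the argument goes through unchanged. (You could also bypass Rademacher and argue directly with finite differences using the $\epone$-Lipschitz bound on $R$, which gives the $1$-Lipschitz conclusion for $\hat g$ without any a.e.\ differentiability step; this is cosmetic.)
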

%
\subsubsection{Stable manifold theorem}

Relative to the  charts $\phi (\xi,x)$ above, one may apply either the Perron--Irwin method or the Hadamard graph transform method to construct stable manifolds.  The existence of stable manifolds for diffeomorphisms of manifolds with non-zero exponents is due to Pesin  \cite{MR0458490}. 
In the case of random dynamical systems, given the family of charts above, the statements and proofs hold with minor modifications (see for example \cite{MR1369243}).
See Section \ref{sec:stabmanifold} for some details in the construction of stable manifolds relative to one-sided charts.

\begin{theorem}[Local stable manifold theorem]\label{thm:locPesinStab}
\item For  $(\xi,x)\in \Lambda$ there is a 
 $C^{1,1}$ function $$h^s{(\xi,x)}\colon \R^s(\ell(\xi,x)\inv) \to \R^u(\ell(\xi,x)\inv))$$ with  \begin{enumerate}
	\item $h^s(\xi,x)(0)= 0$;
\item 				$D_0h^s{(\xi,x)}= 0$;
\item $ \|Dh^s(\xi,x) \| \le 1/3$;
\item $\td f(\xi,x) (\graph( h^s(\xi,x)))\subset  \graph (h^s(F(\xi,x)))\subset \R^2(\ell(F(\xi,x)) \inv)$; in particular,  $\graph(h^s(\xi,x))$, is in the domain of $\td f(\xi,x)$.
\end{enumerate}
Setting   
$$V^s(\xi, x) := \phi(\xi,x) \inv  \left( \graph\left({ h^s{(\xi, x)}}  
\right)\right)$$ we have 
\begin{enumerate}[resume]
\item $f_{ \xi}(V^s(\xi, x)) \subset V^s(F(\xi,x))$
\item\label{itempp} for $z,y\in V^s(\xi,x)$ and $n\ge 0$ 
$$d(f^n_\xi ( z), f^n_\xi(y))\le
\ell(\xi,x) k_0 \exp((\lambda^s + 2\epone)n)d(y,z). $$  
\end{enumerate}
\end{theorem}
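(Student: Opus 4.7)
The plan is to construct $h^s(\xi,x)$ as the unique fixed point of a Hadamard graph-transform operator acting on sequences of $1$-Lipschitz graphs indexed by the forward $F$-orbit of $(\xi,x)$, adapting the random-dynamics argument of \cite[Ch.~III]{MR1369243} to the present fiber-wise setting.

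First I would set up the space of admissible graphs. For each $n\ge 0$, let $\mathcal{G}_n$ be the collection of $1$-Lipschitz maps $g\colon \R^s(r_n)\to \R^u(r_n)$ with $g(0)=0$, where $r_n:=c\,\ell(F^n(\xi,x))^{-1}$ for a small universal constant $c$. Form the product $\mathcal{G}=\prod_{n\ge 0}\mathcal{G}_n$ and equip it with the weighted sup-norm $\|(g_n)\|:=\sup_n \ell(F^n(\xi,x))\|g_n\|_{C^0}$. Note that by item (5) of Section \ref{sec:charts2sided}, $r_n^{-1}\le \ell(\xi,x)e^{n\epone}$, i.e.\ the weights grow only subexponentially.

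Next I would define the transform $T\colon \mathcal{G}\to\mathcal{G}$. By Lemma \ref{lemm:incling}, the preimage $\td f^{-1}(F^{n+1}(\xi,x))(\graph g_{n+1})$ is the graph of a $1$-Lipschitz function on a subset of $\R^s(\ell(F^n(\xi,x))^{-1})$; restricting its domain to $\R^s(r_n)$ produces $T(g)_n\in \mathcal{G}_n$. Using properties \ref{1111}--\ref{4444}, the $\R^u$-component of any point in $\graph g_{n+1}$ is contracted under $\td f^{-1}$ at rate $\beta^{-1}\approx e^{-\lambda^u+\epone}$ up to a Lipschitz perturbation of size $\epone$; consequently $T$ is a strict contraction in the weighted sup-norm with rate $\le e^{-\lambda^u+3\epone}<1$, the inequality holding because $\epone<\lambda^u/200$. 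Banach's fixed point theorem then produces a unique fixed sequence $(h^s_n)$, and I would set $h^s(\xi,x):=h^s_0$.

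Then I would read off the listed conclusions. The invariance $\td f(\xi,x)(\graph h^s_0)\subset \graph h^s_1$ is built into the fixed-point identity $T(h^s)=h^s$. The identity $D_0 h^s=0$ together with $\|Dh^s\|\le 1/3$ follows from a second graph-transform applied to tangent lines: the projectivized derivative $D\td f$ contracts the $\R^u$-direction at rate $\alpha/\beta<e^{\lambda^s-\lambda^u+2\epone}$, so iterating backwards from any line produces geometric convergence to $\R^s$. The $C^{1,1}$ regularity follows from $\lip(D\td f(\xi,x))<\ell(\xi,x)$ by the standard H\"older-cone bootstrap, using the tempered growth $\ell(F^n(\xi,x))\le\ell(\xi,x)e^{n\epone}$ to sum the resulting geometric series. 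Forward invariance $f_\xi V^s(\xi,x)\subset V^s(F(\xi,x))$ is immediate after conjugating through $\phi$. For item \ref{itempp}, the points $\phi(F^n(\xi,x))(f_\xi^n y), \phi(F^n(\xi,x))(f_\xi^n z)$ lie on $\graph(h^s_n)$; their $\R^s$-separation contracts at rate $e^{\lambda^s+\epone}$ by the diagonal form of $D_0 \td f$ with the Lipschitz error absorbed in $\epone$, the $\R^u$-separation is controlled by the $1/3$-Lipschitz bound on $h^s_n$, and pulling back through $\phi(F^n(\xi,x))^{-1}$ (whose Lipschitz constant is at most $k_0$) yields the claimed estimate after replacing $\epone$ by $2\epone$.

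The main technical obstacle is the bookkeeping of the orbit-dependent chart radii $r_n$, which can shrink by $e^{n\epone}$; one must verify that the $\R^u$-image of $T(g)_n$ actually lies inside $\R^u(r_n)$, not merely inside $\R^u(\ell(F^n(\xi,x))^{-1})$, and that the domain of the preimage graph covers $\R^s(r_n)$. Both conditions reduce to checking that the hyperbolic rates $e^{-\lambda^u}$ and $e^{\lambda^s}$ beat the tempering rate $e^{\epone}$ by a definite margin, which is precisely what the constraint $\epone<\min\{\lambda^u,-\lambda^s\}/200$ ensures.
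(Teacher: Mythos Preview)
The paper does not supply its own proof of this theorem: immediately before the statement it says that ``one may apply either the Perron--Irwin method or the Hadamard graph transform method to construct stable manifolds,'' and refers to \cite{MR1369243} for the random-dynamics version. Your Hadamard graph-transform outline is exactly one of those two standard routes, uses the paper's own Lemma~\ref{lemm:incling} in the intended way, and the contraction/regularity/invariance deductions you sketch are the usual ones; the proposal is correct and aligned with what the paper has in mind.
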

\def\loc{\mathrm{loc}}
We define 
$ V^s(\xi,x)\subset M$ to be the \emph{local stable manifold} at $x$ for $\xi$ relative to the above charts.   
We  similarly construct  local unstable manifolds  $V^{u}(\xi,x)$. Similar to \ref{itempp} above,  for $z,y\in V^u(\xi,x)$ and $n\ge 0$
$$d(  f^{-n}_\xi ( z), f^{-n}_\xi(y))\le
\ell(\xi,x) k_0 \exp((-\lambda^u + 2\epone)n)d(y,z).$$
We remark that the family of local stable manifolds $\{V^s(\xi,x)\}$ forms a measurable family of embedded submanifolds.  


We define the \emph{global stable} and \emph{unstable manifolds} at $x$ for  $\xi$ by
\begin{align}
W^s_\xi(x)&: = \{ y\in M \mid \limsup_{n\to \infty} \tfrac 1 n \log d (f_\xi^n(x), f_\xi^n(y) )<0\}\\
W^u_\xi(x)&: = \{ y\in M \mid \limsup_{n\to -\infty} \tfrac 1 n \log d (f_\xi^n(x), f_\xi^n(y) )<0\}. \label{eq:unstmani}
\end{align}
For $\muskew$-\ae $(\xi,x)$  we have the nested union $W^s_\xi(x) = \bigcup_{n\ge0}(\cocycle)\inv(V^s(F^n(\xi, x)))$. 
  It follows for such $(\xi,x)$ that  $W^s_\xi(x)$ is a $C^{1,1}$-injectively immersed curve tangent to $E^s_\xi(x)$. 
We write 
$$
W^s(\xi,x):= \{\xi\}\times W^s_{\xi}(x), \quad 
W^u(\xi,x):= \{\xi\}\times W^u_{\xi}(x)$$ for the associated \emph{fiber-wise stable} and \emph{unstable} manifolds in $X= \Omega\times M$.  

The above family of charts and construction of local stable and unstable manifold  depends on  $\cocycle$ for all $n\in \Z$.  However, from \eqref{eq:unstmani} it is clear that $W^u_\xi(x)$ depends only on $\cocycle$ for all $n\le 0$. This fact will be used heavily in the sequel.  In Section \ref{sec:stabmanifold} we will use one-side charts to construct local stable manifolds that depend only on $\cocycle$ for all $n\ge 0$.

\subsection{Affine parameters}\label{sec:affpar}
Since each stable  manifold $\stabM x \xi$ is a curve, it has a natural parametrization via the Riemannian arc length.  
We define an alternative parametrization, defined on almost every stable manifold, that conjugates the non-linear dynamics $\restrict {f^n_\xi}{\stabM x \xi}$ and the linear dynamics $\restrict {Df^n_\xi}{E^s _\xi(x)}$.  We sketch  the construction and refer the reader to \cite[Section 3.1] {MR2261075} for additional details.  
\begin{prop}\label{prop:Stabman}
For almost every $(\xi,x)$ and any $y\in \stabM x \xi$, there is a $C^{1,1}$ diffeomorphism $$H^s_{(\xi,y)} \colon \stabM x \xi \to T_y \stabM x \xi$$ 
such that 
\begin{enumerate}
\item  restricted to   $\stabM x \xi$ the parametrization  
 intertwines the  nonlinear dynamics $f_\xi$ with the differential $D_yf_\xi$: 
$$ D_y f_\xi \circ  H^s_{(\xi,y)} = H^s_{F(\xi,y)} \circ \restrict{f_\xi}{\locstabM x \xi};$$

\item $H^s_{(\xi,y)} (y)= 0$ and $D_yH^s_{(\xi,y)} = \id$;
\item if $z\in \stabM x \xi$ then the change of coordinates   
\[H_{(\xi,y)} ^s \circ \left(H_{(\xi,z)} ^s\right)\inv \colon T_z \stabM x \xi \to T_y \stabM x \xi\] 
is an affine map  with derivative 
$$D_v\left(H_{(\xi,y)} ^s \circ \left(H_{(\xi,z)} ^s\right)\inv\right) = \rho_{(\xi,y)}(z)$$ for any $v\in T_z \stabM x \xi$  
where $ \rho_{(\xi,y)}(z)$  is defined below.  
\end{enumerate}
\end{prop}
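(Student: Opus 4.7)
The plan is to construct $H^s_{(\xi,y)}$ as the Koenigs-type limit that linearizes the fiber-wise dynamics along the one-dimensional stable manifold. Fix $(\xi,y)$ in a full-measure set where the local-stable-manifold theorem applies to every forward iterate $F^n(\xi,y)$ and the tempering estimates of Lemma \ref{prop:growth} hold. Pick once and for all a smooth unit vector field along the orbit $\{f^n_\xi(y)\}_{n\ge 0}$ spanning $E^s_{\theta^n\xi}(f^n_\xi(y))$; arc-length integration of this vector field produces, for each $n\ge 0$, a $C^{1,1}$ arc-length chart $\gamma_n\colon (-r_n,r_n)\to W^s_{\theta^n\xi}(f^n_\xi(y))$ with $\gamma_n(0)=f^n_\xi(y)$. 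In these charts $f_\xi$ induces a one-dimensional map $\tilde f_n$ fixing $0$ with $\tilde f_n'(0)=D_{f^n_\xi(y)}f_\xi|_{E^s}$, and because $f_\xi$ is $C^{1,1}$ we have the quadratic estimate $|\tilde f_n(t)-\tilde f_n'(0)t|\le C_n t^2$ where $C_n$ grows at most sub-exponentially (controlled by the tempering function $\ell$ and Lemma \ref{prop:tempered}).

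The candidate diffeomorphism is then
\[
H^s_{(\xi,y)}(z):=\lim_{n\to\infty}\bigl(D_yf^n_\xi\bigr|_{E^s_\xi(y)}\bigr)^{-1}\cdot \gamma_n^{-1}\bigl(f^n_\xi(z)\bigr),
\]
where the output is interpreted in $T_yW^s_\xi(x)=E^s_\xi(y)$ via the chosen unit vector at $y$. To see the limit exists, write the telescoping difference between the $n$-th and $(n+1)$-st terms: the discrepancy comes from the nonlinear error of $\tilde f_n$ at the point $\gamma_n^{-1}(f^n_\xi(z))$, which by the $C^{1,1}$ estimate is bounded by $C_n\cdot|\gamma_n^{-1}(f^n_\xi(z))|^2$. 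Since $|\gamma_n^{-1}(f^n_\xi(z))|$ decays like $e^{(\lambda^s+2\epone)n}$ by Theorem \ref{thm:locPesinStab}\ref{itempp} while $D_yf^n_\xi|_{E^s}$ decays like $e^{n\lambda^s\pm n\tfrac12\epone}$ and $C_n$ grows at most like $e^{n\epone}$, the summands are dominated by a geometric series with ratio $e^{\lambda^s+4\epone}<1$ (taking $\epone$ small enough). This yields absolute convergence and simultaneously shows the limit is $C^{1,1}$ with $H^s_{(\xi,y)}(y)=0$ and $D_yH^s_{(\xi,y)}=\id$.

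The intertwining property (1) is built into the definition: shifting the index $n\mapsto n+1$ and using $D_yf^{n+1}_\xi=D_{f_\xi(y)}f^n_{\sigma\xi}\circ D_yf_\xi$ rewrites the defining limit for $H^s_{(\xi,y)}(z)$ as $(D_yf_\xi)^{-1}\circ H^s_{F(\xi,y)}(f_\xi(z))$. Injectivity and surjectivity onto $T_yW^s_\xi(x)$ follow because each $H^s_{(\xi,y)}$ is a $C^{1,1}$ local diffeomorphism near $y$ and the intertwining relation propagates this to the whole $W^s_\xi(x)$ via the exhaustion $W^s_\xi(x)=\bigcup_n(f_\xi^n)^{-1}(V^s(F^n(\xi,y)))$. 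For (3), fix $z\in W^s_\xi(x)$ and consider the composition $\Phi:=H^s_{(\xi,y)}\circ(H^s_{(\xi,z)})^{-1}$. Both $H^s_{(\xi,y)}$ and $H^s_{(\xi,z)}$ conjugate $\restrict{f_\xi}{W^s_\xi(x)}$ to the corresponding linear maps on tangent spaces, so $\Phi$ conjugates the linear map $D_zf_\xi|_{E^s_\xi(z)}$ to $D_yf_\xi|_{E^s_\xi(y)}$; since a $C^1$ conjugacy between two one-dimensional linear contractions with distinct fixed-point derivatives is forced to be affine (pulled back to the fixed point by iteration), $\Phi$ is affine. Its derivative is the unique scalar $\rho_{(\xi,y)}(z)$ sending the unit vector at $z$ (in the chosen trivialization of the stable bundle along $W^s_\xi(x)$) to the unique vector at $y$ for which the conjugacy identity holds; this defines $\rho_{(\xi,y)}(z)$.

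The only real technical point is the uniform-in-$n$ control on $C_n$ and on the arc-length charts $\gamma_n$ needed to force the geometric-series bound; this is exactly where the tempering estimate $\ell(F^n(\xi,x))\le\ell(\xi,x)e^{|n|\epone}$ together with the $C^{1,1}$ bound $\lip(D\tilde f)<\ell$ from Section \ref{sec:charts2sided} is used. Measurability in $(\xi,y)$ of the whole construction is automatic from the measurability of the charts and stable manifolds. Further details are as in \cite[Section 3.1]{MR2261075}.
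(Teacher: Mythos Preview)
Your Koenigs-limit construction of $H^s_{(\xi,y)}$ is correct and in fact coincides with the paper's construction: a change of variables in the arc-length integral shows your limit equals $\int_y^z \rho_{(\xi,y)}(t)\,dt$, where $\rho_{(\xi,y)}(t)=\lim_n \|D_tf^n_\xi|_{T_tW^s}\|\big/\|D_yf^n_\xi|_{E^s}\|=\prod_{k\ge0}J(F^k(\xi,t))/J(F^k(\xi,y))$. The paper simply defines $\rho$ first and then integrates; you build $H$ directly and would recover $\rho$ as its derivative. Your convergence argument and the verification of (1) and (2) are fine.

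Your argument for (3), however, does not work as written. Composing the two intertwining relations only gives
\[
D_yf_\xi\circ\Phi_{(\xi)}=\Phi_{F(\xi)}\circ D_zf_\xi,
\qquad \Phi_{F(\xi)}:=H^s_{F(\xi,y)}\circ\bigl(H^s_{F(\xi,z)}\bigr)^{-1},
\]
with a \emph{different} map $\Phi_{F(\xi)}$ on the right-hand side; so $\Phi=\Phi_{(\xi)}$ is not a self-conjugacy between two fixed linear contractions, and the ``pull back by iteration'' trick does not apply. (Incidentally, a $C^1$ map conjugating two one-dimensional linear contractions with \emph{distinct} derivatives at $0$ is forced to have derivative $0$ at $0$, so no such conjugacy exists; the parenthetical is backwards.) The correct route, which is what the paper does, is to compute $D\Phi$ directly: from your own construction $D_wH^s_{(\xi,y)}=\rho_{(\xi,y)}(w)$, hence
\[
D\Phi\big|_{H^s_{(\xi,z)}(w)}=\frac{\rho_{(\xi,y)}(w)}{\rho_{(\xi,z)}(w)}=\rho_{(\xi,y)}(z),
\]
the last equality being the obvious multiplicative cocycle identity for the infinite product defining $\rho$. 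Since this is independent of $w$, $\Phi$ is affine with derivative $\rho_{(\xi,y)}(z)$.
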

We take $(\xi,x)$ to be in the full measure $F$-invariant set such that for any $y,z\in \stabM x\xi$ there is some $k\ge 0$ with $f_\xi ^k(z)$ and $f_\xi ^k(y)$ contained in $V^s(F^k(\xi,x))$ and sketch the construction of $H_{(\xi,y)}^s$.   
First consider any  $y,z\in V^s(\xi,x)$  
and define
$$J(\xi, z):= \|D_zf_\xi  v\| \cdot \|v\|\inv$$ for any non-zero $v\in T_z\stabM x \xi$ 
where $\|\cdot\|$ denotes the Riemannian norm on $M$.
We define 
\begin{equation}\label{eq:rho}\rho_{(\xi,y)}(z) := \prod_{k = 0}^\infty\dfrac{J(F^k(\xi, z))}{J(F^k(\xi,y))}\end{equation}
Following   \cite[Section 3.1] {MR2261075},   
the right hand side of \eqref{eq:rho} converges uniformly in $z$ to a Lipschitz function.  The only modifications needed in our setting are the  sub-exponential growth of $\|Df_\xi\|$ and the Lipschitz constant  of $Df_\xi$ along orbits given by Lemma \ref{prop:tempered}, as well the sub-exponential growth in $n$ of  the Lipschitz variation of the tangent spaces $T_zV^s(F^n(\xi,x))$ in $z$.  The growth of the  Lipschitz constant of   $T_zV^s(F^n(\xi,x))$ follows from   the proof of the stable manifold theorem (for example in \cite{MR1369243}) or by an argument similar to \cite[Lemma 4.2.2]{MR819556}.
We may extend the definition of $\rho_{(\xi,y)}(z)$ to any $z,y\in \stabM x \xi$ using that $f_\xi ^k(z)$ and $f_\xi ^k(y)$ are contained in $V^s  (F^k(\xi,x))$ for some $k\ge 0$. 



We now define the affine parameter $H^s_{(\xi,y)}\colon \stabM x \xi\to T_y\stabM x \xi$ as follows.  
We define $H^s_{(\xi,y)}$ to be orientation-preserving and  
$$\|H^s_{(\xi,y)}(z)\|:= \int_y^z \rho_{(\xi,y)}(t) \ dt $$
where $\int_y^z \psi(t) \ dt$ is the integral of the function $\psi$, along the curve from $y$ to $z$ in $\stabM x \xi$,  with respect to the Riemannian arc-length on $\stabM x \xi$.  

It follows from computations in \cite[Lemma 3.2, Lemma 3.3] {MR2261075} that the map $H^s_{(\xi,y)}$ constructed above satisfies the properties above. 
We similarly construct unstable affine parameters $H^u_{(\xi,x)}$ with analogous properties.

\begin{remark}
  The unstable line fields $E^u_\xi(x)$, unstable manifolds, and corresponding affine parameters are constructed using only  the dynamics of  $\cocycle $ for $n\le 0$.  Recall that we assume $\xi\mapsto \cocycle [\xi][-1]$ is $\hat \Fol$-measurable and that $\theta (\hat \Fol) \subset \hat \Fol$.  It follows that  $\xi\mapsto  \cocycle [\xi][n]$ is $\hat \Fol$-measurable for all $n\le 0$.  Thus, the line fields $(\xi,x)\mapsto E^u_\xi(x)$, the unstable manifolds $(\xi,x)\mapsto \unstM x \xi$, and the corresponding affine parameters $H^u_{(\xi,x)}$ are $\Fol$-measurable.  
\end{remark}
\subsubsection{Parametrization of stable and unstable manifolds}\label{sec:frames}
We use the affine parameters $H^s$ and the background Riemannian norm on $M$ to parametrize local stable manifolds.  
For $(\xi,x)\in X$ such that  affine parameters are defined, write 
$$\displaystyle \locstabM  x  \xi := (H^s_x)\inv\left (\{v\in E^s_\xi(x) \mid \|v\| < r\}\right) $$ for the local stable manifold in $M$ and 
$$ 
\locstab  x  \xi := \{\xi\}\times  \locstabM  x  \xi$$ for the corresponding  fiber-wise local stable manifold.  
We use similar notation for local unstable manifolds. 

We fix, once and for all,  a family $v^\sigma_{(\xi,x)}\in E^\sigma_{\xi}(x)\subset T_xM$ such that 
\begin{enumerate}
\item $(\xi,x)\mapsto v^s_{(\xi,x)}$ is $\muskew$-measurable;
\item $(\xi,x)\mapsto v^u_{(\xi,x)}$ is $\Fol$-measurable;
\item $\| v^s_{(\xi,x)}\| = \|v^u_{(\xi,x)}\|= 1$.
\end{enumerate}
The family $\{v^s_{(\xi,x)}\}$ and $\{v^u_{(\xi,x)}\}$ induce, respectively, $\muskew$- and $\Fol$-measurable trivializations of the stable and unstable bundles.
Recall that the affine parameters on unstable manifolds are constant along atoms of $\Fol$.  
We then obtain, respectively, $\muskew$- and $\Fol$-measurable maps $(\xi,x)\mapsto \I^s_{(\xi,x)}$ and $(\xi,x)\mapsto \I^u_{(\xi,x)}$ from $X$ to the space of $C^1$-embeddings of 
$\R$ into $M$ given by 
\begin{equation}\label{eq:affparammfolds}
\I^s_{(\xi,x)}\colon t \mapsto (H^s_{(\xi,x)} )\inv (tv^s_{(\xi,x)}),  \quad 
   \I^u_{(\xi,x)} \colon t \mapsto (H^u_{(\xi,x)} )\inv (tv^u_{(\xi,x)}).
\end{equation}

\subsection{Families of conditional measures}
\label{sec:condmeas}
The family of fiber-wise unstable manifolds $\{W^u(\xi,x)\}_{(\xi,x)\in X}$ forms a partition of a full measure subset of $X$.  However, such a partition is generally non-measurable. To define conditional measures we consider  a measurable partition $\P$ of $X$  such for $\muskew$-\ae $(\xi,x)\in X$ there is an $r>0$ such that $\locunst x \xi \subset  \P(\xi,x)\subset W^u(\xi,x)$. 
Such a partition is said to be \emph{$u$-subordinate}.   
 Let $\{\td \muskew^{\P}_{(\xi,x)}\}_{(\xi,x)\in X}$ denote a family of conditional probability measures with respect to such a  partition $\P$. 

\begin{definition}\label{def:fiberwiseSRB}
An $\scrF$-invariant measure $\muskew$ is \emph{fiber-wise SRB} if for any $u$-subordinate measurable partition $\P$ with corresponding family of conditional measures $\{\td \muskew^{\P}_{(\xi,x)}\}_{(\xi,x)\in X}$, the measure $\td \muskew^\P_{(\xi,x)}$ is absolutely continuous with respect to a Riemannian volume on $\unst  x \xi$ for \ae $(\xi,x)$. 
\end{definition}

In the setting introduced in Section \ref{sec:resultsStationary} we have the following.  
\begin{definition}\label{def:SRB}
Let $M$ be a closed manifold, $ \nunaught$ a Borel measure on $\diff^2(M)$,  
and  let $\munaught$ be a $\nunaught$-stationary probability measure.  We say $\munaught$ is \emph{SRB} if the measure $\muskew$ given by Proposition \ref{prop:mudef} is fiber-wise SRB for the associated canonical skew product \eqref{eq:skewdefn}.
\end{definition}

\begin{remark}
In fact, it follows from the proof of Proposition \ref{prop:SRBrandom} (see also \cite{MR1646606} for a related statement for general skew products) 
that $\muskew$ is fiber-wise SRB if and only if the conditional measures $\{\td \muskew^{\P}_{(\xi,x)}\}_{(\xi,x)\in X}$ are \emph{equivalent} to Riemannian volume on $\unst \xi x$ restricted to $\P(\xi,x)$.  Furthermore, with respect to the affine parameters introduced in Section \ref{sec:affpar}, the conditional measures coincide  up to normalization with the Haar measure. See \cite[Corollary 6.1.4]{MR819556}.
  \end{remark}
 

\newcommand{\scond}[2][(\xi, x)]{{ \muskew^s_{#1}}\left({#2}\right)}
\newcommand{\ucond}[2][(\xi, x)]{ \muskew^u_{#1}\left({#2}\right)}

Following a standard procedure, by fixing a normalization, for \ae $(\xi,x)$ we  define a \emph{locally-finite}, infinite measure $ \muskew^u_{(\xi,x)}$ on the curve $\unst x \xi$ that  restricts to $\{\td \muskew^{\P}_{(\xi,x)}\}_{{(\xi,x)}\in X}$, up to normalization,  for any {$u$-subordinate} partition $\P$ of $X$.  We choose the normalization  $\mu^u_{(\xi,x)}(\locunst [1] x \xi) = 1$.  
  Such a measure will be locally-finite in the internal topology of $\unstp p$ induced, for instance, by the affine parameters.  
We remark that the fiber entropy vanishes 
 if and only if the measures $\muskew^u_{(\xi,x)}$ and $\muskew^s_{(\xi,x)}$ have support  $\{(\xi,x)\}  $ for almost every $(\xi,x)\in X$.

\subsection{Relationships between entropy, exponents, and dimension}\label{sec:SRB}
Given $ (\xi,x)\in X$ we define the following pointwise dimensions
\begin{enumerate}
\item $\displaystyle \dim^u(\muskew,  (\xi,x)):= \lim_{r\to 0} \dfrac{\log \left( \ucond[ (\xi,x)] {\locunst x \xi}\right)}{\log r}$
\item $\displaystyle \dim^s(\muskew,  (\xi,x)):= \lim_{r\to 0} \dfrac{\log \left(\scond[(\xi,x)] {\locstabp x \xi}\right)}{\log r}$
\item  $\displaystyle \dim(\muskew,  (\xi,x)):= \lim_{r\to 0} \dfrac{\log (\muskew_\xi\{y\in M: d(x,y)<r\})}{\log r}$
\end{enumerate}
We note that $\dim(\muskew, (\xi,x))$ is the pointwise dimension of conditional measure $\muskew_\xi$ at the point $x$.  In the case that $\muskew$ is obtained from a stationary measure $\munaught$ from Proposition \ref{prop:mudef}, this need not coincide with the pointwise dimension of $\munaught$ at $x$.  
We have that  $\dim^u(\muskew, (\xi,x))$ and $\dim^s(\muskew, (\xi,x))$ are well defined and are furthermore constant \ae by the ergodicity of $\muskew$.   Write $\dim^{s/u}(\mu)$ for these constants.  Note that  $\dim(\muskew, (\xi,x))$ may not be defined if there are zero exponents.  

We have the following proposition.  Recall we write $\pi\colon X\to \Omega$ for the natural projection and   $h_{\muskew} (F \mid\pi)$ for the conditional metric entropy of $(F, \muskew)$ conditioned on the sub-$\sigma$-algebra generated by $\pi\inv$.  
\begin{proposition}\label{prop:ento}
In our setting, 
\begin{enumerate}
\item $h_\muskew(F\mid\pi) = \lambda ^u \dim^u(\muskew) = -\lambda ^s \dim^s(\muskew) $;
\item $\dim (\muskew,  (\xi,x)) = \dim^u(\muskew) + \dim^s(\muskew) $ for $\muskew$-\ae $ (\xi,x)$.
\end{enumerate}
\end{proposition}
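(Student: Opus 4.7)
For part (1), the plan is to adapt the Ledrappier–Young entropy formula to the fiber-wise setting. Fix a measurable $u$-subordinate partition $\mathcal{P}$ of $X$; a family of conditional measures $\{\muskew^{\mathcal{P}}_{(\xi,x)}\}$ agrees (after normalization) with $\{\muskew^u_{(\xi,x)}\}$ on atoms. Consider the refinement $\mathcal{P}^n := \bigvee_{k=0}^{n-1}F^{-k}\mathcal{P}$. On one hand, the Shannon–McMillan–Breiman theorem applied in the conditional setting (as in \cite{MR819556,MR1369243}) gives, for $\muskew$-a.e. $(\xi,x)$,
\[
-\tfrac{1}{n}\log \muskew^{\mathcal{P}}_{(\xi,x)}\bigl(\mathcal{P}^n(\xi,x)\bigr) \longrightarrow h_\muskew(F\mid\pi).
\]
On the other hand, since $\dim W^u = 1$ and the fiber-wise dynamics expand $\unst{x}{\xi}$ at rate $\lambda^u$ in the affine parametrization $H^u$ of \S\ref{sec:affpar}, the atom $\mathcal{P}^n(\xi,x)$ is sandwiched (up to the subexponential distortion afforded by Lemmas \ref{prop:tempered}–\ref{prop:growth}) between affine-parameter balls of radii $e^{-n(\lambda^u+\eps)}$ and $e^{-n(\lambda^u-\eps)}$ around $x$ in $\unst{x}{\xi}$. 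The definition of $\dim^u(\muskew)$ then yields
\[
-\tfrac{1}{n}\log \muskew^{\mathcal{P}}_{(\xi,x)}\bigl(\mathcal{P}^n(\xi,x)\bigr) \longrightarrow \lambda^u \dim^u(\muskew),
\]
giving the first equality. For the second, run the identical argument for the inverse skew product $F^{-1}$: its fiber-wise unstable manifolds are the $W^s(\xi,x)$, the corresponding exponent is $-\lambda^s>0$, and the conditional entropy is invariant under inversion, $h_\muskew(F\mid\pi) = h_\muskew(F^{-1}\mid\pi)$.

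For part (2), the plan is to establish a local product structure of $\muskew_\xi$ in the Lyapunov charts of \S\ref{sec:charts2sided} and then invoke the Eckmann–Ruelle/Barreira–Pesin–Schmeling additivity of pointwise dimensions. In the chart $\phi(\xi,x)$ the stable and unstable manifolds appear as $C^{1,1}$ graphs with bounded slopes transverse to the coordinate axes, and stable/unstable holonomies between nearby transversals are absolutely continuous with Jacobians bounded on a Pesin block (defined fiber-wise via the tempered function $L$ from Lemma \ref{prop:growth}). Restricting to such a positive-measure Pesin block $\Lambda_k$ on which $\ell(\xi,x) \le k$, these holonomies allow one to compare $\muskew_\xi$-mass of small balls with products of $\muskew^s_{(\xi,x)}$- and $\muskew^u_{(\xi,x)}$-masses of the corresponding stable/unstable intervals, up to multiplicative constants depending only on $k$. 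The pointwise dimension $\dim(\muskew,(\xi,x))$ is then the sum of $\dim^s(\muskew)$ and $\dim^u(\muskew)$, first on each $\Lambda_k$ and hence almost everywhere.

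The main obstacle is ensuring that the classical deterministic arguments—originally stated for a single $C^{1+\alpha}$ diffeomorphism with uniform Pesin estimates—transfer to the random skew-product setting. This reduces to verifying that the necessary distortion estimates, bounded-curvature estimates on local (un)stable manifolds, and absolute continuity of holonomies all hold with only subexponential (in $n$) losses along orbits, which is precisely what the tempered bounds in Lemmas \ref{prop:tempered} and \ref{prop:growth} together with the construction in \S\ref{sec:stabmanifold} provide. Once these fiber-wise Pesin-theoretic ingredients are in place, both (1) and (2) follow by literal adaptations of the deterministic proofs in \cite{MR819556} and in the reference book \cite{MR2348606}.
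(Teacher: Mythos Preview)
The paper does not actually prove this proposition: it simply cites the literature---\cite{MR2218998,MR2032494} for the fiber-wise Ledrappier--Young formula in part (1), and \cite{MR2032494,MR1709302} for the random-dynamics version of the Barreira--Pesin--Schmeling dimension formula in part (2). Your outline for part (1) is correct and is exactly the strategy carried out in those references: in a one-dimensional unstable setting the Ledrappier--Young machinery reduces to the SMB-type statement you wrote, and the tempered estimates of Lemmas \ref{prop:tempered}--\ref{prop:growth} are precisely what one needs to make the sandwiching of atoms by affine-parameter balls work.

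Your plan for part (2), however, contains a genuine gap. Absolute continuity of stable (or unstable) holonomies is a statement about \emph{Lebesgue} measure on transversals; it says nothing about how the conditional measures $\muskew^u_{(\xi,\cdot)}$ along unstable leaves transform under stable holonomy, and hence does not by itself allow you to compare $\muskew_\xi$-mass of a small ball with the product $\muskew^s_{(\xi,x)}\bigl(W^s_r\bigr)\cdot\muskew^u_{(\xi,x)}\bigl(W^u_r\bigr)$. The estimate you assert---that these agree up to constants depending only on the Pesin block---is equivalent to local product structure of $\muskew_\xi$, which is false for general hyperbolic measures (even on surfaces). This is exactly why the Eckmann--Ruelle conjecture was nontrivial: the proof in \cite{MR1709302} does \emph{not} establish local product structure but instead proceeds through a delicate comparison of measures of thin ``rectangles'' at carefully chosen non-isotropic scales, using the entropy formula from part (1) as input. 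The random-dynamics extension in \cite{MR2032494} follows the same route. So while you correctly name the result you need, your proposed mechanism for obtaining it does not work; you should instead invoke (or adapt) the actual BPS argument.
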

(1) follows from a  generalization to the case of skew products  of the   Ledrappier-Young entropy formula  \cite{MR819557}.
This generalization  appears in \cite{MR2218998} in the case of  i.i.d.\ random dynamics;  modifications for the  case  of general skew products are outlined in \cite{MR2032494}.  
(2) follows from the results of \cite{MR2032494} generalizing to the random setting the dimension formula for hyperbolic measures proven   in \cite{MR1709302}.  

In our setting, we then have the following equivalent characterizations of the fiber-wise SRB property.
\begin{lemma}\label{prop:SRBdefn}
The following are equivalent.
\begin{enumerate}
\item $\mu$ is fiber-wise SRB;
\item  $h_\muskew(F\mid\pi) = \lambda^u$;
\item the measures $\muskew^u_{(\xi,x)}$ are equivalent to    Riemannian arc-length on $\unstM x \xi$ almost everywhere;
\item $\dim^u (\muskew) = 1$.
\end{enumerate}
\end{lemma}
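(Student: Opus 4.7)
The plan is to establish the equivalences by a cyclic chain (1) $\Rightarrow$ (3) $\Rightarrow$ (4) $\Rightarrow$ (2) $\Rightarrow$ (1), relying on the tools already assembled in Section \ref{sec:SRB} and the remark preceding it.

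First, for (1) $\Rightarrow$ (3), I would invoke the refinement noted in the remark following Definition \ref{def:SRB}: once absolute continuity of the conditional measures $\td\muskew^\P_{(\xi,x)}$ is known, the density is strictly positive, so these measures are in fact equivalent to Riemannian arc-length on $\unst x \xi \cap \P(\xi,x)$. Passing from the local (partition-restricted) measures $\td\muskew^\P_{(\xi,x)}$ to the globally-defined, locally-finite measures $\muskew^u_{(\xi,x)}$ is just the normalization procedure of Section \ref{sec:condmeas}, and it preserves equivalence with arc-length since both measures are $\sigma$-finite with strictly positive densities on each precompact piece of $\unst x \xi$.

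Next, for (3) $\Rightarrow$ (4), I would observe that if $\muskew^u_{(\xi,x)}$ is equivalent to Riemannian arc-length on the $C^{1,1}$ curve $\unst x \xi$, then its pointwise dimension equals that of arc-length, which is $1$; in particular $\dim^u(\muskew,(\xi,x))=1$ almost everywhere, and hence $\dim^u(\muskew)=1$. For (4) $\Rightarrow$ (2), I would simply quote the first identity of Proposition \ref{prop:ento}: $h_\muskew(F\mid\pi)=\lambda^u\dim^u(\muskew)=\lambda^u\cdot 1=\lambda^u$.

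The main step is (2) $\Rightarrow$ (1). Here the entropy formula of Ledrappier--Young, in the random/skew-product form cited after Proposition \ref{prop:ento} (from \cite{MR2218998,MR2032494}), gives the characterization: the inequality $h_\muskew(F\mid\pi)\le\lambda^u\dim^u(\muskew)$ always holds, with equality exactly when the unstable conditionals $\muskew^u_{(\xi,x)}$ are absolutely continuous with respect to Riemannian arc-length on $\unstM x \xi$, i.e.\ exactly when $\muskew$ is fiber-wise SRB in the sense of Definition \ref{def:fiberwiseSRB}. Since $\dim^u(\muskew)\le 1$ because $\unstM x \xi$ is a curve, the assumption $h_\muskew(F\mid\pi)=\lambda^u$ forces $\dim^u(\muskew)=1$ and the equality case of the entropy formula, which delivers (1). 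I expect this implication to be the main obstacle, not because the argument is difficult once the entropy formula is in hand, but because one must verify that the versions of the Ledrappier--Young formula in \cite{MR2218998,MR2032494} apply to the present skew-product setting with the regularity conditions \eqref{eq:IC2} and the fiber-wise non-atomicity assumption; everything else in the chain is essentially bookkeeping.
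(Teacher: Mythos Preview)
The paper does not give an explicit proof of this lemma; it is stated as an immediate consequence of Proposition~\ref{prop:ento}(1) together with the remark following Definition~\ref{def:SRB} (citing \cite{MR1646606}) and Proposition~\ref{prop:SRBrandom}. Your cyclic chain (1)$\Rightarrow$(3)$\Rightarrow$(4)$\Rightarrow$(2)$\Rightarrow$(1) is a correct and complete way to spell this out, and the first three implications are exactly as you describe.

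There is one point of confusion in your treatment of (2)$\Rightarrow$(1). You write that ``the inequality $h_\muskew(F\mid\pi)\le\lambda^u\dim^u(\muskew)$ always holds, with equality exactly when the unstable conditionals are absolutely continuous.'' This is not the right formulation: Proposition~\ref{prop:ento}(1) already asserts the \emph{equality} $h_\muskew(F\mid\pi)=\lambda^u\dim^u(\muskew)$ unconditionally (this is Ledrappier--Young Part~II in the skew-product setting). The characterization you actually need is the Pesin entropy formula: $h_\muskew(F\mid\pi)=\lambda^u$ (i.e.\ equality in the Margulis--Ruelle inequality $h_\muskew(F\mid\pi)\le\lambda^u$) holds if and only if $\muskew$ is fiber-wise SRB. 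This is exactly the content of Proposition~\ref{prop:SRBrandom} for i.i.d.\ random dynamics, extended to general skew products by \cite{MR1646606} as noted in the remark. With that correction, your (2)$\Rightarrow$(1) is immediate and does not require passing through $\dim^u(\muskew)=1$ at all---though of course the latter also follows, since $\lambda^u>0$ and $h_\muskew(F\mid\pi)=\lambda^u\dim^u(\muskew)=\lambda^u$ force $\dim^u(\muskew)=1$.
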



\subsection{The family $\bar \muskew_{(\xi,x)}$}\label{sec:barmu}
Using the affine parameters  $H^u_{(\xi,x)}\colon \unstM x \xi \to E^u _\xi (x)$ and the trivialization \eqref{eq:affparammfolds}, we define a family of locally-finite  Borel  measures on $\R$ by 
\begin{equation}
\bar \muskew_{(\xi,x)}:= (\I^u_{(\xi,x)})\inv_*\muskew_{(\xi,x)} ^u. \label{eq:mubardefn}
\end{equation}
We equip the space of locally-finite  Borel  measures on $\R$  with its standard Borel structure (dual to compactly supported continuous functions).
We thus obtain a measurable function from $X$ to the locally-finite  Borel  measures on $\R$. 
Since the family of measures ${(\xi,x)}\mapsto \muskew_{{(\xi,x)}} ^u$ and parametrizations $\I^u$ are $\Fol$-measurable, it follows that $${(\xi,x)}\mapsto \bar \muskew_{(\xi,x)}$$ is $\Fol$-measurable.  

The family $\{\bar \muskew_{(\xi,x)}\}_{{(\xi,x)}\in X}$ will be our primary focus in the sequel.  In particular, the SRB property of $\muskew$ will follow by showing that for $\muskew$-\ae $p$, the measure  $\bar \muskew_{(\xi,x)}$ is the Lebesgue (Haar) measure on $\R$ (normalized on $[-1,1]$).

\section{Main Proposition and Proof of Theorem \ref{thm:skewproductABS}}
\label{sec:mainprop}

\subsection{Main Proposition}

The major technical result in the proof of Theorem \ref{thm:skewproductABS}
is the following key proposition, whose proof occupies Sections \ref{sec:lemmas}--\ref{sec:lemmaproof}.  
Given two locally finite measures $\eta_1$ and $\eta_2$ on $\R$ we write $\eta_1 \simeq \eta_2$ if there is some $c>0$ with $\eta_1 = c\eta_2$.

\begin{prop}
\label{prop:main}\label{lem:main}
Assume in Theorem \ref{thm:skewproductABS} 
that $(\xi,x)\mapsto E^s_\xi(x)$ is not $\F$-measurable.  
Then there exists $M>0$ such that for every sufficiently  
 small $\epsilon>0$ there exists a 
measurable set $G_\epsilon\subset X$  
with 
$$\muskew(G_\epsilon)> 0$$ such that for any ${(\xi,x)}\in G_\epsilon$ there is an affine map 
	$$\psi\colon \R\to \R$$ 
	 with 
	\begin{enumerate}
	\item $\dfrac{1}{M}\le | D\psi|\le {M}$;
	\item $\dfrac{\epsilon}{M} \le |\psi(0)|\le M\epsilon$;
	\item $\psi_*\bar \muskew_{(\xi,x)}\simeq\bar \muskew_{(\xi,x)}$. 
	\end{enumerate}

Furthermore, writing $$G:= \{ {(\xi,x)}\in X \mid {(\xi,x)}\in G_{1/N} \text{\ for infinitely many $N$}\}$$
we have $\muskew(G)>0$.  
\end{prop}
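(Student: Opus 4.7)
I would follow the exponential drift strategy of Benoist--Quint, as adapted in Section~16 of \cite{1302.3320}, to this skew-product setting. The starting point is to convert the hypothesized failure of $\F$-measurability of $(\xi,x)\mapsto E^s_\xi(x)$ into a coupling statement: on a set of positive $\muskew$-measure I want to produce pairs of configurations $(\xi,x)$ and $(\xi',x')$ lying over the same $\hat\Fol$-atom of $\Omega$ such that (i) the past cocycles $\cocycle[\xi][n]$ and $\cocycle[\xi'][n]$ coincide for $n\le 0$, (ii) the fiber-wise unstable manifolds $\unstM x \xi$ and $\unstM{x'}{\xi'}$ coincide as subsets of $M$, (iii) the fiber-wise conditional measures $\muskew_\xi$ and $\muskew_{\xi'}$ coincide, and yet (iv) the angle between $E^s_\xi(x)$ and $E^s_{\xi'}(x')$ is uniformly bounded away from zero by some $\alpha_0>0$. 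Properties (i)--(iii) come directly from the $\hat\Fol$-measurability of $\xi\mapsto\cocycle[\xi][-1]$ and $\xi\mapsto \muskew_\xi$ that are the hypotheses of Theorem~\ref{thm:skewproductABS}; property (iv) is precisely the quantitative content of the assumed failure of $\F$-measurability, obtained by coupling two conditionally i.i.d.\ copies of $\muskew$ relative to $\F$.

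\textbf{Producing the drift.} Next, I would iterate the coupled pair \emph{forwards} through time $n$, working in the Lyapunov charts of Section~\ref{sec:stabmanifold} and comparing the two local stable foliations as they sweep across a common local unstable manifold along the forward orbit. The mismatch $\alpha_0>0$ in stable slopes produces, under forward iteration, a displacement of the two orbits along the unstable direction which, read through the unstable affine parameter $H^u$, grows at rate $e^{n\lambda^u}$. Given a target scale $\epsilon>0$, the appropriate stopping time is $n(\epsilon)\sim(\lambda^u)^{-1}\log(1/\epsilon)$, chosen so that the accumulated drift has size comparable to $\epsilon$; the tempering estimates of Lemmas~\ref{prop:tempered} and~\ref{prop:growth}, applied on a uniform Pesin block of positive measure, then ensure that the unstable Jacobian ratio between the two paired orbits stays trapped in a fixed compact subinterval of $(0,\infty)$.

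\textbf{Renormalization into an affine map.} The renormalization step now delivers $\psi$. Reading the post-drift configuration simultaneously in the unstable affine parameter $\I^u_{(\xi,x)}$ based at $(\xi,x)$ and in the one based at the coupled endpoint reduces the change-of-coordinates to a genuinely affine map $\psi\colon\R\to\R$: its derivative is the ratio of the two unstable Jacobian cocycles, which by the previous paragraph lies in a fixed interval $[1/M,M]$, and its translation $\psi(0)$ records the accumulated drift, hence satisfies $\epsilon/M\le|\psi(0)|\le M\epsilon$ by the choice of $n(\epsilon)$. The identity $\psi_*\bar\muskew_{(\xi,x)}\simeq\bar\muskew_{(\xi,x)}$ then follows because both sides represent a common fiber-wise unstable conditional measure read through two affine parametrizations related by $\psi$, combined with the $\Fol$-measurability of $(\xi,x)\mapsto\bar\muskew_{(\xi,x)}$ recorded in Section~\ref{sec:barmu}.

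\textbf{Recurrence and the hardest step.} Finally, the set $G$ with $\muskew(G)>0$ comes from a Borel--Cantelli argument: since $\muskew(G_{1/N})\ge c>0$ uniformly for $N$ along an appropriate subsequence, Fatou's lemma yields $\muskew(G)>0$. The main technical obstacle I anticipate is making the drift step honest: one must show that the \emph{nonlinear} remainder terms from the Pesin charts, accumulated along both orbits over time $n(\epsilon)$, are of strictly lower order than the linear drift of size $\epsilon$, so that the limiting $\psi$ really is affine rather than only approximately so. This requires balancing the $\epone$-slow variation of $\ell(\xi,x)$ against the fixed expansion rate $e^{\lambda^u}$, restricting to a uniform Pesin block of large but not full measure, and carefully tracking how the error terms in the two coupled Lyapunov charts compound as the coupling propagates forward.
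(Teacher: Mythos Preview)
Your sketch captures the flavor of the exponential-drift strategy, but it has a genuine structural gap that prevents it from yielding the self-similarity $\psi_*\bar\muskew_{(\xi,x)}\simeq\bar\muskew_{(\xi,x)}$.

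\textbf{What is missing.} The coupling you describe---a pair $(\xi,x)$, $(\xi',x')$ in the same $\hat\Fol$-atom with mismatched stable slopes---is only one leg of the argument. After you iterate forward, the two endpoints sit in \emph{different} $\Omega$-fibers with \emph{different} $M$-coordinates, so their measures $\bar\muskew$ are a priori unrelated; the $\Fol$-measurability of $(\xi,x)\mapsto\bar\muskew_{(\xi,x)}$ only equates the measures when both $\hat\Fol$-atom \emph{and} $x$ agree, and that identity is lost after forward iteration. Your renormalization step therefore does not deliver a self-map of a single measure.

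\textbf{What the paper does instead.} The paper starts not from a single coupled pair but from a point $p=(\pt,\xi,x)$ together with an auxiliary sequence $q_j=(\pt,\xi,y_j)$ in the \emph{same} fiber with $y_j\to x$ but $y_j\notin \locunstM[r_1]{x}{\xi}$; the non-atomicity hypothesis on $\muskew_\xi$ supplies these. Setting $\delta_j=\|H^s_p(z_j)\|\to 0$, the argument flows \emph{backward} to a time $m_j\in[M_{\delta_j},m_{\delta_j}]$ chosen so that $\hat p_j,\hat q_j$ are in general position, and only then performs the $\hat\Fol$-coupling: a new $\hat\eta_j$ is selected in the atom of $\hat\xi_j$ with $E^s_{\hat\eta_j}$ transverse to both $E^s_{\hat\xi_j}$ and $E^u_{\hat\xi_j}$. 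The intersection $\hat v_j = \locunstM[r_1]{\hat x_j}{\hat\xi_j}\cap\locstabM[r_1]{\hat y_j}{\hat\eta_j}$ gives a point on the unstable of $\hat p_j$ at controlled distance, and the stopping time $\tau_{p,\delta_j,\epsilon}(m_j)$ brings this to scale $\epsilon$. The self-similarity then arises from a four-legged limiting diagram: the pair $(\td p_j,\td q_j)$ (original pair flowed by $L_{p,\delta_j,\epsilon}(m_j)$) become arbitrarily close as $j\to\infty$, while $(p'_j,q''_j)$ (the $\hat\eta_j$-coupled pair flowed forward) subconverge in a compact set $K$ to limits $p_0$, $q_1$ with $q_0=\Phi^{\hat t}(q_1)\in\locunstp[r_1]{p_0}$ at distance $\sim\epsilon$. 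Only by combining all four legs does one obtain $\psi_*\bar\mususp_{p_0}\simeq\bar\mususp_{p_0}$.

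\textbf{The hardest step you did not anticipate.} Ensuring that after the $\hat\eta_j$-switch one still lands in the good Pesin set $K$ with positive probability is not a matter of controlling nonlinear remainders; it requires the reverse-martingale argument of Section~\ref{sec:MCT} on the decreasing filtration $\{\Sal^m\}_{m\ge 0}$, which is what forces the backward step and the time-changed flow $\Psi^s$. The set $S_{\hat M}$ of points where this martingale has nearly converged is what allows the selection of $\hat\eta_j$ in Claim~\ref{claim:eta}. Without the backward-then-forward structure and the accompanying martingale input, there is no mechanism to guarantee that the coupled orbit returns to $K$, and the argument collapses.
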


\begin{remark}\label{rem:posmeasure}
Given the space of locally finite Borel  measures on $\R$, the set of measures satisfying (1)--(3) of Proposition \ref{prop:main} for fixed $\epsilon$ and $M$ is closed.  By restricting to measurable sets on which ${(\xi,x)}\mapsto \bar\mu_{(\xi,x)}$ is continuous,  for any fixed $M$ defining $G_\epsilon$  to be the set of ${(\xi,x)}$ such that $\bar\mu_{(\xi,x)}$ satisfies  (1)--(3) above it follows that $G_\epsilon$ is measurable.  Thus,  the proof of  Proposition \ref{prop:main} reduces to showing that $G_\epsilon$ and $G$ have positive measure for some $M$.  
\end{remark}


\subsection{Proof of Theorem \ref{thm:skewproductABS}}
Theorem \ref{thm:skewproductABS} follows from Proposition  \ref{lem:main} by  standard arguments.  
We sketch these  below and referring to \cite{MR2261075} for more details.  
\def\Aff{\mathcal{A}}
 
 \begin{lemma}\label{lem:translation1}
Under the hypotheses of Proposition \ref{lem:main}, for \ae $(\xi,x)\in X$, $\bar \muskew_{(\xi,x)}$ is invariant under the group of translations.  In particular, for \ae $(\xi,x)\in X$, $\bar \muskew_{(\xi,x)}$ is the Lebesgue measure on $\R$ normalized on $[-1,1]$.  
\end{lemma}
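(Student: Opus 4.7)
The plan is to study the closed subgroup
\[ H_{(\xi,x)} := \bigl\{\psi \in \mathrm{Aff}(\R) : \psi_*\bar\muskew_{(\xi,x)} \simeq \bar\muskew_{(\xi,x)}\bigr\} \]
of the affine group of $\R$ and to show that, for $(\xi,x)$ in a positive-$\muskew$-measure set, $H_{(\xi,x)}$ contains the full translation subgroup \emph{and} acts on $\bar\muskew_{(\xi,x)}$ with trivial scaling cocycle. $F$-equivariance of the family $\{\bar\muskew_{(\xi,x)}\}$ together with ergodicity will then propagate this property to $\muskew$-a.e.\ fiber. Closedness of $H_{(\xi,x)}$ in $\mathrm{Aff}(\R)$ is routine: the normalization $\bar\muskew_{(\xi,x)}\bigl((-1,1)\bigr) = 1$ determines the scaling constant $c(\psi)$ assigned to each $\psi \in H_{(\xi,x)}$, and pushforward is continuous on locally finite measures in the vague topology.

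By Proposition \ref{lem:main} and Remark \ref{rem:posmeasure}, on the positive-measure set $G$ one obtains affine maps $\psi^{(N)} \in H_{(\xi,x)}$ with $|D\psi^{(N)}| \in [1/M,M]$ and $|\psi^{(N)}(0)| \asymp 1/N \to 0$. Compactness of $[1/M,M]$ extracts a subsequential limit $\psi_\infty(y) = a_\infty y \in H_{(\xi,x)}$ (pure scaling, since the translation parts vanish), and the corrected sequence $\phi_N := \psi^{(N)} \circ \psi_\infty^{-1} \in H_{(\xi,x)}$ converges to the identity without being equal to it. So $H_{(\xi,x)}$ is a non-discrete closed subgroup, hence a Lie subgroup of positive dimension, and up to conjugation must contain one of the two connected one-parameter subgroups of $\mathrm{Aff}(\R)$: the translation subgroup $\{y\mapsto y+t\}$ or a fixed-point scaling subgroup $\{y\mapsto e^{t}(y-x^*)+x^*\}$ around some $x^*\in\R$.

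The main obstacle is ruling out the fixed-point case. The plan here is to combine the non-atomicity of $\bar\muskew_{(\xi,x)}$ — which follows from the non-atomicity of $\muskew_\xi$ by disintegrating along unstable plaques — with the $F$-equivariance of the family and the a.s.\ constancy of the unstable pointwise dimension $\dim^u(\muskew)$ given by Proposition \ref{prop:ento}. A one-parameter subgroup of scalings fixing some $x^*(\xi,x)$ would force $\bar\muskew_{(\xi,x)}$ to have the power-law form $c\,|y-x^*|^{\beta-1}\,dy$ with $x^*$ and $\beta$ measurable functions on $X$. Since the $F$-action on the affine parameter is a linear scaling fixing the origin, and $0\in\supp(\bar\muskew_{(\xi,x)})$ almost surely, equivariance together with Poincar\'e recurrence in the random base forces $x^*(\xi,x)=0$ a.s.; the resulting power law centered at the origin is compatible with the a.s.\ constant value of $\dim^u(\muskew)$ only when $\beta=1$, which is the (translation-invariant) Lebesgue case.

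Once the fixed-point possibility is excluded, $H_{(\xi,x)}$ contains the full translation subgroup $\{T_t\}$ with some scaling cocycle $c_T(t)=e^{\alpha t}$. Simultaneously $H_{(\xi,x)}$ contains the scaling $\psi_\infty$ from the first step; conjugating $T_t$ by $\psi_\infty$ and matching scaling cocycles forces $\alpha=0$, so $\bar\muskew_{(\xi,x)}$ is honestly translation-invariant. The only locally finite translation-invariant Borel measure on $\R$ is a positive multiple of Lebesgue, and the normalization $\bar\muskew_{(\xi,x)}((-1,1))=1$ fixes the multiple. Finally, the set of $(\xi,x)$ at which $\bar\muskew_{(\xi,x)}$ is Lebesgue is $F$-invariant — the fiber dynamics acts on affine parameters by linear scaling, which preserves the Lebesgue class under the normalization — and of positive measure, so by ergodicity of $(F,\muskew)$ it has full measure.
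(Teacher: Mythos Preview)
Your overall strategy matches the paper's: study the closed subgroup $H_{(\xi,x)}\subset\mathrm{Aff}(\R)$, show it is non-discrete on $G$, propagate by ergodicity, invoke the translation/scaling dichotomy for one-parameter subgroups, and eliminate the scaling case. Two of your steps, however, are not fully justified.

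Your argument that conjugation by $\psi_\infty$ kills the translation cocycle exponent $\alpha$ has a gap. Writing $c(T_t)=e^{\alpha t}$ and $\psi_\infty(y)=a_\infty y$, the identity $\psi_\infty T_t\psi_\infty^{-1}=T_{a_\infty t}$ together with the homomorphism property of $c$ gives only $\alpha(a_\infty-1)=0$. Nothing in Proposition~\ref{lem:main} prevents $a_\infty=1$: the slopes $D\psi^{(N)}$ lie in $[1/M,M]$ and may accumulate at $1$. The paper instead uses $F$-equivariance directly: setting $c_{(\xi,x)}(s)=\bar\mu_{(\xi,x)}([-s-1,-s+1])$, one has $c_{(\xi,x)}(s)=c_{F^n(\xi,x)}(\pm\lVert DF^n|_{E^u(\xi,x)}\rVert\,s)$, and Poincar\'e recurrence (as $n\to-\infty$, so the norm shrinks) to sets where $c$ is uniformly close to $1$ on a fixed neighborhood of $0$ forces $c\equiv 1$.

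Your elimination of the scaling case is also more roundabout than needed, and the key inference is not spelled out. You correctly deduce $x^*=0$ a.e.\ from recurrence, but the claim that constancy of $\dim^u(\mu)$ then forces $\beta=1$ requires an argument you omit (namely: $|y|^{\beta-1}dy$ has local dimension $\beta$ at $0$ but $1$ elsewhere, and consistency of the family $\{\bar\mu_{(\xi,x)}\}$ along unstable leaves transports the basepoint to generic points). Also, non-atomicity of $\bar\mu_{(\xi,x)}$ does \emph{not} follow from non-atomicity of $\mu_\xi$ as you assert---the unstable conditionals could be Dirac masses when the fiber entropy vanishes. The paper bypasses all of this: on $G$, Proposition~\ref{lem:main} supplies elements of $H_{(\xi,x)}$ arbitrarily close to the identity with $\psi(0)\neq 0$; such $\psi$ lie in the identity component $H_0$, and if $H_0$ were the scaling group fixing $x^*$ then every element of $H_0$ would fix $x^*$, forcing $x^*\neq 0$ on $G$. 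This already contradicts $x^*=0$ a.e., so the scaling case is impossible---no power-law analysis or dimension argument is required.
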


\begin{proof} 
\def\AffR{\mathrm{Aff}(\R)}
Let $\AffR$ denote the group of invertible affine transformations of $\R$.  
For $(\xi,x)\in X$, let $\Aff(\xi,x)\subset \AffR$ be the group of affine transformations $\psi\colon \R\to \R$ with $$\psi_*\bar \muskew_{(\xi,x)} \simeq \bar\muskew_{(\xi,x)}.$$  
We have  that $\Aff(\xi,x)$ is a closed subgroup of the Lie group $\AffR$.  (See the proof of \cite[Lemma 3.10]{MR2261075}.)
By Proposition \ref{lem:main}, for $(\xi,x)\in G$, the group $\Aff(\xi,x)$ contains elements of the form $t\mapsto \lambda_j t+ v_j$ with $v_j\neq 0$, $|v_j|\to 0$ as $j\to \infty$, and $\lambda_j\in \R$ such that  $|\lambda_j|$ is  uniformly bounded away from $0$ and $\infty$.  
Then, for $(\xi,x)\in G$,  $\Aff(\xi,x)$ contains at least one map of the form 
	$$t\mapsto \lambda t$$ 
for some accumulation point $\lambda$ of $\{\lambda_j\}\subset \R$.
We  may thus find  a subsequence of $$\{t\mapsto \lambda \inv\lambda_j t+ v_j\}$$ converging to the identity in $\Aff(\xi,x)$.  It follows that $\Aff(\xi,x)$ is not discrete.  In particular, for every $(\xi,x)\in G$ the group $\Aff(\xi,x)$ contains a one-parameter subgroup of $\AffR$.

\def\Cc{\mathcal C}
For $(\xi,x)\in X$ denote by $\Cc_{(\xi,x)} \colon \R \to \R$ the linear map $$\Cc_{(\xi,x)} = (\I^u_{F(\xi,x)})\inv\circ F \circ \I^u _{(\xi,x)}$$ where $\I^u_{(\xi,x)}$ denotes the parametrization \eqref{eq:affparammfolds}. 
As $(\Cc_{(\xi,x)})_*\bar\muskew_{(\xi,x)} \simeq \bar\muskew_{F(\xi,x)}$ we have that $$\Aff(F(\xi,x)) = \Cc_{(\xi,x)}\Aff(\xi,x) \Cc_{(\xi,x)}\inv.$$ 
Let $\Aff_0(\xi,x)\subset \Aff(\xi,x)$ denote the identity component of $\Aff(\xi,x)$. 
Then $\A_0(F(\xi,x)) $ is isomorphic to $\A_0(\xi,x)$ for \ae $(\xi,x)\in X$.  
Since $\muskew(G)>0$, it follows by ergodicity that $\A_0(\xi,x)$ contains a one-parameter subgroup for \ae $(\xi,x)\in X$.  

The one-parameter subgroups of $\AffR$ are either pure translations or are conjugate to scaling.  We show that $\Aff(\xi,x)$ contains the group of translations for \ae $(\xi,x)\in X$.  
Suppose for purposes of contradiction that $\Aff_0(\xi,x)$ were conjugate to scaling for a positive measure set of $(\xi,x)\in X$. By ergodicity, it  follows that $\Aff_0(\xi,x)$ is conjugate to scaling for \ae $(\xi,x)\in X$.  
For such $(\xi,x)$, there are $t_0\in \R, \gamma\in \R_+$ with 
$$\Aff_0(\xi,x) = \{t\mapsto t_0 + \gamma^s(t-t_0)\mid s\in \R\}.$$
In particular, for such $(\xi,x)$ the action of $\Aff_0(\xi,x)$ on $\R$  contains a unique fixed point $t_0(\xi,x)$. 

For $(\xi,x)\in G$ the fixed point $t_0(\xi,x)$ is non-zero since, as observed above, there are $\psi\in \Aff(\xi,x)$ arbitrarily close to the identity with $\psi(0) \neq 0$.  
Furthermore, writing $\psi\colon t \mapsto t_0(\xi,x) + \gamma^s(t-t_0(\xi,x))$ we have 
$$\Cc _{(\xi,x)} \circ \psi\circ \Cc_{(\xi,x)}\inv\colon t\mapsto \pm\|\restrict{ DF}{E^u(\xi,x)}\|t_0(\xi,x) + \gamma^s\left(t-\pm\|\restrict{ DF}{E^u(\xi,x)}\|t_0(\xi,x)\right)$$ 
where the sign depends on whether or not $\Cc _{(\xi,x)}\colon \R \to \R$ preserves orientation.  
It follows for $(\xi,x)\in G$ that $|t_0(F^n(\xi,x)) |= \|\restrict{ DF^n}{E^u(\xi,x)}\|  \ |t_0(\xi,x)|$ becomes arbitrarily large, contradicting Poincar\'e recurrence.

Therefore, for almost every $(\xi,x)\in X$, the group $\Aff(\xi,x) $ contains the group of translations.  
We finish the proof by showing that for such $(\xi,x)$, the measure $\bar \muskew_{(\xi,x)}$ is \emph{invariant} under the group of translations. 
 For $s\in \R$ define $T_s\colon \R \to \R$ by $T_s\colon t \mapsto t+s$
 and 
define $c_{(\xi,x)}\colon \R \to \R$ by 
$c_{(\xi,x)}(s) = \bar \muskew _{(\xi,x)}([-s-1,-s+1])$.  Then 
$$\frac{d(T_s)_* \bar\muskew_{(\xi,x)}} { d\bar \muskew_{(\xi,x)}} = c_{(\xi,x)}(s) .$$  
As the group $\Aff(\xi,x) $ contains all translations, the measure $\bar\muskew_{(\xi,x)}$ has no atoms   and we have that $c_{(\xi,x)}\colon \R \to \R$ is continuous.

Note that 
$$\Cc_{(\xi,x)} \circ T_s \circ \Cc_{(\xi,x)}\inv = T_{ \pm \|\restrict{ DF}{E^u(\xi,x)}\| s}$$ and for $n\in \Z$
\begin{equation}c_{(\xi,x)}(s) = c_{F^n(\xi,x)}\left( \pm  \|\restrict{ DF^n}{E^u(\xi,x)}\| s \right)\label{eq:bad}\end{equation} where the signs depend on whether or not $DF$ or $DF^n$ preserves the orientation on unstable subspaces.  
Define the set $$B_{r,\epsilon}:= \{(\xi,x)\in X: |c_{(\xi,x)}(s)-1|<\epsilon \text{  for all } |s|<r\}.$$  For each $\epsilon>0$ pick $r$ so that $\muskew(B_{r,\epsilon})>0$.  Applying \eqref{eq:bad}, by ergodicity almost every point visits  $B_{r,\epsilon}$ infinitely often as $n\to -\infty$ contradicting \eqref{eq:bad} unless 
$|c_{(\xi,x)}(s)-1|<\epsilon$ for all $s$ and \ae $(\xi,x)\in X$.  
Taking $\epsilon\to 0$  shows that $c_{(\xi,x)}(s)= 1$ for all $s\in \R $  and \ae $(\xi,x)\in X$ completing the proof of the lemma.
\end{proof}

Theorem \ref{thm:skewproductABS} 
now follows as an immediate corollary of Proposition \ref{lem:main} and  Lemma \ref{lem:translation1}.

\section{Suspension flow}
As in the proof of \cite{MR2831114} and \cite[Sections 15--16]{1302.3320}, to prove Proposition \ref{lem:main} we introduce a suspension flow.  
On the product space $\R \times X$ consider the identification $$(t,\xi,x)\sim \left(t-1,F(\xi,x)\right)= (t-1,\theta(\xi), f_\xi(x))$$ and define  the quotient space $Y = (\R \times X)/\sim.$
We denote by $[t,\xi,x]$ the element of the quotient space $Y$.  
The space $Y$ is equipped with a natural flow 
	$$\Phi^t\colon Y \to Y, \quad \quad \Phi^t([s,\xi,x]) = [s+t,\xi,x].$$  
We have that $\Phi^t$ 
preserves a Borel probability measure $$d \mususp ([t,\xi,x]) = d \muskew(\xi,x) \ d t.$$  

%

It is  convenient to consider the measurable parametrization $[0,1)\times X\to Y$ given by $(\pt, \xi,x)\mapsto [\pt,\xi,x]$.  In these coordinates the  flow $\Phi^t\colon Y \to Y$ is given by 
$$\Phi^t(\pt,\xi,x) = (\{\pt+t\}, F^{\lfloor \pt+t\rfloor}(\xi,x))$$ 
where $ \lfloor \ell \rfloor$ denotes the integer part of $\ell$ and $\{\ell\} = \ell-  \lfloor \ell \rfloor$. 
When we write $(\pt,\xi,x)\in Y$ 
 it is implied that $0\le \pt <1$ and that $(\pt,\xi,x)$ is identified with $[\pt,\xi,x]$.  
Given $(\pt, \xi) \in   [0,1) \times \Omega $ we write $M_{(\pt, \xi)}= \{\pt\}\times \{\xi\} \times M$.
We will also write $\Theta^t\colon [0,1)\times \Omega\to [0,1)\times \Omega$ 
for the induced suspension flow.  

Note that the parametrization $Y = [0,1)\times X$ makes $Y$ into a Polish space with respect to which the measure $\mususp$ is Radon.  To discuss convergence and continuity, we equip $[0,1)$ and $\Omega$ with  complete, separable metrics and endow $Y= [0,1)\times \Omega \times M$ with the product metric.  

We use the parametrization $[0,1)\times X\to Y$ to extend the definition of local and global unstable manifolds. Given $p = (\pt,\xi,x)\in Y$ write 
\begin{itemize}
	\item $\locunstM[r]  x {(\pt,\xi)} = \locunstM[r] x \xi\subset M$; 		 $\unstM  x {{(\pt,\xi)}} = \unstM x \xi\subset M$;
		\item $\locunstp[r]   {p} = \{\pt\} \times \locunst[r] x \xi= \{\pt\} \times \xi\times \locunstM[r] x \xi\subset Y$;
		\item $\unstp {p} = \{\pt\} \times \unst x \xi= \{\pt\} \times \xi\times \unstM x \xi\subset Y$.
\end{itemize}
We similarly extend the definition of local and global stable manifolds, affine parameters, frames for the stable and unstable spaces introduced in Section \ref{sec:frames}, and the induced parametrizations $\I^u$ and $\I^s$.    
Given $p= (\pt,\xi,x)\in Y$, we write $\mususp^u_p= \delta_\pt \times \muskew^u_{(\xi,x)}$ for the locally finite measures on $\unstp {p}$ normalized on $\locunstp[1]   {p} $ and $\bar\mususp_p := (\I^u_p)\inv_*(\mususp^u_p) = \bar \mu_{(\xi,x)}$ for the corresponding measure on $\R$.

	\def\orb{\mathcal O}
	\def\Orb{\orb}
	\def\Morb{\mathcal M}
	\def\Uorb{\mathcal U}
	

Although the flow $\Phi^t\colon Y \to Y$ is, at best, measurable, the restriction $$\Phi^t \colon M_{(\pt, \xi)} \to M_{\Theta^t\left(\pt, \xi\right)}$$
is a $C^2$-diffeomorphism.  
Define a fiber-wise tangent bundle 
$$ TY:= [0,1) \times TX = [0,1) \times \Omega \times TM$$
and the fiber-wise differential $D\Phi^t\colon TY\to TY$
$$D\Phi^t\colon \left(\pt,\xi, (x,v)\right) \mapsto \left(\{\pt+ t\}, \theta^{\lfloor \pt+t\rfloor}(\xi), \left(f_\xi^{\lfloor \pt+t\rfloor}(x), D_x f_\xi^{\lfloor \pt+t\rfloor}(v)\right) \right).$$
We trivially extend  norms on $TM$ to $TY$  
by identifying $\{(\pt,\xi)\} \times T_xM$  with $T_xM$.

\def\nucondF{\nu^{\hat\Fol}}
\section{Preparations for the proof of Proposition \ref{lem:main}}\label{sec:lemmas}\label{sec:lemMainPrep}
We begin with a number of constructions and technical lemmas that will be used in the proof of  Proposition \ref{lem:main}. 


\subsection{Modification of $\hat \Fol$}\label{sec:modifyF}
Recall we assume the function  $\xi\mapsto \cocycle    [\xi] [-1]$ is $\hat \F$-measurable  which implies the entire past dynamics $\xi\mapsto \cocycle    [\xi] [n]$ is $\hat \Fol$-measurable for all $n\le 0$.  It is convenient for technical reasons below  to allow the  first future iterate $\cocycle[\xi][] $ to be measurable on $\hat \Fol$ as well.  As 
$f_\xi = \left(\cocycle [\theta (\xi)][-1]\right)\inv$, this can be accomplished by replacing $\hat \Fol$ with $\theta\inv(\hat \Fol)\subset \hat \Fol$.  Then $\theta\inv(\hat \Fol)$ is a decreasing sub-$\sigma$-algebra for which 
$\xi\mapsto \cocycle    [\xi] [n]$ is measurable for all $n\le 1$.  Moreover, as $f_\xi$ is constant on atoms of $\theta\inv(\hat \Fol)$, we have 
that $$(\xi,x)\mapsto E^s_{\xi}(x)$$ is $\F$-measurable if and only if $(\xi,x)\mapsto E^s_{\xi}(x)$ is $F\inv(\F)$-measurable.  

Thus for the remainder, we replace $\hat \Fol$ and $\Fol$ with $\theta\inv(\hat \Fol)$ and $F\inv (\Fol)$ respectively.  With this  new notation,  we then have that  $\xi\mapsto \cocycle    [\xi] [n]$ is $\hat \Fol$-measurable for all $n\le 1$. 

\subsection{Lyapunov norms}\label{sec:lyapnorm}
From Lemma \ref{prop:growth}, for each $p\in Y$ we  observe the hyperbolicity of the cocycle $D\Phi^t$ after a finite amount of time.  
We define here two norms, called \emph{Lyapunov norms}, with respect to which the hyperbolicity of $D\Phi^t$ is seen immediately.  
 We remark that while the induced Riemannian norm $\|\cdot\|$ on $TY$ is constant in the first parameter of the parametrization $[0,1)\times X \to Y$, the Lyapunov norms defined below will vary 
 in the parameter $\pt$.


We first define the Lyapunov norms for the  skew product $F$ on $X$.  
For  $(\xi,x)\in X$, $\sigma \in\{s,u\}$, and $v\in E^\sigma_\xi(x)$ 
define  the \emph{two-sided Lyapunov norm}
\begin{align}
&\lyap{v}_{\eps, \pm, (\xi,x)}^\sigma:= \left(\sum_{n\in \Z}   \|Df^n_\xi v\|^2 \rexp{ -2 \lambda^\sigma n -  2 \eps |n|}\right)^{1/2}
 \end{align}
and the past \emph{one-sided   Lyapunov norm}
\begin{align}
&  \lyap{v}_{\eps, -, (\xi,x)}^\sigma := \left(\sum_{n\le0}   \|Df^n_\xi v\|^2 \rexp{ -2 \lambda^\sigma n -  2 \eps |n|}\right)^{1/2}.  
\end{align}
It follows from Lemma \ref{prop:growth} that the sums above converge for almost every $(\xi,x)\in X$.  Observe that for $v\in E^\sigma_\xi(x),$  $$\|v\|  \le \lyap{v}_{\eps, -, (\xi,x)}^\sigma,\quad 
\|v\|\le   \lyap{v}_{\eps, \pm, (\xi,x)}^\sigma.$$


\begin{remark}\label{rem:onesided} 
Recall that we have 
$\xi \to \cocycle[\xi] [-n] $ is $\hat \Fol$-measurable for all $n\ge 0$ whence the assignment $(\xi,x) \mapsto E^u_\xi(x)$ is $\Fol$-measurable. 
Recall the $\Fol$-measurable family of vectors $v^u_{(\xi,x)}\in E^u_\xi(x)$ built in Section \ref{sec:frames}.  It follows from construction that   $(\xi,x)\mapsto \lyapno{v^u_{(\xi,x)}}_{\eps, -, (\xi,x)}^u$ is $\Fol$-measurable. 
Moreover, as discussed in Section \ref{sec:modifyF}, since we assume $\xi \mapsto \cocycle[\xi] [] $ is $\hat \Fol$-measurable, we have that $(\xi,x)\mapsto \lyapno{D_x\cocycle[\xi][] v^u_{(\xi,x)}}_{\eps, -, F(\xi,x)}^u$ is $\Fol$-measurable. 
This will be the primary  reason for using the one-sided Lyapunov norms rather than two-sided Lyapunov norms below.  
\end{remark}

We have the following bounds on hyperbolicity.  For the one-sided norm $ \lyap{\cdot}_{\eps, -, (\xi,x)}^\sigma $, the bounds are of most use when $\sigma = u$.  (One can similarly define the future one-sided norm  $ \lyap{\cdot}_{\eps, +, (\xi,x)}^s $ which is more natural for the stable bundle.)

\begin{lemma}\label{prop:LyapNormPropsDiscrete}\label{prop:discLyap}
For $\muskew$-\ae $(\xi,x)\in X$, 
$v\in E^\sigma_\xi(x),$ 
 $n\in \Z $, and $k\ge 0$ we have 

\begin{enumerate}
\item $\rexp{n\lambda^\sigma -  |n|\eps} \lyap v_{\eps, \pm, (\xi,x)}^\sigma\le 
	 \lyap{Df^n_\xi v}_{\eps, \pm, F^n(\xi,x)}^\sigma
	 \le \rexp{n\lambda^\sigma +  |n|\eps} \lyap v_{\eps, \pm, (\xi,x)}^\sigma$
\item $  \rexp{k\lambda^\sigma  -k\eps } \lyap v_{\eps,-, (\xi,x)}^\sigma \le  \lyap{Df^k_\xi v}_{\eps, -, F^k(\xi,x)}^\sigma.$
\end{enumerate}
\end{lemma}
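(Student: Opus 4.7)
The plan is to prove both inequalities by a direct computation, using the cocycle identity $Df^m_{\theta^n(\xi)} \circ Df^n_\xi = Df^{n+m}_\xi$ to re-index the defining sums, together with elementary bounds on $|m|$ versus $|m+n|$.

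For part (1), I will expand the definition of $\lyap{Df^n_\xi v}^\sigma_{\eps,\pm,F^n(\xi,x)}$ as a sum over $m\in\Z$ of $\|Df^m_{\theta^n(\xi)}Df^n_\xi v\|^2 \rexp{-2\lambda^\sigma m - 2\eps|m|}$, apply the cocycle identity, and substitute $k = m+n$. This produces a factor $\rexp{2n\lambda^\sigma}$ in front of the sum (coming from $-2\lambda^\sigma(k-n) = -2\lambda^\sigma k + 2\lambda^\sigma n$), and an inner sum with weights $\rexp{-2\lambda^\sigma k - 2\eps|k-n|}$. The reverse triangle inequality $\bigl||k|-|n|\bigr| \le |k-n| \le |k|+|n|$ then gives $\rexp{-2\eps|n|}\rexp{-2\eps|k|} \le \rexp{-2\eps|k-n|} \le \rexp{2\eps|n|}\rexp{-2\eps|k|}$, and pulling the $\rexp{\pm 2\eps|n|}$ outside the square root recovers exactly $\lyap v^\sigma_{\eps,\pm,(\xi,x)}$ times $\rexp{n\lambda^\sigma \pm |n|\eps}$.

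For part (2), I will perform the same substitution $j = m+k$ in the one-sided sum defining $\lyap{Df^k_\xi v}^\sigma_{\eps,-,F^k(\xi,x)}$. Since the range of summation for $m\le 0$ becomes $j\le k$, I get a lower bound by restricting to $j\le 0$. On this restricted range, and using $k\ge 0$, the quantity $|j-k|$ simplifies exactly to $k-j = k+|j|$, so $\rexp{-2\eps|j-k|} = \rexp{-2\eps k}\rexp{-2\eps|j|}$. The factor $\rexp{-2\eps k}$ pulls out, combines with the $\rexp{2k\lambda^\sigma}$ produced by the re-indexing, and yields $\rexp{k\lambda^\sigma - k\eps}\lyap v^\sigma_{\eps,-,(\xi,x)}$ after taking the square root. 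Almost-sure convergence of both sums is guaranteed by Lemma \ref{prop:growth}, so all manipulations are valid on a full-measure invariant set.

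There is no real obstacle: the only subtle point is being careful with the sign conventions and the fact that part (2) is only a one-sided (lower) bound, which is a consequence of having dropped the contributions from $0<j\le k$ when restricting the range of the re-indexed sum. This asymmetry is expected, since in the one-sided norm the "future" orbit of $v$ past the basepoint is not recorded, and the tempered growth of $\|Df^j_\xi v\|$ for $0<j\le k$ cannot be controlled from below using only the data entering $\lyap v^\sigma_{\eps,-,(\xi,x)}$.
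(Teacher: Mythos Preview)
Your proof is correct and is precisely the standard computation for these Lyapunov-norm estimates; the paper in fact states this lemma without proof, and your re-indexing argument via the cocycle identity together with the triangle inequality $\bigl||k|-|n|\bigr|\le |k-n|\le |k|+|n|$ is exactly the intended verification. Your handling of part (2)---dropping the terms $0<j\le k$ after the substitution and then using $|j-k|=k+|j|$ for $j\le 0$, $k\ge 0$---is clean and correctly explains why only a lower bound survives in the one-sided case.
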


\subsubsection{Extensions to $Y$}\label{sec:lyaponY}
We extend the Lyapunov norms to $TY$ as follows:  For $p = (\pt,\xi,x)\in Y$ and $w\in E^\sigma_\xi (x)= E^\sigma_{(\pt,\xi)}(x)\subset T_xM$, 
define 
\begin{equation}\label{eq:extendtoY}{\lyap {w} _{\eps, -, p}^\sigma = \left(\lyap {w} _{\eps, -, (\xi,x)}^\sigma \right)^{1-\pt}\left(\lyap {D_x f_\xi w} _{\eps, -, F(\xi,x)}^\sigma \right)^{\pt}}.\end{equation}
Identifying   $E^\sigma(p)= \{(\pt, \xi)\} \times E^\sigma_\xi(x)\subset TY$ with $E^\sigma_{(\pt,\xi)}(x)\subset TM$, we extend the definition of $\lyap {\cdot} _{\eps, -, p}^\sigma$ to $E^\sigma(p)$.  
We similarly extend 
the two-sided Lyapunov norms to $TY.$ 

Given $t\in \R$ we write 
$$\lyap {\restrict{D\Phi^t}{E^\sigma(p)}}_{\eps, -}, \quad \quad \lyap {\restrict{D\Phi^t}{E^\sigma(p)}}_{\eps, \pm}$$
to indicate the operator norm of $D\Phi^t\colon E^\sigma (p) \to E^\sigma(\Phi^t(p))$ with respect to the corresponding norms.  

We have the following extension of Lemma \ref{prop:discLyap}. 

	\begin{lemma}\label{prop:LyapNormProps}
				For $\mususp$-\ae $p = (\pt,\xi,x)\in Y,$ 
				$v\in E^\sigma(p)$,  $t\in \R$, and $s\ge 0$
				we have 

				\begin{enumerate}
				\item \label{prop:Lyap1} $\rexp{t\lambda^\sigma -  |t|\eps} \lyap v_{\eps, \pm,p}^\sigma\le
					 \lyap{D\Phi^t v}_{\eps,\pm, \Phi^t(p)}^\sigma	 \le \rexp{t\lambda^\sigma +  |t|\eps} \lyap v_{\eps, \pm, p}^\sigma,$ 
				
				\item \label{prop:Lyap3} $\rexp{s\lambda^\sigma -s\eps} \lyap v_{\eps,-, p}^\sigma\le  \lyap{D\Phi^s v}_{\eps, -, \Phi^s(p)}^\sigma.$
				
				\end{enumerate}
\end{lemma}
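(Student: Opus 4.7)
The plan is to reduce this lemma to the discrete bounds of Lemma \ref{prop:discLyap} by exploiting the geometric-interpolation structure of the Lyapunov norm on $TY$ in \eqref{eq:extendtoY}. Concretely, I will view $\tau \mapsto \log \lyap{D\Phi^\tau v}_{\eps,\pm,\Phi^\tau(p)}^{\sigma}$ as a real-valued function of the continuous flow parameter $\tau\in\R$ and show it is continuous and piecewise affine with uniformly controlled slope.

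First I would unwind the definitions. For $p=(\pt,\xi,x)$ and $\tau\in\R$, write $n = \lfloor \pt+\tau\rfloor$ and $\pt' = \pt+\tau-n$, so $\Phi^\tau(p)=(\pt', F^n(\xi,x))$ and $D\Phi^\tau v = Df^n_\xi v$. Setting $w_k := Df^k_\xi v$ and using \eqref{eq:extendtoY} with $\pt$ replaced by $\pt'$, I obtain
\begin{equation*}
\log \lyap{D\Phi^\tau v}_{\eps,\pm,\Phi^\tau(p)}^{\sigma}
= (1-\pt')\log \lyap{w_n}_{\eps,\pm,F^n(\xi,x)}^{\sigma} + \pt'\log \lyap{w_{n+1}}_{\eps,\pm,F^{n+1}(\xi,x)}^{\sigma}.
\end{equation*}
In the variable $\tau$, the right-hand side is continuous, agrees with $\log\lyap{w_n}$ at integer shifts $\pt+\tau\in\Z$, and is affine on each interval between consecutive integer shifts. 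The same holds for $\lyap{\cdot}_{\eps,-}$.

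Next I would compute the slope on each such interval $[n,n+1]$ (shifted by $\pt$), which equals
\begin{equation*}
\log \lyap{Df_{F^n(\xi,x)} w_n}_{\eps,\pm,F^{n+1}(\xi,x)}^{\sigma} - \log \lyap{w_n}_{\eps,\pm,F^n(\xi,x)}^{\sigma}.
\end{equation*}
By Lemma \ref{prop:discLyap}\,(1) applied at $F^n(\xi,x)$ with $n=1$, this slope lies in $[\lambda^\sigma-\eps,\lambda^\sigma+\eps]$ for the two-sided norm; by Lemma \ref{prop:discLyap}\,(2), the analogous slope for the one-sided norm $\lyap{\cdot}_{\eps,-}$ is bounded below by $\lambda^\sigma-\eps$.

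Finally I would integrate these slope bounds. Since the difference $\log\lyap{D\Phi^t v}_{\eps,\pm,\Phi^t(p)}^{\sigma}-\log\lyap{v}_{\eps,\pm,p}^{\sigma}$ equals the integral of the piecewise-constant slope over an interval of signed length $t$, a function with values in $[\lambda^\sigma-\eps,\lambda^\sigma+\eps]$ integrates to a value in $[t\lambda^\sigma-|t|\eps,\, t\lambda^\sigma+|t|\eps]$ for any $t\in\R$, yielding (1). For (2) with $s\ge 0$, integrating a function bounded below by $\lambda^\sigma-\eps$ over $[0,s]$ yields the lower bound $s(\lambda^\sigma-\eps)$, giving (2). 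There is no real obstacle: once one notes the piecewise-affine structure in $\tau$, the lemma is a bookkeeping exercise translating the integer-time discrete estimates into continuous-time inequalities. The only mild subtlety is being careful with signs of $t$ in (1) so that the $|t|\eps$ error is correctly captured by the worst-case slope over the traversed interval.
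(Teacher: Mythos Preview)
Your proposal is correct. The paper does not include a proof of this lemma; it simply presents it as ``the following extension of Lemma \ref{prop:discLyap}'' and leaves the verification implicit. Your argument---recognizing that the interpolation formula \eqref{eq:extendtoY} makes $\tau\mapsto \log\lyap{D\Phi^\tau v}$ continuous and piecewise affine with slopes controlled by the one-step discrete bounds of Lemma \ref{prop:discLyap}---is exactly the intended mechanism and is carried out accurately, including the sign bookkeeping for negative $t$ in part (1).
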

We  have the following estimate which allows us to compare the Lyapunov norm with the induced Riemannian norm. 
Recall the functions $D\colon \Omega \to \R$ and $L\colon X\to \R$ in Lemmas \ref{prop:tempered} and \ref{prop:growth}. Let  $c_1 = e^{\eps}(1-e^{-\eps})^{1/2}.$  
 \begin{lemma}\label{prop:subexpcomp}
For any $w\in E^u(\pt,\xi,x)$, 
$$ \|w\|\le  \lyap w ^u _{\eps, - , p}\le  L(\xi,x) D(\xi) c_1\|w\|.$$

In particular, defining $\hat L\colon Y\to [1,\infty)$ by $$\hat L(\pt,\xi,x)= L(\xi,x) D(\xi)c_1$$
we have 
	$$\hat L(\Phi^t(p))\le \rexp{2\eps(|t|+1)}\hat L(p)$$
and 
\begin{equation}\label{eq:subcomp}
\hat L(p)\inv \|\restrict {D\Phi^t}{E^u(p)}\|\le  \lyap{\restrict {D\Phi^t}{E^u(p)}} ^u _{\eps, - }\le \rexp{2\eps(|t|+1)}\hat L(p) \|\restrict {D\Phi^t}{E^u(p)}\|.
\end{equation}
\end{lemma}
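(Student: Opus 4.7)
The plan is to first establish the pointwise sandwich bound $\|w\|\le \lyap{w}^u_{\eps,-,p}\le L(\xi,x)D(\xi)c_1\|w\|$ on a single fiber using the definition of the one-sided Lyapunov norm together with Lemmas \ref{prop:tempered} and \ref{prop:growth}, then lift to $Y$ via the interpolation in \eqref{eq:extendtoY}. The subexponential tempering of $\hat L$ and the operator-norm comparison \eqref{eq:subcomp} will follow by routine manipulations.

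For the lower bound I would observe that retaining only the $n=0$ summand in the defining series gives $\lyap{w}^u_{\eps,-,(\xi,x)}\ge \|w\|$. To propagate this through \eqref{eq:extendtoY}, I would apply Lemma \ref{prop:discLyap}(2) with $k=1$ to obtain $\lyap{D_xf_\xi w}^u_{\eps,-,F(\xi,x)}\ge e^{\lambda^u-\eps}\lyap{w}^u_{\eps,-,(\xi,x)}\ge \|w\|$, using $\lambda^u>\eps$. Both factors of the $\pt$-weighted geometric mean therefore dominate $\|w\|$, yielding $\lyap{w}^u_{\eps,-,p}\ge \|w\|$.

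For the upper bound on a fiber, I would substitute Lemma \ref{prop:growth}(2) into the defining sum. For $w\in E^u_\xi(x)$ and $n\le 0$ the exponents combine cleanly:
\[ \|Df^n_\xi w\|^2 e^{-2\lambda^u n-2\eps|n|}\le L(\xi,x)^2 e^{2n\lambda^u+|n|\eps-2\lambda^u n-2\eps|n|}\|w\|^2 = L(\xi,x)^2 e^{-\eps|n|}\|w\|^2, \]
so summing the geometric series gives $\lyap{w}^u_{\eps,-,(\xi,x)}\le L(\xi,x)(1-e^{-\eps})^{-1/2}\|w\|$. Applying the same estimate at $F(\xi,x)$ together with $L(F(\xi,x))\le e^\eps L(\xi,x)$ (Lemma \ref{prop:growth}) and $\|D_xf_\xi w\|\le D(\xi)\|w\|$ (Lemma \ref{prop:tempered}), I would obtain $\lyap{D_xf_\xi w}^u_{\eps,-,F(\xi,x)}\le L(\xi,x)D(\xi)c_1\|w\|$. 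Since $D(\xi)\ge 1$, both endpoints of \eqref{eq:extendtoY} are majorized by $\hat L(p)\|w\|$, and the $\pt$-weighted geometric mean stays below this common bound independently of $\pt\in[0,1)$.

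The tempering bound for $\hat L$ is immediate: writing $\Phi^t(p)=(\{\pt+t\},F^n(\xi,x))$ with $n=\lfloor\pt+t\rfloor$ and $|n|\le |t|+1$, Lemmas \ref{prop:tempered}(4) and \ref{prop:growth} together give $L(F^n(\xi,x))D(\theta^n\xi)\le e^{2|n|\eps}L(\xi,x)D(\xi)$, whence $\hat L(\Phi^t p)\le e^{2\eps(|t|+1)}\hat L(p)$. Finally, because $E^u(p)$ is one-dimensional, the operator norm of $\restrict{D\Phi^t}{E^u(p)}$ in either norm equals the ratio of the corresponding norms of $D\Phi^t w$ and $w$ for any nonzero $w$; combining the two-sided sandwich bounds at $p$ and $\Phi^t(p)$ with the tempering of $\hat L$ yields both sides of \eqref{eq:subcomp}. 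The only (very mild) difficulty is the bookkeeping around \eqref{eq:extendtoY}: one must check that after absorbing the Jacobian factor $D(\xi)$ into the endpoint at $F(\xi,x)$, the weighted geometric mean is still bounded by the single $\pt$-independent constant $\hat L(p)$.
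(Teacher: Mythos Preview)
Your proof is correct and follows essentially the same approach as the paper: both establish the sandwich bound by (i) taking the $n=0$ summand and monotonicity for the lower bound, (ii) inserting the estimate from Lemma~\ref{prop:growth} into the defining series and summing the resulting geometric series for the upper bound at $(\xi,x)$, then (iii) pushing forward to $F(\xi,x)$ via $L(F(\xi,x))\le e^{\eps}L(\xi,x)$ and $\|D_xf_\xi w\|\le D(\xi)\|w\|$ before taking the $\pt$-weighted geometric mean. Your treatment of the tempering of $\hat L$ and of \eqref{eq:subcomp} is in fact more explicit than the paper's, which leaves those consequences to the reader.
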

\noindent (A similar estimate holds for the two-sided norms.)

\begin{proof} Recall that for $w\in E^u_\xi(x)$, we have $\|w\| \le \lyap w ^u _{\eps, - , (\xi,x)}$ and  $$\lyap w ^u _{\eps, - , (\xi,x)}<\lyap{D_x\cocycle[\xi][] w}^u_{\eps, - , F(\xi,x)}       .$$  The lower bound then follows.  

For the upper bound  we have for $(\xi,x)\in X$ and $w\in E^u_\xi(x)$ that 
		\begin{align*}\lyap{w}^u_{\eps, - ,(\xi,x)} 
		&\le \left(
			\sum_{n\le 0}(L(\xi,x))^2 \|w\|^2 \rexp{2n \lambda^u + |n|\eps} \rexp{-2n\lambda^u - 2 \eps|n|} 	
		\right)^{1/2}\\&
		=L(\xi,x) \left(1-\rexp{-\eps} \right)^{-1/2}\|w\|.\end{align*}
Similarly, from Lemma \ref{prop:growth} we have 
$$\lyap{D_xf_\xi w}^u_{\eps, - ,(\xi,x)} 
\le L(\xi,x) \rexp{\eps} \left(1-\rexp{-\eps} \right)^{-1/2}\|D_xf_\xi w\|.$$
Then for $p= (\pt,\xi,x)\in Y$ and $w\in E^u(p)$, with $b =  \left(1-\rexp{-\eps} \right)^{-1/2}$ we have 
\begin{align*}
\lyap{w}^u_{\eps, - ,p} 
	&:= {\left(\lyap{w}^u_{\eps, - ,(\xi,x)}\right)^{1-\pt} \left(\lyap{D_xf_\xi w}^u_{\eps, - ,F(\xi,x)}\right)^{\pt}}\\
	&\le \left(L(\xi,x)b\|w\| \right)^{1-\pt}   \left(L(\xi,x)\rexp{\eps}b \|D_xf_\xi w\|\right)^{\pt}\\
	&={\left(L(\xi,x)b \|w\| \right)^{1-\pt}   \left(L(\xi,x)\rexp{\eps}b D(\xi) \| w\|\right)^{\pt}}\\
	& \le L(\xi,x) b \rexp{\eps} D(\xi) \| w\|.		\qedhere
\end{align*}
\end{proof}

Declaring that $ E^u(p)$ and $E^s(p)$ are orthogonal, we may extend the definitions of both the two-side and one-side Lyapunov norms to 
 all of $T_pY$.  
When clear from context, we will drop the majority of  sub- and superscripts from the Lyapunov norms.

\subsection{The  time changed flow}\label{sec:timechange}

\def\ess{\mathscr S}
\def\tee{\mathscr T}
\def\enn{\mathscr N}
 
It is convenient 
 to work with a flow $\Psi^s$ that is a time change of $\Phi^t$ and for which the norm of the restriction of $D\Psi^s$ to the unstable spaces  grows at a constant rate (with respect to the one-sided norm $\lyap {\cdot} ^u_{\eps, -}$.)  
 
For $p\in Y$ and $t\in \R$, define  
\begin{equation}\label{eq:timechange}
\ess_p(t) = \log\left(\lyap {\restrict{D\Phi^{t}}{E^u(p)} }_{\eps, -}^u \right) .
\end{equation}
It follows from 
construction and Lemma \ref{prop:LyapNormProps} 
 that, for $\mususp$-\ae $p\in Y$, the function  
$\ess_p\colon \R \to \R$ is an orientation-preserving homeomorphism.  
Moreover, as $E^u(\xi,x)$ is 1-dimensional, the map $Y\times \R \to \R$ given by $(p,t) \to \ess_p(t)$
satisfies the cocycle equation 
$\ess_p(t_1+ t_2) = \ess_{\Phi^{t_2}(p)}( t_1) + \ess_p(t_2) $. 
It follows that $\Psi^s\colon Y\to Y$ given by $$\Psi^s(p) = \Phi^{\ess_p\inv(s)}(p)$$ defines a measurable flow on $Y$ that is a time change of $\Phi^t$.

Given $p= (\pt,\xi,x)\in Y$ define \begin{equation}h(p)= h(\xi,x) =  \log \left(\lyap {\restrict{D_xf_\xi }{E^u_\xi(x)} }_{\eps, -, {(\xi,x)}}^u\right).\label{eq:emptyrooftop}\end{equation}
We note that for $- \pt\le t<1-\pt$
$$\log \lyap {\restrict {D\Phi^t}{E^u(p)}}_{\eps, -, p} = t h(p).$$  In particular, if $0\le \pt + s/ h(p)<1$ then $\Psi^s(p) =  (\pt + s/ h(p), \xi,x)$; that is, $h(p)\inv$ is the local change of speed of the original flow $\Phi^t$.    
It follows that $$\ess_p(t) = \int _0 ^t h(\Phi^s (p)) \ ds .$$
By \eqref{eq:subcomp}, Lemma \ref{prop:growth}, and the fact that  $h(p)\ge\lambda^u-\eps$ for almost all $p$, we have for any $t\ge 0$ that 
 \begin{equation}\label{eq:controlled} (\lambda^u-\eps) t\le \ess_p(t)\le a(p) + b_0(t+1)\end{equation}
 where 
 $$a(\pt,\xi,x) = 
 \log (L(\xi,x)^2 D(\xi)c_1 ), \quad \quad b_0 = \lambda^u+3\eps.$$

We 
claim \begin{claim} \label{claim:finmeas}
$\displaystyle\int  h(\xi,x) \ d \muskew(\xi,x) <\infty.$
\end{claim}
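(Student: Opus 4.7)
The plan is to establish a pointwise upper bound $h(\xi,x) \le \log^+|f_\xi|_{C^1} + C$ with $C$ depending only on $\lambda^u$ and $\eps$, which is then $\muskew$-integrable by \eqref{eq:IC2}.

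First, using the extension formula \eqref{eq:extendtoY} to unpack the identity $\log \lyap{\restrict{D\Phi^t}{E^u(p)}}^u_{\eps,-} = t\cdot h(p)$ for $-\pt\le t<1-\pt$ (on which range $\Phi^t$ fixes the $X$-coordinate, so $D\Phi^t$ acts as the identity on the fiber), I obtain the explicit formula
\[
h(\xi,x) = \log\frac{\lyap{D_xf_\xi v}^u_{\eps,-,F(\xi,x)}}{\lyap{v}^u_{\eps,-,(\xi,x)}}
\]
for any nonzero $v \in E^u_\xi(x)$. Next, exploiting the cocycle identity $D_{f_\xi(x)} f_{\theta\xi}^n \circ D_xf_\xi = D_x f_\xi^{n+1}$ and substituting $m = n+1$ in the sum defining $\lyap{D_xf_\xi v}^u_{\eps,-,F(\xi,x)}$, the range $n\le 0$ becomes $m\le 1$: the term $m=1$ contributes $\|D_xf_\xi v\|^2$, while for $m\le 0$ the relation $|m-1|=1+|m|$ factors out a uniform $e^{2(\lambda^u-\eps)}$ and reassembles the remaining terms into $e^{2(\lambda^u-\eps)}\bigl(\lyap{v}^u_{\eps,-,(\xi,x)}\bigr)^2$. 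This yields the closed-form identity
\[
\bigl(\lyap{D_xf_\xi v}^u_{\eps,-,F(\xi,x)}\bigr)^2 = \|D_xf_\xi v\|^2 + e^{2(\lambda^u-\eps)}\bigl(\lyap{v}^u_{\eps,-,(\xi,x)}\bigr)^2.
\]

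Dividing by $\bigl(\lyap{v}^u_{\eps,-,(\xi,x)}\bigr)^2$ and using the trivial bound $\lyap{v}^u_{\eps,-,(\xi,x)}\ge \|v\|$ (the $n=0$ term of the defining series, as noted in the proof of Lemma \ref{prop:subexpcomp}) together with $\|D_xf_\xi v\|/\|v\| \le |f_\xi|_{C^1}$, I conclude
\[
e^{2h(\xi,x)} \le |f_\xi|_{C^1}^2 + e^{2(\lambda^u-\eps)}.
\]
The elementary estimate $\log(a^2+b^2) \le \log 2 + 2\log^+\max(a,b)$ then gives $h(\xi,x) \le \log^+|f_\xi|_{C^1} + C$ with $C = \tfrac{1}{2}\log 2 + (\lambda^u-\eps)$. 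Since $\pi_*\muskew = \nuskew$ and $|f_\xi|_{C^1}\le |f_\xi|_{C^2}$, integrating both sides against $\muskew$ and invoking \eqref{eq:IC2} yields $\int h\, d\muskew < \infty$.

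The only nontrivial step is the index-shift producing the closed-form identity for the norm in the second paragraph; since this is a direct computation, I do not anticipate a genuine obstacle.
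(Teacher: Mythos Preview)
Your proof is correct and follows essentially the same approach as the paper: the same index-shift computation yields the identity $\bigl(\lyap{D_xf_\xi v}^u_{\eps,-,F(\xi,x)}\bigr)^2 = \|D_xf_\xi v\|^2 + e^{2(\lambda^u-\eps)}\bigl(\lyap{v}^u_{\eps,-,(\xi,x)}\bigr)^2$, after which both proofs divide through, use $\lyap{v}\ge\|v\|$, and invoke \eqref{eq:IC2}. The only cosmetic difference is that you make the final integrability step explicit via $\log(a^2+b^2)\le \log 2 + 2\log^+\max(a,b)$, whereas the paper simply asserts $\int \log(|f_\xi|_{C^1}^2 + e^{2\lambda^u-2\eps})\,d\nu<\infty$ as a consequence of the hypothesis.
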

\begin{proof}

Consider $0\neq w\in {E^u_\xi(x)}$. 
  We have 
\begin{align*}
 	\left(\lyap{D_xf_\xi w}_{\eps,-, F(\xi,x)}^u\right)^2
		&= 
		   \|Df_\xi w\|^2 
		 + \sum_{n \le -1}   \|Df^{n+1}_\xi w\|^2 
	 	\rexp{-  2\lambda^un  - 2 \eps|n|}
		\\
		&=
 \|D_xf_\xi w\|^2		 
	 +\rexp{2\lambda^u   -2 \eps  }
	\left(  \sum_{\ell\le 0}   \|Df^{\ell}_\xi w\|^2 
	 	\rexp{-  2\lambda^u \ell - 2 \eps |\ell|}\right)\\
		&=
 \|D_xf_\xi w\|^2		 
	 +\rexp{2\lambda^u   -2 \eps  }\left(\lyap{ w}_{\eps,-, (\xi,x)}^u\right)^2
\end{align*}
and since  $\|w\|^2 \le \left(\lyap{ w}_{\eps,-, (\xi,x)}^u\right)^2$, we have
\begin{align*}	
\frac{ 	\left(\lyap{D_xf_\xi w}_{\eps,-, F(\xi,x)}^u\right)^2}{\left(\lyap{ w}_{\eps,-, (\xi,x)}^u\right)^2}
		\le 
 \|\restrict{Df_\xi}{E^u_\xi(x)} \|^2		 
	 +\rexp{2\lambda^u   -2 \eps  }
	 		\le 
 \|{Df_\xi} \|^2		 
	 +\rexp{2\lambda^u   -2 \eps  }.
\end{align*}


Recall 
 $\int \log^+\left(|f_\xi|_{C^1} \right) \ d \nu(\xi)<\infty$ by hypothesis \eqref{eq:IC2}. 
The claim follows as
\begin{gather*}\int \log\left( |f_{\xi}|_{C_1}^2   +\rexp{2\lambda^u   -2 \eps  } \right) \ d \nu(\xi)< \infty. \qedhere\end{gather*}
\end{proof}

From Claim \ref{claim:finmeas} it follows that 
$\Psi^s\colon Y\to Y$ preserves a probability measure $ \mualt$ given by $$ d  \mualt (\pt,\xi,x) =
\tfrac 1 {\int h(\xi,x) \ d\muskew(\xi,x)} h(\xi,x)  \ d\muskew(\xi,x) \ d \pt.$$ Observe that $\mualt$ and $\mususp$ are equivalent measures.  
Furthermore, since the $\sigma$-algebras of $\Phi^t$- and $\Psi^s$-invariant sets coincide it follows that $\Psi^s$ is $\mualt$-ergodic.  

\subsection{Decreasing subalgebras, conditional measures, and the martingale convergence argument}\label{sec:MCT}


We write  $\Sal\subset \B_Y$ and $\hat \Sal\subset \B_{[0,1)\times \Omega}, $ 
respectively, for the completions of $\B_{[0,1)}\otimes  \Fol$ and  $\B_{[0,1)}\otimes \hat \Fol$. 
Note that we have $$\Phi^t(\Sal ) \subset \Sal, \quad \quad \Theta^t (\hat \Sal) \subset \hat \Sal$$ for all $t\ge 0$ whence $\Sal$ and $\hat \Sal$ are  decreasing $\sigma$-algebras for the respective flows. 
In particular, the map
\begin{equation}Y\times [0,\infty)\to \left(Y,\Sal, \mususp \right),\quad \quad (p,t) \mapsto \Phi^{-t}(p)\label{eq:xyz9}\end{equation}
is $\Sal \otimes \B_{[0,\infty)}$-measurable. 
Thus the backwards flow $\Phi^{-t}, t\ge 0$ induces a measurable semi-flow on the factor space  $\left(Y,\Sal, \mususp \right)$.  

As discussed in Remark \ref{rem:onesided} 
the past dynamics $\xi \mapsto \cocycle$, $n\le 0$ is $\hat \Fol$-measurable and thus the  unstable spaces $E^u_\xi(x)$ and family of one-sided norms $\lyap {\cdot}_{\eps, -, (\xi,x)}^u$ are $\Fol$-measurable.   Furthermore, as the extension of the norms $\lyap {\cdot}_{\eps, -, (\xi,x)}^u$ to $Y$ in \eqref{eq:extendtoY} involves only the past dynamics and a single future iterate $\cocycle [\xi][]$, 
it follows that the 
  family of one-sided norms $\lyap {\cdot}_{\eps, -,p}^u$ on $Y$ are $\Sal$-measurable.  
 It follows that, restricted to the past, the  cocycle defining the time change 
$$Y\times [0,\infty)\to[0,\infty),\quad \quad (p,t) \mapsto \ess_p(-t)$$ 
is $\Sal\otimes \B_{[0, \infty)}$-measurable. 
Thus, the backwards time-changed flow $\Psi^{-s}, s\ge 0$, given by $\Psi^{-s}(p) = \Phi^{\ess_p\inv(-s)}(p)$, induces a measurable semi-flow  on $\left(Y,\Sal, \mususp \right)$.  
In particular, $\Psi^s(\Sal) \subset \Sal$ for  $s\ge 0$.   

Given $m\in \R$ define the sub-$\sigma$-algebra on $Y$ by  $$\Sal^m:= \Psi^{m}(\Sal)= \{ \Psi^m(C)\colon C\in \Sal\}.$$
From the above discussion, we have the following.  
\begin{claim}
For $m\le \ell$ we have 
$\Sal^\ell\subset \Sal^m.$ 
In particular, $\{\Sal^m\}_{m\ge 0}$ defines a decreasing filtration on $(Y, \mualt)$.  
\end{claim}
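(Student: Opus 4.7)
The plan is to unpack the definitions and use the semi-flow property of $\Psi^s$ on $\Sal$ established just before the claim. Fix $m \le \ell$ and take an arbitrary $A \in \Sal^\ell$. By the definition of $\Sal^\ell = \Psi^\ell(\Sal)$, we may write $A = \Psi^\ell(C)$ for some $C \in \Sal$. Since $\Psi^s$ is a flow on $Y$ (in particular $\Psi^m$ is invertible with inverse $\Psi^{-m}$, and the cocycle identity $\Psi^{m} \circ \Psi^{\ell - m} = \Psi^\ell$ holds $\mualt$-a.e.), we factor
\[
A = \Psi^\ell(C) = \Psi^m\bigl( \Psi^{\ell - m}(C) \bigr).
\]

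Now I would invoke the fact, proved in the paragraphs preceding the claim, that $\Psi^s(\Sal) \subset \Sal$ for every $s \ge 0$. Since $\ell - m \ge 0$, this gives $\Psi^{\ell - m}(C) \in \Sal$, so $A = \Psi^m(C')$ with $C' := \Psi^{\ell - m}(C) \in \Sal$, i.e.\ $A \in \Psi^m(\Sal) = \Sal^m$. This yields $\Sal^\ell \subset \Sal^m$, which is precisely the decreasing-filtration property.

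There is essentially no obstacle: the entire content has already been done in establishing that the backward time-changed semi-flow $\Psi^{-s}$, $s\ge 0$, is measurable on $(Y,\Sal,\mususp)$ (equivalently, $\Psi^s(\Sal)\subset \Sal$ for $s\ge 0$). The only minor point to mention is that the identities above hold modulo $\mualt$-null sets, which is harmless since $\Sal$ is $\mususp$-complete and $\mualt \sim \mususp$, so completions agree and null sets may be absorbed into the equivalence class representatives of the elements of each $\Sal^m$.
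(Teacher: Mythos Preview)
Your proof is correct and is exactly the argument the paper has in mind: the claim is stated without an explicit proof, following ``from the above discussion,'' which refers precisely to the fact $\Psi^s(\Sal)\subset\Sal$ for $s\ge 0$ together with the flow identity $\Psi^\ell=\Psi^m\circ\Psi^{\ell-m}$ that you use.
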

As usual, we write $\Sal^\infty = \bigcap_{m=0}^\infty \Sal^m$.

\def\mucondS{\mususp^{\Sal}}   
\def\mualtcondS{\mualt^{\Sal}}
\def\thisfoot{\footnote{  Recall that given a sub-$\sigma$-algebra $\calA$ of a Lebesgue probability space $(\Omega, \mathcal{B}, \mu)$, there is a unique (up to \as equivalence)  measurable partition $\alpha$, called the partition into atoms.  If $\{\mu^\alpha_\omega\}$ denotes a family of conditional measures induced by the partition $\alpha$ then $\Exp(f \mid \calA)(\omega) = \int f \ d \mu^\alpha_\omega$.   }
}
\subsubsection{Families of conditional measures}\label{sec:CondMeas}
We fix once and for all a measurable partition\thisfoot of $\Omega$ into atoms of $\hat \F$ and  an induced family of conditional probabilities  $\{\nucondF_\xi\}_{\xi \in \Omega}$. 
Since in Theorem \ref{thm:skewproductABS} 
 we assume the map  $\xi\mapsto \muskew_\xi$ is $\hat \F$-measurable, 
defining a family of measures  $\{\mu^\Fol_{(\xi,x)}\}_{(\xi,x)\in X}$  by 
$$d \mu^\Fol_{(\xi,x)}(\eta,y) :=d{\nucondF_\xi}(\eta) \delta_x(y),$$ it follows that $\{\mu^\Fol_{(\xi,x)}\}$ defines a family of conditional measures induced by $\Fol$.  For $p= (\pt, \xi,x)\in Y$ we write $\mucondS_p = \delta_\pt \times \mu^\Fol_{(\xi,x)}$.  Then $\{\mucondS_p\}_{p \in Y}$ is a family of 
conditional measures of $\mususp$ induced by $\Sal$.  

By a slight abuse of notation, for  $p= (\pt,\xi,x)$ we may consider $ \mucondS_p$ as measures on $\Omega$ by declaring 
$$d \mucondS_p(\eta) = d \mucondS_p(\pt, \eta, x).$$
This identifies $\mucondS_p(\eta)$ with $\nucondF_\xi$. In particular, if 
$q= (\pt,\xi, y)$, then under this identification we have 
$\mucondS_p  =  \mucondS_q.$

Recall that $\mususp$ and $\mualt$ are equivalent measures; moreover $\dfrac{d\mualt}{d\mususp}(\pt,\xi,x)= \dfrac 1 {\int h \ d \mususp} h(\pt, \xi,x) $ where $h$ is the speed change in \eqref{eq:emptyrooftop}.  
Thus,   defining 
$$d \mualt ^\Sal_p(q) := \dfrac{h(q)}{\int h \ d\mususp^\Sal_p} \ d \mususp^\Sal_p(q)$$ it follows that
$\{ \mualt ^\Sal_p\}_{p\in Y}$ defines a family of conditional measures for $\mualt$ induced by   $\Sal$.  As
$h\colon Y \to \R$ is $\Sal$-measurable, we may take   $$\mualt ^\Sal_p = \mususp ^\Sal_p.$$

\subsubsection{Martingale convergence argument} \label{subsec:MCT}
Consider any bounded measurable $g\colon Y \to \R$.  As $\mualt $ is $\Psi^s$-invariant, 
we have  for $m\in \R$
\begin{align}\label{eq:crazymgale1}
\int g (\Psi^{m}(q))\ d\left(\mususp^\Sal_{\Psi^{-m}(p)}\right)(q)&= 
\int g (\Psi^{m}(q))\ d\left(\mualt^\Sal_{\Psi^{-m}(p)}\right)(q) 
\\&= \int g (q') \ d \left(\left(\Psi^m\right)_* \mualt^\Sal_{\Psi^{-m}(p)}\right) (q')\label{eq:crazymgale3}
\\&\circeq  \Exp _{\mualt} (g \mid \Sal^m)(p)\label{eq:crazymgale2}.
\end{align}
where the first two equalities hold everywhere by definition and the last equality holds as almost-everywhere defined functions.  

The right-hand side of \eqref{eq:crazymgale2} defines a reverse Martingale with respect the  decreasing filtration $\Sal^m$ on $(Y, \mualt)$.  
By the convergence theorem for reverse martingales, along any discrete subsequence of $m_j\in [0,\infty)$
 we have, almost surely, that  $$\Exp (g\mid \Sal^{m_j})(p)\to \Exp (g \mid \Sal^\infty)(p) .$$ On the other hand, given any $m$ and any $p\in Y$,  writing $\Psi^{-m}(p) = (\pt,\xi,x)$, for all $\epsilon< (\lambda^u-\eps)\inv (1-\pt)$
$$(\Psi^\epsilon )_* \mualt^\Sal_{\Psi^{-m}(p)} = \mualt^\Sal_{\Psi^{-m+\epsilon}(p)}.$$ 
It follows that the sample paths defined by \eqref{eq:crazymgale3} are constant on half-open intervals whose lengths are at least $(\lambda^u-\eps)\inv$.  Taking a discrete subgroup with gaps less than $(\lambda^u-\eps)\inv$ it follows that for  almost every $p\in Y$, the left-hand side of  \eqref{eq:crazymgale1} converges to 
$\Exp (g \mid \Sal^\infty)(p) $ as $m\to \infty$.

\subsection{Stopping times and bi-Lipschitz estimates}\label{sec:stopping}
Given $p=(\pt,\xi,x)\in Y$, $\delta>0$, $\epsilon>0$ and $m\in \R$ define 
	$$\tau_{p, \delta, \epsilon}(m): = \sup \left\{ \ell\in \R : \lyap {\restrict {D\Phi^m}{E^s(p)}}_{\eps, \pm, p}^s \cdot  \lyap {\restrict {D\Phi^\ell}{E^u(\Phi^m(p))}}_{\eps, \pm,\Phi^m(p)}^u\delta\le \epsilon\right\}$$
and $$L_{p, \delta, \epsilon}(m) = m + \tau_{p, \delta, \epsilon}(m).$$

Note that 	$\tau_{p, \delta, \epsilon}\colon \R \to \R$ and $L_{p, \delta, \epsilon}\colon \R \to \R$ are increasing homeomorphisms.  In fact we have the following.  
\begin{lemma}\label{lem:bilipest}
$L_{p, \delta, \epsilon}$ and $\tau_{p, \delta, \epsilon}$ are bi-Lipschitz with  constants uniform in $p, \delta, \epsilon$.  In particular, for $\ell \ge0$ 
\begin{align}
\label{eq:bilip1}{\dfrac {-\lambda ^s - 3 \eps}{\lambda^u + \eps} \ell}&\le \tau_{p, \delta, \epsilon}(m+\ell) - \tau_{p, \delta, \epsilon}(m) \le {\dfrac {-\lambda ^s +3 \eps}{\lambda^u -  \eps} \ell } \\
 \dfrac{  \lambda^u-\lambda^s-2\eps}{ \lambda^u+\eps } \ell&\le L_{p, \delta, \epsilon}(m+\ell) - L_{p, \delta, \epsilon}(m) \le  \dfrac{  \lambda^u-\lambda^s+2\eps}{ \lambda^u- \eps } \ell\label{eq:bilip2}
\end{align}
\end{lemma}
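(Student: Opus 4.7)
The proof is a direct calculation based on the Lyapunov-norm cocycle estimates in Lemma~\ref{prop:LyapNormProps}(1). Since $\lambda^u-\eps>0$, the product
\[ \lyap{\restrict{D\Phi^m}{E^s(p)}}_{\eps,\pm, p}^s\cdot \lyap{\restrict{D\Phi^\ell}{E^u(\Phi^m(p))}}_{\eps,\pm,\Phi^m(p)}^u \]
is continuous and strictly increasing in $\ell$, so the supremum defining $\tau_{p,\delta,\epsilon}(m)$ is attained with equality. Abbreviating $\tau=\tau_{p,\delta,\epsilon}$, $A(m):=\log\lyap{\restrict{D\Phi^m}{E^s(p)}}_{\eps,\pm,p}^s$, and $B(m,\ell):=\log\lyap{\restrict{D\Phi^\ell}{E^u(\Phi^m(p))}}_{\eps,\pm,\Phi^m(p)}^u$, this reads
\[ A(m)+B(m,\tau(m))=\log(\epsilon/\delta). \]

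I would then extract two ingredients from Lemma~\ref{prop:LyapNormProps}(1). Applied to the one-dimensional stable line, for $\ell\ge 0$,
\[ \ell(\lambda^s-\eps)\;\le\; A(m+\ell)-A(m)\;\le\; \ell(\lambda^s+\eps). \]
Applied to the one-dimensional unstable line, one has the base-point-free bound $s(\lambda^u-\eps)\le B(q,s)\le s(\lambda^u+\eps)$ for any $q\in Y$ and $s\ge 0$, together with the cocycle identity
\[ B(m,\ell+\Delta)=B(m+\ell,\Delta)+B(m,\ell) \]
coming from $D\Phi^{\ell+\Delta}|_{E^u(\Phi^m(p))}=D\Phi^\Delta|_{E^u(\Phi^{m+\ell}(p))}\circ D\Phi^{\ell}|_{E^u(\Phi^m(p))}$.

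Next I would difference the defining identity at $m$ and $m+\ell$ and use additivity of $B$ to obtain
\[ B(m,\tau(m+\ell)+\ell)-B(m,\tau(m))=\bigl[A(m)-A(m+\ell)\bigr]+B(m,\ell). \]
Setting $\Delta':=\tau(m+\ell)+\ell-\tau(m)=L_{p,\delta,\epsilon}(m+\ell)-L_{p,\delta,\epsilon}(m)$, the stable estimate places the first bracket on the right in $[-(\lambda^s+\eps)\ell,\,-(\lambda^s-\eps)\ell]$ and the unstable estimate places $B(m,\ell)$ in $[(\lambda^u-\eps)\ell,(\lambda^u+\eps)\ell]$, so the full right-hand side lies in $[(\lambda^u-\lambda^s-2\eps)\ell,\,(\lambda^u-\lambda^s+2\eps)\ell]$. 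A second application of the unstable bound to the left-hand side (with fixed base point $\Phi^m(p)$) puts that quantity in $[\Delta'(\lambda^u-\eps),\Delta'(\lambda^u+\eps)]$. Dividing gives \eqref{eq:bilip2}, and subtracting $\ell$ gives \eqref{eq:bilip1}.

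There is no substantive obstacle here; this is a bookkeeping argument once the right ingredients are isolated. The only small point to verify is that $\Delta'\ge 0$, so that the unstable bound can indeed be applied to the left-hand side; this follows from strict monotonicity of the defining product in $\ell$ (equivalently, self-consistency of the final pair of inequalities). Uniformity of the constants in $p,\delta,\epsilon$ is automatic: $\lambda^s,\lambda^u,\eps$ are global, and the constants $\epsilon,\delta$ drop out of the differenced equation.
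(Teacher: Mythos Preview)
Your proof is correct and follows essentially the same approach as the paper: both use the defining equality for $\tau$, the cocycle property of the norms along $E^u$, and the two-sided Lyapunov bounds of Lemma~\ref{prop:LyapNormProps}\ref{prop:Lyap1}. The only cosmetic differences are that you work additively (after taking logarithms) and first prove \eqref{eq:bilip2} then subtract $\ell$ to get \eqref{eq:bilip1}, whereas the paper works multiplicatively and goes in the opposite order; your organization is arguably a bit cleaner, as the extra $e^{\pm 2\eps\ell}$ base-point comparison in the paper is absorbed into your single use of the cocycle identity.
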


\begin{proof} 

Write $\tau_p = \tau_{p, \delta, \epsilon}$. 
By definition we have 
\begin{equation}\begin{aligned}\label{eq:QI}
&\lyap{ \restrict{D\Phi^{\tau_p(m+\ell)}}{\Eup {\Phi^{m+\ell}(p)}}}_{\eps,\pm} \cdot \  \lyap{\restrict{D\Phi^{\tau_p(m)}}{\Eup {\Phi^{m+\ell}(p)}}}^{-1}_{\eps,\pm}   
\\& \quad  \quad   \cdot  \   
 \lyap{\restrict{D\Phi^{\tau_p(m)}}{\Eup {\Phi^{m+\ell}(p)}}}_{\eps,\pm}   
   \cdot  \   
   \lyap{\restrict{D\Phi^{m+\ell}}{\Esp p}}_{\eps,\pm} \ \cdot \delta 
=\epsilon.
  \end{aligned}\end{equation}

As $\tau_p(m+\ell) \ge \tau_p(m)$, we  bound the product of the first two  terms of the left-hand side of \eqref{eq:QI} by
 \begin{align*}
&\exp((\lambda^u-\eps)(\tau_p(m+\ell) - \tau_p(m))) \\
&\le \lyap{ \restrict{D\Phi^{\tau_p(m+\ell)}}{\Eup {\Phi^{m+\ell}(p)}}}_{\eps,\pm} \cdot \  \lyap{\restrict{D\Phi^{\tau_p(m)}}{\Eup {\Phi^{m+\ell}(p)}}}^{-1}_{\eps,\pm}
\\ &\le \exp((\lambda^u+\eps)(\tau_p(m+\ell) - \tau_p(m)) )
\end{align*}

To bound the remaining terms of \eqref{eq:QI} 
first note that 
  \begin{align*}
&\lyap{\restrict{D\Phi^{\tau_p(m)}}{\Eup {\Phi^{m+\ell}(p)}}}_{\eps,\pm}   \\  
&\quad= \lyap{\restrict{D\Phi^{ \tau_p(m)  +\ell}}{\Eup {\Phi^{m}(p)}}}_{\eps,\pm}  \cdot 
\lyap{\restrict{D\Phi^{\ell }}{\Eup {\Phi^{m}(p)}}}_{\eps,\pm} \inv\\  
&\quad =\lyap{\restrict{D\Phi^{\tau_p(m)}}{\Eup {\Phi^{m}(p)}}}_{\eps,\pm}  \cdot
\lyap{\restrict{D\Phi^{ \ell }}{\Eup {\Phi^{m+ \tau_p(m)}(p)}}}_{\eps,\pm}  \cdot 
\lyap{\restrict{D\Phi^{\ell }}{\Eup {\Phi^{m }(p)}}}_{\eps,\pm} \inv.
  \end{align*}
We have 
  $$
   e^{-2\eps \ell}  \le 
\lyap{\restrict{D\Phi^{ \ell }}{\Eup {\Phi^{m+ \tau_p(m)}(p)}}}_{\eps,\pm}  \cdot 
\lyap{\restrict{D\Phi^{\ell }}{\Eup {\Phi^{m }(p)}}}_{\eps,\pm} \inv  \le e^{2\eps \ell}.$$
whence it follows that 
  $$
 e^{-2\eps \ell} \le \dfrac{  \lyap{\restrict{D\Phi^{\tau_p(m)}}{\Eup {\Phi^{m+\ell}(p)}}} _{\eps,\pm} }
 { \lyap{\restrict{D\Phi^{\tau_p(m)}}{\Eup {\Phi^{m}(p)}}} _{\eps,\pm} }
 \le  e^{2\eps \ell}.$$
As  
\begin{align*} &\exp((\lambda^s - \eps)\ell) \epsilon
   \\& \le \lyap{\restrict{D\Phi^{\tau_p(m)}}{\Eup {\Phi^{m}(p)}}}_{\eps,\pm}   
   \cdot  \   
   \lyap{\restrict{D\Phi^{m+\ell}}{\Esp p}}_{\eps,\pm} \ \cdot \delta 
\\ &\le \exp((\lambda^s + \eps)\ell) \epsilon\end{align*}
we have 
\begin{align*} &\exp((\lambda^s -3 \eps)\ell) \epsilon
   \\& \le \lyap{\restrict{D\Phi^{\tau_p(m)}}{\Eup {\Phi^{m+\ell}(p)}}}_{\eps,\pm}   
   \cdot  \   
   \lyap{\restrict{D\Phi^{m+\ell}}{\Esp p}}_{\eps,\pm} \ \cdot \delta. 
\\ &\le \exp((\lambda^s +3 \eps)\ell) \epsilon.\end{align*}

Reassembling \eqref{eq:QI} we  have 
\[
\exp\big((\lambda^u-\eps)(\tau_p(m+\ell) - \tau_p(m))\big)
\exp((\lambda^s - 3\eps)\ell) \epsilon\le \epsilon\]
and 
\[\exp\big((\lambda^u+\eps)(\tau_p(m+\ell) - \tau_p(m))\big) 
\exp((\lambda^s + 3\eps)\ell)\epsilon \ge  \epsilon\] 
hence 
\[
\dfrac{-\lambda ^s-3 \eps}{\lambda^u+\eps} \ell 
	\le 
\tau_p(m+ \ell) - \tau_p(m)
	\le 
\dfrac{-\lambda ^s+ 3\eps}{\lambda^u-\eps} \ell\]
proving \eqref{eq:bilip1}.


We derive \eqref{eq:bilip2} from \eqref{eq:bilip1} noting
\begin{gather*}
\dfrac{-\lambda ^s- 3\eps}{\lambda^u+\eps} \ell + \ell
	\le 
L(m+\ell)-L(m) 
	\le 
\dfrac{-\lambda ^s+ 3\eps}{\lambda^u-\eps} \ell +\ell
.	\qedhere\end{gather*}

\end{proof}

Let $\leb$ denote the Lebesgue measure on $\R$.  We have the following fact.  
\begin{claim}\label{claim:bilipRND}
Let $g\colon \R \to \R$ be a bi-Lipschitz homeomorphism  with $$a|y-x|\le |g(y) - g(x)| \le b|y-x|.$$
Then 
$g_*\leb\ll\leb\ll g_*\leb$ and 
$a\le\frac {d\leb}{dg_*\leb}\le b$.
\end{claim}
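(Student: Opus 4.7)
The plan is to first reduce to the monotone increasing case and then compare the two measures on intervals, from which the bound on the Radon--Nikodym derivative will follow by a standard density argument.

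First, since $g$ is a bi-Lipschitz homeomorphism of $\R$, it is monotone; by composing with $t\mapsto -t$ if necessary (which leaves $\leb$ invariant and does not change the asserted bounds), I may assume $g$ is strictly increasing.  In particular, $g^{-1}\colon \R\to \R$ is a strictly increasing homeomorphism satisfying $\tfrac{1}{b}|y-x|\le|g^{-1}(y)-g^{-1}(x)|\le \tfrac{1}{a}|y-x|$.

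Next, I would compute $g_*\leb$ on intervals.  For any bounded interval $I\subset\R$ with endpoints $\alpha<\beta$, the set $g^{-1}(I)$ is an interval with endpoints $g^{-1}(\alpha)<g^{-1}(\beta)$, so by the bi-Lipschitz bound on $g^{-1}$,
\[
\tfrac{1}{b}\,\leb(I)\;\le\;\leb(g^{-1}(I))\;=\;g_*\leb(I)\;\le\;\tfrac{1}{a}\,\leb(I).
\]
The same inequality passes to finite disjoint unions of intervals, hence to all open sets, and by outer regularity of the Lebesgue measure and of the Radon measure $g_*\leb$, to all Borel sets $A$:
\[
\tfrac{1}{b}\,\leb(A)\;\le\;g_*\leb(A)\;\le\;\tfrac{1}{a}\,\leb(A).
\]

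From this double inequality the mutual absolute continuity $g_*\leb\ll\leb\ll g_*\leb$ is immediate, and moreover the Radon--Nikodym derivative satisfies $\tfrac{1}{b}\le \tfrac{dg_*\leb}{d\leb}\le \tfrac{1}{a}$ a.e., so that
\[
a\;\le\;\frac{d\leb}{d\,g_*\leb}\;\le\;b
\]
almost everywhere.  There is no real obstacle here; the only mild point is checking that the decreasing case really does not affect the constants, and that one is comparing $|g^{-1}(I)|$ to $|I|$ with the correct direction of inequality---both verifications are immediate from the defining bi-Lipschitz estimate.
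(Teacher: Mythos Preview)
Your proof is correct. The paper does not actually give a proof of this claim at all; it is stated as an elementary fact and used immediately afterward. Your argument---reducing to the increasing case, comparing $g_*\leb$ and $\leb$ on intervals via the bi-Lipschitz bound on $g^{-1}$, and then passing to Borel sets by regularity---is exactly the standard verification one would supply.
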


 \def\rhere{r_1}

\subsection{Dichotomy for invariant subspaces}
For this  and the following subsection, we return to the skew product $F\colon X\to X$.  In this section we establish the following dichotomy for $DF$-invariant subbundles of $TX$. 
Let $F\colon X\to X$  and $\muskew$ be as in Theorem \ref{thm:skewproductABS}.  
\def\V{\mathcal V}
Consider a $\muskew$-measurable line field $\V\subset TX$.  Write $V_\xi(x)\subset T_xM$ for the family of subspaces with $$\V(\xi,x) = \{\xi\}\times    V_\xi(x) .$$   
The measurability of $\V$ with respect to a sub-$\sigma$-algebra of $X$ is the measurability of the function $(\xi,x)\mapsto V_\xi(x)$ with the standard Borel structure on $TM$.
We say  $\V$ is \emph{$DF$-invariant} if for $\muskew$-\ae $(\xi,x)\in X$
\[DF_{(\xi,x)} \V(\xi,x) = \V(F(\xi,x))\text {\ or \ }   D_xf_\xi V_\xi(x) = V_{\theta(\xi)}( f_\xi(x)).\]
		
Recall that 
$\F$  in Theorem \ref{thm:skewproductABS} is an decreasing sub-$\sigma$-algebra; that is,   $F(\F) \subset \F$.  
We write $\F_{\infty}$ 
for the smallest $\sigma$-algebra containing $\bigcup_{n\ge 0}F^{-n}(\F)$.  We  similarly define  $\hat \F_{\infty}$.  
(We remark that in the case that $\hat \Fol$ is the $\sigma$-algebra of local unstable sets in Section \ref{sec:skewreint}, $\hat\Fol_\infty$ and $\Fol_\infty$ are, respectively,  the completions of the Borel algebras on $\Sigma$ and $\Sigma\times M$.) 
		

Recall we write $\{\nucondF_\xi\}_{\xi \in \Omega}$ for a family of  conditional probabilities  induced by $\hat \Fol$. 

\begin{lemma}\label{lem:VFdichot}
		Let $\muskew$ and $\scrF$ be as in Theorem \ref{thm:skewproductABS}.  Then 
	\begin{enumerate}[label=({\arabic*}), ref= ({\arabic*})]
		\item \label{dich1}  the line field $(\xi,x)\mapsto E_\xi^s(x)$ is $\F_\infty$-measurable;
		\item \label{dich2}  
		for any $DF$-invariant, $\F_{\infty}$-measurable line field   $\V\subset TX$ either 
${(\xi,x) \mapsto V_\xi(x)}$  is $\F$-measurable, or 
		\begin{align}\label{eq:olive} 
\text{for $\nu$-a.e.\ $\xi$, $\muskew_\xi$-a.e.\ $x$, and  $\nucondF_\xi$-a.e.\ $\eta$, 
\(V_\xi(x) \neq V_\eta(x).\)}
\end{align}

		\end{enumerate}
\end{lemma}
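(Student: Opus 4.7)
The plan. Part \ref{dich1} is direct: the stable Oseledec subspace $E^s_\xi(x)$ is characterized as the almost surely unique line in $T_xM$ of vectors $v$ with $\tfrac 1 n\log\|D_xf^n_\xi v\|\to \lambda^s<0$, so it depends measurably on $x$ and the forward cocycle $\{f^n_\xi\}_{n\ge 0}$.  The modification in Section \ref{sec:modifyF} makes $\xi\mapsto f_\xi$ measurable with respect to $\hat \F$; combined with the fact that $\hat\F$ is decreasing, iterating shows $\xi\mapsto f_{\theta^k\xi}$ is $\theta^{-k}\hat\F\subset\hat\F_\infty$-measurable for every $k\ge 0$, so $\xi\mapsto f^n_\xi$ is $\hat\F_\infty$-measurable for every $n\ge 0$.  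Therefore $(\xi,x)\mapsto E^s_\xi(x)$ is $\F_\infty$-measurable.

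For part \ref{dich2}, set $g(\xi,x) := \nu^{\hat\F}_\xi(\{\eta : V_\eta(x) = V_\xi(x)\})$; the conclusion is equivalent to $g\in\{0,1\}$ almost surely.  The plan is to exploit the $\F$-measurable pushforward $\rho(\xi,x) := (\eta\mapsto V_\eta(x))_*\nu^{\hat\F}_\xi$ on the projectivized tangent bundle, which carries an $F$-invariance structure on its atomic part induced by $DF$-invariance of $\V$.  Using $f_\eta = f_\xi$ for $\eta\sim_{\hat\F}\xi$ (Section \ref{sec:modifyF}) and the $DF$-invariance of $\V$, one derives the key identification $V_\eta(x) = V_\xi(x)$ iff $V_{\theta\eta}(f_\xi x) = V_{\theta\xi}(f_\xi x)$ for such $\eta$, which upgrades to the measure-theoretic relation $\rho(F(\xi,x)) = \bigl[(D_xf_\xi)_*\rho(\xi,x)\bigr]\bigl(\cdot\,\big|\,\eta'\sim_{\hat\F}\theta\xi\bigr)$ on $\mathbb{P}T_{f_\xi x}M$; in particular $V_{F(\xi,x)}=D_xf_\xi V_\xi(x)$, and the masses $\rho(\xi,x)(\{V_\xi(x)\})$ and $\rho(F(\xi,x))(\{V_{F(\xi,x)}\})$ are related through a strictly positive conditional factor on a full measure set.

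The dichotomy is then closed by an $F$-ergodicity argument applied to the $\F$-measurable countable set of atoms of $\rho$: selecting a canonical enumeration $W_1, W_2, \ldots$ of atoms (ordered, say, by decreasing mass) yields $\F$-measurable line fields, and the event $\{V = W_i\}$ is $\F_\infty$-measurable with $\{g>0\} = \bigcup_i\{V=W_i\}$; combining $DF$-invariance of $\V$ with the $F$-ergodicity of $\muskew$ and the fact that $DF$ permutes the $W_i$'s through the conditional pushforward forces the index $i(\xi,x)$ to be essentially $F$-invariant, so that on the event $\{g>0\}$ the line field $\V$ coincides with a single $\F$-measurable atom line field, yielding $g=1$ there and the exclusion $g=0$ on the complement.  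The main technical obstacle lies in the reconciliation of $\rho(F(\xi,x))$ with the pushforward $(D_xf_\xi)_*\rho(\xi,x)$: because $\theta(\mathrm{atom}_{\hat\F}(\xi))\supsetneq\mathrm{atom}_{\hat\F}(\theta\xi)$ in general, atomic masses can be rescaled or even annihilated under the conditional pushforward, so $\{g>0\}$ is only $F$-invariant up to null sets, and one must carefully track the bijection between atoms of $(D_xf_\xi)_*\rho(\xi,x)$ and atoms of $\rho(F(\xi,x))$ via a martingale or Birkhoff-type averaging to avoid losing information when passing between conditional structures.
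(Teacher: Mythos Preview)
Your argument for part \ref{dich1} is correct and essentially the same as the paper's: the forward cocycle $\xi\mapsto f^n_\xi$ is $\hat\F_\infty$-measurable for $n\ge 0$, and $E^s_\xi(x)$ depends only on this data.

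For part \ref{dich2} there is a genuine gap. You correctly identify the function $g(\xi,x)=\nu^{\hat\F}_\xi(\{\eta:V_\eta(x)=V_\xi(x)\})$ and correctly observe that it does \emph{not} satisfy $g\circ F=g$: because $\theta(\mathrm{atom}_{\hat\F}(\xi))\supsetneq\mathrm{atom}_{\hat\F}(\theta\xi)$, passing to $F(\xi,x)$ involves a further conditioning that rescales atomic masses. But then your proposed remedy --- enumerating the atoms $W_1,W_2,\ldots$ of $\rho(\xi,x)$ by decreasing mass and arguing that the index $i(\xi,x)$ with $V_\xi(x)=W_{i(\xi,x)}$ is $F$-invariant --- fails for exactly this reason: $DF$ does carry atoms of $\rho(\xi,x)$ to atoms of $\rho(F(\xi,x))$ on a set of full measure, but it does not preserve the mass ordering, so $i\circ F\ne i$ in general. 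Your closing sentence acknowledges the obstacle and gestures toward ``a martingale or Birkhoff-type averaging,'' but this is precisely the heart of the argument and you have not carried it out.

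The paper's proof sidesteps the atom-enumeration entirely. It introduces the filtration $\F_n:=F^{-n}\F$ and the functions $\phi_n(\xi,x):=\mu^{\F_n}_{(\xi,x)}(\{V_\cdot=V_\xi(x)\})$, so that $\phi_0=g$. The key exact identity is
\[
\phi_0\bigl(F^n(\xi,x)\bigr)=\phi_n(\xi,x),
\]
which follows from $F^n_*\mu^{\F_n}_{(\xi,x)}=\mu^{\F}_{F^n(\xi,x)}$ together with $DF$-invariance of $\V$ and $\F_n$-measurability of $D_xf^n_\xi$. This identity converts the awkward ``conditioning under $F$'' into an equality along a filtration. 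Now for fixed $(\xi,x)$, the sequence $\eta\mapsto\phi_n(\eta,x)$ on the $\hat\F$-atom is a genuine martingale with respect to $\hat\F_n\nearrow\hat\F_\infty$; since $\V$ is $\F_\infty$-measurable it converges to the indicator $1_{\{V_\cdot(x)=V_\xi(x)\}}$. Hence if $g(\xi,x)>0$ on a positive-measure set, then $\phi_n\to 1$ on a positive-measure set, and the identity $\phi_0\circ F^n=\phi_n$ combined with ergodicity of $F$ forces $\phi_0\equiv 1$ almost everywhere, which is exactly $\F$-measurability of $\V$. No tracking of individual atoms or their ordering is needed.
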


Recall the  family of conditional measures $\{\muskew^\Fol_{(\xi,x)}\}$ induced by $\Fol$ is defined   
by
$d \muskew^\Fol_{(\xi,x)} (\eta, y) =d \nucondF_\xi(\eta) \times  \delta_x(y)$. 
  Thus, for $\nu$-a.e.\ $\xi$, and $\muskew_\xi$-a.e.\ $x$, $V_\eta(x)$ is defined for  $\nucondF_\xi$\ae $\eta$ and the comparison  \eqref{eq:olive} is well defined \ae  

\begin{proof}
To see \ref{dich1} we recall that $\xi\mapsto \cocycle    [\xi][-n]$ is $\hat\Fol$-measurable for all $n\ge0$.  Then 
$$  \xi\mapsto \cocycle = \left(\cocycle[\theta^n(\xi)][{-n}]\right) \inv$$
is $\theta^{-n} (\hat \F)$-measurable.  It follows that $\xi\mapsto \cocycle $ is $\hat \F_\infty$-measurable for all $n\ge 0$.  
Since $E^s_\xi(x)$ depends only on $\cocycle$ for $n\ge 0$, we have  $$(\xi,x)\mapsto E^s_\xi(x) = \left\{v\in T_xM \mid  \lim _{n\to \infty} \frac 1 n |D\cocycle  (v)| <0\right\}$$ is $\F_\infty$-measurable.  

\def\Qpart{\mathcal Q}
 
To prove \ref{dich2} let $\P$ denote the measurable partition of $X$ into level sets of $(\xi,x) \mapsto V_\xi(x)$.
We assume \eqref{eq:olive} fails: 
\begin{equation}\begin{aligned}\label{eq:treeness}
	\muskew\bigg\{(\xi,x) \mid \muskew^\Fol_{(\xi,x)}& (\P(\xi,x))  > 0\bigg\}= \muskew\bigg\{(\xi,x) \mid \nucondF_\xi\big\{ \eta \mid V_\xi(x) = V_\eta(x) \big\} > 0\bigg\}>0.  
\end{aligned}\end{equation}
From \eqref{eq:treeness} we will deduce $\F$-measurability of $(\xi,x)\mapsto V_\xi(x)$.  

\def\meashere{\muskew _{(\xi,x)}^\Fol}
\def\measheren{\muskew _{(\xi,x)}^{\Fol_n}}
Let 
	\(\Fol_{n}: = F^{-n} (\F)\)
and write $\{\measheren\}$ for a corresponding family of conditional measures. Also write  \(\hat \Fol_{n}: = \theta^{-n} (\hat \F)\).


For each $(\xi,x)\in X$ define 
\[ \phi_n(\xi,x) := 
\measheren(\P(\xi, x)).
\]
  We have  
\[ \phi_n(\xi,x) =  \Ex_{\meashere}(1_{\P (\xi, x)}(\cdot)\mid \F_{n})(\xi,x) = \Ex_{\nucondF_\xi}(1_{\P (\xi, x)}(\cdot, x)\mid \hat \F_{n})(\xi). \] 
Consider any $(\xi,x)$ with $ \muskew^\Fol_{(\xi,x)} (\P(\xi,x))>0$ and such that  $\V$ is $\Fol_\infty$-measurable modulo $ \muskew^\Fol_{(\xi,x)} $. 
For $\eta\in \Omega$ define $$\psi_n(\eta) := 
 \Ex_{\nucondF_\xi}(1_{\P (\xi, x)}(\cdot, x)\mid \hat \F_{n})(\eta)
.$$
Then $\psi_n(\eta) $ is a martingale (with filtration $\hat \F_n$ on the measure space $(\Omega, \B_\Omega, \nucondF_\xi)$) 
whence (using  the $\Fol_\infty$-measurability of $\V$)
$$\psi_n(\eta)\to \Ex_{\nucondF_\xi}(1_{\P (\xi, x)}(\cdot, x)\mid \hat \F_{\infty})(\eta) = 1_{\P (\xi, x)}(\eta,x)$$ $ \nucondF_\xi$-\as as $n\to \infty$.  
In particular, for $\meashere$-\ae $(\eta,x) \in \P(\xi,x)$
$$\phi_n(\eta,x) \to 1$$ as $ n\to \infty.$
It follows from  \eqref{eq:treeness}  that 
\begin{equation}\label{eq:downlow}\muskew\left\{ (\xi,x) \in X \mid \phi_n(\xi,x) \mapsto 1 \text{\ as\ } n\to \infty\right\}>0.\end{equation}

The $\F$-measurability of $(\xi,x)\mapsto V_\xi(x)$ is equivalent to the assertion that 
\[\muskew\{(\xi,x)\mid \phi _0(\xi,x) = 1\} = 1.\]  
Since $F^n_*(\muskew _{(\xi,x)}^{\Fol_n}) = \muskew _{F^n(\xi,x)}^{\Fol}$,  $\V$ is $DF$-invariant, and $(\xi,x) \mapsto D_x\cocycle$ is $\F_n$-measurable, we  have that $\phi_0(F^n(\xi, x)) = \phi_n(\xi,x).$
The ergodicity and $F$-invariance of $\muskew$ and 
\eqref{eq:downlow} then imply that $\phi_0\equiv 1$ on a set of full measure completing the proof.
\end{proof}

\subsection{Sets of good angles, geometry of intersections, and bounds on distortion}\label{sec:all the carp}
We remark that in this section, all estimates are with respect to the background Riemannian  metric on $M$.  
Let $X_1\subset X$ denote the full $\muskew$-measure subset such that $E^{u/s}_\xi(x)$ is defined and $W^{u/s}_\xi(x)$ is an injectively immersed curve tangent to $E^{u/s}_\xi(x)$. 
Furthermore, assume the affine parameters and corresponding parametrizations  $\I^{u/s}$ in \eqref{eq:affparammfolds} are defined on $W^{u/s}_\xi(x)$ for every $(\xi,x)\in X_1$. 
Given $\gamma_ 1>0$, let {$\Lambda({\gamma_1})\subset X_1$} denote the set of points where  
$$\angle 	\left(E^s_\xi (x) , E^u_\xi (x)\right)> \gamma_1.$$
Given $0<\gamma_2 <\gamma_1/2$ and $(\xi,x)\in  \Lambda(\gamma_1)$ 	define 
$A_{\gamma_2}(\xi,x)$ to be the set of $\eta\in \Omega$ with 
\begin{enumerate}
\item $(\eta,x) \in X_1$
\item $ \angle 	\left(E^s_\xi (x) , E^s_\eta (x)\right)>    \gamma_2$, and 
\item $	\angle\left(E^u_\xi (x) , E^s_\eta (x)\right)>	 \gamma_2 .$
\end{enumerate}

As $\muskew(X_1) = 1$, as remarked in the previous section, for almost every $(\xi,x)$ we have $(\eta,x) \in X_1$ for $ \nucondF_{\xi}$-\ae $\eta$.  
For $0<a<1$ we define the set
$\calA_{\gamma_1,\gamma_2, a}\subset \Lambda(\gamma_1)$ by 
$$\calA_{\gamma_1,\gamma_2, a}:= \left\{ (\xi,x) \in \Lambda(\gamma_1) \mid  \nucondF_{\xi} (A_{\gamma_2}(\xi,x)) >a\right\}.$$
	
	From Lemma \ref{lem:VFdichot} we obtain the following.  
\begin{lemma}\label{lem:goodangle}
Assume that $(\xi,x)\mapsto E^s_\xi(x)$ is not $\Fol$-measurable.  Then for any $\alpha>0$ and $0<a<1$ there exists $\gamma_1>0$ and $\gamma_2>0$ with 
$$\muskew(\calA_{\gamma_1,\gamma_2, a}) >1-\alpha.$$
\end{lemma}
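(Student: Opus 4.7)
The plan is to apply the dichotomy in Lemma \ref{lem:VFdichot} to the line field $(\xi,x)\mapsto E^s_\xi(x)$ (which is $\F_\infty$-measurable and $DF$-invariant). Since by assumption $E^s$ is not $\F$-measurable, the second alternative of Lemma \ref{lem:VFdichot}\ref{dich2} applies: for $\muskew$-\ae $(\xi,x)$ we have $E^s_\xi(x)\neq E^s_\eta(x)$ for $\nucondF_\xi$-\ae $\eta$, i.e.\ the angle $\angle(E^s_\xi(x),E^s_\eta(x))$ is $\nucondF_\xi$-\as strictly positive. I will then build the desired set by combining two separate thickenings — one handling condition (iii) (which is purely a matter of the angle between $E^s$ and $E^u$), and one handling condition (ii) (which is the genuine consequence of non-$\F$-measurability).

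For condition (iii), observe that since $(\xi,x)\mapsto E^u_\xi(x)$ is $\F$-measurable, for $\muskew$-\ae $(\xi,x)$ we have $E^u_\eta(x)=E^u_\xi(x)$ for $\nucondF_\xi$-\ae $\eta$. Consequently $\angle(E^u_\xi(x),E^s_\eta(x))=\angle(E^u_\eta(x),E^s_\eta(x))$ almost surely in $\eta$, so if $(\eta,x)\in\Lambda(\gamma_1)$ then $\angle(E^u_\xi(x),E^s_\eta(x))>\gamma_1$. Choose $\gamma_1>0$ small enough that $\muskew(\Lambda(\gamma_1))>1-\alpha(1-a)/4$, and set
\[
q(\xi,x):=\nucondF_\xi\{\eta:(\eta,x)\in\Lambda(\gamma_1)\}.
\]
Using $d\muskew^\Fol_{(\xi,x)}(\eta,y)=d\nucondF_\xi(\eta)\,\delta_x(y)$ and $\int q\,d\muskew=\muskew(\Lambda(\gamma_1))$, Markov's inequality yields $\muskew\{q>(1+a)/2\}>1-\alpha/2$.

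For condition (ii), set
\[
p_{\gamma_2}(\xi,x):=\nucondF_\xi\{\eta:\angle(E^s_\xi(x),E^s_\eta(x))>\gamma_2\}.
\]
By the dichotomy discussed above, $p_{\gamma_2}(\xi,x)\nearrow 1$ as $\gamma_2\searrow 0$ for $\muskew$-\ae $(\xi,x)$, so by Egorov (or the monotone convergence theorem combined with Markov) we may choose $\gamma_2\in(0,\gamma_1/2)$ so that $\muskew\{p_{\gamma_2}>(1+a)/2\}>1-\alpha/2$. Intersecting with the set from the previous paragraph gives a set of $\muskew$-measure exceeding $1-\alpha$ on which both $p_{\gamma_2}(\xi,x)>(1+a)/2$ and $q(\xi,x)>(1+a)/2$; since $A_{\gamma_2}(\xi,x)$ contains the intersection of the two corresponding sets in $\eta$, the elementary inequality $\nucondF_\xi(B_1\cap B_2)\ge \nucondF_\xi(B_1)+\nucondF_\xi(B_2)-1$ yields $\nucondF_\xi(A_{\gamma_2}(\xi,x))>a$, proving $(\xi,x)\in\calA_{\gamma_1,\gamma_2,a}$.

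The main (and really only) conceptual obstacle is to correctly exploit that $E^u$ is $\F$-measurable while $E^s$ is not, in order to rewrite condition (iii) as a statement about the distribution of $E^s_\eta(x)$ relative to the common $E^u_\xi(x)$; once that observation is made, both conditions reduce to standard Fubini/Egorov estimates and the proof is essentially routine.
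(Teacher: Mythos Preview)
Your proof is correct and is exactly the argument the paper has in mind: the paper simply writes ``From Lemma~\ref{lem:VFdichot} we obtain the following'' without further detail, and your use of the dichotomy together with the $\Fol$-measurability of $E^u$ (so that $E^u_\eta(x)=E^u_\xi(x)$ for $\nucondF_\xi$-a.e.\ $\eta$) plus standard Markov/Egorov bookkeeping is precisely what is intended. One small omission: $\calA_{\gamma_1,\gamma_2,a}$ is by definition a subset of $\Lambda(\gamma_1)$, so you must also intersect your final set with $\Lambda(\gamma_1)$ (and adjust the constants accordingly, e.g.\ replace $\alpha/2$ by $\alpha/3$ in each step); this costs at most an additional $\alpha(1-a)/4$ in measure and is harmless.
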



Fix a uniform $\rho_0>0$ to be smaller than the injectivity radius of  $M$  and given $x\in M$  let 
	$$\exp_x\colon B(\rho_0)\subset T_xM\to M$$
denote the exponential map.  
We recall that for every $(\xi,x)\in  X_1$ we have selected 
$v^u(\xi,x) \in E^u_\xi(x)$ and $v^s(\xi,x) \in E^s_\xi(x)$ such that $(\xi,x)\mapsto v^u(\xi,x) $ is $\Fol$-measurable and $(\xi,x)\mapsto v^s(\xi,x) $ is $\muskew$-measurable.  

By Lusin's theorem, there is a compact subset $\Lambda_2\subset  X_1$, of measure arbitrarily close to 1, on which the family of parametrized stable and unstable manifolds 
$${(\xi,x)} \mapsto \I^\sigma_{(\xi,x)} $$
 vary continuously in the $C^1$ topology on the space of embeddings  $C^1([-r,r], M)$ for  $\sigma= \{s,u\}$ and all $0<r<1$.  

Given $x\in M$, a subspace $V\subset T_xM$, and $0<\gamma<\pi$ we denote by $\Cone_\gamma(V)$ the open cone of angle $\gamma$ around the subspace $V$.  
We have the following. 
\begin{lemma}
Given any $\gamma>0$, there exist $\hat r_1, \hat r_0>0$ such that for all $(\xi, x)\in \Lambda_2$ and all $(\xi,y)\in \Lambda_2$  with $d(x,y)<\hat r_0$ 
\begin{enumerate}
\item $\exp_x\inv\left (\locunstM [\hat r_1] x { \xi} \right) \subset \Cone_{\gamma} \left(E^u_{\xi}(x)\right)$;
\item $\exp_x\inv\left(\locstabM [\hat r_1] x { \xi} \right)\subset \Cone_{\gamma} \left(E^s_{ \xi}(x)\right)$;
\item {$\exp_x\inv\left(\locstabM [\hat r_1] y {\xi} \right)\subset \Cone_{\gamma} \left(E^s_{ \xi}(x)\right) + \exp_x\inv(y).$}
\end{enumerate}
\end{lemma}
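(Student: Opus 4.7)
The plan is to prove all three statements by combining the uniform continuity of the parametrizations $\I^\sigma_{(\xi,x)}$ on the compact set $\Lambda_2$ in the $C^1$ topology with the fact that the exponential map $\exp_x\colon T_xM \to M$ has differential at $0$ equal to the identity (under the canonical identification $T_0(T_xM) \cong T_xM$). The key observation throughout is that an absolutely continuous curve whose derivative lies in a fixed open cone traces out a path contained in that same cone (centered at its starting point), so the statements reduce to estimating how much the derivatives of the relevant pulled-back curves can tilt away from the reference subspaces.

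First I would fix $\gamma > 0$ and handle parts (1) and (2). For each $(\xi,x) \in \Lambda_2$ define the $C^1$ curve
\[ g^u_{(\xi,x)}(t) := \exp_x^{-1}\bigl(\I^u_{(\xi,x)}(t)\bigr), \qquad g^s_{(\xi,x)}(t) := \exp_x^{-1}\bigl(\I^s_{(\xi,x)}(t)\bigr), \]
defined for $t$ in a neighborhood of $0$. Since $D_0\exp_x = \mathrm{Id}$ and $D_0\I^\sigma_{(\xi,x)}(1) = v^\sigma_{(\xi,x)} \in E^\sigma_\xi(x)$, we have $g^\sigma_{(\xi,x)}(0) = 0$ and $\dot g^\sigma_{(\xi,x)}(0) = v^\sigma_{(\xi,x)}$. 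Because $\Lambda_2$ is compact and $(\xi,x) \mapsto \I^\sigma_{(\xi,x)}$ is continuous in $C^1$, the family $\{g^\sigma_{(\xi,x)}\}_{(\xi,x) \in \Lambda_2}$ varies continuously in $C^1([-1,1], T_xM)$ (after identifying nearby tangent spaces via the Riemannian structure), so by uniform continuity of the derivative one can choose $\hat r_1 > 0$ with the property that for all $(\xi,x) \in \Lambda_2$ and $|t| \le \hat r_1$, the vector $\dot g^\sigma_{(\xi,x)}(t)$ makes angle less than $\gamma/2$ with $E^\sigma_\xi(x)$. Writing $g^\sigma_{(\xi,x)}(t) = \int_0^t \dot g^\sigma_{(\xi,x)}(s)\,ds$ and using convexity of the open cone $\Cone_{\gamma/2}(E^\sigma_\xi(x))$, the value $g^\sigma_{(\xi,x)}(t)$ lies in $\Cone_{\gamma/2}(E^\sigma_\xi(x)) \subset \Cone_\gamma(E^\sigma_\xi(x))$ for $0 < |t| \le \hat r_1$, which gives (1) and (2).

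For (3), the same curve-integral scheme applies but two new continuity estimates are needed because the basepoints differ. First, the assignment $(\xi,y) \mapsto E^s_\xi(y)$ is continuous on $\Lambda_2$ (it is recovered as $\mathrm{span}\,D_0\I^s_{(\xi,y)}$, which varies continuously in $C^1$), so by shrinking a neighborhood we may force the angle between $E^s_\xi(y)$ and $E^s_\xi(x)$, after parallel transport through the short geodesic from $y$ to $x$, to be at most $\gamma/4$ whenever $(\xi,x),(\xi,y) \in \Lambda_2$ and $d(x,y) < \hat r_0$. Second, since $D\exp_x$ at points in a small neighborhood of $0 \in T_xM$ is arbitrarily close to the identity, conjugating by $D\exp_x^{-1}$ distorts tangent directions by an amount that can be made less than $\gamma/4$ by choosing $\hat r_0$ and $\hat r_1$ small enough. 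Now form $h(t) := \exp_x^{-1}(\I^s_{(\xi,y)}(t))$, which satisfies $h(0) = \exp_x^{-1}(y)$ and whose derivative $\dot h(t)$, by the two estimates above, makes angle less than $\gamma$ with $E^s_\xi(x)$ for all $|t| \le \hat r_1$; integrating from $0$ and translating by $h(0)$ yields
\[ \exp_x^{-1}\bigl(\locstabM[\hat r_1] y \xi\bigr) \subset \Cone_\gamma(E^s_\xi(x)) + \exp_x^{-1}(y). \]

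The main technical obstacle is the bookkeeping in step (3): one must simultaneously control (a) the tilt of $E^s_\xi(y)$ relative to $E^s_\xi(x)$ as $y$ moves, (b) the distortion introduced by pulling back via $\exp_x$ rather than $\exp_y$, and (c) the $C^1$-variation of $\I^s_{(\xi,y)}$ away from the basepoint. Each of these is a continuity statement on a compact set, so they can be absorbed into the two parameters $\hat r_0$ and $\hat r_1$, but the three estimates must be reconciled by choosing $\hat r_0$ sufficiently small relative to $\hat r_1$ (and both sufficiently small relative to the injectivity radius of $M$) so that every composition in the argument is defined and the angle losses sum to less than $\gamma$.
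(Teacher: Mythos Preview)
Your proof is correct and is precisely the argument the paper has in mind: the lemma is stated without proof in the paper, being treated as an immediate consequence of the compactness of $\Lambda_2$ and the continuous variation of the parametrized stable and unstable manifolds in the $C^1$ topology (the same reasoning invoked in the proof of Lemma~\ref{prop:controlledintersections}, where the authors write that the analogous conclusions ``follow simply from the $C^1$ topology and Luzin's theorem''). Your writeup supplies the routine details behind that sentence.
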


Fix $\epone= \eps/10$.  Fix a family of Lyapunov charts $\phi(\xi,x)$ with corresponding function $\ell\colon X\to [1,\infty)$ and retain all related notation from  Section  \ref{sec:charts2sided}.  Let $\Lambda_3\subset \Lambda_2$ be a set on which $\ell$ is bounded above by $\ell_0$ and such that  there exist   $0<\td r_0$ and $0<\td r_1$  such  that for $(\xi,x)\in \Lambda_3$, \begin{enumerate}
\item $\locstabM [\td r_1] x \xi \subset V^s(\xi,x)$ where $V^s(\xi,x)$ is the local stable manifold built in Theorem \ref{thm:locPesinStab}; 
\item the diameters of  $\locstabM [\td r_1] x \xi $ and $\locunstM [\td r_1] x \xi $ are less that $\dfrac {\ell_0^{-3} e^{-\lambda_0-\epone}}{10 k_0}$;
\item  if $(\xi,x), (\xi,y)\in \Lambda_0$ with $d(x,y)\le \td r_0$ then 
$$\phi(\xi,x)\left(\locstabM [\td r_1] y \xi\right) $$ is the graph of a $1$-Lipschitz function $g_{x,y}\colon D\subset \R^s\to \R^u(\ell_0\inv e^{-\lambda_0 - \epone})$ for some $D\subset \R^s(\ell_0\inv e^{-\lambda_0 - \epone})$.  
\end{enumerate}
Note, in  particular, for $(\xi,x)$ and $(\xi,y)$ above that $\locstabM [\td r_1] y \xi$ is in the domain of the chart $\phi(\xi,x)$.  
We may take $\mu(\Lambda_3)$ arbitrarily close to $\mu(\Lambda_2)$.

\begin{figure}[h]
\def\dsz{5pt}
\def\lwd{1.5pt}
\def\lwda{.8pt}
\def\framecolor{black}
\def\labsizea{\normalsize}
\def\labsizec{\large}
\def\labsizeb{\small}

\psscalebox{1.0 1.0} 
{
\psset{unit=.6cm}

\begin{pspicture}(0,-4.5)(14.13852,4)

\psdots[linecolor=black, dotsize=\dsz](2.9,-2.3)
\psdots[linecolor=black, dotsize=\dsz](5.0,1.3)
\psdots[linecolor=black, dotsize=\dsz](10.9,-2.6)
\psdots[linecolor=black, dotsize=\dsz](11.3,-1.9)
\psbezier[linecolor=black, linewidth=\lwd](0.41316566,-2.50676)(1.3083032,-2.4573998)(1.8393892,-2.2313633)(2.9,-2.3)(3.8374684,-2.3189983)(6.3595386,-2.5280867)(7.358429,-2.5751808)(8.357319,-2.6222749)(9.898433,-2.652231)(10.9,-2.6)
(11.898424,-2.6581306)(12.90618,-2.58966)(14.138429,-2.5951807)
\psbezier[linecolor=black, linewidth=\lwd](1.9417126,-3.7276871)(2.3444693,-3.2858083)(2.537873,-2.8332872)(2.9,-2.3)(3.3020096,-1.7429138)(4.6989446,0.27557805)(5.0,1.3)(5.4092207,2.4233994)(6.7563324,3.6614165)(7.1477766,3.9194307)
\psbezier[linecolor=black, linewidth=\lwd](9.958428,-3.8951807)(10.084205,-3.7256408)(10.757951,-2.891771)(10.9,-2.6)
(11.078907,-2.3722749)(11.234731,-2.1035357)(11.3,-1.9)(11.362127,-1.7268261)(12.48838,1.5908831)(13.3384285,3.2048192)

\psbezier[linecolor=black, linewidth=\lwd,linestyle=dashed,  dash=3pt 2pt](0.93948144,3.554293)(2.5927973,2.9731712)(3.7508197,2.2609324)(5.0,1.3)(6.3480587,0.41415808)(7.2929063,-0.2473017)(8.220923,-0.6034756)(9.148941,-0.95964944)(10.766837,-1.4823141)(11.3,-1.9)(11.9598675,-2.3073137)(12.927789,-2.749726)(13.794218,-3.4678123)
\psbezier[linecolor=black, linewidth=\lwd, linestyle=dashed,  dash=3pt 2pt](0.011060381,-0.022549238)(0.98336405,-0.66720515)(2.1136239,-1.6998501)(2.9,-2.3)(3.7190232,-2.8926167)(4.486175,-3.645016)(5.2531657,-3.9172862)

\uput{8pt}[270](2.9,-2.3){\labsizea$x$}
\uput{8pt}[0](5.0,1.3){\labsizea$v$}
\uput{5pt}[-55](10.9,-2.6){\labsizea$z$}
\uput{8pt}[25](11.3,-1.9){\labsizea$y$}

\uput[-45](5.2531657,-3.9172862){\labsizeb $W^s_\eta(x)$}
\uput[270](13.794218,-3.4678123){\labsizeb$W^s_\eta(y)$}
\uput[270](1.9417126,-3.7276871){\labsizeb$W^u_\xi(x) $} 
\uput[270](9.958428,-3.8951807){\labsizeb$W^u_\xi(y)$}
\uput[0](14.138429,-2.5951807){\labsizeb$W^s_\xi(x)$} 

\end{pspicture}

}

\caption{Lemma \ref{prop:controlledintersections}.}\label{fig:1}

\end{figure}

Appealing repeatedly to Lusin's theorem, and standard estimates in the construction of stable and unstable manifolds, we may choose parameters satisfying the following.  See Figure \ref{fig:1}.  (Note that in our application of Figure \ref{fig:1}  we have  $W^u_\xi(x) =W^u_\eta(x)$.)

\begin{lemma}\label{prop:controlledintersections}
For every $0<\gamma_1$,  $0<\gamma_2<\gamma_1/2$, and $\Lambda_3\subset \Lambda_2\subset \Lambda(\gamma_1)$ as above there exist a subset $\Lambda '\subset \Lambda_3$  with $\muskew(\Lambda')$ arbitrarily close to $\muskew(\Lambda_3)$,  positive constants $r_0<\td r_0,  r_1<\td r_1$, and constants $C_1, C_2, C_3,  D_1>1$, with the following properties.

 For $(\xi,x) \in \Lambda' $ we have 
\begin{enumlemma}
\item $\frac 1  {C_2} d(x,w) \le \|H^u_{(\xi,x)}(w)\|\le {C_2} d(x,w)$ for all $w\in \locunstM [r_1] x \xi$. 
\label{item8:3} 
\item $\frac 1  {C_2} d(x,w') \le \|H^s_{(\xi,x)}(w')\|\le {C_2} d(x,w')$ for all $w'\in \locstabM [r_1] x \xi$. 
\label{item8:4} 
\end{enumlemma}
{ For $(\xi,x), (\xi,y) \in \Lambda' $ 
with $d(x,y)<r_0$}
\begin{enumlemma}[resume]
\item \label{itme1} $\locstabM [ r_1] x \xi \cap \locunstM [ r_1]y \xi$ is a singleton $\{z\}$ and the intersection is uniformly transverse;  
\end{enumlemma}
{ furthermore, if $\eta\in A_{\gamma _2}(\xi, x)$ and $(\eta,y), (\eta,x) \in \Lambda'$}
\begin{enumlemma}[resume]
\item \label{itme2}  $\locunstM [ r_1] x \xi \cap \locstabM [ r_1]y \eta$ is a singleton $\{v\}$ and the intersection is uniformly transverse, and   
\item \label{itme3}  if $D_1\cdot \|H^u_{(\xi,y)}(z)\|\le \|H^s_{(\xi,x)}(z) \| $ then 
$$ \dfrac{1}{C_3} \|H^s_{(\xi,x)}(z) \| \le \|H^u_{(\xi,x)}(v) \|\le C_3 \|H^s_{(\xi,x)}(z) \|.$$
\end{enumlemma}
\def\rhere{{r_1}}
Additionally,    we have a uniform bound $C_1$ so that for $(\xi,x)\in \Lambda'$ and  $ w\in \locunstM[ \rhere]  x \xi$ 
\begin{enumlemma}[resume]
\item \label{item8:1} $\displaystyle\dfrac 1 {C_1}\le  \|\restrict {D_x\cocycle[\xi][-n]  }{T_x\locunstM[ \rhere] x \xi }\| \cdot \|\restrict{D_w\cocycle[\xi][-n] }{T_w\locunstM[ \rhere] x \xi } \|\inv \le C_1 $ for all $n\ge 0$
\end{enumlemma}
and for $(\xi,y)\in \Lambda'$ with $d(x,y)<r_0$ and $z$ as in \ref{itme1}
\begin{enumlemma}[resume]
\item \label{item8:2} $\dfrac1 {C_1}\le  \displaystyle \|\restrict {D _x \cocycle }{T_x\locunstM[ \rhere] x \xi} \| \cdot \|\restrict{D_z \cocycle }{T_z\locunstM[ \rhere] y \xi} \|\inv \le C_1$ for all $n\ge 0$.  
\end{enumlemma}

%
\label{lem:contsets} \label{lem:standardcrap}
\end{lemma}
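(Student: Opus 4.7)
The plan is to apply Lusin's theorem repeatedly to shrink $\Lambda_3$ to a compact $\Lambda'\subset \Lambda_3$ on which all the objects appearing in the statement --- the charts $\phi(\xi,x)$, the parametrizations $\I^{u/s}_{(\xi,x)}$, the affine parameters $H^{u/s}_{(\xi,x)}$, the Oseledec splittings $E^{s/u}_\xi(x)$, and the derivative $(\xi,x)\mapsto D_x f_\xi$ --- vary continuously in their natural topologies, so that compactness yields uniform constants. Items \ref{item8:3} and \ref{item8:4} are then immediate: $H^{u/s}_{(\xi,x)}$ is $C^{1,1}$ with $H^{u/s}_{(\xi,x)}(x)=0$ and derivative $\id$ at $x$, so continuous dependence in $C^1$ on a compact set produces a uniform $C_2$ and radius $r_1$ on which the estimate holds.

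For \ref{itme1}, I work inside $\phi(\xi,x)$: by the construction of $\Lambda_3$ and Lemma \ref{lemm:incling}, both $\locstabM[r_1] x \xi$ and $\locunstM[r_1] y \xi$ appear as $1$-Lipschitz graphs over $\R^s$ and $\R^u$ respectively (provided $r_0$ is small enough that $y$ sits well inside the domain of the chart); the uniform angle bound $\angle(E^s_\xi,E^u_\xi) > \gamma_1$ forces a unique, uniformly transverse intersection. For \ref{itme2} the same argument applies once $\Lambda'$ is further restricted so that $(\eta,y)\mapsto E^s_\eta(y)$ is continuous on $\Lambda'$; combined with $\eta \in A_{\gamma_2}(\xi,x)$ and $r_0$ small, $E^s_\eta(y)$ still makes angle at least $\gamma_2/2$ with both $E^s_\xi(x)$ and $E^u_\xi(x)$, so $\locstabM[r_1] y \eta$ appears in $\phi(\xi,x)$ as a Lipschitz graph transverse to $\locunstM[r_1] x \xi$. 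Placing $x$ at the origin and writing $y=(y_s,y_u)$ in chart coordinates, the point $z$ has coordinates within Lipschitz-$1/3$ of $(y_s,0)$; thus $|y_s|\asymp\|H^s_{(\xi,x)}(z)\|$ and $|y_u|\asymp\|H^u_{(\xi,y)}(z)\|$ by \ref{item8:3}, \ref{item8:4} and the uniform comparison between chart coordinates and Riemannian distance. Since $v$ lies on the near-vertical graph through the origin and on a graph through $y$ of slope $m$ with $|m|$ bounded above and below (by the angle control), linearization gives $v\approx(0,\,y_u-m y_s)$ up to uniformly controlled Lipschitz errors. Choosing $D_1$ large enough to absorb all implicit constants, the hypothesis $D_1\|H^u_{(\xi,y)}(z)\|\le \|H^s_{(\xi,x)}(z)\|$ forces $|y_u|\ll|y_s|$, whence $|v_u|\asymp |m y_s|\asymp |y_s|\asymp \|H^s_{(\xi,x)}(z)\|$, proving \ref{itme3}.

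Finally, items \ref{item8:1} and \ref{item8:2} are standard bounded-distortion estimates. Writing
\[ \log\!\left\|\restrict{D_x\cocycle[\xi][-n]}{T_x\locunstM[r_1]x\xi}\right\| - \log\!\left\|\restrict{D_w\cocycle[\xi][-n]}{T_w\locunstM[r_1]x\xi}\right\| = \sum_{k=0}^{n-1}\delta_k, \]
each term $\delta_k$ is bounded by the tempered Lipschitz constant of $Df^{-1}_{\theta^{-k}(\xi)}$ (via Lemma \ref{prop:tempered}) times $d(\cocycle[\xi][-k](x),\cocycle[\xi][-k](w))$, which decays geometrically by the backward-unstable analogue of item \ref{itempp} of Theorem \ref{thm:locPesinStab}; the sum converges to a uniform $\log C_1$. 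Item \ref{item8:2} follows identically after noting that $x$ and $z$ lie on distinct but nearby unstable leaves whose backward iterates remain uniformly close. The principal obstacle is \ref{itme3}: although the linear picture is transparent, one must simultaneously control the nonlinear Lipschitz errors in the graph representations, the distortion between chart coordinates and affine parameters on three different leaves, and the uniformity of $D_1$ across $\Lambda'$ --- all while ensuring the implicit $\asymp$ constants are independent of $(\xi,x,y,\eta)$.
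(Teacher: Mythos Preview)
Your approach is correct and matches the paper's own proof almost step for step: Lusin plus compactness for \ref{item8:3}--\ref{itme2}, a linear-algebra/trigonometry computation in a chart at $x$ for \ref{itme3} (the paper phrases this via the law of sines in the exponential chart rather than in $\phi(\xi,x)$, but the content is identical), and the standard telescoping distortion argument for \ref{item8:1}--\ref{item8:2}.

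There is one slip in your treatment of \ref{item8:2}. You write that ``$x$ and $z$ lie on distinct but nearby unstable leaves whose backward iterates remain uniformly close,'' but this is the wrong direction: backward iterates of points on distinct unstable leaves need not stay close. The correct observation is that $z\in\locstabM[r_1]{x}{\xi}$, so the \emph{forward} orbits $\cocycle[\xi][n](x)$ and $\cocycle[\xi][n](z)$ converge exponentially (this is why the statement has $n\ge 0$). The derivatives in \ref{item8:2} are restricted to the unstable tangent directions at $x$ and $z$ respectively, so to run the telescoping sum you also need that the images $D_x\cocycle[\xi][n](T_x W^u_\xi(x))$ and $D_z\cocycle[\xi][n](T_z W^u_\xi(y))$ become exponentially close as subspaces of $T_{\cocycle[\xi][n](x)}M\cong T_{\cocycle[\xi][n](z)}M$; this follows from the same subexponential-versus-exponential competition (Lemma \ref{prop:tempered} controls the growth of the Lipschitz constant of $Df$, while the tangent spaces converge because the base points do and the unstable direction depends measurably---hence, after Lusin, continuously---on the orbit). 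The paper's proof records exactly this point. Once you replace ``backward'' with ``forward'' and invoke $z\in W^s_\xi(x)$, your argument for \ref{item8:2} goes through.
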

\begin{proof}
Conclusions  \ref{item8:3}--\ref{itme2} follow simply from the $C^1$ topology and Luzin's theorem.  

For \ref{itme3}, we work in the exponential chart at $x$.   By the law of sines, given fixed $\gamma_1$ and  $\gamma_2$, we pick a sufficiently small $\gamma>0$  so that if $\hat y\in C_\gamma (E^s_\xi(x))$ then there exists $\hat C>1$ with 
$$\dfrac {d(0,\hat y)}{\hat C}\le \max \{ d(0, v): v\in C_\gamma(E^u_\xi(x)) \cap C_\gamma(E^s_\eta(x)) + \hat y \} < \hat C d(0,\hat y) $$
We then obtain  \ref{itme3}  from the uniform Lipschitz bounds on the exponential map and the affine parameters.  See Figure \ref{fig:1}.

The estimates \ref{item8:1} and \ref {item8:2} follow from the fact that the pairs $\cocycle[\xi][-n] (x)$ and $\cocycle [\xi][-n] (w)$,    $\cocycle (x)$ and $\cocycle(z)$, and  $D _x \cocycle (T_x\locunstM[ \rhere] x \xi)$ and $D_z \cocycle (T_z\locunstM[ \rhere] y \xi)$ are exponentially asymptotic while $|f_\xi|_{C^1}$, $\lip(Df^n_{\xi})$, and the Lipschitz constants for the variation of the tangent spaces to $\cocycle [\xi] [-n] (\locunstM x \xi )$ grow sub-exponentially for  $\xi\in \Omega_0$ and $(\xi,x)$ satisfying Proposition \ref{prop:Stabman}.  
\end{proof}

The following lemma will be needed in Claim \ref{claim:GP} below.  
\begin{lemma}\label{lem:inclination}
Take $(\xi,x) \in \Lambda'$ and $ (\xi,y)\in \Lambda'$ with $d(x,y)<r_0$, and set
$z= \locstabM [ r_1] x \xi \cap \locunstM [ r_1]y \xi$  and 
$w=  \locstabM [ r_1] y \xi \cap \locunstM [ r_1]x\xi$.
Let $\Gamma\subset  \locstabM [ r_1] y \xi$ be the curve with endpoints $w$ and $y$.  Let $n\ge 0$ be such that   $$\left|\phi(F^{-j}(\xi,x))(\cocycle [\xi][-j] (z))\right|\le \frac{e^{-\lambda_0-\epone} \ell_0\inv  e^{-\epone j}}{10}$$
for all $0\le j\le n$.  
Then $\cocycle [\xi][-n] (\Gamma)$ is in the domain of $\phi(F^{-n}(\xi,x))$ and  $$\phi(F^{-n}(\xi,x))(\cocycle [\xi][-n] (\Gamma))$$ is the graph of a $1$-Lipschitz function $$g\colon \hat D\subset \R^s\to \R^u(e^{-\lambda_0-\epone} \ell  (F^{-n}(\xi,x) )\inv )$$ for some $\hat  D\subset\R^s(e^{-\lambda_0-\epone} \ell  (F^{-n}(\xi,x) )\inv )$.  
\end{lemma}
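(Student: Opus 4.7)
The plan is to prove by induction on $j$, for $0\le j\le n$, the statement that $\cocycle[\xi][-j](\Gamma)$ lies in the domain of $\phi(F^{-j}(\xi,x))$ and that its image under that chart is the graph of a $1$-Lipschitz function $g_j\colon D_j\to \R^u\big(e^{-\lambda_0-\epone}\ell(F^{-j}(\xi,x))\inv\big)$ for some $D_j\subset \R^s\big(e^{-\lambda_0-\epone}\ell(F^{-j}(\xi,x))\inv\big)$; the case $j=n$ is precisely what has to be proved. The base case $j=0$ should be immediate from property (3) in the construction of $\Lambda_3$: since $(\xi,x),(\xi,y)\in \Lambda'\subset \Lambda_3$ and $d(x,y)<r_0<\td r_0$, the set $\phi(\xi,x)(\locstabM[\td r_1] y \xi)$ is already a $1$-Lipschitz graph $g_{x,y}\colon D\to \R^u(\ell_0\inv e^{-\lambda_0-\epone})$ over some $D\subset \R^s(\ell_0\inv e^{-\lambda_0-\epone})$. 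Since $\Gamma\subset \locstabM[r_1] y \xi\subset \locstabM[\td r_1] y \xi$ and $\ell(\xi,x)\le \ell_0$, restricting this graph to $\phi(\xi,x)(\Gamma)$ gives the desired sub-graph.

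For the inductive step, I assume the conclusion at level $j<n$ and apply Lemma \ref{lemm:incling} with the base point $(\xi,x)$ replaced by $F^{-j}(\xi,x)$. The lemma then produces a $1$-Lipschitz graph $\hat g\colon \hat D \to \R^u\big(e^{-\lambda_0-\epone}\ell(F^{-(j+1)}(\xi,x))\inv\big)$ for some $\hat D \subset \R^s(\ell(F^{-(j+1)}(\xi,x))\inv)$. Unpacking the definition $\td f\inv(F^{-j}(\xi,x))=\phi(F^{-(j+1)}(\xi,x))\circ f_{\theta^{-j}(\xi)}\inv \circ \phi(F^{-j}(\xi,x))\inv$ and using $\cocycle[\xi][-(j+1)] = f_{\theta^{-j}(\xi)}\inv\circ \cocycle[\xi][-j]$, this graph coincides with $\phi(F^{-(j+1)}(\xi,x))(\cocycle[\xi][-(j+1)](\Gamma))$. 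The codomain bound is already exactly what is required; only the domain bound needs to be upgraded from $\R^s(\ell(F^{-(j+1)}(\xi,x))\inv)$ to $\R^s(e^{-\lambda_0-\epone}\ell(F^{-(j+1)}(\xi,x))\inv)$.

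This is where the hypothesis on $\phi(F^{-(j+1)}(\xi,x))(\cocycle[\xi][-(j+1)](z))$ enters. The curve $\cocycle[\xi][-(j+1)](\Gamma)$ is a segment of $W^s_{F^{-(j+1)}(\xi)}(\cocycle[\xi][-(j+1)](y))$ joining $\cocycle[\xi][-(j+1)](w)$ to $\cocycle[\xi][-(j+1)](y)$. Since $w\in W^u_\xi(x)$, backward iteration contracts $w$ exponentially toward $x$, so the first endpoint lies exponentially close to the origin of the chart. Since $z\in W^u_\xi(y)$, the same contraction forces the second endpoint to lie exponentially close to $\phi(F^{-(j+1)}(\xi,x))(\cocycle[\xi][-(j+1)](z))$, which by hypothesis has norm at most $\tfrac{1}{10}e^{-\lambda_0-\epone}\ell_0\inv e^{-\epone(j+1)}$. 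Combined with $\ell(F^{-(j+1)}(\xi,x))\le \ell_0 e^{(j+1)\epone}$ from Section \ref{sec:charts2sided}, both endpoint projections to $\R^s$ have magnitude well below $e^{-\lambda_0-\epone}\ell(F^{-(j+1)}(\xi,x))\inv$. Since $\hat g$ is $1$-Lipschitz and its graph is connected, the $\R^s$-projection of $\phi(F^{-(j+1)}(\xi,x))(\cocycle[\xi][-(j+1)](\Gamma))$ is contained in the interval spanned by the two endpoint projections, closing the induction. The main technical point will be making the exponential-closeness estimates along unstable manifolds uniform over $\Lambda'$; this should be achievable from the uniform bound $\ell\le \ell_0$ on $\Lambda'$ together with the uniform constants furnished by Lemma \ref{prop:controlledintersections}.
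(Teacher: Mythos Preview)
Your proposal is correct and follows essentially the same approach as the paper: induction on $j$, with the base case coming from property (3) in the definition of $\Lambda_3$, the inductive step from Lemma~\ref{lemm:incling}, and the domain upgrade from the backward contraction of $w$ toward $x$ and of $y$ toward $z$ along fiber-wise unstable manifolds combined with the hypothesis on $\phi(F^{-j}(\xi,x))(\cocycle[\xi][-j](z))$. The only minor remark is that the uniform exponential-closeness estimates you mention come directly from the unstable analogue of Theorem~\ref{thm:locPesinStab}\ref{itempp} together with the diameter bound (2) and the bound $\ell\le \ell_0$ in the definition of $\Lambda_3$, rather than from Lemma~\ref{prop:controlledintersections}.
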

\begin{proof}
We prove by induction on $n$.  For $n=0$ the conclusion follows from  hypotheses and the choice of $\Lambda_3$.
For $n\ge 1$, assume $$\phi(F^{-(n-1)}(\xi,x))(\cocycle [\xi][-(n-1)] (\Gamma))$$ is the graph of a $1$-Lipschitz function $g\colon D\subset \R^s\to \R^u(\ell  (F^{-(n-1)}(\xi,x) )\inv e^{-\lambda_0-\epone})$ for some $D\subset\R^s(e^{-\lambda_0-\epone} \ell  (F^{-(n-1)}(\xi,x) )\inv )$.  
From Lemma \ref{lemm:incling} 
it follows that $\cocycle [\xi][-n] (\Gamma)$ is in the domain of $\phi(F^{-n}(\xi,x))$ and 
$$\phi(F^{-n}(\xi,x))(\cocycle [\xi][-n] (\Gamma))$$ is the graph of a $1$-Lipschitz function $\hat g\colon \hat D\subset \R^s\to \R^u(e^{-\lambda_0-\epone} \ell  (F^{-n}(\xi,x) )\inv )$ for some $\hat D\subset\R^s(\ell (F^{-n}(\xi,x) )\inv )$.
By the hypothesis, we have that 
$ \cocycle [\xi][-n] (z)$ 
is contained in the domain of $\phi(F^{-n}(\xi,x))$.   We have 
\begin{align*}
d(\cocycle [\xi][-n] (y)), (\cocycle [\xi][-n] (z)))
&\le \ell_0 k_0 e^{n(-\lambda^u + 2 \epone ) } \frac {\ell_0^{-3} e^{-\lambda_0-\epone}}{10 k_0} \\
&\le e^{n(-\lambda^u + 2 \epone ) } \frac 1{10}\ell_0^{-2} e^{-\lambda_0-\epone}\\
\end{align*}
Thus, 
\begin{align*}
\left|  \phi(F^{-n}(\xi,x)) (\cocycle [\xi][-n] (y))- \phi(F^{-n}(\xi,x))(\cocycle [\xi][-n] (z)) \right|
&\le \ell_0 e^{n\epone}e^{n(-\lambda^u + 2 \epone ) } \frac 1{10}\ell_0^{-2} e^{-\lambda_0-\epone}\\&\le\frac 1 {10} \ell(F^{-n}(\xi,x))\inv e^{-\lambda_0-\epone}\\
\end{align*}
hence,
$$\left|  \phi(F^{-n}(\xi,x)) (\cocycle [\xi][-n] (x))- \phi(F^{-n}(\xi,x))(\cocycle [\xi][-n] (y)) \right|\le \frac 2 {10} \ell(F^{-n}(\xi,x)) \inv e^{-\lambda_0-\epone}.$$
Similarly, 
$$\left|  \phi(F^{-n}(\xi,x)) (\cocycle [\xi][-n] (x))- \phi(F^{-n}(\xi,x))(\cocycle [\xi][-n] (w)) \right| \le \frac 1 {10} \ell(F^{-n}(\xi,x)) \inv e^{-\lambda_0-\epone}$$
hence 
$\hat  D\subset\R^s(e^{-\lambda_0-\epone} \ell  (F^{-n}(\xi,x) )\inv )$.  
\end{proof}



Under the hypotheses of Lemma \ref{lem:inclination}, if   $F^{-n}(\xi,x)\in \Lambda'$ and  $F^{-n}(\xi,y)\in \Lambda'$
then $\phi(F^{-n}(\xi,x))(\Gamma)$ and $$\phi(F^{-n}(\xi,x)) \left(\locunstM [ r_1]{  \cocycle [\xi][-n](x) }{\theta^{-n} (\xi)} \right)$$ have at most one point of intersection.  Thus we have the following corollary.
\begin{corollary}\label{cor:tolemma}For $n$ satisfying the  hypotheses of Lemma \ref{lem:inclination}, if  $F^{-n}(\xi,x)\in \Lambda'$, $F^{-n}(\xi,y)\in \Lambda'$, and $d(\cocycle [\xi][-n](x) , \cocycle [\xi][-n](y) )\le r_0$ then 
$$  \locstabM [ r_1] { \cocycle [\xi][-n](y) } {\theta^{-n} (\xi)} \cap \locunstM [ r_1]{  \cocycle [\xi][-n](x) }{\theta^{-n} (\xi)} = { \cocycle [\xi][-n](w) }.$$
\end{corollary}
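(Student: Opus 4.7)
The plan is to apply Lemma \ref{prop:controlledintersections}\ref{itme1} at time $-n$ to produce a unique intersection point, then identify it with $\cocycle[\xi][-n](w)$ via the transversality of graphs in the Lyapunov chart at $F^{-n}(\xi,x)$ noted in the paragraph preceding the corollary.

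Since $F^{-n}(\xi,x), F^{-n}(\xi,y) \in \Lambda'$ and $d(\cocycle[\xi][-n](x), \cocycle[\xi][-n](y)) \le r_0$ by hypothesis, Lemma \ref{prop:controlledintersections}\ref{itme1} gives that the intersection
\[ \locstabM[r_1]{\cocycle[\xi][-n](y)}{\theta^{-n}(\xi)} \cap \locunstM[r_1]{\cocycle[\xi][-n](x)}{\theta^{-n}(\xi)} \]
is a singleton; denote its element by $z'$. Now $\cocycle[\xi][-n](w)$ is itself a candidate for $z'$: from $w \in \Gamma \subset \locstabM[r_1]y\xi$ we obtain $\cocycle[\xi][-n](w) \in W^s_{\theta^{-n}(\xi)}(\cocycle[\xi][-n](y))$, and from $w \in \locunstM[r_1]x\xi$ together with the contraction of local unstable manifolds under backward iteration (the analog for unstable manifolds of Theorem \ref{thm:locPesinStab}\ref{itempp}), we obtain $\cocycle[\xi][-n](w) \in \locunstM[r_1]{\cocycle[\xi][-n](x)}{\theta^{-n}(\xi)}$. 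Thus both $z'$ and $\cocycle[\xi][-n](w)$ lie on $W^s_{\theta^{-n}(\xi)}(\cocycle[\xi][-n](y)) \cap \locunstM[r_1]{\cocycle[\xi][-n](x)}{\theta^{-n}(\xi)}$.

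I would conclude $z' = \cocycle[\xi][-n](w)$ by passing to the chart $\phi(F^{-n}(\xi,x))$: by Lemma \ref{lem:inclination}, $\cocycle[\xi][-n](\Gamma)$ is realized in this chart as a $1$-Lipschitz graph over $\R^s$, while $\locunstM[r_1]{\cocycle[\xi][-n](x)}{\theta^{-n}(\xi)}$ is a graph over $\R^u$ of slope at most $1/3$ by the stable manifold theorem. These transverse graphs meet in at most one point, as observed in the paragraph preceding the corollary; since $\cocycle[\xi][-n](w)$ lies on both graphs, it is precisely their unique intersection. I expect the main (minor) technicality to be verifying that $z'$, a priori only known to lie in the local stable $\locstabM[r_1]{\cocycle[\xi][-n](y)}{\theta^{-n}(\xi)}$, also sits inside the chart domain on the same stable graph as $\cocycle[\xi][-n](w)$. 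This is a routine size comparison: the chart at $F^{-n}(\xi,x) \in \Lambda_3$ contains a Riemannian ball of radius $\ell_0^{-2}$, the hypothesis gives $d(\cocycle[\xi][-n](x), \cocycle[\xi][-n](y))\le r_0$, and the local stable diameter is controlled by $\td r_1$ via the construction of $\Lambda_3$, so for $r_0,r_1$ small the arc of $W^s_{\theta^{-n}(\xi)}(\cocycle[\xi][-n](y))$ joining $\cocycle[\xi][-n](y)$ through $z'$ lies in the chart and coincides with a sub-arc of the graph of Lemma \ref{lem:inclination}. Transversality in the chart then forces $z' = \cocycle[\xi][-n](w)$.
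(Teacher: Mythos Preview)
Your approach is correct and matches the paper's: the paper's entire proof is the one-sentence observation preceding the corollary that, in the chart $\phi(F^{-n}(\xi,x))$, the image $\cocycle[\xi][-n](\Gamma)$ (a $1$-Lipschitz graph over $\R^s$ by Lemma~\ref{lem:inclination}) meets the local unstable (a graph over $\R^u$ of slope $\le 1/3$) in at most one point, and you have correctly filled in the identification of that point with both $z'$ and $\cocycle[\xi][-n](w)$.

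One minor imprecision: your final claim that the stable arc through $z'$ ``coincides with a sub-arc of the graph of Lemma~\ref{lem:inclination}'' is not quite right and is what you need to avoid, since you do not yet know on which side of $\hat y$ the point $z'$ sits. The clean statement is that \emph{both} $\cocycle[\xi][-n](\Gamma)$ and $\locstabM[r_1]{\cocycle[\xi][-n](y)}{\theta^{-n}(\xi)}$ are $1$-Lipschitz graphs over $\R^s$ in the same chart (the latter by property~(3) of $\Lambda_3$, since $F^{-n}(\xi,y)\in\Lambda'\subset\Lambda_3$ and $d(\cocycle[\xi][-n](x),\cocycle[\xi][-n](y))\le r_0\le\td r_0$), both contain $\cocycle[\xi][-n](y)$, and both parametrize pieces of the same one-dimensional curve $W^s_{\theta^{-n}(\xi)}(\cocycle[\xi][-n](y))$; hence their union is a single $1$-Lipschitz graph over $\R^s$. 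Since $z'$ and $\cocycle[\xi][-n](w)$ both lie on this union and on the local unstable graph, transversality forces $z'=\cocycle[\xi][-n](w)$.
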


\begin{remark}\label{rem:scase}
In the case that $\Omega = \Sigma$, recall that the stable line fields, stable manifolds, and corresponding affine parameters are defined using only the forwards itinerary.  As $\cocycle[\eta][n]= \cocycle [\xi] $   for all $\eta\in \Sigmalocs(\xi)$ and $n\ge 0$, it follows that we may choose $v^s({\xi,x})$ and the corresponding  parametrizations $\I^s_{(\xi,x)} $ of the stable manifolds to be constant on sets of the form $\Sigmalocs(\xi)\times \{x\}$ and thus induce corresponding objects  $v^s({\omega,x})$ and $\I^s_{(\omega,x)} $  for the one-sided skew product on $\Sigma_+\times M$.  
  We may then take 
 $\hat \Lambda^s\subset \Sigma_+\times M$ of $(\nunaught^\N\times \munaught)$-measure arbitrarily close to $1$ so that on $\hat \Lambda^s$ the parametrized stable manifolds 
$${(\omega,x)} \mapsto \left (t\mapsto \I^s_{(\omega,x)}\left(tv^s({\omega,x})\right) \right)$$
vary continuously in the space of $C^1$-embeddings  $[-r,r]\to M$ for all sufficiently small  $0<r<1$.  

Let $ \pi_+\colon \Sigma\to \Sigma _+$ be the natural projection.  We may modify parts \ref{itme1}--\ref{itme3}, \ref{item8:2}  of Lemma  \ref{lem:standardcrap} as follows:
Choose $\Lambda' $ in Lemma  \ref{lem:standardcrap} so that if $(\xi,x)\in \Lambda'$ then $(\pi_+(\xi), x)\in \hat \Lambda^s$.  Then the constants can be chosen so that: 
\begin{customprop}{\ref{prop:controlledintersections}'}\label{ninetynine}
{ For $(\xi,x)\in   \Lambda' $, $(\zeta, y)\in \Lambda'$ with $\pi_+(\zeta) = \pi_+ (\xi)$ 
and $d(x,y)<r_0$}
\begin{enumerate}[label*=\emph{(\alph*')}, ref= {(\alph*')} ]
  \setcounter{enumi}{2}
\item \label{itme1'} $\locstabM [ r_1] x \xi \cap \locunstM [ r_1]y \zeta$ is a singleton $\{z\}$ and the intersection is uniformly transverse;  
\end{enumerate}
{furthermore, if $\eta\in A_{\gamma _2}(\xi, x)$, $\eta'\in \Sigma$ is such that $\pi_+(\eta') = \pi_+ (\eta)$ and $(\eta',y), (\eta,x) \in \Lambda'$ then}  
\begin{enumerate}[label*=\emph{(\alph*')}, ref= {(\alph*')}, resume]
\item \label{itme2'}  $\locunstM [ r_1] x \xi \cap \locstabM [ r_1]y {\eta'}$ is a singleton $\{v\}$ and the intersection is uniformly transverse, and   
\item \label{itme3'}  if $D_1\cdot \|H^u_{(\zeta,y)}(z)\|\le \|H^s_{(\xi,x)}(z) \|$ then  
$$ \dfrac{1}{C_3} \|H^s_{(\xi,x)}(z) \| \le \|H^u_{(\xi,x)}(v) \|\le C_3 \|H^s_{(\xi,x)}(z) \|.$$
\end{enumerate}
Additionally, for $x,y,z$ as above   we have uniform bounds 

\begin{enumerate}[label*=\emph{(\alph*')}, ref= {(\alph*')} ]
  \setcounter{enumi}{6}
\item \label{item8:2'} $\dfrac1 {C_1}\le  \displaystyle \|\restrict {D _x \cocycle }{T_x\locunstM[ \rhere] x \xi} \| \cdot \|\restrict{D_z \cocycle [\zeta][n]}{T_z\locunstM[ \rhere] y \zeta} \|\inv \le C_1$ for all $n\ge 0$.  
\end{enumerate}
\end{customprop}
Note that in \ref{item8:2'},  
we have $\cocycle = \cocycle [\zeta][n]$ for all $n\ge 0$.  


\end{remark}

\section{Proof of Proposition \ref{prop:main}}\label{sec:lemMainProof}\label{sec:proofmainlem}\label{sec:lemmaproof}
\def\stepp{Step \setcounter{thestep}{1}\arabic{thestep}}
\def\MCT{\mathfrak S}
\def\Rec{\mathcal R}

Given the skew-product $F\colon X \to X$ and $\muskew$ satisfying \eqref{eq:IC2}, we have    $\lambda^s<0<\lambda^u$ given by the hyperbolicity of $DF$ and $\eps< \min\{1,\lambda^u/200, -\lambda^s/200\}$ fixed in Section \ref{sec:PT}. 
We recall the family $\bar \muskew_{(\xi,x)}$ introduced in Section \ref{sec:barmu}. 
Recall that our goal is to prove for such measures that the measurable sets $G_\epsilon$ and $G$ in  Proposition \ref{prop:main} have positive $\mu$-measure (for some fixed $M$ and all sufficiently small $\epsilon$).  This will be shown in Section \ref{sec:finallytheproof}.  

We define $X_0\subset X_1\subset X$ to be the full $\muskew$-measure, $F$-invariant subset of $\Omega_0\times M$ where all propositions from Section \ref{sec:BG} hold and such that 
the stable and unstable manifolds, Lyapunov norms, affine parameters, and the parametrizations  $\I^\us$ are defined.  We also assume that for $(\xi,x) \in X_0$ the  measures $\muskew_\xi$, $\muskew^u_{(\xi,x)}$, $\muskew^s_{(\xi,x)}$, and  $\bar\muskew_{(\xi,x)}$ are defined
 and satisfy
$F_*\muskew_\xi = \muskew_{\theta(\xi)}$, 
$F_*\muskew^{u/s}_{(\xi,x)}\simeq \muskew^{u/s}_{F(\xi,x)}$, and $\left(\I^u_{F(\xi,x)}\right)\inv\circ F \circ \I^u_{(\xi,x)}(  \bar\muskew_{(\xi,x)}) \simeq \bar \muskew_{F(\xi,x)}$.  We further assume    $\bar \muskew_{(\xi,x)}$ contains $0$ in its support.  
 We further take $X_0$ so that for  $(\xi,x)\in X_0$ and $\nucondF _\xi$-\ae $\eta\in \Omega$ we have $\cocycle = \cocycle [\eta][n]$ for all $n\le 0$;  for such $\eta$ we have 
 $\unstM x \xi = \unstM x \eta$, and corresponding equality of affine parameters, parametrizations $\I^u$, and measures $ \bar\muskew_{(\xi,x)} =  \bar\muskew_{(\eta,x)}$.  
 Finally, we  assume that if $(\xi,x)\in X_0$ then  $\mu_\xi(X_0)=1$ and  $\muskew^\Fol_{(\xi,x)}(X_0) = \nucondF_\xi(\{\eta\colon (\eta,x)\in X_0\} )= 1$.  

Write $Y_0 = [0,1)\times X_0$.

   \subsection{
Choice of parameters and sets}\label{sec:choices}
We pick any

 \begin{choices}
 \item $0<\beta<1$ such that $\dfrac {1+\beta}{1-\beta} < \dfrac{\lambda^u- \lambda^s -2 \eps}{ -\lambda^s + \eps}$;
 \end{choices}
and fix the following constants for the remainder
 \begin{choices}
\item   ${\kappa_1= \dfrac{  \lambda^u-\lambda^s-2\eps}{ \lambda^u+\eps }}$;   ${\kappa_2= \dfrac{  \lambda^u-\lambda^s+2\eps}{ \lambda^u-\eps }}$;
 \item $\alpha_0 = 
 \dfrac{1}2 - \dfrac{ 1}{2} \dfrac{ (1+ \beta)(-\lambda^s + \eps)}{(1-\beta)(\lambda^u - \lambda^s - 2 \eps)}$; 
 \item 
  $\alpha =\left( \dfrac {\kappa_1\alpha_0}{5(\kappa_1 +\kappa_2)}\right)$. 
 \end{choices}
Note $\alpha_0>0$ by the choice of $\beta$.  
%
%

Recall that the measures  $\mususp$ and $\mualt$ are equivalent.  
We select 
\begin{choices}
\item \label{N0} $N_0$ such that $\mususp\left\{ p: \frac 1 {N_0} \le \dfrac {d\mualt}{d\mususp}(p)\le N_0\right\} >1-\alpha/2$.
\item \label{choice:cont}  By Lusin's Theorem, we may choose a compact subset $K_0\subset Y_0\subset Y$ with $\mususp$- and $\mualt$-measure sufficiently close to $1$ on which
\begin{enumerate}[label= \roman*)]
\item the frames for the stable and unstable subbundles $p\mapsto v^s_p$, $p\mapsto v^s_p$ defined in Section \ref{sec:frames};
\item the stable and unstable manifolds parametrized by \eqref{eq:affparammfolds};
\item all Lyapunov norms defined in Section \ref{sec:lyaponY}; 
\item the families of measures $\bar \mususp_p$
\end{enumerate} 
vary continuously.  

We may also assume the functions 
$a(p)$ in \eqref{eq:controlled}  and $\hat L(p)$  in Proposition \ref{prop:subexpcomp} are bounded on $K_0$, respectively, by  
by  $a_0$ and $\hat L$.  
Finally, we may assume there is a $L_1>1$ so that 
for $p= (\pt,\xi,x)\in K_0$ and $y\in \locstabM [1]x {(\pt,\xi)}$, writing $(\pt_t,\xi_t,x_t) = \Phi^t(p), $ and $
(\pt_t,\xi_t,y_t) = \Phi^t(\pt,\xi,y)$, for any $t\ge 0$  
$$d(x_t, y_t)\le L_1 e^{t(\lambda^s + \eps)}d(x,y).$$
 \item \label{M0} Let $M_0>1$ denote the maximal ratio of all Lyapunov and Riemannian norms on $  K_0$:

$$ M_0 = \sup_{p= 
\in K_0}\sup_{0\neq v\in T_xM}\left\{
\left( \dfrac{\lyap v_{\eps, \pm, p}}{\lyap v_{\eps, -, p}}\right)^{\pm1}, 
 \left(\dfrac{\lyap v_{\eps, \pm, p}}{\| v\|}\right)^{\pm1},  \left( \dfrac{\lyap v_{\eps, -, p}}{\| v\|}\right)^{\pm1}   \right\}.$$

\item  \label{1442} As discussed in Section \ref{sec:all the carp}, fix $\epone<\eps/10$, a function $\ell\colon X\to [1,\infty)$ and a family of Lyapunov charts  $\phi(\xi,x)$.  Let $\ell_0>1$ be such that $\ell(\xi,x)\le \ell_0$ for $(\xi,x)\in\Lambda_3$ where $\Lambda_3$ is as in Section  \ref{sec:all the carp}. 
\item  \label{13} By Lemma \ref{lem:goodangle}, we pick $\gamma_1, \gamma_2$ so that 
$\muskew(\calA_{\gamma_1,\gamma_2, .9})>1-\alpha.$ 
We fix $ C_1, C_2, C_3>1$,   $0< r_0, r_1<1$,  $D_1>1$  and $\Lambda'\subset \Lambda_3$ with measure sufficiently close to $1$ satisfying Lemma \ref{prop:controlledintersections}.
We write  $\scrA = [0,1)\times \calA_{\gamma_1,\gamma_2, .9}$ and for $p=(\pt,\xi,x)\in Y_0$,  $A_{\gamma_2}(p) = 
A_{\gamma_2}(\xi,x)$.  
\item \label{14}  Take $\hat r = \min\{r_0/(2C_2), r_1\}.$  


 \item \label{11}  Set $\hat T = \log (\hat L M_0^2C_1^3)/(\lambda^u - \eps)$.

Fix a 
compact set $\Lambda''\subset Y_0$ and $D_0>0$ such that for $p\in \Lambda''$ and $t \in [-\hat T, \hat T]$ $$ D_0\inv \le \|\restrict{D\Phi^t}{E^u(p)}\| \le D_0.$$  


\item\label{12}
We fix a compact $\Omega'\subset \Omega$ with $\nu(\Omega')$ sufficiently close to 1 and such that for all $j\in \Z$ with $|j|\le \hat T +1$ we have $\xi \mapsto \theta^j(\xi)$ and $\xi \mapsto \cocycle [\xi][j]$ are continuous when restricted to $\Omega'$.  Consider  $q_n\in[0,1)\times \Omega'\times M$ converging  to $q\in[0,1)\times \Omega'\times M$, and let $t_n$ be a sequence with $|t_n|\le \hat T$, $t_n \to t$ and $\Phi^{t_n}(q_n) \in [0,1-a]\times \Omega\times M$ for all $n$ and some $a>0$.  It then follows that $\Phi^{t_n}(q_n)$ converges to $\Phi^t(q)$.

\item The above choices can be made so that 
setting
$$K := K_0 \cap \Lambda'' \cap \big([0,1)\times \Lambda' \big)\cap \big([0,1)\times \Omega'\times M\big)$$
we may ensure \begin{equation}\label{eq:measurebounds} 
	\mususp(K)>1 - \frac \alpha {10} \text{ and } \mualt(K)>1 - \frac { \alpha}{40N_0}   
\end{equation}
\item \label{16} Fix 
$\hat \alpha = \dfrac { \alpha}{40 N_0}  $ .   Let $U\subset Y$ be any  open set to be specified later with $\mualt(U)< \hat \alpha.$ We have  $\mualt(K\sm U)> 1 - \frac { \alpha}{20N_0}.$
\end{choices}

Recall if $\psi\colon Y\to [0,1]$ is $\mususp$-measurable with $\int \psi \ d \mususp > 1- ab$ then 
\begin{equation}\label{eq:splitmeasure} \mususp\{ p : \psi(p) > 1-a\} > 1- b.\end{equation}
%

Recall that we have the filtration $\{\Sal^m: m\in \R\}$ on $(Y, \mualt)$ decreasing to  $\Sal^\infty= \bigcap_{m\in \R} \Sal^m.$ 
From  \eqref{eq:splitmeasure}  and \ref{N0} we claim
\begin{claim}\label{claim:silly}
With $N_0$,  $K$, and $U$ as above
\begin{enumlemma}
\item $\mususp\left\{ p : \Exp_\mususp (1_K\mid \Sal)(p)>.9 \right\} >1-\alpha$\label{claim:1};
\item $\mualt\left\{p : \Exp_\mualt (1_{K\sm U}\mid \Sal^\infty)(p)>.9 \right\} >1-\alpha/(2N_0)$\label{claim:2};
\item $\mususp\left\{p : \Exp_\mualt (1_{K\sm U}\mid \Sal^\infty)(p)>.9 \right\} >1-\alpha$\label{claim:3}.
\end{enumlemma}
\end{claim}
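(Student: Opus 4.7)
The plan is to derive all three inequalities from the displayed implication \eqref{eq:splitmeasure}, combined with the bounded Radon--Nikodym comparison between $\mususp$ and $\mualt$ afforded by the choice of $N_0$ in \ref{N0}. Nothing deep is happening; this is essentially a Markov-inequality bookkeeping exercise, but one has to be careful to apply \eqref{eq:splitmeasure} with the correct measure and to only transfer between $\mususp$ and $\mualt$ where we know $d\mualt/d\mususp$ is controlled.

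For part \ref{claim:1}, I would take $\psi = \Exp_\mususp(1_K \mid \Sal)$, which takes values in $[0,1]$. By the defining property of conditional expectation, $\int \psi \, d\mususp = \mususp(K) > 1 - \alpha/10$, and \eqref{eq:splitmeasure} applied with $a = 1/10$ and $b = \alpha$ gives $\mususp\{\psi > 0.9\} > 1 - \alpha$. Part \ref{claim:2} is the analogous application on the probability space $(Y,\mualt)$: taking $\psi = \Exp_\mualt(1_{K\sm U}\mid \Sal^\infty)$, the bound $\mualt(K \sm U) > 1 - \alpha/(20 N_0)$ from \ref{16} combined with \eqref{eq:splitmeasure} for $a = 1/10$, $b = \alpha/(2N_0)$ yields the claim.

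For part \ref{claim:3}, I need to transfer the $\mualt$-estimate just proved in \ref{claim:2} into a $\mususp$-estimate. Writing $B := \{p : \Exp_\mualt(1_{K\sm U}\mid \Sal^\infty)(p) \le 0.9\}$, part \ref{claim:2} tells me that $\mualt(B) < \alpha/(2N_0)$. Let
\[
E := \left\{ p \in Y : \tfrac{1}{N_0} \le \tfrac{d\mualt}{d\mususp}(p) \le N_0 \right\},
\]
which by \ref{N0} satisfies $\mususp(E^c) < \alpha/2$. On $E$ we have $d\mususp/d\mualt \le N_0$, so
\[
\mususp(B) \le \mususp(B \cap E) + \mususp(E^c) \le N_0 \, \mualt(B \cap E) + \alpha/2 < N_0 \cdot \tfrac{\alpha}{2N_0} + \tfrac{\alpha}{2} = \alpha,
\]
which gives \ref{claim:3}.

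The only place where one has to be a bit careful is the change-of-measure step in \ref{claim:3}: one must restrict to the set $E$ on which $d\mualt/d\mususp$ is bounded below, and absorb the complement $E^c$ separately into the error. This is exactly why the constant $\hat\alpha = \alpha/(40 N_0)$ was built with the factor of $N_0$ in \ref{16} and why the measure of $K$ under $\mualt$ was chosen much smaller than its measure under $\mususp$ in \eqref{eq:measurebounds}. Beyond this, there is no real obstacle: the proof is two applications of the Markov-type inequality \eqref{eq:splitmeasure} together with one Radon--Nikodym comparison.
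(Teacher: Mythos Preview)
Your proof is correct and is exactly the argument the paper has in mind: the paper simply writes ``From \eqref{eq:splitmeasure} and \ref{N0} we claim'' and leaves the details implicit, and you have filled them in accurately. The only remark is that \eqref{eq:splitmeasure} is stated for $\mususp$ but of course holds for any probability measure, so its application to $(Y,\mualt)$ in part \ref{claim:2} is unproblematic; your change-of-measure argument for \ref{claim:3} is precisely the intended use of the constant $N_0$ from \ref{N0}.
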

\begin{choices}\item Let  $S_0:=  \{p \in Y \mid \mususp^\Sal_p(K)>.9 
\}.$  
\end{choices}

As discussed in Section \ref{subsec:MCT}, taking  $g(p) = 1_{K\sm U}(p)$ 
we have $$\mususp_{ \Psi^{-m}(p)}^\Sal(  \Psi^{-m}(K\sm U)) =   \Exp_\mualt(1_{K\sm U}\mid \Sal^m)(p)
 \to \Exp_\mualt(1_{K\sm U}\mid \Sal^\infty)(p)$$ as $m\to \infty$ for  $\mususp$-\ae $p\in Y$.  
Given $M>0$ let 
$$S_M = 	\{p \in Y \mid \mususp_{ \Psi^{-m}(p)}^\Sal(  \Psi^{-m}(K\sm U)) >.9 
	 \text { for all } m\ge M\} .$$

\begin{choices}
\item \label{17} Fix $\hat M$ so that $\mususp(S_{\hat M})>1-\alpha.$

\end{choices}
\def\rec{\mathcal{R}}

Given $T>0$, define $\rec(T)\subset K$ to be the set of $p\in K$ such that for $B= K, \scrA, S_{\hat M}, $ or $S_0$  and any $T',T''\ge T$
\begin{enumerate}[label=\roman*)]
\item $\dfrac 1 {T'} \leb (\{ t\in [0,T']\colon \Phi^t(p) \in B\})>1-\alpha$; 
\item $\dfrac 1 {T''} \leb( \{ t\in [-T'',0]\colon \Phi^t(p) \in B\})>1-\alpha$;
\end{enumerate}
and  thus $\dfrac 1 {T'+T''} \leb( \{ t\in [-T'',T']\colon \Phi^t(p) \in B\})>1-\alpha$.  
\begin{choices}
\item \label{19} By the pointwise ergodic theorem, fix $T_0$ with  $\mususp(\rec(T_0))>0$.  
\item \label{20} Finally, set $\epsilon_0= \min\left \{\dfrac{\hat r}{M_0^4},\dfrac{ r_1}{2C_3 M_0^6},   \dfrac{e^{\lambda^s - \eps} e^{-\lambda_0-\epone} \ell_0^{-2}}{10 C_2 }\right\}$.  
\end{choices}

\subsection{
Choice of time intervals}\label{sec:times}
Consider a fixed $\epsilon<\epsilon_0.$  This $\epsilon$ will be as  in Proposition \ref{lem:main}. 
 Given $0<\delta<1$ we define 

 \begin{enumerate}
 \item $m_\delta = \dfrac {(1+\beta)\log \delta- \log (M_0^4)}{\lambda^u- \lambda^s -2 \eps}$
  \item $M_\delta = \dfrac {(1-\beta) \log \delta- \log \epsilon}{ -\lambda^s + \eps} $
 \end{enumerate}

 Note that for all sufficiently small $0<\delta<1$ we have $M_\delta<m_\delta<0.$
For $0<\delta<\epsilon$   consider any $\ell$ with  
$$\dfrac{\log \delta- \log \epsilon}{-\lambda^s+ \eps}\le \ell\le 0.$$  By the definition of $\tau_{p, \delta, \epsilon}$ (see Section \ref{sec:stopping}) we have 
 $\tau_{p, \delta, \epsilon}(\ell)\ge 0$ and
$$
\rexp{\tau_{p, \delta, \epsilon}(\ell)(\lambda^u-\eps)}\rexp{\ell(\lambda^s+\eps)}\delta\le \epsilon \le
\rexp{\tau_{p, \delta, \epsilon}(\ell)(\lambda^u+\eps)}\rexp{\ell(\lambda^s-\eps)}\delta. 
$$
It follows  for such $\ell$ that
 \begin{equation}\label{eq:controlassymp}\dfrac{\log(\epsilon/\delta)+ (-\lambda^s+\eps)\ell}{\lambda^u+\eps}\le \tau_{p, \delta, \epsilon}(\ell)\le \dfrac{\log(\epsilon/\delta)+ (-\lambda^s-\eps)\ell}{\lambda^u-\eps}.\end{equation}
In particular, for any $M_\delta\le \ell \le m_\delta<0$,   \eqref{eq:controlassymp} holds and $\tau_{p, \delta, \epsilon}(\ell)>0$.  

From \eqref{eq:controlassymp} we obtain the following asymptotic behavior.  
\begin{claim} \label{cor:infinitecontrol} For fixed $\epsilon>0$ we have that 
\begin{enumlemma}
\item \label{it:here1}
$\tau_{p, \delta, \epsilon} (0) = L_{p, \delta, \epsilon}(0)\to \infty$,
\item \label{it:here2}
 $\tau_{p, \delta, \epsilon} (M_\delta)\to \infty $, and   
\item \label{it:here3}  $L_{p, \delta, \epsilon}(M_\delta)\to -\infty$
\end{enumlemma}
as $\delta\to 0$; furthermore, the divergence is uniform in $p\in Y_0$.  
\end{claim}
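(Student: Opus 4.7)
The plan is to read off all three limits directly from the two-sided bound \eqref{eq:controlassymp}, evaluated at the strategically chosen parameter $\ell=0$ for \ref{it:here1} and $\ell = M_\delta$ for \ref{it:here2} and \ref{it:here3}. Since the bounds in \eqref{eq:controlassymp} have no dependence on $p$, uniformity in $p\in Y_0$ will be automatic throughout.

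For \ref{it:here1}, substituting $\ell = 0$ into the lower bound of \eqref{eq:controlassymp} yields $\tau_{p,\delta,\epsilon}(0) \geq \frac{\log(\epsilon/\delta)}{\lambda^u+\eps}$, which tends to $+\infty$ as $\delta\to 0^+$. Since $L_{p,\delta,\epsilon}(0)=\tau_{p,\delta,\epsilon}(0)$, this settles both halves of \ref{it:here1}. For \ref{it:here2}, I plug $\ell = M_\delta$ into the same lower bound; by definition of $M_\delta$ one has $(-\lambda^s+\eps)M_\delta = (1-\beta)\log\delta - \log\epsilon$, so the numerator collapses to $-\log\delta + (1-\beta)\log\delta = -\beta\log\delta$, and hence
$$\tau_{p,\delta,\epsilon}(M_\delta) \;\ge\; \frac{-\beta\log\delta}{\lambda^u+\eps}\;\to\;+\infty.$$

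For \ref{it:here3}, which is where the care is needed, I write $L_{p,\delta,\epsilon}(M_\delta) = M_\delta + \tau_{p,\delta,\epsilon}(M_\delta)$ and bound $\tau_{p,\delta,\epsilon}(M_\delta)$ above via \eqref{eq:controlassymp}. A direct computation then gives
$$L_{p,\delta,\epsilon}(M_\delta) \;\le\; \frac{M_\delta(\lambda^u-\lambda^s-2\eps) + \log(\epsilon/\delta)}{\lambda^u-\eps},$$
and after substituting $M_\delta = \frac{(1-\beta)\log\delta - \log\epsilon}{-\lambda^s+\eps}$ the coefficient of $\log\delta$ in the right-hand side works out to
$$\frac{1}{\lambda^u - \eps}\left(\frac{(1-\beta)(\lambda^u - \lambda^s - 2\eps)}{-\lambda^s+\eps} - 1\right).$$
The key point is that this coefficient is strictly positive, so that pairing with $\log\delta\to-\infty$ forces $L_{p,\delta,\epsilon}(M_\delta)\to -\infty$. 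Strict positivity is exactly what the choice of $\beta$ satisfying $\frac{1+\beta}{1-\beta} < \frac{\lambda^u - \lambda^s - 2\eps}{-\lambda^s+\eps}$ is designed to provide: since $1+\beta > 1$ and $-\lambda^s + \eps > 0$, this inequality gives $(1-\beta)(\lambda^u - \lambda^s - 2\eps) > (1+\beta)(-\lambda^s+\eps) > -\lambda^s+\eps$, which rearranges to exactly the required strict inequality.

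I do not anticipate any genuine obstacle beyond bookkeeping; the formulas for $M_\delta$, $m_\delta$, and $\beta$ in the preceding choices were evidently designed precisely so these algebraic identities close up, and the proof reduces to a careful accounting of \eqref{eq:controlassymp} at $\ell\in\{0, M_\delta\}$.
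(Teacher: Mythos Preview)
Your proposal is correct and follows essentially the same approach as the paper: both arguments extract all three limits directly from the two-sided estimate \eqref{eq:controlassymp}, and your computation for \ref{it:here3}---reducing to positivity of the coefficient $\frac{(1-\beta)(\lambda^u-\lambda^s-2\eps)}{-\lambda^s+\eps}-1$ via the defining inequality for $\beta$---matches the paper's line for line. Your treatment of \ref{it:here2} (collapsing the numerator to $-\beta\log\delta$) is in fact more explicit than the paper, which simply says \ref{it:here1} and \ref{it:here2} ``follow from \eqref{eq:controlassymp}.''
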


\begin{proof}
Conclusions \ref{it:here1} and \ref{it:here2} follow from  \eqref{eq:controlassymp}.

For  \ref{it:here3} we have 
\begin{align*}
L_{p, \delta, \epsilon}(M_\delta)&\le  \dfrac{\log(\epsilon/\delta)+ (-\lambda^s-\eps)M_\delta}{\lambda^u-\eps}+M_\delta\\
&=\dfrac {\log \epsilon + (\lambda^u - \lambda^s - 2 \eps) M_\delta - \log \delta}{\lambda^u - \eps}\\
& = \dfrac {\log \epsilon}{\lambda^u - \eps} - \dfrac {(\lambda^u - \lambda^s - 2 \eps)\log \epsilon}{(\lambda^u - \eps)(-\lambda^s+\eps)} 
	+\left[
		\dfrac {(1-\beta)(\lambda^u - \lambda^s - 2\eps)}{-\lambda^s +\eps} - 1
	\right]\log \delta
\end{align*}
and the limit follows  from our choice of $\beta$ as
from the fact that \begin{gather*}\dfrac {\lambda^u - \lambda^s - 2\eps}{-\lambda^s +\eps}>\frac {1+\beta}{1-\beta} > \frac 1{1-\beta}.\qedhere\end{gather*}\end{proof}

The choice of $m_\delta$ above guarantees that, for  $(\pt,\xi,x)$ and $(\pt,\xi, y) $ in $K$ with $x$ and $y $ sufficiently close in $M$, the image of $y$  under the backwards flow $\Phi^t$ is   in general position 
with respect to the hyperbolic splitting $T_{\Phi^t(\pt,\xi,x)}M = E^u({\Phi^t(\pt,\xi,x)}) \oplus E^s( {\Phi^t(\pt,\xi,x)})$ for $t<m_\delta$.  $M_\delta$ is chosen so that, in addition to the properties in Claim \ref{cor:infinitecontrol}, the images of $x$ and $y$ do not drift too far apart under the backwards flow.   We make this precise in the following claim.  

\begin{figure}[h]\def\dsz{4pt}
\def\lwd{1.5pt}
\def\lwda{.8pt}
\def\framecolor{black}
\def\labsizea{\tiny}
\def\labsizec{\large}
\def\labsizeb{\tiny}
\psset{unit=.7cm}

\begin{minipage}{.3\textwidth}
\centering
\begin{pspicture}(0,-4.5)(5,4.5) 
\psbezier[linecolor=black, linewidth=\lwda](0.73686975,-4.449636)(0.7127771,-4.460336)(1.8730544,-4.009415)(2.8716369,-3.8637156)(3.8702192,-3.7180164)(5.073722,-3.8698223)(5.1290383,-3.9136796)(5.1843553,-3.957537)(5.0430784,-1.3179214)(4.6728888,0.97566915)(4.3026996,3.2692597)(4.2267647,4.4525065)(4.2357507,4.453625)(4.244736,4.454744)(3.3139105,3.9660668)(2.610363,3.747806)(1.9068155,3.5295453)(0.05490279,3.8556824)(0.022326251,3.8718326)(-0.010250286,3.8879828)(0.32000947,1.2388599)(0.6153129,-0.39083123)(0.9106163,-2.0205224)(0.7609624,-4.4389358)(0.73686975,-4.449636)
\psbezier[linecolor=black, linewidth=\lwd](1.5924122,2.748983)(1.7714443,2.4981642)(2.136812,1.1666895)(2.0752194,-0.008463906)(2.0136266,-1.1836174)(2.0082865,-1.0431124)(2.0057456,-1.5642086)(2.0032048,-2.0853047)(2.0259876,-2.379708)(2.1324122,-2.791017)
\psbezier[linecolor=black, linewidth=\lwd](4.4037166,0.23663507)(4.1770067,0.09837163)(4.0394573,0.053873222)(3.6394575,-0.0015866251)(3.2394571,-0.057046473)(2.2494652,-0.04379658)(1.5242893,0.029158738)

\psdots[dotsize=\dsz]
(2.0729,-0.011017098)
(1.9513657,2.3384326)
(2.2506042,-0.011043609)
(1.7374282,2.3866103)

\uput[225](2.0729,-0.011017098){\labsizea $\tilde x_m$}
\uput[45](1.9513657,2.3384326){\labsizea $\tilde y _m$}
\uput[-45](2.2506042,-0.011043609){\labsizea $\tilde z_m $}
\uput[135](1.7374282,2.3866103){\labsizea $\tilde w_m $}

\psbezier[linecolor=black, linewidth=\lwd](1.7932395,2.7234473)(2.054135,2.1398587)(2.3303888,1.0958833)(2.2272973,0.06283411)(2.1242058,-0.970215)(2.1548598,-1.5416058)(2.1856308,-2.0001223)(2.2164018,-2.4586391)(2.2617662,-2.458341)(2.2954133,-2.6132917)
\psbezier[linecolor=black, linewidth=\lwd](4.075615,2.541407)(3.8545492,2.443519)(3.5254943,2.3946238)(3.1354525,2.3553593)(2.7454107,2.3160949)(1.9749955,2.356681)(1.2678735,2.408332)
\end{pspicture}
\end{minipage}
\begin{minipage}{.35\textwidth}
\centering
\begin{pspicture}(0,-4.5)(5,4.5)
\psbezier[linecolor=black, linewidth=\lwda](0.73686975,-4.449636)(0.7127771,-4.460336)(1.8730544,-4.009415)(2.8716369,-3.8637156)(3.8702192,-3.7180164)(5.073722,-3.8698223)(5.1290383,-3.9136796)(5.1843553,-3.957537)(5.0430784,-1.3179214)(4.6728888,0.97566915)(4.3026996,3.2692597)(4.2267647,4.4525065)(4.2357507,4.453625)(4.244736,4.454744)(3.3139105,3.9660668)(2.610363,3.747806)(1.9068155,3.5295453)(0.05490279,3.8556824)(0.022326251,3.8718326)(-0.010250286,3.8879828)(0.32000947,1.2388599)(0.6153129,-0.39083123)(0.9106163,-2.0205224)(0.7609624,-4.4389358)(0.73686975,-4.449636)
\psbezier[linecolor=black, linewidth=\lwd](1.5924122,2.748983)(1.7714443,2.4981642)(2.136812,1.1666895)(2.0752194,-0.008463906)(2.0136266,-1.1836174)(2.0082865,-1.0431124)(2.0057456,-1.5642086)(2.0032048,-2.0853047)(2.0259876,-2.379708)(2.1324122,-2.791017)
\psbezier[linecolor=black, linewidth=\lwd](2.0176296,2.6063743)(2.2785254,2.0227857)(2.554779,0.9788101)(2.4516876,-0.054239064)(2.348596,-1.0872881)(2.3792498,-1.658679)(2.410021,-2.1171956)(2.440792,-2.5757122)(2.4861565,-2.575414)(2.5198035,-2.730365)
\psbezier[linecolor=black, linewidth=\lwd](4.4037166,0.23663507)(4.1770067,0.09837163)(4.0394573,0.053873222)(3.6394575,-0.0015866251)(3.2394571,-0.057046473)(2.2494652,-0.04379658)(1.5242893,0.029158738)
\psbezier[linecolor=black, linewidth=\lwd]
(4.1341515,1.8975047)(3.9130857,1.7996167)(3.5840309,1.7507213)(3.193989,1.7114569)(2.8039472,1.6721923)(2.0335321,1.7127786)(1.3264102,1.7644296)

\psdots[dotsize=\dsz](2.0729,-0.011017098)
(2.3220975,1.6945301)
(2.4554822,-0.020799706)
(1.922794,1.7231956)

\uput[215](2.0729,-0.011017098){\labsizea $x$}
\uput[45](2.3220975,1.6945301){\labsizea $y $}
\uput[250](2.1049123,-2.7387345){\labsizeb $W^u_\xi(x)$}
\uput[310](2.519586,-2.6936257){\labsizeb $W^u_\xi(y )$}
\uput{0pt}[120](1.5242893,0.029158738) 
{\labsizeb $W^s_\xi(x)$}
\uput[-45](2.4554822,-0.020799706){\labsizea $z $}
\uput[135](1.922794,1.7231956){\labsizea $w $}
\uput{2pt}[250](1.3445861,1.7722279){\labsizeb $W^s_\xi(y )$}
\end{pspicture}
\end{minipage}
\begin{minipage}{.3\textwidth}
\centering
\begin{pspicture}(0,-4.5)(5,4.5)
\psbezier[linecolor=black, linewidth=\lwda](0.73686975,-4.449636)(0.7127771,-4.460336)(1.8730544,-4.009415)(2.8716369,-3.8637156)(3.8702192,-3.7180164)(5.073722,-3.8698223)(5.1290383,-3.9136796)(5.1843553,-3.957537)(5.0430784,-1.3179214)(4.6728888,0.97566915)(4.3026996,3.2692597)(4.2267647,4.4525065)(4.2357507,4.453625)(4.244736,4.454744)(3.3139105,3.9660668)(2.610363,3.747806)(1.9068155,3.5295453)(0.05490279,3.8556824)(0.022326251,3.8718326)(-0.010250286,3.8879828)(0.32000947,1.2388599)(0.6153129,-0.39083123)(0.9106163,-2.0205224)(0.7609624,-4.4389358)(0.73686975,-4.449636)
\psbezier[linecolor=black, linewidth=\lwd](1.5924122,2.748983)(1.7714443,2.4981642)(2.136812,1.1666895)(2.0752194,-0.008463906)(2.0136266,-1.1836174)(2.0082865,-1.0431124)(2.0057456,-1.5642086)(2.0032048,-2.0853047)(2.0259876,-2.379708)(2.1324122,-2.791017)

\psbezier[linecolor=black, linewidth=\lwd](3.4908004,2.6453986)(3.751696,2.06181)(4.02795,1.0178344)(3.9248583,-0.015214675)(3.8217669,-1.0482638)(3.8524206,-1.6196545)(3.8831916,-2.0781713)(3.9139628,-2.5366879)(3.9593272,-2.5363896)(3.9929743,-2.6913404)
\psbezier[linecolor=black, linewidth=\lwd](4.4037166,0.23663507)(4.1770067,0.09837163)(4.0394573,0.053873222)(3.6394575,-0.0015866251)(3.2394571,-0.057046473)(2.2494652,-0.04379658)(1.5242893,0.029158738)
\psbezier[linecolor=black, linewidth=\lwd](4.2609806,0.5414071)(4.039915,0.44351918)(3.7108603,0.39462382)(3.3208184,0.3553593)(2.9307766,0.31609476)(2.1603613,0.35668102)(1.4532394,0.408332)


\psdots[dotsize=\dsz](2.0729,-0.011017098)
(3.9318535,0.43599355)
(3.9481652,0.047492977)
(2.0886476,0.38661015)

\uput[215](2.0729,-0.011017098){\labsizea $\hat x_m$}
\uput[55](3.9318535,0.43599355){\labsizea $\hat y_m $}
\uput[-45](3.9481652,0.047492977){\labsizea $\hat z_m $}
\uput[135](2.0886476,0.38661015){\labsizea $\hat w_m $}

\end{pspicture}
\end{minipage}
%
%
%
%
%
\caption{Claim \ref{claim:GP}}\label{fig:22}

\end{figure}

Recall $\hat r$ is chosen in \ref{14} and that $r_1$ and $r_0$ were fixed in \ref{13} to be as in Lemma \ref{prop:controlledintersections}.  See Figure \ref{fig:22}.
\begin{claim}\label{claim:GP} \label{claim:bounds}
Let $p=(\pt,\xi,x)$ and $ q=(\pt,\xi, y) $ be in $K$ with 
$d(x,y) <  r_0$.  Let $$z = \locunstM [ r_1] y \xi \cap \locstabM [r_1] x \xi,\quad \quad  w= \locstabM [ r_1] y \xi \cap \locunstM [r_1] x \xi.$$
Assume $z\neq x$ and set $\delta = \|H^s_p (z)\|$.  For any $m$ with  $$M_\delta\le m\le m_\delta<0$$ set
\begin{enumerate}[label=\roman*)]
\item $\hat p_m=(\hat \pt_m, \hat \xi_m,\hat  x_m )=  \Phi^m(p)$, 
 $\hat q_m =(\hat \pt_m, \hat \xi_m,\hat  y_m )
 =  \Phi^m(q)$,
\item 
$(\hat \pt_m,\hat  \xi_m,\hat  z_m )=  \Phi^m(\pt,\xi,z)$,
$(\hat \pt_m,\hat  \xi_m,\hat  w_m )=  \Phi^m(\pt,\xi,w)$,

\item $\td p_m= (\td \pt_m, \td \xi_m, \td x_m )= \Phi^{L_{p, \delta, \epsilon}(m)}(p)$,
$\td q_m =  (\td \pt_m, \td \xi_m, \td y_m )=\Phi^{L_{p, \delta, \epsilon}(m)}(q)$,
\item $(\td \pt_m, \td \xi_m, \td z_m )=  \Phi^{L_{p, \delta, \epsilon}(m)}(\pt,\xi,z)$,
$(\td \pt_m, \td \xi_m, \td w_m )=  \Phi^{L_{p, \delta, \epsilon}(m)}(\pt,\xi,w)$.

\end{enumerate}
Then, if $\hat p_m, \hat q_m, \td p_m, \td q_m\in K$,  we have 
\begin{enumlemma}
\item \label{itemclaim:1} $\delta^{-\beta}\cdot \|H^u_{\hat q_m}(\hat z_m)\|\le \|H^s_{\hat p_m}(\hat z_m) \|\le \hat r \delta^{\beta}$ 
\item \label{itemclaim:1'} $\delta^{-\beta} \cdot \|H^u_{\hat p_m}(\hat w_m)\|\le \|H^s_{\hat p_m}(\hat z_m) \|\le \hat r \delta^{\beta}$

\item \label{itemclaim:3} $d(\hat x_m, \hat y_m)<r_0$
\item \label{itemclaim:2} $\|H^u_{\td p_m}(\td w_m)\| \le  \hat r\delta^\beta$
\item \label{itemclaim:4} 
$d(\td x_m, \td y_m)< C_2 \delta^\beta  \hat r+  C_2 L_1  \rexp{\tau_{p, \delta, \epsilon}(m)(\lambda^s+\eps)} r_1.$

\end{enumlemma}
\end{claim}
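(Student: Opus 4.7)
The plan is to derive all five parts from four tools introduced earlier in the excerpt: the affine-parameter intertwining $H^{s/u}_{\Phi^t(p)}\circ\Phi^t = D\Phi^t\circ H^{s/u}_p$ of Proposition \ref{prop:Stabman}, the Lyapunov-norm hyperbolicity bounds of Lemma \ref{prop:LyapNormProps}, the Lyapunov-to-Riemannian norm comparison on $K$ with constant $M_0$ fixed in choice \ref{M0}, and the affine-parameter--to--distance comparison of Lemma \ref{prop:controlledintersections} with constant $C_2$. The thresholds $M_\delta$ and $m_\delta$ from Section \ref{sec:times} are engineered precisely so that the hyperbolicity rates along $m\in[M_\delta,m_\delta]$ produce the required $\delta^\beta$ factor.

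Parts (a) and (b) follow from the same template. For the upper bound $\|H^s_{\hat p_m}(\hat z_m)\|\le\hat r\,\delta^\beta$, write $H^s_{\hat p_m}(\hat z_m)=D\Phi^m\,H^s_p(z)$ and apply the Lyapunov bound on backward stable expansion, yielding $\|H^s_{\hat p_m}(\hat z_m)\|\le M_0\,\delta\,\exp(m(\lambda^s-\eps))$; the condition $m\ge M_\delta$ together with $\epsilon\le\hat r/M_0$ (a consequence of $\epsilon\le\epsilon_0$ in choice \ref{20}) delivers the bound. For the lower bound, form the ratio of $\|H^u_{\hat q_m}(\hat z_m)\|$ (bounded above by backward unstable contraction) to $\|H^s_{\hat p_m}(\hat z_m)\|$ (bounded below as above); this produces a factor $\exp(m(\lambda^u-\lambda^s-2\eps))/\delta$, and the condition $m\le m_\delta$ paired with $\|H^u_q(z)\|\le r_1\le 1\le M_0$ supplies the $\delta^\beta$. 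Part (b) is identical with $w$ in place of $z$, using $\|H^u_p(w)\|\le r_1$.

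Part (c) is a triangle inequality through $\hat z_m$: combining Lemma \ref{prop:controlledintersections} at $\hat p_m,\hat q_m\in K$ with both halves of (a) gives $d(\hat x_m,\hat y_m)\le 2C_2\hat r\,\delta^\beta$, which is $<r_0$ since $\hat r\le r_0/(2C_2)$ by choice \ref{14} and $\delta<1$. Part (d) exploits the defining identity of the stopping time,
\[
\lyap{D\Phi^m|_{E^s(p)}}^s\cdot\lyap{D\Phi^\tau|_{E^u(\hat p_m)}}^u\cdot\delta=\epsilon,\qquad \tau:=\tau_{p,\delta,\epsilon}(m),
\]
together with the exact one-dimensional intertwining $\lyap{H^s_{\hat p_m}(\hat z_m)}^s=\lyap{D\Phi^m|_{E^s(p)}}^s\cdot\lyap{H^s_p(z)}^s$, which lets me rewrite the forward-expansion factor as $\lyap{D\Phi^\tau|_{E^u(\hat p_m)}}^u\le \epsilon M_0/\|H^s_{\hat p_m}(\hat z_m)\|$; combining with (b) and the forward intertwining $H^u_{\td p_m}(\td w_m)=D\Phi^\tau\,H^u_{\hat p_m}(\hat w_m)$ then gives $\|H^u_{\td p_m}(\td w_m)\|\le M_0^3\epsilon\,\delta^\beta\le\hat r\,\delta^\beta$, using $\epsilon\le\hat r/M_0^4$.

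Part (e) is the most delicate and is where I expect the main difficulty. After the decomposition $d(\td x_m,\td y_m)\le d(\td x_m,\td w_m)+d(\td w_m,\td y_m)$, the unstable term $d(\td x_m,\td w_m)\le C_2\hat r\,\delta^\beta$ follows from (d) and Lemma \ref{prop:controlledintersections}. For the stable term, the plan is to apply the $L_1$-Lipschitz stable contraction of choice \ref{choice:cont} at the intermediate point $\hat q_m\in K$ forward over time $\tau\ge 0$,
\[
d(\td w_m,\td y_m)\le L_1\exp(\tau(\lambda^s+\eps))\,d(\hat w_m,\hat y_m),
\]
and to bound $d(\hat w_m,\hat y_m)\le C_2 r_1$ via the affine-parameter comparison and $\|H^s_q(w)\|\le r_1$. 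The hard step is verifying that $\hat w_m$ actually lies in the domain on which the $L_1$ estimate applies for every $m\in[M_\delta,m_\delta]$: backward iteration expands the stable affine parameter and may push $\|H^s_{\hat q_m}(\hat w_m)\|$ above its initial value $r_1$, so one must appeal to the Pesin chart at $\hat q_m\in\Lambda'$, whose domain scales as $\ell_0^{-1}$ (Section \ref{sec:all the carp}), and tune $r_1$, $\hat r$, and $\ell_0$ in the choices of Section \ref{sec:choices} so that the composite backward--forward orbit of $w$ remains in the controlled region throughout the range $[M_\delta,m_\delta]$.
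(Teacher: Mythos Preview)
Your approach for (a)--(d) matches the paper's, including the triangle through $\hat z_m$ for (c) and the use of the stopping-time identity combined with (b) for (d); you undercount $M_0$-factors in a couple of places (the paper gets $M_0^2$ in the upper bound of (a) and $M_0^4$ in (d)), but the choice $\epsilon_0\le \hat r/M_0^4$ absorbs this.

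For (e) you correctly locate the difficulty, but the paper's resolution is more specific than ``appeal to the Pesin chart and tune parameters.'' The paper invokes Corollary~\ref{cor:tolemma} (a consequence of the inclination Lemma~\ref{lem:inclination}, prepared in Section~\ref{sec:all the carp}) to conclude directly that $\hat w_m$ coincides with the intersection $\locstabM[r_1]{\hat y_m}{\hat\xi_m}\cap\locunstM[r_1]{\hat x_m}{\hat\xi_m}$, i.e.\ that $\hat w_m$ remains in $\locstabM[r_1]{\hat y_m}{\hat\xi_m}$ after the backward iteration. The hypothesis of Lemma~\ref{lem:inclination} is a chart-based control on the backward orbit of $z$ (not $w$): one checks
\[
\bigl|\phi(F^{-j}(\xi,x))(f_\xi^{-j}(z))\bigr| \le \tfrac{1}{10}\,e^{-\lambda_0-\epone}\,\ell_0^{-1}\,e^{-\epone j}
\quad\text{for all }0\le j\le \lfloor -M_\delta+1\rfloor,
\]
and this is precisely what the third term in the definition of $\epsilon_0$ in choice~\ref{20} was chosen to guarantee, after the chart-expansion estimate $|\phi(F^{-j}(\xi,x))(f_\xi^{-j}(z))|\le e^{(-\lambda^s+2\epone)j}C_2\ell_0\delta$ and the bound $e^{(-\lambda^s+\eps)(-M_\delta+1)}\delta\le e^{-\lambda^s+\eps}\delta^\beta\epsilon$. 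Once $\hat w_m\in\locstabM[r_1]{\hat y_m}{\hat\xi_m}$ is established, the bound $d(\hat w_m,\hat y_m)\le C_2 r_1$ follows from Lemma~\ref{prop:controlledintersections}\ref{item8:4} at $\hat q_m\in K$ and the $L_1$-contraction applies exactly as you indicate.
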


\begin{proof}
Note  $0<\delta<1$. 
For \ref{itemclaim:1}, first observe that $\|H^u_{q}(z)\|\le 1$. 
We then obtain the lower bound
	\begin{align*}
			\|H^s_{\hat p_m}(\hat z_m) \|
					&\ge \dfrac 1 {M_0}  \lyap{H^s_{\hat p_m}(\hat z_m) }_{\eps,\pm, \hat p_m}^s
					\ge \dfrac 1 {M_0} \rexp{(\lambda^s+\eps)m} \lyap{H^s_{p}(z) }_{\eps,\pm, p}^s
					\\ &\ge   \dfrac 1 {M_0^2} \delta \rexp{(\lambda^s+\eps)m} 
					\ge   \dfrac 1 {M_0^2} \delta \rexp{(\lambda^s+\eps)m_\delta} \\
					&=   \dfrac 1 {M_0^2} \exp \Bigg[\Bigg(
						\dfrac{(\lambda^s+\eps)\left((1+\beta)\log\delta - \log (M_0^4)\right)}{\lambda^u - \lambda^s - 2 \eps} \\ &  \quad   \quad \quad\quad\quad\quad  + (1+\beta)\log \delta - \log (M_0^4)\Bigg) - \beta \log \delta+ \log (M_0^4)\Bigg]\\
						&=   \dfrac {M_0^4 } {M_0^2} \rexp{(\lambda^u -\eps)m_\delta}\delta^{-\beta}
						\ge   \dfrac {M_0^4 } {M_0^3} 	\delta^{-\beta}\lyap{H^u_{\hat q_m}(\hat  z_m)}^u_{\eps, \pm, \hat p_m}
						 \\& \ge \delta^{-\beta}	\|H^u_{\hat q_m}(\hat z_m)\|.
						\end{align*}
The lower bound in \ref{itemclaim:1'} is identical.  
The upper bound in \ref{itemclaim:1} and \ref{itemclaim:1'}  follows since 
		\begin{align*}
			\|H^s_{\hat p_m}(\hat z_m) \|
					&\le {M_0}  \lyap{H^s_{\hat p_m}(\hat z_m) }_{\eps,\pm, \hat p_m}^s
					\le {M_0} \rexp{(-\lambda^s+\eps)|m|} \lyap{H^s_{p}(z) }_{\eps,\pm, p}^s\\
					&\le {M^2_0} \rexp{(-\lambda^s+\eps)|m|} \|H^s_{p}(z) \|
					\le {M^2_0} \rexp{(-\lambda^s+\eps)|M_\delta|} \|H^s_{p}(z) \|\\					
					&\le {M^2_0} \exp\left[ (-\lambda^s+ \eps)\dfrac {-(1-\beta)\log \delta + \log \epsilon}{-\lambda^s +\eps} +\log \delta \right]\\
					&\le {M^2_0} \epsilon\delta^\beta\le \hat r\delta^\beta.
		\end{align*}
		
For \ref{itemclaim:2} we have 
\begin{align*}
	\|H^u_{\td p_m}(\td w_m)\| & 
	\le M_0   \lyap{H^u_{\td p_m}(\td w_m) }_{\eps,\pm, \td p_m}^u\\
	&\le  M_0^2 \lyap {\restrict {D\Phi^{\tau_{p, \delta, \epsilon}(m)}}{E^u(\Phi^m(p))}}_{\eps, \pm, \hat p_m}^u \|H^u_{ \hat p_m} (\hat w_m) \|\\
	&\le  \delta^\beta {M_0^2} \lyap {\restrict {D\Phi^{\tau_{p, \delta, \epsilon}(m)}}{E^u(\Phi^m(p))}}_{\eps, \pm,\hat p_m}^u 
	 \left\| H^s_{\hat p_m} (\hat z_m)\right\|\\
	&\le \delta^\beta {M_0^4}\lyap {\restrict {D\Phi^{\tau_{p, \delta, \epsilon}(m)}}{E^u(\Phi^m(p))}}_{\eps, \pm, \hat p_m}^u 
	\lyap {\restrict {D\Phi^m}{E^s(p)}}_{\eps, \pm, p}^s \delta
	\\
	&\le  \delta^\beta {M_0^4} \epsilon \le \delta^\beta  \hat r.
	\end{align*}
From \ref{itemclaim:1} and Lemma \ref{lem:standardcrap}\ref{item8:3} and \ref{item8:4}  we have the inequality 
$ d(\hat x_m , \hat y_m) \le 2C_2\hat r$.  By the definition of $ \hat r$  we obtain \ref{itemclaim:3}.  

For \ref{itemclaim:4}, recall that $z$ is in the domain of the chart $\phi(\xi,x)$.  We have $$|\phi(\xi,x)(z)| \le C_2\ell_0\delta.$$
Then for $0\le j \le \lfloor {-M_\delta +1}\rfloor$ we have 
 \begin{align*}
 |\phi(F^{-j}(\xi,x))(\cocycle [\xi][-j](z))|
 & \le e^{(-\lambda^s + 2\epone)j}C_2\ell_0\delta\\
  & \le e^{-8\epone j} e^{(-\lambda^s + \eps)j}C_2\ell_0\delta\\
  & \le e^{-8\epone j} e^{(-\lambda^s + \eps)(-M_\delta +1)}C_2\ell_0\delta\\
  &\le e^{-8\epone j}e^{(-\lambda^s + \eps)} C_2 \ell_0 \delta^\beta\epsilon.  
\end{align*}
Since $\epsilon\le \epsilon_0 \le  \frac{e^{\lambda^s - \eps} e^{-\lambda_0-\epone} \ell_0^{-2}}{10 C_2 } $ and  $d(\hat y_m, \hat x_m)<r_0$,  it follows from Corollary \ref{cor:tolemma}  that $\hat w_m\in  \locstabM [ r_1] {\hat y_m} {\hat \xi_m}$.  
 \ref{itemclaim:4} follows as   
\begin{align*}
	d(\td x_m, \td y_m)& \le d(\td x_m, \td w_m)+d(\td w_m, \td y_m)\\
&\le C_2\|H^u_{\td p_m}(\td w_m)\| +  L_1  \rexp{\tau_{p, \delta, \epsilon}(m)(\lambda^s+\eps)} d(\hat y_m, \hat w_m) \\ 
& \le C_2 \delta^\beta  \hat r+   L_1  \rexp{\tau_{p, \delta, \epsilon}(m)(\lambda^s+\eps)}(C_2 r_1).
\qedhere
\end{align*}

\end{proof}

We observe that for
\begin{align*} g(\delta) 
=\dfrac {M_\delta - m_\delta}{M_\delta} =
1 - \dfrac{((1+\beta)\log \delta - \log(M_0^4))(-\lambda^s+  \eps)}{((1-\beta)\log\delta - \log \epsilon)(\lambda^u - \lambda^s - 2\eps)}
\end{align*}
we have  
$$\lim_{\delta\to 0} g(\delta)= 1 - \dfrac{(1+ \beta)(-\lambda^s + \eps)}{(1-\beta)(\lambda^u - \lambda^s - 2 \eps)}=2\alpha_0.$$

We define one final  parameter in addition to those from Section \ref{sec:choices}.  
\begin{choices}
\item For  $T_0>0$ and $\epsilon_0$ fixed in \ref{19} and \ref{20}, given any  $0<\epsilon<\epsilon_0$ 
 we define  $0<\delta_0(T_0, \epsilon)<1$ so that  for all $0<\delta<\delta_0(T_0,\epsilon)$ we have 
\begin{enumerate}
	\item $M_\delta<m_\delta<0$
	\item $\frac {M_\delta - m_\delta}{M_\delta} \ge \alpha_0$
	\item $ M_\delta<-T_0$
	\item $L_{p, \delta, \epsilon}(M_\delta)<-T_0$ for all $p\in Y_0$
	\item $L_{p, \delta, \epsilon}(0)>T_0$ for all $p\in Y_0$
	\item $\tau_{p, \delta, \epsilon} (M_\delta)>\max\{\hat M, \frac {\hat M}{\lambda^u-\eps}\}$ where $\hat M$ was fixed in \ref{17}
	\item $\delta^{-\beta} > D_1$.  
\end{enumerate}
\end{choices}
	
\subsection{
 Key Lemma}
We have the following lemma, whose proof follows from the above choices.  
\begin{lemma}\label{lem:key1}\label{lem:key}
Given $0<\epsilon<\epsilon_0$ 
and any open $U\subset Y$ with $\mualt(U)<\hat \alpha$, there exist sequences
\begin{enumerate}[label=\roman*)]
\item $\td p_j=(\td \pt_j, \td \xi_j,\td  x_j )$
\item $\td q_j=(\td \pt_j, \td \xi_j,\td  y_j )$
\item $p'_j=( \pt'_j, \xi'_j,  x'_j )$
\item $q'_j=( \pt'_j, \xi'_j,  y'_j )$
\item $q''_j=( \pt''_j, \xi''_j,  y''_j )$
\end{enumerate}
such that for all  $j$
	\begin{enumlemma}
		\item \label{lemitem:a} $p'_j, \td p_j, q''_j, \td q_j\in K$;
		\item \label{lemitem:2}$p'_j\notin U$; 

		\item \label{lemitem:0} $q'_j = \Phi^{t_j}(q''_j)$ for some $|t_j|\le \hat T$ where $\hat T$ is as in \ref{11};
		\item \label{lemitem:4} there is $v_j'\in \locunstM [\rhere]{x'_j}{\xi'_j}$ with 
			$\frac 1{C_3M_0^6}\epsilon  \le \| H^u_{p_j'} (v_j') \|\le  C_3M_0^6 \epsilon$
			and $d(y_j', v_j') \to 0$ as $j\to \infty$.
		\end{enumlemma}
		Moreover,
		\begin{enumlemma}[resume]
		\item \label{lemitem:1}$d(\td x_j , \td y_j) \to 0$ as $j\to \infty$ and 

		
		\item \label{lemitem:6}
for every $j$ there are $a_j$ and $b_j$ with   $|a_j|, |b_j| \in [M_0^{-4}, M_0^4]$ with 
$$\bar \mususp_{\td p_j } \simeq \left(\lambda_{a_j} \right)_*\bar \mususp_{p_j'}, \quad \text{ and }\quad \bar \mususp_{\td q_j } \simeq(\lambda_{b_j})_* \bar \mususp_{q_j''}.$$  
		\label{item:lem3}
	\end{enumlemma}

\end{lemma}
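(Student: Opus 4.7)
This is the core exponential drift lemma; the construction proceeds in three stages.

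\emph{Initial pair.} Fix a $\muskew$-generic $p_0 = (\pt_0, \xi_0, x_0) \in \rec(T_0) \cap K$ at which $(\xi_0, x_0) \in \calA_{\gamma_1, \gamma_2, .9}$; such $p_0$ form a positive-measure set by the estimate \eqref{eq:measurebounds} and choice \ref{19}. For each target scale $\delta_j \downarrow 0$, use the large $\nucondF_{\xi_0}$-mass of $A_{\gamma_2}(\xi_0, x_0)$ to select $\eta_j$ in the $\hat\Fol$-atom of $\xi_0$ (so $\unstM{x_0}{\eta_j} = \unstM{x_0}{\xi_0}$ while $E^s_{\eta_j}(x_0) \neq E^s_{\xi_0}(x_0)$), and then pick $y_j$ on the local stable of $(\eta_j, x_0)$ with $d(x_0, y_j)$ scaled so that, with $q_j = (\pt_0, \xi_0, y_j)$ and $z_j := \locstabM[r_1]{x_0}{\xi_0} \cap \locunstM[r_1]{y_j}{\xi_0}$ (transverse intersection by Lemma~\ref{prop:controlledintersections}\ref{itme1}), one has $\|H^s_{p_0}(z_j)\| = \delta_j$.

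\emph{Stopping-time averaging.} Write $L_j(m) := L_{p_0, \delta_j, \epsilon}(m)$. On the interval $[M_{\delta_j}, m_{\delta_j}]$, whose length grows like $\alpha_0|M_{\delta_j}|$, use the bi-Lipschitz bounds of Lemma~\ref{lem:bilipest} to transfer density statements between $m$ and $L_j(m)$. Combining (i) the recurrence density $\ge 1-\alpha$ ensuring $\Phi^m(p_0) \in K \cap \scrA \cap S_0$ and $\Phi^{L_j(m)}(p_0) \in K$ on a set of relative density $\ge 1-\alpha$; (ii) Claim~\ref{cor:infinitecontrol}\ref{it:here2} guaranteeing $\tau_j(m) \ge \hat M$ for all $m$ in the interval once $\delta_j$ is small, so that the reverse-martingale convergence of Section~\ref{subsec:MCT} together with the definition of $S_{\hat M}$ yields $\mususp^\Sal_{\Phi^{L_j(m)}(p_0)}(K \setminus U) > 0.9$ on a further density-$(1-\alpha)$ set; and (iii) a pigeonhole, select $m_j \in [M_{\delta_j}, m_{\delta_j}]$ at which all these conditions hold simultaneously. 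Then $\Phi^{L_j(m_j)}(q_j) \in K$ follows from $d(\td x_j, \td y_j) \to 0$ in Claim~\ref{claim:GP}\ref{itemclaim:4} together with the uniform continuity from choice~\ref{choice:cont}.

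\emph{Output sequences.} Set $\td p_j := \Phi^{L_j(m_j)}(p_0)$ and $\td q_j := \Phi^{L_j(m_j)}(q_j)$; this verifies part of \ref{lemitem:a} and gives \ref{lemitem:1}. Using the $\Sal$-mass bound, pick $\eta'_j$ in the $\hat\Fol$-atom of $\td\xi_j$ with $p'_j := (\td\pt_j, \eta'_j, \td x_j) \in K \setminus U$ (verifying \ref{lemitem:2}); by a further positive-measure restriction on $\eta'_j$ also ensure $q''_j := (\td\pt_j, \eta'_j, \td y_j) \in K$. Since $p'_j$ and $\td p_j$ share both past and base point, their unstable manifolds and affine parameters coincide, so $\bar\mususp_{\td p_j} = \bar\mususp_{p'_j}$ (whence $a_j = 1$); likewise $b_j = 1$, and both lie in $[M_0^{-4}, M_0^4]$. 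Finally let $v'_j$ be the intersection of $\locunstM[\rhere]{x'_j}{\xi'_j}$ with the local stable manifold through $y'_j$ relative to $\eta'_j$ (existence and transversality via Lemma~\ref{prop:controlledintersections}\ref{itme2}), and take $t_j \in [-\hat T, \hat T]$ small so that $q'_j := \Phi^{t_j}(q''_j)$ has base point on this local stable through $v'_j$ (use \ref{11}). The bound $\|H^u_{p'_j}(v'_j)\| \sim \epsilon$ comes from Lemma~\ref{prop:controlledintersections}\ref{itme3}, whose hypothesis $D_1 \|H^u(w)\| \le \|H^s(z)\|$ is provided by Claim~\ref{claim:GP}\ref{itemclaim:1}\ref{itemclaim:2} together with $\delta_j^{-\beta} > D_1$ (final bullet of the $\delta_0$ choice); convergence $d(y'_j, v'_j) \to 0$ follows from $\|H^u_{\td p_j}(\td w_j)\| \le \hat r \delta_j^\beta \to 0$.

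\emph{Main obstacle.} The central difficulty is the averaging step: one must simultaneously enforce forward-orbit recurrence of $p_0$ at the stopping time $L_j(m_j)$ \emph{and} high $\Sal$-conditional mass of $K \setminus U$ at the drifted endpoint $\Phi^{L_j(m_j)}(p_0)$. The former is a forward-dynamical property of $\Phi^t$; the latter is a reverse-martingale statement accessible only via the descending filtration $\{\Sal^m\}$ after passing to the time-changed flow $\Psi^s$ of Section~\ref{sec:timechange}, for which the unstable expansion is constant. Reconciling these controls via the bi-Lipschitz map $L_j$---whose constants $\kappa_1, \kappa_2$ are dictated by the Lyapunov data---is precisely what forces $\alpha$ to be chosen so much smaller than $\alpha_0$ in Section~\ref{sec:choices}.
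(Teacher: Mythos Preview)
Your architecture departs from the paper's in two places, and both are genuine gaps.

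\textbf{The second orbit must be generic, not constructed.} You place $y_j$ by hand on a local stable leaf of $(\eta_j,x_0)$ at a prescribed scale $\delta_j$. But then $q_j=(\pt_0,\xi_0,y_j)$ has no recurrence properties whatsoever, so you have no control on where $\Phi^{m}(q_j)$ or $\Phi^{L_j(m)}(q_j)$ land. Your attempt to recover $\td q_j\in K$ from ``$d(\td x_j,\td y_j)\to 0$ and uniform continuity'' is circular: Claim~\ref{claim:GP} has $\hat q_m,\td q_m\in K$ among its \emph{hypotheses}, not its conclusions, and in any case $K$ is a compact Lusin set, not an open set---proximity to $K$ gives nothing. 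The paper instead takes $y_j$ to be a $\mususp_{(\pt,\xi)}$-generic point near $x$ with $q_j\in\rec(T_0)$ (using that $\mu_\xi$ is non-atomic and not supported on $W^u_\xi(x)$); then the density argument of Claim~\ref{prop:densities} applies symmetrically to $p$ and $q_j$, yielding $\Phi^{m_j}(q_j)\in K\cap S_0$ and $\Phi^{L_j(m_j)}(q_j)\in K\cap S_{\hat M}$ directly.

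\textbf{The word-change must occur at the intermediate time $m_j$, not at the endpoint.} You select $\eta'_j$ in the $\hat\Fol$-atom of $\td\xi_j$ and set $p'_j=(\td\pt_j,\eta'_j,\td x_j)$, $q''_j=(\td\pt_j,\eta'_j,\td y_j)$. This makes $a_j=b_j=1$ in \ref{lemitem:6}, but it destroys the mechanism behind \ref{lemitem:0} and \ref{lemitem:4}. In the paper, one first lands at $\hat p_j=\Phi^{m_j}(p)$, $\hat q_j=\Phi^{m_j}(q_j)$; since $\hat p_j\in\scrA$, one picks $\hat\eta_j\in A_{\gamma_2}(\hat p_j)$ in the $\hat\Fol$-atom of $\hat\xi_j$ (Claim~\ref{claim:eta}), forms $\bar p_j,\bar q_j$, and \emph{then flows forward} by $\Psi^{s'_j},\Psi^{s''_j}$ to define $p'_j,q'_j,q''_j$. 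The angle condition on $\hat\eta_j$ is exactly what makes Lemma~\ref{prop:controlledintersections}\ref{itme3} applicable to the intersection $\hat v_j=W^u_{\hat\xi_j,r_1}(\hat x_j)\cap W^s_{\hat\eta_j,r_1}(\hat y_j)$; flowing this forward by $t'_j$ is what turns $\|H^u(\hat v_j)\|\asymp\|H^s_{\hat p_j}(\hat z_j)\|$ into $\|H^u_{p'_j}(v'_j)\|\asymp\epsilon$ (Claim~\ref{claim:annulus}), and the forward stable contraction is what gives $d(y'_j,v'_j)\to 0$. In your setup there is no forward flow after the word-change and no angle condition at the point where you try to invoke \ref{itme3} (your $v'_j$ uses the \emph{same} word $\xi'_j=\eta'_j$ for both manifolds, so \ref{itme3} does not even apply). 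The bound $|t'_j-t''_j|\le\hat T$ in \ref{lemitem:0} likewise comes from comparing the unstable expansion of $\bar p_j$ and $\bar q_j$ over the \emph{forward} interval $[0,t'_j]$ via Lemma~\ref{prop:controlledintersections}\ref{item8:1}\ref{item8:2}; there is no analogue of this in your construction, and your proposed choice of $t_j$ (``so that $q'_j$ has base point on this local stable through $v'_j$'') has no justification.
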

In \ref{lemitem:6} above, $\lambda_a\colon \R \to \R$ denotes the multiplication map $\lambda_a \colon t\mapsto at$.  

\begin{figure}[h]
\def\dsz{5pt}
\def\lwd{1.5pt}
\def\lwda{.8pt}
\def\framecolor{black}
\def\labsizea{\small}
\def\labsizeb{\tiny}
\def\labsizec{\normalsize}
\def\unithere{.85cm}
\def\opa{.85}
\centering
\begin{minipage}{.5\textwidth}
  \centering
\psscalebox{1.0 1.0} 
{
\def\rightside{6}
\def\leftside{0}
\psset{unit=\unithere}

\begin{pspicture}(\leftside,-5)(\rightside,5.0)
\psbezier[linecolor=black, linewidth=\lwda](0.5511891,-4.0814466)(0.52709645,-4.0921464)(1.2050209,-3.780572)(1.7524921,-3.6593173)(2.2999632,-3.5380623)(3.1119628,-3.4225478)(3.1672795,-3.4664052)(3.2225962,-3.5102625)(2.9017115,-1.9499934)(3.031522,0.2835971)(3.1613324,2.5171876)(2.885398,4.226317)(2.8943837,4.2274356)(2.9033692,4.2285542)(2.6948965,3.7763476)(1.9113489,3.5580869)(1.1278015,3.3398263)(0.054058746,3.3545907)(0.021482209,3.370741)(-0.011094329,3.3868914)(0.54323477,1.5530454)(0.5599874,-0.43635595)(0.5767401,-2.4257572)(0.57528174,-4.0707464)(0.5511891,-4.0814466)
\psbezier[linecolor=black, linewidth=\lwd, linestyle=dashed,arrowinset=0.2]{-<}(1.3550848,1.5453709)(1.8993475,1.6551495)(2.6271996,1.7623473)(3.308346,1.9027885)(3.9894924,2.0432296)(5.14913,2.1716938)(5.9400935,2.1601727)
\psbezier[linecolor=black, linewidth=\lwd, linestyle=dashed,arrowinset=0.2]{-<}(3.3590202,1.4081529)(3.7784555,1.5220177)(4.067908,1.5505306)(4.4763713,1.6101002)(4.8848352,1.6696697)(5.2818594,1.759988)(5.963242,1.7589179)

\psbezier[linecolor=black, fillstyle=solid, opacity=\opa, linewidth=\lwda](2.1645224,-4.217002)(2.1404297,-4.2282677)(3.176132,-3.7411213)(3.7369366,-3.573006)(4.297741,-3.4048908)(5.054185,-3.3741274)(5.109502,-3.4203033)(5.1648183,-3.4664793)(5.24432,-1.2220107)(5.0463047,0.49615484)(4.848289,2.2143204)(5.0436587,4.452345)(5.0526443,4.4535227)(5.0616302,4.4547)(4.0548964,3.8787544)(3.431349,3.522611)(2.8078015,3.1664674)(1.8629476,3.0837445)(1.8303711,3.1007488)(1.7977946,3.1177528)(2.0240011,1.4169611)(1.9189147,-0.84493685)(1.8138282,-3.106835)(2.188615,-4.205736)(2.1645224,-4.217002)

\psbezier[linecolor=black, linewidth=\lwd](1.4055722,2.919008)(1.3227805,2.4751565)(1.3160833,2.313473)(1.3455722,1.5895457)(1.375061,0.8656185)(1.5227804,-1.077675)(1.5055722,-1.5125332)(1.488364,-1.9473913)(1.475061,-2.3094351)(1.3922389,-2.512103)
\psbezier[linecolor=black, linewidth=\lwd](2.7477944,2.839008)(2.8380897,2.0482585)(2.875171,1.8896512)(2.8855722,1.4547127)(2.8959734,1.0197744)(2.7277193,-0.9828825)(2.7226093,-1.4981731)(2.7174993,-2.0134637)(2.6989362,-2.4412546)(2.6389055,-2.6743252)
\psbezier[linecolor=black, linewidth=\lwd](3.3455722,2.899008)(3.3722932,2.6127858)(3.452181,2.1383276)(3.3715181,1.4299171)(3.2908552,0.72150666)(3.2852662,-1.1180443)(3.3542209,-1.5961698)(3.4231756,-2.0742953)(3.4903145,-2.3140664)(3.5055723,-2.560992)
\psbezier[linecolor=black, linewidth=\lwd](2.2655723,1.3445895)(2.6918073,1.3641443)(3.765086,1.5824842)(4.404346,1.1372603)

\psdots[dotsize=\dsz](3.3480723,1.414008)
\psdots[dotsize=\dsz](2.873451,1.4128717)
\psdots[dotsize=\dsz](2.7230723,-1.5609919)
\psdots[dotsize=\dsz](1.5105722,-1.6359919)
\psdots[dotsize=\dsz](1.3480722,1.539008)

%

\uput[35](3.3480723,1.414008){\labsizea$q'_j$}
\uput[135](2.873451,1.4128717){\labsizea$v'_j$}
\uput[180](2.7230723,-1.5609919){\labsizea$p'_j$}
\uput[180](1.5105722,-1.6359919){\labsizea$p_0$}
\uput[180](1.3480722,1.539008){\labsizea$q_0$}

\uput[270](4.422253,1.1137438){\labsizeb$W^s_{r_1}(q'_j)$}
\uput[310](3.4980721,-2.573492){\labsizeb$W^u_{r_1}(q'_j)$}
\uput[270](2.6314056,-2.6707141){\labsizeb$W^u_{r_1}(p'_j)$}
\uput[260](1.4022388,-2.510992){\labsizeb$W^u_{r_1}(p_0)$}
\uput[90](5.9400935,2.1601727){\labsizea$\Phi^{\hat t}$}
\uput[270](5.963242,1.7589179){\labsizea$\Phi^{t_j}$}

\uput[270](0.7077944,-4.272103){\labsizec$M_{(\varsigma_0, \xi_0)}$}
\uput[270](2.9868283,-4.2568374){\labsizec$M_{(\varsigma'_j, \xi'_j)}$}

\end{pspicture}

}

\end{minipage}%
\begin{minipage}{.5\textwidth}
\centering

\psset{unit=\unithere}

\psscalebox{1.0 1.0} 
{\def\rightside{5}

\def\leftside{-1}

\begin{pspicture}(\leftside,-5)(\rightside,5.0)

\psbezier[linecolor=black, linewidth=\lwda](0.68985224,-4.212293)(0.66575956,-4.2229934)(1.6359917,-3.9295385)(2.634574,-3.8146083)(3.6331563,-3.6996784)(4.2972927,-3.6089501)(4.352609,-3.6528077)(4.4079256,-3.696665)(4.313708,-1.6452847)(3.9435184,0.64830583)(3.573329,2.9418964)(3.4973943,4.4310255)(3.50638,4.432144)(3.5153656,4.433263)(2.9284313,4.105672)(2.2248838,3.887411)(1.5213364,3.6691504)(0.054944072,3.645966)(0.022367533,3.6621163)(-0.010209003,3.6782668)(0.31249616,0.76339513)(0.6425822,-0.77933955)(0.9726682,-2.3220742)(0.71394485,-4.2015934)(0.68985224,-4.212293)
\psbezier[linecolor=black, linewidth=\lwd](1.7514148,3.188332)(1.9304469,2.9375134)(2.2958145,1.6060388)(2.2342217,0.43088537)(2.172629,-0.74426806)(2.167289,-0.60376316)(2.1647482,-1.1248593)(2.1622074,-1.6459554)(2.1849902,-1.9403589)(2.2914147,-2.351668)
\psbezier[linecolor=black, linewidth=\lwd](2.2983713,3.237028)(2.559267,2.6534393)(2.8355207,1.6094637)(2.7324293,0.5764146)(2.6293378,-0.45663458)(2.6599915,-1.0280254)(2.6907625,-1.486542)(2.7215338,-1.9450586)(2.816898,-2.2697604)(2.8505452,-2.4247112)
\psbezier[linecolor=black, linewidth=\lwd](3.5714147,0.78033215)(3.32424,0.65024805)(3.1752126,0.504173)(2.8363152,0.42037135)(2.4974177,0.3365697)(2.1602266,0.31650907)(1.4627192,0.42674696)
\psdots[dotsize=\dsz](2.22,0.35)
\psdots[ dotsize=\dsz](2.67,1.8556185)
\def\td{\tilde}
\uput[135](2.22,0.35){\labsizea$\td p_j$}
\uput[0](2.67,1.8556185){\labsizea$\td q_j$}
\uput[270](2.1554952,-4.312002){\labsizec$M_{(\td \varsigma_j, \td \xi_j)}$}

\uput[190](2.2639148,-2.3341677){\labsizeb$W^u_{r_1}(\td p_j)$}
\uput[-10](2.8514147,-2.446668){\labsizeb$W^u_{r_1}(\td q_j)$}
\uput[260](1.4639148,0.42833218){\labsizeb$W^s_{r_1}(\td p_j)$}

\end{pspicture}
}

\end{minipage}
\psset{unit=\unithere}

\psscalebox{1.0 1.0} 
{
\def\rightside{6.5}

\begin{pspicture}(0,-5)(\rightside,5.0)

\psbezier[linecolor=black, linewidth=\lwda,fillstyle=solid, opacity =\opa](1.4825524,-4.0814466)(1.4584597,-4.0921464)(2.1363842,-3.780572)(2.6838553,-3.6593173)(3.2313266,-3.5380623)(4.043326,-3.4225478)(4.098643,-3.4664052)(4.1539593,-3.5102625)(3.8330746,-1.9499934)(3.9628851,0.2835971)(4.0926957,2.5171876)(3.8167613,4.226317)(3.8257468,4.2274356)(3.8347325,4.2285542)(3.6262598,3.7763476)(2.8427122,3.5580869)(2.0591648,3.3398263)(0.985422,3.3545907)(0.9528455,3.370741)(0.92026895,3.3868914)(1.474598,1.5530454)(1.4913508,-0.43635595)(1.5081034,-2.4257572)(1.5066451,-4.0707464)(1.4825524,-4.0814466)

\psbezier[linecolor=black, linewidth=\lwda,fillstyle=solid, opacity =\opa](3.0958858,-4.217002)(3.071793,-4.2282677)(4.1074953,-3.7411213)(4.6682997,-3.573006)(5.2291045,-3.4048908)(5.9855485,-3.3741274)(6.040865,-3.4203033)(6.0961814,-3.4664793)(6.1756835,-1.2220107)(5.977668,0.49615484)(5.779652,2.2143204)(5.975022,4.452345)(5.984008,4.4535227)(5.9929934,4.4547)(4.9862595,3.8787544)(4.3627124,3.522611)(3.7391648,3.1664674)(2.794311,3.0837445)(2.7617345,3.1007488)(2.729158,3.1177528)(2.9553642,1.4169611)(2.850278,-0.84493685)(2.7451916,-3.106835)(3.1199784,-4.205736)(3.0958858,-4.217002)
\psbezier[linecolor=black, linewidth=\lwd, linestyle=dashed,arrowinset=0.2]{->}(2.286448,1.5453709)(1.5628992,1.536089)(1.1790792,1.464238)(0.8534774,1.4413978)(0.5278757,1.4185575)(0.43931457,1.363265)(0.0026307455,1.305112)
\psbezier[linecolor=black, linewidth=\lwd, linestyle=dashed,arrowinset=0.2]{->}(4.290384,1.4081529)(3.4283798,1.2583376)(2.7330022,1.1858749)(2.1669688,1.0924269)(1.6009356,0.99897885)(0.6006486,0.8018271)(0.1561439,0.7435334)

\psbezier[linecolor=black, linewidth=\lwd](2.3369355,2.919008)(2.2541437,2.4751565)(2.2474468,2.313473)(2.2769356,1.5895457)(2.3064244,0.8656185)(2.4541438,-1.077675)(2.4369354,-1.5125332)(2.4197273,-1.9473913)(2.4064243,-2.3094351)(2.3236022,-2.512103)
\psbezier[linecolor=black, linewidth=\lwd](4.2769356,2.899008)(4.3036566,2.6127858)(4.3835444,2.1383276)(4.3028812,1.4299171)(4.2222185,0.72150666)(4.2166295,-1.1180443)(4.285584,-1.5961698)(4.354539,-2.0742953)(4.421678,-2.3140664)(4.4369354,-2.560992)

\psdots[linecolor=black, dotsize=\dsz](4.2794356,1.414008)
\psdots[linecolor=black, dotsize=\dsz](2.2794354,1.539008)

\uput[0](4.2794356,1.414008){\labsizea$q_j''$}
\uput[45](2.2794354,1.539008){\labsizea$q_1$}
\uput[270](4.4294357,-2.573492){\labsizeb$W^u(q''_j)$}
\uput[260](2.3336022,-2.510992){\labsizeb$W^u(q_1)$}
\uput[270](1.6391578,-4.272103){\labsizec$M_{(\varsigma_1, \xi_1)}$}
\uput[270](3.9181914,-4.2568374){\labsizec$M_{(\varsigma''_j, \xi''_j)}$}
\uput[90](0.0026307455,1.305112){\labsizea$\Phi^{\hat t}$}
\uput[270](0.1561439,0.7435334){\labsizea$\Phi^{t_j}$}

\end{pspicture}
}

\caption{Choices of points in Lemma \ref{lem:key} and proof of Lemma  \ref{lem:here2}}\label{fig:2}
\end{figure}
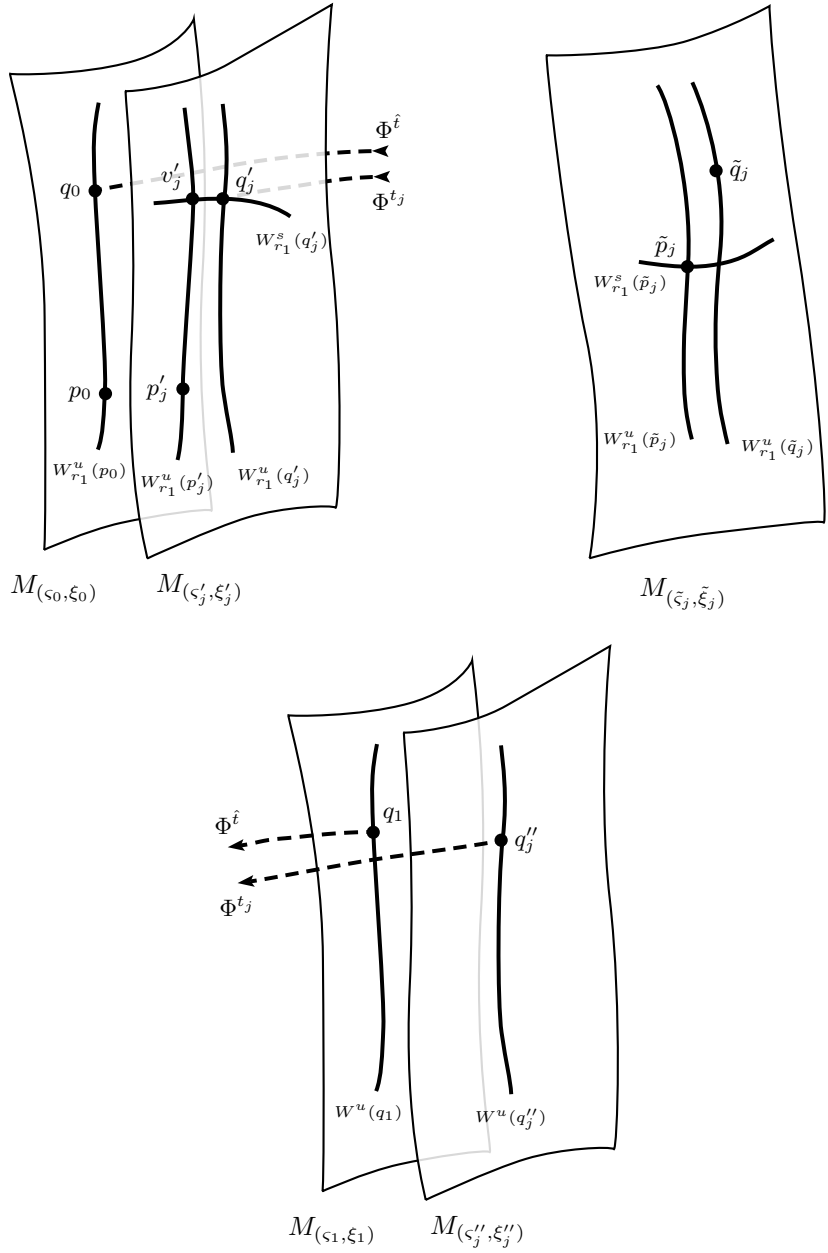

\subsubsection{Construction of the sequences in Lemma \ref{lem:key1}}

\label{sec:choiceslemma}

Let $0<\epsilon< \epsilon_0$ 
 be fixed and let $U\subset Y$ satisfy $\mualt(U)<\hat \alpha$.  We take this to be the $U$ in \ref{16}. We  construct the sequences in  Lemma \ref{lem:key1} through a sequence of claims and then show they have the desired properties.  

Recall that we assume $\muskew_\xi$ is non-atomic for $\nu$-\ae $\xi$.  It follows that 
  $\muskew_{\xi}$ is not locally supported on $\locunst x {\xi}$ for almost every $(\xi,x)$.  Indeed, as $\mu_\xi$ is assumed non-atomic, if otherwise it would follow that $\muskew^u_{(\xi,x)}$ was not atomic, whence $h_{\muskew}(F\mid \pi)>0$. But then  $\muskew^s_{(\xi,x)}$ would necessarily be non-atomic \as and thus $\muskew_{\xi}$ could  not be locally supported on $\locunst x {\xi}$.  It follows that   $\mususp_{(\pt,\xi)}$ is not locally supported on $\locunst x {\pt,\xi}$
  
Recall $\rec(T_0)$ fixed in \ref{19}.  We fix $p = (\pt,\xi,x)\in \rec(T_0)\subset K$ such that  $p$ is a  $\mususp_{(\pt,\xi)}$-density point of $ \rec(T_0)$ for our fixed $T_0>0$ and such that  $\mususp_{(\pt,\xi)}$ is not locally supported on $\locunst x {\pt,\xi}$.  
It follows that  there exists  a sequence of points $\{y_j\} \subset M$ such that  $q_j = (\pt,\xi, y_j) \in \rec(T_0)\subset K$, 
$d(x,y_j)\le  r_0$, and $y_j\notin \locunst[r_1] x \xi $ for all $j$ and $d(x,y_j)\to 0$ as $j\to \infty$.    
For each $j>0$ set (c.f. Figure \ref{fig:22})
\begin{itemize}
	\item $z_j= \locstabM[r_1] x \xi  \cap  \locunstM[r_1] {y_j} \xi$;
	\item $w_j= \locstabM[r_1] {y_j} \xi  \cap  \locunstM[r_1] x \xi$;
	\item $\delta_j= \|H^s_p(z_j)\|$. 
\end{itemize}
We have  $\delta_j>0$ for all $j$ and $\delta_j\to 0$.    By omission, we  may assume 
$\delta_j< \delta_0(T_0, \epsilon)$ for  all $j$.


We select a sequence of times $\{m_j\}$ satisfying the  following claim.  Recall $\scrA$ defined in \ref{1442}.  
\begin{claim}\label{prop:densities}
Writing $\delta= \delta_j$, there exists an $m\in [M_\delta, m_\delta]$ with 
\begin{enumerate}
	\item $\Phi^m(p)\in \scrA\cap K\cap S_0$ and $\Phi^m(q_j)\in K\cap S_0$
	\item $\Phi^{L_{p, \delta, \epsilon}(m)}(p) \in S_{\hat M}\cap K$ and $\Phi^{L_{p, \delta, \epsilon}(m)}(q_j) \in S_{\hat M}\cap K$
\end{enumerate}
\end{claim}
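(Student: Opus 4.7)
The plan is to prove this as a measure-theoretic density argument exploiting that both $p$ and $q_j$ lie in $\rec(T_0)$, combined with the bi-Lipschitz control of $L_{p,\delta,\epsilon}$ from Lemma \ref{lem:bilipest}, and the fact that the choices of $\alpha, \alpha_0, \beta$ were calibrated precisely to make the estimates close. Write $L = L_{p,\delta,\epsilon}$, $\tau = \tau_{p,\delta,\epsilon}$, and $\delta = \delta_j$. By choice of $\delta_0(T_0,\epsilon)$ and since $\delta < \delta_0(T_0,\epsilon)$, we have $M_\delta < m_\delta < 0$, $|M_\delta - m_\delta| \geq \alpha_0 |M_\delta|$, $|M_\delta| > T_0$, $L(M_\delta) < -T_0$, and $L(0) > T_0$.

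First, I would bound the measure of $m \in [M_\delta, m_\delta]$ for which condition (1) fails. Since $p, q_j \in \rec(T_0)$ and $|M_\delta| > T_0$, the definition of $\rec(T_0)$ gives that for each of the ``good sets'' $B \in \{\scrA, K, S_0\}$, the set of $m \in [M_\delta, 0]$ with $\Phi^m(p) \notin B$ has Lebesgue measure at most $\alpha |M_\delta|$; summing the three conditions on $p$ and the two on $q_j$, the total bad set for (1) has measure at most $5\alpha |M_\delta|$.

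Next, I would bound the bad set for condition (2) by pushing forward through $L$. The set $B^{(2)}_p := \{m \in [M_\delta, m_\delta] : \Phi^{L(m)}(p) \notin S_{\hat M} \cap K\}$ satisfies, by the bi-Lipschitz estimate of Lemma \ref{lem:bilipest}, $|B^{(2)}_p| \leq \kappa_1^{-1} |L(B^{(2)}_p)|$, and $L(B^{(2)}_p)$ is contained in $[L(M_\delta), L(m_\delta)] \subset [L(M_\delta), L(0)]$. Splitting this interval at $0$ and applying the $\rec(T_0)$ property on each side (legitimate since $|L(M_\delta)| > T_0$ and $L(0) > T_0$) bounds the Lebesgue measure of times where $\Phi^t(p) \notin S_{\hat M} \cap K$ by $2\alpha(|L(M_\delta)| + L(0)) \leq 2\alpha \kappa_2 |M_\delta|$, using $|L(0) - L(M_\delta)| \leq \kappa_2 |M_\delta|$. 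Thus $|B^{(2)}_p| \leq 2\alpha(\kappa_2/\kappa_1)|M_\delta|$, and the analogous bound holds for $q_j$.

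Finally, summing all contributions, the total bad measure in $[M_\delta, m_\delta]$ is at most
\[ \bigl( 5\alpha + 4\alpha \kappa_2/\kappa_1 \bigr)|M_\delta| = \frac{\alpha_0(5\kappa_1 + 4\kappa_2)}{5(\kappa_1+\kappa_2)} |M_\delta| < \alpha_0 |M_\delta| \leq |m_\delta - M_\delta|, \]
where the equality uses the specific value $\alpha = \kappa_1 \alpha_0 / (5(\kappa_1 + \kappa_2))$ fixed in Section \ref{sec:choices}. So the bad set cannot exhaust $[M_\delta, m_\delta]$, yielding the desired $m$. The only subtle point is making sure the bi-Lipschitz image $L([M_\delta, m_\delta])$ straddles zero with both halves of length at least $T_0$, so that the recurrence estimates for $p$ and $q_j$ apply on both the past and the future; this is guaranteed precisely by the choices $L(M_\delta) < -T_0$ and $L(0) > T_0$ from the definition of $\delta_0(T_0,\epsilon)$.
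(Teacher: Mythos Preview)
Your proof is correct and follows essentially the same approach as the paper's: bound the bad set for condition (1) by $5\alpha|M_\delta|$ using the $\rec(T_0)$ property directly on $[M_\delta,0]$, bound the bad set for condition (2) by $4\alpha(\kappa_2/\kappa_1)|M_\delta|$ by applying $\rec(T_0)$ on $L([M_\delta,0])\supset[-T_0,T_0]$ and pulling back via the bi-Lipschitz estimate of Lemma~\ref{lem:bilipest}, and then verify that the total $(5\alpha+4\alpha\kappa_2/\kappa_1)|M_\delta|<\alpha_0|M_\delta|\le|m_\delta-M_\delta|$ using the calibrated choice of $\alpha$. The paper phrases the same estimate as a lower bound on the good set rather than an upper bound on the bad set, and invokes the combined two-sided form of the $\rec(T_0)$ density directly on $L([M_\delta,0])$ rather than splitting at $0$, but the arithmetic and the logic are identical.
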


\begin{proof}[Proof of Claim \ref{prop:densities}]
Let \begin{enumerate}
		\item $F_1 = \left\{ t\in \R : \Phi^t(p) \in K\cap \scrA\cap S_0\right\}$;
		\item $F_2 = \left\{ t\in \R : \Phi^t(q_j) \in K \cap S_0\right\}$;
		\item $F_3 = \left\{ t\in \R : \Phi^t(p) \in K\cap S_{\hat M}\right\}$;
		\item $F_4 = \left\{ t\in \R : \Phi^t(q_j) \in K\cap S_{\hat M}\right\}.$
\end{enumerate}

Write $L = L_{p,\delta, \epsilon}$.  
Since $p,q_j\in \rec(T_0)$,  $M_\delta\le -T_0$, and  $\frac{M_\delta-m_\delta}{M_\delta}>\alpha_0$, we have 
\begin{equation}\leb\left( [M_\delta, m_\delta ] \cap F_1\cap F_2\right)\ge (\alpha_0 - 5\alpha)|M_\delta|.\end{equation}

Furthermore, as $[-T_0, T_0]\subset L \left([M_\delta, 0]\right)$, we have
$$\leb \left(L\left([M_\delta, 0]\right)\cap F_3\cap F_4\right)
		\ge (1-4\alpha) \leb\left(L\left([M_\delta, 0]\right)\right)$$
hence, by Lemma \ref{lem:bilipest} and Claim \ref{claim:bilipRND},  $$\leb \left(L\left([M_\delta, 0]\right)\sm  (F_3\cap  F_4)\right)
		\le  (4\alpha) \leb\left(L\left([M_\delta, 0]\right)\right)\le 4\alpha \kappa_2 |M_\delta|.$$
Then, $$\Leb \left([M_\delta, 0] \sm  L\inv\left( F_3\cap F_4\right)\right)\le 4\alpha \kappa_1\inv  \kappa_2 |M_\delta|.$$ 
Thus, 
$$\leb \left ([M_\delta, m_\delta ] \cap F_1\cap F_2  \cap L\inv _{p, \delta, \epsilon}(F_3) \cap L\inv _{p, \delta, \epsilon}(F_4)\right)\ge (\alpha_0 - 5\alpha - 4\alpha \kappa_1\inv  \kappa_2)  |M_\delta|.$$

Our choice of $\alpha$ ensures $\alpha_0 - 5\alpha - 4\alpha \kappa_1\inv  \kappa_2>0$.  
\end{proof}

For each $j$, select a  $m_j<0$ satisfying Claim \ref{prop:densities}.  We define  
$\td p_j$ and $\td q_j$ 
satisfying the conclusions in Lemma \ref{lem:key}  by 
\begin{itemize}
\item 
$\td p_j=(\td \pt_j, \td \xi_j, \td  x_j )= \Phi^{L_{p,\delta_j}(m_j)}(p); $ 
\item $
 \td q_j =(\td \pt_j, \td \xi_j, \td  y_j )=  \Phi^{L_{p,\delta_j}(m_j)}(q_j).$ 
\end{itemize}

We also define
\begin{itemize} 
\item 
$\hat p_j=(\hat \pt_j, \hat \xi_j,\hat  x_j )= \Phi^{m_j}(p);
$ $ \hat q_j =(\hat \pt_j, \hat \xi_j,\hat  y_j )=  \Phi^{m_j}(q_j);$
\item 
$s'_j = \ess _{\hat p_j}(\tau_{p,\delta_j,\epsilon}(m_j));$
$ s''_j = \ess _{\hat q_j}(\tau_{p,\delta_j,\epsilon}(m_j)).$
\end{itemize}
Then  $\td p_j = \Psi^{s_j'}\left(\hat p_j\right) $ and $ \td q_j = \Psi^{s_j''}\left(\hat q_j\right).$
%

With the above choices, for each $j$ we choose a $\hat \eta_j$ satisfying the following.
\begin{claim}\label{claim:eta}
Given $p, q_j,$ and $m_j$ as above, for each $j$ there exists $\eta\in \Omega$ with 
	\begin{enumlemma}
		\item $(\hat \pt_j, \eta, \hat x_j ) \in  K \cap 
			\Psi^{-s'_j}(K\sm U)$ \label{choiceetat1};
		\item $(\hat \pt_j, \eta, \hat y_j ) \in  K\cap \Psi^{-s''_j}(K) $; 
		\item $\eta \in (A_{\gamma_2}(\hat p_j))$.
	\end{enumlemma}
	Furthermore, we may choose $\eta$ so that 
	\begin{enumlemma}[resume]
	\item $\cocycle = \cocycle [\eta][n]$ for all $n\le 0$;\label{itemclaim3:10}
\item $\unstM {\hat x_j } {{\eta}}  = \unstM {\hat x_j } {{\hat \xi_j}} $ and  $\bar \mususp_{\hat p_j} = \bar \mususp_{(\hat \pt_j, \eta, \hat x_j )}$;\label{itemclaim3:4}
		\item $\unstM {\hat y_j } {{\eta}} = \unstM {\hat y_j } {{\hat \xi_j} }$ and  $\bar \mususp_{\hat q_j} = \bar \mususp_{(\hat \pt_j, \eta, \hat y_j )}$.\label{itemclaim3:5}

	\end{enumlemma}

\end{claim}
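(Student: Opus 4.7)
The plan is to argue that conditions \ref{choiceetat1}--\ref{itemclaim3:5} are satisfied on a positive-measure subset of the $\hat\Fol$-atom through $\hat\xi_j$, using the conditional-measure bounds built into the defining sets $S_0$, $S_{\hat M}$, $\scrA$, and $K\sm U$. Throughout I identify the $\Sal$-atom through a point $(\hat\pt_j,\hat\xi_j,\hat x_j)$ with the $\hat\Fol$-atom through $\hat\xi_j$, since $\mucondS_{(\hat\pt_j,\hat\xi_j,\hat x_j)}=\delta_{\hat\pt_j}\times \nucondF_{\hat\xi_j}\times \delta_{\hat x_j}$, and similarly at $\hat y_j$.

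First I observe that (d)--(f) come for free once $\eta$ lies in the $\hat\Fol$-atom of $\hat\xi_j$: the assignment $\xi\mapsto \cocycle[\xi][n]$ is $\hat\Fol$-measurable for all $n\le 0$ (Section \ref{sec:modifyF}), and the unstable manifolds, affine parameters, and the measures $\bar\mususp_{(\xi,x)}$ are $\Fol$-measurable (Section \ref{sec:barmu}). Consequently (d), (e), (f) hold for $\nucondF_{\hat\xi_j}$-\ae $\eta$, so it suffices to verify (a)--(c) on a set of positive $\nucondF_{\hat\xi_j}$-measure.

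I will bound the $\nucondF_{\hat\xi_j}$-measure of each of (a), (b), (c) separately. Since $\hat p_j\in K\cap\scrA\cap S_0$, the defining property of $S_0$ gives $\mucondS_{\hat p_j}(K)>.9$, which translates to $\nucondF_{\hat\xi_j}\{\eta:(\hat\pt_j,\eta,\hat x_j)\in K\}>.9$, while $\hat p_j\in\scrA$ yields $\nucondF_{\hat\xi_j}(A_{\gamma_2}(\hat p_j))>.9$. By choice of $\delta_0(T_0,\epsilon)$ we have $\tau_{p,\delta_j,\epsilon}(m_j)\ge \hat M/(\lambda^u-\eps)$, and then the lower bound $\ess_p(t)\ge(\lambda^u-\eps)t$ of \eqref{eq:controlled} gives $s_j'\ge\hat M$. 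Since $\td p_j=\Psi^{s_j'}(\hat p_j)\in S_{\hat M}\cap K$, applying the defining property of $S_{\hat M}$ with $m=s_j'$ yields
\[\mucondS_{\hat p_j}\big(\Psi^{-s_j'}(K\sm U)\big)>.9,\]
which translates to $\nucondF_{\hat\xi_j}\{\eta:(\hat\pt_j,\eta,\hat x_j)\in\Psi^{-s_j'}(K\sm U)\}>.9$. (Here I am using that $h$, and hence $\ess$, is $\Sal$-measurable, so $\Psi^{s_j'}$ acts within the $\Sal$-atom.) Entirely parallel reasoning, starting from $\hat q_j\in K\cap S_0$ and $\td q_j\in K\cap S_{\hat M}$ with $s_j''\ge \hat M$, gives
\[\nucondF_{\hat\xi_j}\{\eta:(\hat\pt_j,\eta,\hat y_j)\in K\cap\Psi^{-s_j''}(K)\}>.8.\]

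Combining the four lower bounds $.9, .9, .8, .9$ (with $K$ appearing in both (a) and the $\hat x_j$-estimate), the set of $\eta$ simultaneously satisfying (a), (b), (c) has $\nucondF_{\hat\xi_j}$-measure at least $1-(0.2+0.2+0.1)=0.5>0$. Intersecting with the $\nucondF_{\hat\xi_j}$-conull set on which (d)--(f) hold yields a positive-measure set of admissible $\eta$, from which any element can be chosen. The only delicate point requiring care is the verification that $s_j',s_j''\ge\hat M$, which reduces to the explicit lower bound \eqref{eq:controlled} for the time-change cocycle combined with the choice of $\delta_0(T_0,\epsilon)$; once this is in place the argument is purely combinatorial on conditional measures.
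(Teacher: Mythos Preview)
Your proof is correct and follows essentially the same approach as the paper's: identify $\mucondS_{\hat p_j}$ and $\mucondS_{\hat q_j}$ with $\nucondF_{\hat\xi_j}$, use the defining properties of $S_0$, $\scrA$, and $S_{\hat M}$ (together with the bound $s_j',s_j''\ge\hat M$ coming from \eqref{eq:controlled} and the choice of $\delta_0$) to obtain five events each of $\nucondF_{\hat\xi_j}$-measure at least $.9$, and conclude by a union bound that the set of admissible $\eta$ has measure at least $.5$. The paper's proof is organized identically.
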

\begin{proof}[Proof of Claim \ref{claim:eta}]
We have $\hat p_j=(\hat \pt_j, \hat \xi_j,\hat  x_j )$ and   $\hat q_j=(\hat \pt_j, \hat \xi_j,\hat  y_j )$ in $K\subset Y_0$.  
Then \ref{itemclaim3:10}--\ref{itemclaim3:5} hold  for $\nucondF_{\hat \xi_j}$-\ae  $\eta$. 

Recall for $p=(\pt,\xi,x)$ we have $\mucondS _p= \delta_\pt \times \nucondF_\xi \times \delta_x$ 
as discussed in Section \ref{sec:CondMeas}.
Since $\hat p_j \in S_0 \cap \scrA,  \hat q_j\in S_0$ we have
	\begin{enumerate}
	\item $\mucondS_{\hat p_j} (K) \ge .9$
	\item $\mucondS_{\hat q_j} (K) \ge .9$	
	\item $\nucondF_{\hat \xi_j}(A_{\gamma_2}(\hat p_j))\ge .9$.
\end{enumerate}
Furthermore, since $\td p_j, \td q_j \in S_{\hat M}$,  and since 
 $$ s_j'\ge (\lambda^u-\eps)\tau_{p, \delta, \epsilon}(m_j)\ge (\lambda^u-\eps) \tau_{p, \delta, \epsilon}(M_{\delta_j})\ge \hat M$$ and similarly $s''_j>\hat M$, we have by \ref{17}

 \begin{enumerate}[resume]
	\item $\mucondS_{\hat p_j} \left(\Psi^{-s_j'}(K\sm U)\right) \ge .9$
	\item $\mucondS_{\hat q_j}  \left( \Psi^{-s_j''}(K)\right) \ge .9$.
	\end{enumerate}
From the natural identification of $\mucondS_{\hat p_j}$ and $\mucondS_{\hat q_j}$ with $\nucondF_{\hat \xi_j}$, it follows that the set of $\eta$ satisfying the conclusions of the claim has $\nucondF_{\hat \xi_j}$-measure at least $.5$.  
\end{proof}	

\subsubsection{Proof of Lemma \ref{lem:key1}}
Having selected $p, {y_j}, {m_j}$ and ${\hat \eta_j}$ above, we define 
\begin{itemize}
\item $(\td \pt_j, \td \xi_j, \td z_j )=  \Phi^{L_{p, \delta, \epsilon}(m_j)}(\pt,\xi,z_j)$, and 
$(\td \pt_j, \td \xi_j, \td w_j )=  \Phi^{L_{p, \delta, \epsilon}(m_j)}(\pt,\xi,w_j)$;
\item $(\hat \pt_j, \hat \xi_j, \hat z_j) = \Phi^{m_j} (\pt,\xi, z_j)$, and $(\hat \pt_j, \hat \xi_j, \hat w_j) = \Phi^{m_j} (\pt,\xi, w_j)$;
\item $\bar p_j = (\hat \pt_j, \hat \eta_j, \hat x_j)$, and $\bar q_j = (\hat \pt_j, \hat \eta_j, \hat y_j)$;
\item  $ t_j' = \ess_{\bar p_j }\inv(s_j')$, and $t''_j =\ess_{\bar q_j}\inv(s_j'')$.

\end{itemize}
We show Lemma \ref{lem:key1} holds with $\td p_j, \td q_j$ defined above and 
  \begin{itemize}
\item $p'_j =  (\pt'_j, \xi'_j, x'_j) := \Psi^{s'_j}(\bar p_j)=  \Phi^{t'_j}(\bar p_j)$; 
\item
 $q'_j = (\pt'_j, \xi'_j, y'_j) := \Phi^{t'_j}(\bar q_j)$; 
\item $q''_j = (\pt''_j, \xi''_j, y''_j): = \Psi^{s''_j}(\bar q_j)=  \Phi^{t''_j}(\bar q_j).$  
\end{itemize}

\begin{proof}[Proof of Lemma \ref{lem:key}]
Part \ref{lemitem:a} of Lemma   \ref{lem:key} follows from the selection procedure in the above claims.   Part \ref{lemitem:2} follows from Claim \ref{claim:eta}\ref{choiceetat1}.  
Part \ref{lemitem:1} follows immediately from Claim \ref{claim:GP}\ref{itemclaim:4} since as $j\to \infty$, $\delta_j\to 0$ and $\tau_{p,\delta, \epsilon}(m_j)\ge  \tau_{p,\delta, \epsilon}(M_{\delta_j})\to \infty$.   

By Claim \ref{claim:GP}\ref{itemclaim:3} we have 
 $d(\hat y_j, \hat x_j)< r_0$.  By Lemma \ref{lem:standardcrap}, and the fact that $(\hat \pt_j,\hat \eta_j, \hat x_j)$ and $(\hat \pt_j,\hat \eta_j, \hat y_j)$ are in $K\subset [0,1)\times \Lambda'$, we  define $\hat v_j$ to be  the point of intersection  $$\hat v_j  = \locunstM [ r_1] {\hat x_j} {\hat \xi_j}  \cap \locstabM [ r_1]{\hat y_j} {\hat \eta_j}.$$   
 From Lemma \ref{prop:controlledintersections}, Claim \ref{claim:GP}\ref{itemclaim:1}, and  the fact that $\delta_j^{-\beta}\ge D_1$, for each $j$ we have 
$$ \dfrac{1}{C_3} \|H^s_{\hat p_j}(\hat z_j) \| \le \|H^u_{\hat p_j}(\hat v_j)\|\le C_3 \|H^s_{\hat p_j}(\hat z_j) \|.$$

Recall that $\locunstM [ r_1] {\hat x_j} {\hat \xi_j} = \locunstM [ r_1] {\hat x_j} {\hat \eta_j} $ and $$ \|H^u_{\hat p_j}(\hat v_j)\|= \|H^u_{\bar p_j}(\hat v_j) \|.$$
We define $v'_j$ in  Lemma \ref{lem:key}\ref{lemitem:4} by $$
(\pt'_j, \xi'_j, v'_j) = \Phi^{t_j'}(\hat \pt_j, \hat \eta_j, \hat v_j).$$
We claim
\begin{claim}\label{claim:annulus}
$\frac 1{C_3M_0^6}\epsilon 
\le \| H^u_{p_j'} (v_j') \|\le 
C_3M_0^6 \epsilon.$
\end{claim}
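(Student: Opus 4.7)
The plan is to rewrite $\|H^u_{p_j'}(v_j')\|$ as a product of (the operator norm of) a derivative cocycle along the unstable direction and the length of an affine parameter in the starting fiber, and then to telescope $\delta_j$ against the defining equation of $\tau_{p,\delta_j,\epsilon}(m_j)$, collecting comparison constants between Riemannian, one-sided, and two-sided Lyapunov norms at points of $K$.

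First, since the unstable affine parameter intertwines $f_\xi$ with the fiber-wise derivative (Proposition \ref{prop:Stabman}), and since $E^u$ is one-dimensional, one has
\[\|H^u_{p_j'}(v_j')\|=\|\restrict{D\Phi^{t_j'}}{E^u(\bar p_j)}\|\cdot\|H^u_{\bar p_j}(\hat v_j)\|.\]
Claim \ref{claim:eta}\ref{itemclaim3:10}--\ref{itemclaim3:4} says $\hat\eta_j$ and $\hat\xi_j$ share their past dynamics and their unstable manifolds through $\hat x_j$, so $\|H^u_{\bar p_j}(\hat v_j)\|=\|H^u_{\hat p_j}(\hat v_j)\|$, and Lemma \ref{prop:controlledintersections}\ref{itme3} (applicable since $\delta_j^{-\beta}\ge D_1$ by our choice) gives
\[\tfrac1{C_3}\|H^s_{\hat p_j}(\hat z_j)\|\le\|H^u_{\hat p_j}(\hat v_j)\|\le C_3\|H^s_{\hat p_j}(\hat z_j)\|.\]
Next, again by the intertwining property of the stable affine parameter, $\|H^s_{\hat p_j}(\hat z_j)\|=\|\restrict{D\Phi^{m_j}}{E^s(p)}\|\cdot\delta_j$.

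Now the defining equations $t_j'=\ess_{\bar p_j}^{-1}(s_j')$ and $s_j'=\ess_{\hat p_j}(\tau_{p,\delta_j,\epsilon}(m_j))$ give
\[\lyap{\restrict{D\Phi^{t_j'}}{E^u(\bar p_j)}}_{\eps,-}^u \;=\;e^{s_j'}\;=\;\lyap{\restrict{D\Phi^{\tau_{p,\delta_j,\epsilon}(m_j)}}{E^u(\hat p_j)}}_{\eps,-}^u,\]
and by definition of $\tau_{p,\delta_j,\epsilon}(m_j)$ one has the two-sided identity
\[\lyap{\restrict{D\Phi^{\tau_{p,\delta_j,\epsilon}(m_j)}}{E^u(\hat p_j)}}_{\eps,\pm}^u\cdot\lyap{\restrict{D\Phi^{m_j}}{E^s(p)}}_{\eps,\pm,p}^s\cdot\delta_j=\epsilon.\]
Combining these,
\[\|H^u_{p_j'}(v_j')\|\;\approx\;\|\restrict{D\Phi^{t_j'}}{E^u(\bar p_j)}\|\cdot\|\restrict{D\Phi^{m_j}}{E^s(p)}\|\cdot\delta_j,\]
up to a multiplicative error of $C_3$.

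It remains to pass between the three norms (Riemannian, one-sided, two-sided) at the base- and end-points $p,\hat p_j,\bar p_j,p_j',\td p_j$, all of which lie in $K$ by the selection procedure (recall Claim \ref{prop:densities} gives $\hat p_j,\td p_j\in K$, Claim \ref{claim:eta} gives $\bar p_j\in K$, and $p_j'\in K\sm U$ by Claim \ref{claim:eta}\ref{choiceetat1}). By the definition of $M_0$ in \ref{M0}, each such comparison of an operator norm between two of these three norms on $E^u$ or $E^s$ costs at most a factor of $M_0^2$ (one factor of $M_0$ in the domain, one in the range). There are three such conversions to carry out: Riemannian to one-sided $\lyap{\cdot}_{\eps,-}^u$ for $D\Phi^{t_j'}|_{E^u(\bar p_j)}$; one-sided $\lyap{\cdot}_{\eps,-}^u$ to two-sided $\lyap{\cdot}_{\eps,\pm}^u$ for $D\Phi^{\tau_{p,\delta_j,\epsilon}(m_j)}|_{E^u(\hat p_j)}$ (using the equality of one-sided Lyapunov norms at $\bar p_j$ and $\hat p_j$ coming from Claim \ref{claim:eta}\ref{itemclaim3:10}); and Riemannian to two-sided $\lyap{\cdot}_{\eps,\pm}^s$ for $D\Phi^{m_j}|_{E^s(p)}$. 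The total comparison factor is $C_3\cdot M_0^2\cdot M_0^2\cdot M_0^2=C_3 M_0^6$, which yields
\[\tfrac1{C_3M_0^6}\epsilon\;\le\;\|H^u_{p_j'}(v_j')\|\;\le\;C_3M_0^6\epsilon\]
as claimed. The only delicate point is the bookkeeping that every relevant basepoint used to convert norms lies in the Lusin set $K$, which is guaranteed by our choices.
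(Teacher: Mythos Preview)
Your proof is correct and follows essentially the same approach as the paper's: both arguments telescope $\|H^u_{p_j'}(v_j')\|$ down to the defining identity $\lyap{\restrict{D\Phi^{m_j}}{E^s(p)}}_{\eps,\pm}\cdot\lyap{\restrict{D\Phi^{\tau_{p,\delta_j,\epsilon}(m_j)}}{E^u(\hat p_j)}}_{\eps,\pm}\cdot\delta_j=\epsilon$, using the intertwining of affine parameters with the derivative cocycle, Lemma~\ref{prop:controlledintersections}\ref{itme3}, and the equality $\|H^u_{\bar p_j}(\hat v_j)\|=\|H^u_{\hat p_j}(\hat v_j)\|$. The only cosmetic difference is that the paper converts vector norms step by step (six single $M_0$ factors), whereas you first write the Riemannian identity $\|H^u_{p_j'}(v_j')\|=\|\restrict{D\Phi^{t_j'}}{E^u(\bar p_j)}\|\cdot\|\restrict{D\Phi^{m_j}}{E^s(p)}\|\cdot\delta_j$ (up to $C_3$) and then convert operator norms in three $M_0^2$ blocks; the bookkeeping agrees. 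One minor remark: your parenthetical about equality of one-sided norms at $\bar p_j$ and $\hat p_j$ is redundant, since the displayed chain via $e^{s_j'}$ already uses only the definitions $t_j'=\ess_{\bar p_j}^{-1}(s_j')$ and $s_j'=\ess_{\hat p_j}(\tau_{p,\delta_j,\epsilon}(m_j))$.
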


\begin{proof}[Proof of Claim \ref{claim:annulus}]
We have the upper bound 

\begin{align*}
\| H^u_{p_j'} (v_j') \|
		&\le  M_0 \lyap{ H^u_{p_j'} (v_j') }_{\eps,-}\\
		&= M_0 \lyap{ H^u_{\bar p_j} (\hat v_j) }_{\eps,-} 
			\lyap{ \restrict {D\Phi^{t_j}}{E^u(\bar p_j)}}_{\eps,-}= M_0 \lyap{ H^u_{\bar p_j} (\hat v_j) }_{\eps,-} \rexp{s_j'}\\
		&\le M_0^2 \| H^u_{\bar p_j} (\hat v_j) \| \rexp{s_j'} = M_0^2 \| H^u_{\hat p_j} (\hat v_j) \| \rexp{s_j'} \le M_0^2 C_3 \|H^s_{\hat p_j}(\hat z_j) \| \rexp{s_j'}\\	
		&\le M_0^3 C_3 \lyap{ H^s_{\hat p_j}(\hat z_j)  }_{\eps,\pm} \rexp{s_j'}\\
		&= M_0^3C_3 \lyap{ H^s_{\hat p_j}(\hat z_j)  }_{\eps,\pm} 
			\lyap{ \restrict {D\Phi^{\tau_{p,\delta_j,\epsilon}(m_j)}}{E^u(\hat p_j)}}_{\eps,-}\\
		&\le M_0^5 C_3\lyap{ H^s_{\hat p_j}(\hat z_j)  }_{\eps,\pm} 
		\lyap{ \restrict {D\Phi^{\tau_{p,\delta_j,\epsilon}(m_j)}}{E^u(\hat p_j)}}_{\eps,\pm}\\
		&\le M_0^6 C_3\lyap{ \restrict {D\Phi^{m_j}}{E^s( p)}}_{\eps,\pm}\delta_j
		\lyap{ \restrict {D\Phi^{\tau_{p,\delta_j,\epsilon}(m_j)}}{E^u(\hat p_j)}}_{\eps,\pm}\\
		&= M_0^6 C_3\epsilon.
\end{align*}
The lower bound is identical.  
\end{proof}

As $\bar q_j\in K$ 
 we have 
	\begin{align*}
			 d(y'_j, v'_j) 
			\le L_1 e^{t'_j(\lambda^s + \eps)} d(\hat y_j, \hat v_j)\\		
			\le C_2L_1 e^{t'_j(\lambda^s + \eps)} r_1.
	\end{align*}
	
Since 
$\bar q_j\in K$ for each $j$ and since $s'_j, 
\to \infty$, by the upper bound in  \eqref{eq:controlled} and the fact that $a$ in \eqref{eq:controlled} is bounded on $K$  we have $t_j'\to \infty $ as $j\to \infty$.  
Thus, $d(y'_j, v'_j) 
 \to 0$ as $j\to \infty$ completing the proof of  Lemma   \ref{lem:key}\ref{lemitem:4}.

To derive the bound in Lemma \ref{lem:key}\ref{lemitem:0},  first consider the case $t'_j\ge t''_j$.  As $\bar p_j\in K$,  by the lower bound in \eqref{eq:subcomp} we have 
$$\lyap {\restrict{D\Phi^{t''_j}}{E^u(\bar p _j)}}_{\eps, -}\ge \hat L\inv \|\restrict{D\Phi^{t''_j}}{E^u(\bar p _j)}\|. $$
Moreover, as $\bar q _j$ and $q''_j = \Phi^{t''_j}(\bar q_j)$ are in $K$, 
$$ \|\restrict{D\Phi^{t''_j}}{E^u(\bar q _j)}\| \ge \dfrac 1 {M_0^2}\lyap {\restrict{D\Phi^{t''_j}}{E^u(\bar q _j)}}_{\eps, -}. $$

Write $p''_j = (\pt_j'', \xi''_j, x''_j):= \Phi^{t_j''}(\bar p_j)$ and $(\pt_j'', \xi''_j, v''_j) := \Phi^{t_j''}(\hat \pt_j, \hat \eta_j, \hat v_j).$
For  $n'= \lfloor \hat \pt_j+t'_j\rfloor\ge n''= \lfloor \hat \pt_j+t''_j\rfloor\ge0$  we have 

\begin{align}
&\dfrac {\| \restrict { D\Phi^{t''_j}}{E^u(\bar q_j)}\|} 
	{\| \restrict { D\Phi^{t''_j}}{E^u(\bar p_j)}\|}
 		= 
	\dfrac {\| \restrict { D\cocycle[\hat \eta_j][n'']}{T_{\hat y_j} \unstM   {\hat y_j} {\hat \eta_j}}\|}
		{\| \restrict { D\cocycle[\hat \eta_j][n'']}{T_{\hat x_j} \unstM   {\hat x_j} {\hat \eta_j}}\|}
\notag\\
		&
\quad		= 
	\dfrac {\| \restrict { D\cocycle[\hat \eta_j][n'']}{T_{\hat y_j} \unstM   {\hat y_j} {\hat \eta_j}}\|}
		{\| \restrict { D\cocycle[\hat \eta_j][n'']}{T_{\hat v_j} \unstM   {\hat x_j} {\hat \eta_j}}\|}
			\cdot
	\dfrac {\| \restrict { D\cocycle[\xi''_j][-n'']}{T_{ x''_j} \unstM  { x''_j} { \xi''_j}}\|}
		{\| \restrict { D\cocycle[\xi''_j][-n'']}{T_{ v_j''} \unstM { x''_j} { \xi''_j}}\|}
\notag \\
		&
\quad	= 
	\dfrac {\| \restrict { D\cocycle[\hat \eta_j][n'']}{T_{\hat y_j} \unstM   {\hat y_j} {\hat \eta_j}}\|}
		{\| \restrict { D\cocycle[\hat \eta_j][n'']}{T_{\hat v_j} \unstM   {\hat x_j} {\hat \eta_j}}\|}
			\cdot
	\dfrac {\| \restrict { D\cocycle[ \xi'_j][-n']}{T_{ x_j'} \unstM   { x'_j} { \xi'_j}}\|}
		{\| \restrict { D\cocycle[ \xi'_j][-n']}{T_{ v_j'} \unstM   { x'_j} { \xi'_j}}\|}
		\cdot 
	\dfrac{\| \restrict { D\cocycle[ \xi'_j][-(n'- n'')]}{T_{ v_j'} \unstM   { x'_j} { \xi'_j}}\|}
	 {\| \restrict { D\cocycle[ \xi'_j][-(n'-n'')]}{T_{ x_j'} \unstM   { x'_j} { \xi'_j}}\|}				
%
  \label{eq:thisguy}
\end{align}

As $p'_j, \bar q_j\in K$ we have $(\hat \eta_j, \hat y_j)\in \Lambda'$ and $(\xi'_j, x'_j)\in \Lambda'$.  Moreover, as $\|H^u_{p'_j}(v'_j)\|\le r_1$
from Lemma \ref{prop:controlledintersections}\ref{item8:1} and \ref{item8:2} we have that \eqref{eq:thisguy} is bounded above by $C_1^3$.  Thus 
$$\lyap {\restrict{D\Phi^{t''_j}}{E^u(\bar p _j)}}_{\eps, -}\ge \dfrac 1 {C_1^3 M_0^2 \hat L}\lyap {\restrict{D\Phi^{t''_j}}{E^u(\bar q _j)}}_{\eps, -}. $$
As 
\begin{align*}
\lyap {\restrict{D\Phi^{t''_j}}{E^u(\bar q _j)}}_{\eps, -} & = 
\lyap {\restrict{D\Phi^{t'_j}}{E^u(\bar p _j)}}_{\eps, -} \ge e^{(\lambda^u - \eps)(t'_j -t''_j)}
\lyap {\restrict{D\Phi^{t''_j}}{E^u(\bar p _j)}}_{\eps, -} \\&\ge e^{(\lambda^u - \eps)(t'_j -t''_j)} \dfrac 1 {C_1^3 M_0^2 \hat L}\lyap {\restrict{D\Phi^{t''_j}}{E^u(\bar q _j)}}_{\eps, -} \end{align*}
it follows that 
$$t'_j - t''_j \le \dfrac {\log(C_1^3 M_0^2 \hat L)} {\lambda^u - \eps}= \hat T.$$

If $t''_j\ge t'_j$  we similarly have 
$$\lyap {\restrict{D\Phi^{t'_j}}{E^u(\bar q _j)}}_{\eps, -}\ge \hat L\inv \|\restrict{D\Phi^{t'_j}}{E^u(\bar q _j)}\|, 
\quad  \|\restrict{D\Phi^{t'_j}}{E^u(\bar p _j)}\| \ge \dfrac 1 {M_0^2}\lyap {\restrict{D\Phi^{t'_j}}{E^u(\bar p_j)}}_{\eps, -}. $$

Then with 
$n'= \lfloor \hat \pt_j+t'_j\rfloor\ge  0$  we have 
\begin{align}
\dfrac {\| \restrict { D\Phi^{t'_j}}{E^u(\bar q_j)}\|} 
	{\| \restrict { D\Phi^{t'_j}}{E^u(\bar p_j)}\|}
		&
		= 
	\dfrac {\| \restrict { D\cocycle[\hat \eta_j][n']}{T_{\hat y_j} \unstM  {\hat y_j} {\hat \eta_j}}\|}
		{\| \restrict { D\cocycle[\hat \eta_j][n']}{T_{\hat x_j} \unstM  {\hat x_j} {\hat \eta_j}}\|}
\notag\\
		&
		= 
	\dfrac {\| \restrict { D\cocycle[\hat \eta_j][n']}{T_{\hat y_j} \unstM  {\hat y_j} {\hat \eta_j}}\|}
		{\| \restrict { D\cocycle[\hat \eta_j][n']}{T_{\hat v_j} \unstM  {\hat x_j} {\hat \eta_j}}\|}
			\cdot
	\dfrac {\| \restrict { D\cocycle[\xi'_j][-n']}{T_{ x'_j} \unstM  { x'_j} { \xi'_j}}\|}
		{\| \restrict { D\cocycle[\xi'_j][-n']}{T_{ v_j'} \unstM  { x'_j} { \xi'_j}}\|}
  \label{eq:thisguy2}
\end{align}
and, as above,  by Lemma \ref{prop:controlledintersections}\ref{item8:1} and \ref{item8:2} equation  \eqref{eq:thisguy2} is bounded below by $\dfrac 1{C_1^2}$.  Thus 
$$\lyap {\restrict{D\Phi^{t'_j}}{E^u(\bar q _j)}}_{\eps, -}\ge \dfrac 1 {C_1^2 M_0^2 \hat L}\lyap {\restrict{D\Phi^{t'_j}}{E^u(\bar p _j)}}_{\eps, -}$$
and the same analysis as above gives 
$$t''_j - t'_j \le \dfrac {\log(C_1^2 M_0^2 \hat L)} {\lambda^u - \eps}\le \hat T.$$

Finally,  for Lemma \ref{lem:key}\ref{lemitem:6} we have 
$$\bar \mususp_{\td p_j} \simeq \left( \lambda_{\pm  \|\restrict{D \Phi^{\tau_{p, \delta_j,\epsilon}(m_j)}}{E^u(\hat p_j)}\|}\right)_* \bar \mususp_{\hat p_j}= 
\left(\lambda_{\pm  \|\restrict{D \Psi^{s'_j}}{E^u(\hat p_j)} \|}\right)_* \bar \mususp_{\hat p_j}$$
where the sign depends on whether or not $\restrict{D \Phi^{\tau_{p, \delta_j, \epsilon}(m_j)}}{E^u(\hat p_j)}\colon {E^u(\hat p_j)}\to {E^u(\td p_j)}$ preserves   orientation.  
We similarly have
$$\bar \mususp_{p_j'} \simeq \left(\lambda_{\pm  \|\restrict{D \Phi^{t_j'}}{E^u(\bar p_j)}\|}\right)_* \bar \mususp_{\bar p_j}= 
 \left(\lambda_{\pm  \|\restrict{D \Psi^{s'_j}}{E^u(\bar p_j)} \| }\right)_*\bar \mususp_{\bar p_j}.$$
Since $ \bar \mususp_{\bar p_j} =  \bar \mususp_{\hat p_j}$ we have 
$$\bar \mususp_{p_j'}  \simeq\left( \lambda_{a_j}\right)_* \  \bar \mususp_{\td p_j} $$
 where 
 
 \begin{align*}|a_j| = \dfrac{ 
   \|\restrict{D \Psi^{s'_j}}{E^u(\bar p_j)} \|}{
 \|\restrict{D \Psi^{s'_j}}{E^u(\hat p_j)} \| 
}\le M^4_0\dfrac{ 
   \lyapno{\restrict{D \Psi^{s'_j}}{E^u(\bar p_j)} }_{\eps, -}
  }
  {
 \lyapno{\restrict{D \Psi^{s'_j}}{E^u(\hat p_j)} }_{\eps,-}}
= M^4_0\end{align*}
proving the upper bound in Lemma \ref{lem:key}\ref{lemitem:6}.  The lower bound on $|a_j|$ and  the existence of $b_j$ and its bounds are similar.

\end{proof}

\subsection{
Proof of Proposition \ref{lem:main}}\label{sec:finallytheproof} 
We show Proposition \ref{lem:main}  follows with 
\begin{align}\label{eq:M}
M : =  C_1M_0^{14}C_3D_0 
\end{align}
where  $M_0$ is as in \ref{M0}, $C_1,$ and $C_3$ are as in  \ref{13}, and $ D_0$ is as in \ref{11}. 
For $ \epsilon<\epsilon_0$ define the set $G_\epsilon$ as in Remark \ref{rem:posmeasure}.  Set $\td G_\epsilon = [0,1)\times G_\epsilon$.   
Consider $\td G_\epsilon\cap K$.  Were $\mualt (\td G_\epsilon\cap K)<\hat \alpha$ there would exist an open $U\supset (\td G_\epsilon \cap K)$ with $\mualt (U)<\hat \alpha$.  With such a $U$ we obtain a sequence ${p'_j\in K}$ satisfying the conclusions of  Lemma \ref{lem:key}.  We have the following.
\begin{lemma}\label{lem:here2}
Let $p$ be an accumulation point of $\{p'_j\}$.  Then $p\in \td G_\epsilon$.  
\end{lemma}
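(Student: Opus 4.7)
The plan is, after passing to a subsequence, to use the objects produced by Lemma \ref{lem:key1} to assemble an affine map $\psi$ that witnesses $p\in\td G_\epsilon$. Compactness of $K$---with $p'_j,\td p_j,q''_j,\td q_j\in K$ by \ref{lemitem:a}---together with the uniform bounds $|a_j|,|b_j|\in[M_0^{-4},M_0^4]$ and $|t_j|\le\hat T$, allow me to extract simultaneous convergent subsequences $p'_j\to p$, $q''_j\to q''$, $\td p_j\to\td p$, $\td q_j\to\td q$, $v'_j\to v$, $a_j\to a$, $b_j\to b$, $t_j\to t$. The continuity of $\bar\mu$, $\I^u$, $v^u$ on $K$ built into choice \ref{choice:cont} then ensures that each limiting measure, parametrization, and local unstable manifold is the corresponding limit, and similarly $q'_j\to q=\Phi^t(q'')$ using choice \ref{12}.

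I would then read off three identifications of $\bar\mu_p$ with the other limiting measures. First, $\td p_j$ and $\td q_j$ share a fiber with $d(\td x_j,\td y_j)\to 0$ by \ref{lemitem:1}, so $\td p=\td q$; passing to the limit in \ref{lemitem:6} yields $(\lambda_a)_*\bar\mu_p\simeq\bar\mu_{\td p}=\bar\mu_{\td q}\simeq(\lambda_b)_*\bar\mu_{q''}$, i.e.\ $\bar\mu_{q''}\simeq(\lambda_{a/b})_*\bar\mu_p$. Second, since $q=\Phi^t(q'')$ with $|t|\le\hat T$ and $\Phi^t$ acts on unstable affine parameters by multiplication by $\pm\|D\Phi^t|_{E^u(q'')}\|\in[D_0^{-1},D_0]$ on $\Lambda''\supset K$ (choice \ref{11}), I obtain $\bar\mu_q\simeq(\lambda_c)_*\bar\mu_{q''}$ for some $|c|\in[D_0^{-1},D_0]$. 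Third, \ref{lemitem:4} together with continuity on $K$ places $v$ in the local unstable manifold at $p$ with $\|H^u_p(v)\|\in[\epsilon/(C_3M_0^6),\,C_3M_0^6\epsilon]$, and $d(y'_j,v'_j)\to 0$ forces $q=(\pt,\xi,v)$ to lie on the unstable manifold of $p$. By Proposition \ref{prop:Stabman}(3), $\chi:=(\I^u_q)^{-1}\circ\I^u_p\colon\R\to\R$ is affine with $\chi(s)=0$ for $s$ the signed affine parameter of $v$ (so $|s|\in[\epsilon/(C_3M_0^6),\,C_3M_0^6\epsilon]$), with $|D\chi|$ bounded by $C_1$ via Lemma \ref{prop:controlledintersections}\ref{item8:1} and the uniform convergence of the product defining $\rho$, and $\chi_*\bar\mu_p\simeq\bar\mu_q$.

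Combining the three identifications yields $\psi_*\bar\mu_p\simeq\bar\mu_p$ with $\psi:=\lambda_{b/(ac)}\circ\chi$. Since $\psi(t)=\tfrac{b\,D\chi}{ac}(t-s)$, the listed bounds on $|a|,|b|,|c|,|D\chi|,|s|$ deliver $|D\psi|\in[1/M,\,M]$ and $|\psi(0)|=|D\psi|\cdot|s|\in[\epsilon/M,\,M\epsilon]$ for $M$ as in \eqref{eq:M}, establishing $(\xi,x)\in G_\epsilon$ and hence $p\in\td G_\epsilon$. The main obstacle in executing this plan is essentially bookkeeping: one must track the multiplicative constants through the composition of the three identifications to see that they fit precisely inside the chosen $M$. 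A secondary technical point is that the second identification must persist in the limit even though $q'_j=\Phi^{t_j}(q''_j)$ need not itself lie in $K$; this is precisely why choice \ref{12} was arranged to give continuity of $\Phi^{t_j}$ along convergent sequences in the good set.
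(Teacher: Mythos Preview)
Your approach is essentially the same as the paper's: extract convergent subsequences, use $d(\td x_j,\td y_j)\to 0$ together with \ref{lemitem:6} to relate $\bar\mu_p$ and $\bar\mu_{q''}$, use $q=\Phi^t(q'')$ to relate $\bar\mu_{q''}$ and $\bar\mu_q$, use that $q$ lands on the local unstable manifold of $p$ to get the affine change of coordinates $\chi$, and compose. The paper is slightly more explicit on two technical points you gloss over---it justifies that $\bar\mu_q$ is defined at the limit $q\notin K$ (via $q=\Phi^t(q'')$, $q''\in K\subset Y_0$, and $\Phi^t$-invariance of $Y_0$), and it handles the passage of $\simeq$ to the limit by introducing explicit normalization constants $c_j,d_j$ and checking they converge (using that intervals centered at $0$ are continuity sets for $\bar\mu$)---but these are exactly the bookkeeping items you flag, not conceptual differences.
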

\noindent On the other hand, as $p'_j\notin U$ for every $j$, we have $p\notin U$.  This yields a contradiction showing $\mualt (\td G_\epsilon\cap K)\ge \hat \alpha$ for all  $\epsilon< \epsilon_0 $. 
 Then for $G$  defined as in Proposition \ref{lem:main} and for 
$$G \times [0,1) =\{ p\mid p\in \td G_{1/N} \text{ for infinitely many $N$} \} =:\td G$$
we have that $\mualt (\td G)\ge \hat \alpha$ and hence, as $\mususp$ and $\mualt$ are equivalent measures,  $\mususp (\td G)>0$.  Then  $\muskew (G)>0$ and Proposition \ref{lem:main} follows.

We prove the lemma,  concluding the proof of Proposition \ref{lem:main}.  

%
%
\begin{proof}[Proof of Lemma \ref{lem:here2}]

\def\ponej{\td p_j}
\def\qonej{\td q_j}
 \def\ptwoj{p'_j} 
\def\qtwoj{q''_j}
\def\qmidj{q'_j}
 \def\zj{z_j}
 \def\wj{w_j} 
 \def\oj{o_j}

\def\oddlim{\lim_{j\in B \to \infty}}
With $U$ as above, we recall all notation from Lemma \ref{lem:key}.  
We have each $\ptwoj$ and $\qtwoj$ is contained in the compact set $K$.  Let $p_0\in G_\epsilon$ be an accumulation point of $\{\ptwoj\}$.
We may restrict to an infinite subset $B\subset   \N_0$ such that 
$\displaystyle{\oddlim}\ptwoj = p_0= (\pt_0, \xi_0, x_0)$.  Further restricting $B$ we may assume 
 that the sequence $(\qtwoj)_{j\in B}$ converges.  Let $q_1=  \displaystyle{\oddlim} \qtwoj .$

Recall that $\Phi^{t_j}(\qtwoj) = \qmidj $ for some $|t_j|\le \hat T$.  We may assume $(t_j)_{j\in B}$ converges.  
Note that $\qmidj$ is not assumed to be contained in $K$.  However, as $p'_j = (\pt'_j, \xi'_j, x'_j)_{j\in B}$ converges we have $\pt'_j\in [0,1-a]$ for some $a>0$ and all $j\in B$.  As $\qmidj = (\pt'_j, \xi'_j, y'_j)$, 
from 
 \ref{12} we have that $\qmidj = \Phi^{t_j}(\qtwoj)$ converges to  $q_0 = (\pt_0, \xi_0, y_0) =\Phi^{\hat t}(q_1)$ for some $|\hat t|\le \hat T$.  See Figure \ref{fig:2}.

Note that $q_1 \in K$, and by Lemma \ref{lem:key}\ref{lemitem:4},   $q_0\in \locunstp[r_1]{p_0}$  
and 			$$\frac 1{C_3M_0^6}\epsilon  \le \| H^u_{p_0} (y_0) \|\le  C_3M_0^6 \epsilon.$$
We need not have  $q_0\in K$.  However---as $q_1\in K\subset Y_0$,  $q_0 = \Phi^{\hat t} (q_1)$, and   $Y_0$ is  $\Phi^t$-invariant---we have $q_0\in Y_0$.  Thus, the unstable line field $E^u(q_0)$,  unstable manifold $\unstp {q_0} = \unstp {p_0}$,  trivialization $\I^u_{q_0}$, affine parameters $H^u_{q_0}$, and measure $\bar \omega_{q_0}$ are defined at $q_0$.  

Fix $\gamma := d(\left(\I^u_{q_0}\right)\inv  \circ \I^u_{p_0} (t))/dt(0)$ and let $v:= (\I^u_{p_0})\inv(y_0)$
where $\I^u_{p}$ is defined in \eqref{eq:affparammfolds}.  
As $p_0\in K$, by Proposition \ref{prop:Stabman} and \eqref{eq:rho} (applied to unstable manifolds), and Lemma \ref{prop:controlledintersections}\ref{item8:1} we have $C_1\inv \le |\gamma| \le C_1$.
We also have $(C_3M_0^6)\inv \epsilon \le |v| \le C_3M_0^6\epsilon.$
Define the map $\phi\colon \R\to \R$  by $$\phi\colon t \mapsto \gamma (t- v).$$  By  construction,  we have
\begin{equation}\label{eq:half}\phi_*\bar \mususp_{p_0} \simeq \bar \mususp_{q_0}.\end{equation}


Recall that given $\alpha\in \R$ we write $\lambda_{\alpha}\colon \R\to \R$ for the 
linear map  $\lambda_\alpha\colon x \mapsto \alpha x$.
 Let $\beta:= \|\restrict{D\Phi^{{\hat t}}}{E^u(q_1)}\|$. 
As $q_1\in K$ we have $\frac{1}{ D_0}\le \beta\le D_0$.  Also, $\bar \mususp_{q_0}\simeq (\lambda_{\pm \beta})_*\bar \mususp_{q_1}$
where the sign  depends on whether or not $\restrict{D\Phi^{{\hat t}}}{E^u(q_1)}\colon {E^u(q_1)}\to {E^u(q_0)}$ preserves orientation.  
It remains to relate the measures $\bar \mususp_{p_0}$  and $\bar \mususp_{q_1}$.  

\def\oddlim{\lim_{j\in B \to \infty}}
%
Let $a_j$ and $b_j$ be as in  Lemma \ref{lem:key}\ref{lemitem:6}.  
 We  further restrict the set $B\subset \N$ 
 so that the limits 
$$\displaystyle{\oddlim} a_j = a, \quad \quad \displaystyle{\oddlim}b_j = b$$ are defined.

We claim that $(\lambda_a)_*\bar\mususp_{p_0}\simeq (\lambda_b)_*\bar\mususp_{q_1}.$
 Indeed, for all $j$ we have 
$$(\lambda_{a_j})_* \bar\mususp_{\ptwoj} \simeq \bar\mususp_{\ponej}, \quad \quad (\lambda_{b_j})_* \bar\mususp_{\qtwoj} \simeq \bar\mususp_{\qonej}.$$
We introduce normalization factors
$$c_j := \bar\mususp_{\ptwoj}([-a_j\inv, a_j\inv]) \inv, \quad \quad d_j := \bar\mususp_{\qtwoj}([-b_j\inv, b_j\inv])\inv$$ and 
$$c := \bar\mususp_{p_0}([-a\inv, a\inv])\inv, \quad \quad d := \bar\mususp_{q_0}([-b\inv, b\inv])\inv.$$
We remark that for $q\in Y_0$, the measure $\bar \mususp_{q}$ has at most one atom which by assumption is at $0$. It follows that non-trivial  intervals centered at $0$ are continuity sets for each $\bar \mususp_{q}$ and thus $c_j\to c$ and $d_j\to d$.  
Let $f$ be a continuous, compactly supported function $f\colon \R \to \R$.  We note that $q\mapsto \bar\mususp_{q}(f)$ is uniformly continuous 
 on $K$ and that
$$\left | (\lambda_{a})_*\bar\mususp_{q}(f) 
			-(\lambda_{a_j})_*\bar\mususp_{q}(f)  \right| 
=  \left | \int f(a t) - f( {a_j} t)  \ d\bar\mususp_{q}(t) \right| $$
approaches zero uniformly in $q$ as $j\in B\to \infty$.
Thus for any $\kappa>0$ and for all sufficiently large $j\in B$ we have 
\begin{itemize}
\item $ | {c}(\lambda_a)_*\bar\mususp_{p_0}(f)-  {c}(\lambda_a)_*\bar\mususp_{\ptwoj}(f)| \le \kappa$,
\item $ |  {c}(\lambda_a)_*\bar\mususp_{\ptwoj}(f)
			- {c_j}(\lambda_{a_j})_*\bar\mususp_{\ptwoj}(f) | \le \kappa$,
			
\item $ | {d}(\lambda_b)_*\bar\mususp_{q_1}(f)-  {d}(\lambda_b)_*\bar\mususp_{\qtwoj}(f)| \le \kappa$,
\item $ |  {d}(\lambda_b)_*\bar\mususp_{\qtwoj}(f)
			- {d_j}(\lambda_{b_j})_*\bar\mususp_{\qtwoj}(f) | \le \kappa$,
\item  $|  \bar\mususp_{\ponej} (f)- \bar\mususp_{\qonej}(f)|\le \kappa$
 \end{itemize}
 where the final estimate follows since $\ponej$ and  $\qonej$ become arbitrarily close in $K\subset Y$ as $j\in B\to \infty$ by Lemma \ref{lem:key}\ref{lemitem:1}.  
 
 Since
$$ {c_j}(\lambda_{a_j})_*\bar\mususp_{\ptwoj}(f)  =  \bar\mususp_{\ponej}(f),\quad \quad  {d_j}(\lambda_{b_j})_*\bar\mususp_{\qtwoj}(f) =  \bar\mususp_{\qonej}(f)$$
we conclude 
$c(\lambda_a)_*\bar\mususp_{p_0} = d(\lambda_b)_*\bar\mususp_{q_1},$
or $$\bar \mususp_{p_0}\simeq  (\lambda_{b/a})_*\bar\mususp_{q_1}.$$

Combining the above  with \eqref{eq:half}, it follows that map (with the appropriate sign discussed above)
$$\psi=(\lambda_{b/a})\circ \lambda_{\pm \beta\inv}\circ \phi\colon t \mapsto \pm \dfrac{b  \gamma}{\beta a} (t-v)$$
satisfies 
$$\psi_* \bar \mususp_{p_0} \simeq \bar \mususp_{p_0}.$$
It follows that $p_0 \in \td G_\epsilon$.  
\end{proof}

\section{Geometry of the stable support of stationary measures}
In this and the following sections, we return to the special case where $\Omega = \Sigma$ to prove Theorem \ref{thm:mainAtominv}.  Recall the measure  $\muskew$ constructed by Proposition \ref{prop:mudef}.  We show that if the fiber-wise measures $\mu_\xi$ are finitely supported and if $(\xi, x) \mapsto E^s_\xi(x)$ is not $\F$-measurable then the stationary measure $\munaught$ is finitely supported and hence $\nunaught$-\as invariant.  
This result is analogous to \cite[Lemmas 3.10, 3.11]{MR2831114} but our methods of proof are completely different.  

In Section \ref{sec:atom}, assuming $(\xi, x) \mapsto E^s_\xi(x)$ is not $\F$-measurable, we show that the fiber-wise measures are non-atomic under the additional assumption that the conditional measures along total stable sets (in both the $\Sigmalocs$ and fiber-wise stable directions) satisfy  a certain geometric criterion.
In this section we consider the case in which the geometric criterion mentioned above fails.  
This degenerate case forces some rigidity of the measure $\mu$ which implies that the 
 stationary measure $\munaught$ is  $\nunaught$-\as invariant.

We remark however that in this section we do not use the fact that $M$ is a surface though we still require that the 
stationary measure  $\munaught$ be hyperbolic to obtain Lemma \ref{lem:existsnicepart} below.
Thus, for this section alone,  take $M$ to be any closed manifold, $\nunaught$ a measure on $\diff^2(M)$ satisfying  \eqref{eq:IC2a}, and take $\munaught$ to be an ergodic, hyperbolic, $\nunaught$-stationary measure. $\muskew$ is as in Proposition \ref{prop:mudef}.
We note that if $\munaught$ has only positive exponents then, by the invariance principle in \cite{MR2651382},  $\munaught$ is $\nunaught$-\as invariant and $\mu= \nunaught^\Z\times \munaught$.  
 We thus also assume $\munaught$ has one negative exponent.  
If all exponents of $\munaught$ are negative, the analysis and conclusions in this section are still valid.  

Consider $\P$ a $\muskew$-measurable partition of $\Sigma\times M$ with the property that for $\mu$-\ae $(\xi,x)\in X$, there is an $r(\xi,x)$ with 
\begin{equation}\label{eq:partprop}
	\Sigmalocs ( \xi) \times \locstab [r(\xi,x)] x \xi  \subset \P(\xi,x)\subset \Sigmalocs(\xi)\times \stab x \xi.
\end{equation}
   Let $\{\muskew^\P_{(\xi,x)}\}$ denote an associated family of conditional measures.  
We consider here the degenerate situation where $\muskew^\P_{(\xi,x)}$ is supported on $\Sigmalocs (\xi)\times \{x\}$ for $\muskew$-\ae $(\xi,x)$.  Note  that hyperbolicity and recurrence imply that  for any other partition $\P'$  satisfying \eqref{eq:partprop} we have that 
$\mu^{\P'}_{(\xi,x)}$ is supported on $\Sigmalocs(\xi) \times\{x\}$ for $\mu$-\ae $(\xi,x)$ and that 
$$\mu^{\P}_{(\xi,x)} = \mu^{\P'}_{(\xi,x)}.$$
In particular, the hypothesis that $\muskew^\P_{(\xi,x)}$ is supported on $\Sigmalocs (\xi)\times \{x\}$ for $\muskew$-\ae $(\xi,x)$ implies that the partition $\P' $ given by $\P'= \{\Sigmalocs(\xi)\times \stab x \xi\}$ is measurable.

The purpose of this section is to prove the following proposition.
\begin{prop}\label{prop:entropyrigidity}
Assume for some partition $\P$ as above that the measures  $\muskew^\P_{(\xi,x)}$ are supported on $\Sigmalocs (\xi)\times \{x\}$ for $\muskew$-\ae $(\xi,x)$.  Then $\muskew = \nunaught^\Z\times \munaught$ and $\munaught$ is $\nunaught$-\as invariant.  
\end{prop}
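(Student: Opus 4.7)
The plan is to show that the fiber-wise conditional measures $\muskew_\xi$ are $\nunaught^\Z$-almost-surely constant in $\xi$; given this, $\muskew = \nunaught^\Z\times\munaught$ is immediate, and combining this product form with the $F$-invariance of $\muskew$ forces $(f_\xi)_*\munaught = \munaught$ for $\nunaught$-a.e.\ $\xi$. First I would reinterpret the hypothesis as saying that the $M$-coordinate $(\xi,x)\mapsto x$ is measurable with respect to the $\sigma$-algebra generated by the total-stable partition $\{\Sigmalocs(\xi)\times\stabM{x}{\xi}\}$. Disintegrating $\muskew^\P_{(\xi,x)}$ further along the fiber partition $\{\{\xi\}\times M\}$ then gives that the fiberwise stable conditional measures of $\muskew_\xi$ along $\stabM{x}{\xi}$ are Dirac masses $\delta_x$, so $\dim^s(\muskew) = 0$.

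Combining this with the Ledrappier--Young dimension/entropy formulas for skew products (Proposition~\ref{prop:ento} and \cite{MR2032494}) yields $h_\muskew(F\mid\pi) = -\lambda^s\dim^s(\muskew) = 0$; since $F$ is invertible so $h_\muskew(F^{-1}\mid\pi) = h_\muskew(F\mid\pi) = 0$, the unstable version of the formula forces $\dim^u(\muskew) = 0$ as well. Under the hyperbolicity hypothesis, the vanishing of both fiberwise dimensions forces the pointwise dimension of $\muskew_\xi$ to vanish $\muskew$-a.s., so $\muskew_\xi$ is atomic. Ergodicity of $\muskew$ and $F$-equivariance then imply $\muskew_\xi$ is supported on a constant finite number $N$ of atoms of equal weight $1/N$.

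The harder step is to promote atomicity to $\xi$-constancy of $\muskew_\xi$. The hypothesis says that for any $\xi,\eta$ sharing a common future $\omega$, atoms of $\muskew_\eta$ lying in the $\omega$-stable class of an atom $x$ of $\muskew_\xi$ must equal $x$; in particular, within each stable class at most one representative ever appears in any $\muskew_\xi$, producing a $\hat\Gol$-measurable section $\psi(\omega,[x]_\omega)$ whose graph contains $\supp(\muskew_\xi)$ for a.e.\ $\xi$ with future $\omega$. Projecting $\muskew_\xi$ to the quotient bundle $M/{\sim_\omega}$ produces an atomic, $F$-equivariant invariant family $\tilde\muskew_\xi$ on a bundle whose fiberwise dynamics has no stable Lyapunov exponent. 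An invariance-principle argument in the spirit of \cite{MR2651382} and Remark~\ref{rem:samesign} then forces $\tilde\muskew_\xi$ to be $\hat\Gol$-measurable; combined with the matching-of-atoms in shared stable classes, this promotes $\supp(\muskew_\xi)$, and hence $\muskew_\xi$ itself, to be $\hat\Gol$-measurable. Since $\muskew_\xi$ is already $\hat\Fol$-measurable by the construction of $\muskew$ in Proposition~\ref{prop:mudef}, the argument of Proposition~\ref{prop:hopf} applied to $\xi\mapsto \muskew_\xi$ (using that $\hat\Fol\cap\hat\Gol$ is trivial mod $\nunaught^\Z$) gives that $\muskew_\xi$ is $\nunaught^\Z$-a.s.\ constant.

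The main obstacle is the $\hat\Gol$-measurability step: the quotient spaces $M/{\sim_\omega}$ are measurable bundles rather than manifolds and vary with $\omega$, so invoking a standard invariance principle requires care. An alternative route avoids the quotient entirely by analyzing the labelling cocycle $\xi\mapsto \supp(\muskew_\xi)\in M^N/S_N$ over $(\Sigma,\sigma,\nunaught^\Z)$ and showing that the stable-holonomy structure encoded in the hypothesis provides enough rigidity to force this cocycle to factor through the future projection $\Sigma\to\Sigma_+$.
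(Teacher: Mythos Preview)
Your opening reduction is correct: the hypothesis forces the fiber-wise stable conditionals of $\muskew_\xi$ to be Dirac masses, hence $h_\muskew(F\mid\pi)=0$, hence the fiber-wise unstable conditionals are also atomic, and ergodicity gives that each $\muskew_\xi$ is supported on $N$ equal-weight atoms. (Minor point: Proposition~\ref{prop:ento} is stated for surfaces, while this section explicitly allows general $M$; you should cite the general Ledrappier--Young formulas for random dynamics from \cite{MR2218998,MR2032494} instead, but the conclusion is unchanged.) This is a clean route to atomicity that the paper does not take.

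The genuine gap is in your third paragraph, and you identify it yourself. Promoting atomicity of $\muskew_\xi$ to $\hat\Gol$-measurability is the entire content of the proposition, and your sketch does not establish it. The quotients $M/{\sim_\omega}$ are not manifolds, the induced fiber dynamics is not a linear cocycle, and the Avila--Viana invariance principle \cite{MR2651382} simply does not apply to this setup. Your alternative suggestion of analyzing the labelling cocycle $\xi\mapsto\supp(\muskew_\xi)\in M^N/S_N$ is not developed either: there is no evident Lyapunov structure on this cocycle, and ``the stable-holonomy structure encoded in the hypothesis provides enough rigidity'' is not an argument. As written, the proof stops precisely where the real work begins.

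The paper's approach is entirely different and avoids this difficulty. It never isolates atomicity as an intermediate step. Instead it constructs a comparison measure $\lambda$ on $\Sigma\times M$ which agrees with $\muskew$ on the $\sigma$-algebra generated by $\beta(\xi,x)=\Sigmalocs(\xi)\times\{x\}$ but whose $\beta$-conditionals are the product $d\nunaught^\N\times\delta_x$, and proves $\muskew=\lambda$ directly via an entropy/Jensen argument in the spirit of \cite{MR743818}. The technical core is Lemma~\ref{lem:existsnicepart}, which produces a finite-entropy partition $\alpha\le\beta$ with $\alpha^-\vee\Q\circeq\beta$; combined with a careful approximation by finite partitions $\Q_n\nearrow\Q$ (necessary because $h_{\nunaught^\Z}(\sigma)$ may be infinite), one shows that the Radon--Nikodym derivatives $d\restrict{\lambda^{\alpha^-}_{(\xi,x)}}{F^k\beta}\big/d\restrict{\muskew^{\alpha^-}_{(\xi,x)}}{F^k\beta}$ are controlled by a difference of conditional entropies that tends to zero, and a supermartingale/Jensen argument forces these derivatives to equal $1$. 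Once $\muskew=\lambda$, the explicit product form of $\lambda^\beta$ gives constancy of $\muskew_\xi$ on local stable sets, and one concludes as in Proposition~\ref{prop:hopf}.
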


The idea behind the proof of  Proposition \ref{prop:entropyrigidity} is that if, for $\P$ as in \eqref{eq:partprop}, the conditional measures $\muskew^\P_{(\xi,x)}$ are supported on $\Sigmalocs (\xi)\times \{x\}$ then the entropy of the skew product $F\colon (X, \mu)\to (X, \mu)$ has no fiber-wise entropy and thus the $\mu$-entropy of $F$ equals the entropy of the shift $\sigma \colon (\Sigma , \nunaught ^\Z) \to (\Sigma , \nunaught ^\Z)$.  As $F$ is  hyperbolic, the entropy of $F\colon (X, \mu)\to (X, \mu)$ should be captured by the mean conditional entropy $H_\mu(F\P\mid \P)$ for any (decreasing) partition $\P$ subordinated to the stable sets of $F$ in $X$ (a partition $\P$ as in \eqref{eq:partprop} will be such a partition under the assumptions on the support of $\muskew^\P_{(\xi,x)}$.)  
Let $\beta$ denote the partition on $X$ given by $\beta(\xi,x) = \Sigmalocs(\xi)\times \{x\}$.  Then $\beta $ is equivalent to $\P \bmod \mu$  and we have $H_\mu(F\beta \mid \beta) = h_{\nunaught^\Z}(\sigma)$.  Using Jensen's inequality in a manner analogous to the  proof of \cite[Theorem 3.4]{MR743818} (see also \cite[(6.1)]{MR819556} for the argument in English) one could  show that the conditional measures $\mu^\beta_{(\xi,x)}$ are canonically identified with $\nu^\N$ almost everywhere.  This would complete the proof.

However, the main technical obstruction in implementing the above procedure is that $h_{\nunaught^\Z}(\sigma)$ is not assumed to be finite.  Thus extra care is needed to approximate  differences of the form $\infty-\infty$ arising from the outline above.  


\subsection{Proof of Proposition \ref{prop:entropyrigidity}.}
Before presenting the proof of Proposition \ref{prop:entropyrigidity} we recall some facts about mean conditional entropy.  A primary  reference is \cite{0036-0279-22-5-R01}.  Let  $(X,\mu)$ be a Lebesgue probability space.  Given measurable partitions $\alpha, \beta$ of $(X,\mu)$ (which may be uncountable) we  define the \emph{mean conditional entropy} of $\alpha$ relative to $\beta$ to be $$H_\mu(\alpha \mid \beta) = - \int \log (\mu^\beta_x(\alpha(x))) \ d \mu(x)$$ where $\{\mu^\beta_x\}$ is a family of conditional measures relative to the partition $\beta. $ The \emph{entropy of $\alpha$} is  $H_\mu(\alpha) = H_\mu(\alpha \mid \{\emptyset, X\})$.  Note that if $H_\mu(\alpha)<\infty$ then $\alpha$ is necessarily countable.  
Given measurable partitions $\alpha, \beta,\gamma$ of $(X,\mu)$ we have 
\begin{enumerate}
\item $H_\mu(\alpha \vee \gamma\mid \beta) = H_\mu(\alpha \mid \beta) + H_\mu(\gamma\mid \alpha \vee \beta)$;
\item If $\alpha\ge \beta$ then $H_\mu(\alpha\mid \gamma) \ge H_\mu(\beta\mid \gamma)$ and $H_\mu(\gamma\mid \alpha)\le H_\mu(\gamma\mid \beta)$;
\item If $\gamma _n \nearrow \gamma$ and if $H_\mu(\alpha\mid \gamma_1)<\infty $ then $H_\mu(\alpha\mid \gamma_n) \searrow H_\mu(\alpha\mid \gamma)$.
\end{enumerate}

We  proceed with the proof of Proposition \ref{prop:entropyrigidity}.


\begin{proof}[Proof of Proposition \ref{prop:entropyrigidity}]
Let $\beta$ denote the partition on $X$ given by $\beta(\xi,x) = \Sigmalocs(\xi)\times \{x\}$.
As remarked above, the hypothesis that 
  $\muskew^\P_{(\xi,x)}$ is supported on $\Sigmalocs (\xi)\times \{x\}$ for $\muskew$-\ae $(\xi,x)$ for some partition $\P$ satisfying \eqref{eq:partprop} implies that all such partitions are equivalent modulo $\muskew$ and, furthermore, that any such partition $\P$ is equivalent to $\beta$ modulo $\muskew$.  
 
Given a measure $\lambda$ on $\Sigma\times M$ and a $\lambda$-measurable partition $\Q$ of $\Sigma\times M$ we write $\restrict{\lambda}{\Q}$ for the restriction of $\lambda$ to the sub-$\sigma$-algebra of $\Q$-saturated subsets and $\lambda^\Q_{(\xi,x)}$ for the conditional measure of $\lambda$ along the atom  $\Q(\xi,x)$.
As we explain below, the proposition follows if we can show the conditional measures $\mu^\beta_{(\xi,x)}$ take the form 
$$d\mu^\beta_{(\xi,x)}(\eta,y) = \delta_x(y) \ d  \nunaught^\N(\dots, \eta_{-3}, \eta_{-2}, \eta_{-1})  \ \delta _{\xi_{0}} (\eta_{0}) \  \delta _{\xi_{1}} (\eta_{1}) \delta _{\xi_{2}} (\eta_{2})\dots$$
To this end, define a measure $\lambda$ on $\Sigma\times M$ with $\restrict \lambda \beta = \restrict \muskew \beta$ and define $\lambda^\beta_{(\xi,x)}$ by 
$$d\lambda^\beta_{(\xi,x)}(\eta,y) = \delta_x(y) \ d  \nunaught^\N(\dots, \eta_{-3}, \eta_{-2}, \eta_{-1})  \ \delta _{\xi_{0}} (\eta_{0}) \  \delta _{\xi_{1}} (\eta_{1}) \delta _{\xi_{2}} (\eta_{2})\dots.$$
In what follows we show---under the hypothesis that $\mu^{\P}_{(\xi,x)}$ is supported on $\Sigmalocs(\xi) \times\{x\}$---that $\mu = \lambda$.  

Define the partition $\Q$ of $\Sigma\times M$ by $$\Q(\xi,x) = \Sigmalocs (\xi) \times M.$$
Observe for any $k\ge 0$ that
\begin{equation}\label{eq:key}
	F^k\beta = F^k\Q \vee \beta.
\end{equation}
Given a partition $\alpha$ of $\Sigma\times M$ we write $$\alpha^-:= \bigvee _{i = 0} ^{\infty} f^{-i}\alpha.$$
We need the following lemma whose proof 
we postpone until the next subsection.  
\begin{lemma}\label{lem:existsnicepart}
There exists a finite entropy partition $\alpha$ of $\Sigma\times M$ with $\alpha\le \beta$ and \begin{equation}\label{eq:alphaisgood}\alpha^-\vee \Q \circeq \beta.\end{equation}
\end{lemma}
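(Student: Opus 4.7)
I will take $\alpha(\xi,x):=\Sigma\times\zeta(x)$ for a finite Borel partition $\zeta=\{M_1,\dots,M_k\}$ of $M$. The conditions $\alpha\le\beta$ (each $\alpha$-atom $\Sigma\times M_j$ contains the $\beta$-atom $\Sigmalocs(\xi)\times\{x\}$ through any of its points) and $H_\muskew(\alpha)=H_\munaught(\zeta)\le\log k<\infty$ are then immediate, so the content of the lemma is to choose $\zeta$ guaranteeing $\alpha^-\vee\Q\aeq\beta$.

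The containment $\alpha^-\vee\Q\le\beta$ is automatic: one checks that $\beta$ is $F^{-1}$-increasing, directly from $F(\beta(\xi,x))\subsetneq\beta(F(\xi,x))$ (the former additionally fixes the $(-1)$-coordinate), so $\beta\ge F^{-i}\alpha$ for every $i\ge 0$ and also $\beta\ge\Q$, whence $\beta\ge\alpha^-\vee\Q$. For the reverse containment modulo $\muskew$, using that $f_\xi^i=f_\eta^i$ for $\eta\in\Sigmalocs(\xi)$ and $i\ge 0$, the atom $(\alpha^-\vee\Q)(\xi,x)$ is exactly
\[
\{(\eta,y):\eta\in\Sigmalocs(\xi),\ f_\xi^i(y)\in\zeta(f_\xi^i(x))\text{ for all }i\ge 0\},
\]
so it suffices to show that for $\muskew$-a.e.\ $(\xi,x)$ the only $y\in\supp\muskew_\xi$ with matching forward $\zeta$-itinerary is $y=x$ (in the a.e.\ sense dictated by the conditional measures).

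I would derive this last assertion by combining two ingredients. The standing hypothesis on $\muskew^\P$ immediately gives $W^s_\xi(x)\cap\supp\muskew_\xi=\{x\}$ for $\muskew$-a.e.\ $(\xi,x)$, since any other such point would produce non-Dirac mass in a $\P$-atom. Fiber-wise Pesin theory (Section~\ref{sec:PT}) shows in turn that a point $y$ whose forward $f_\xi$-orbit stays close to that of $x$ along a subsequence returning to a Pesin block must lie in $W^s_\xi(x)$. Accordingly, I would exhaust $X$ by Pesin sets $\Lambda_m$ on which the function $L$ of Lemma~\ref{prop:growth} is uniformly bounded, fix $m$ so that $\muskew(\Lambda_m)$ is close to $1$, and then choose a finite $\zeta$ with $\munaught$-null boundaries and diameter much smaller than the uniform local-stable-manifold length on $\Lambda_m$. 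Ergodicity of $\muskew$ combined with a Borel--Cantelli argument, exploiting the sub-exponential variation of $L$ along orbits, then forces a matching-itinerary partner $y$ of $x$ into $W^s_\xi(x)$, hence $y=x$. The main obstacle is the interplay between the fixed cell geometry of $\zeta$ on $M$ and the point-dependent Pesin scale; negotiating this via the ergodicity/Borel--Cantelli step is the technical heart of the proof.
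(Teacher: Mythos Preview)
Your plan diverges from the paper's proof at the most delicate point. You take $\alpha$ to be the pullback to $\Sigma\times M$ of a \emph{fixed} finite partition $\zeta$ of $M$; the paper instead invokes Ma\~n\'e's lemma to build a (countable) finite-entropy partition $\hat\alpha$ of $\Sigma_+\times M$ whose atoms have fiber-diameter bounded by a carefully chosen Pesin function $\rho(\omega,x)$, and then pulls this back to $\Sigma\times M$. The reason for the variable diameter is exactly the obstacle you flag: in nonuniform hyperbolicity, bounded forward distance $d(f_\xi^n x,f_\xi^n y)\le\delta$ for all $n\ge0$ does \emph{not} in general force $y\in W^s_\xi(x)$; one needs the allowed gap to shrink at the Pesin rate along the orbit so that $f_\xi^n(y)$ remains inside the (shrinking) Lyapunov chart of $f_\xi^n(x)$, where linear graph-transform estimates apply. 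The paper's Claim establishing ``matching itinerary $\Rightarrow y\in W^s_\xi(x)$'' depends essentially on this $\rho$-adaptedness, and $\int|\log\rho|\,d(\nunaught^\N\times\munaught)<\infty$ is what makes the resulting partition have finite entropy.

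Your proposed workaround---closeness at a subsequence of return times to a Pesin block plus a Borel--Cantelli/subexponential-$L$ argument---does not close this gap. First, the statement you write (``stays close along a subsequence $\Rightarrow y\in W^s_\xi(x)$'') is false as written: recurrence along a subsequence is far weaker than closeness for all $n$, and two points can recur arbitrarily close infinitely often without sharing a stable leaf. Second, even granting closeness for all $n$, restarting charts at each return time gives no control at the intermediate times when the orbit leaves the Pesin block; the subexponential bound on $L$ goes the wrong way (it allows $L$ to grow), so the chart radius may collapse well below $\delta$ and the linear picture is lost. A fixed finite $\zeta$ cannot replace the Ma\~n\'e construction here. (Your reductions $\alpha\le\beta$, $\alpha^-\vee\Q\le\beta$, the identification of the $(\alpha^-\vee\Q)$-atom, and the final passage from $W^s_\xi(x)$ to $\{x\}$ via the standing hypothesis are all correct and match the paper.)
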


Our strategy below will be to show that $\mu= \lambda$ by showing that 
	\begin{equation}\label{eq:17}
		\restrict{\mu^{\alpha^-}_{(\xi,x)}}{F^k(\beta)} = \restrict{\lambda^{\alpha^-}_{(\xi,x)}}{F^k(\beta)}\end{equation}
		for \ae $(\xi,x)$ and all $k\ge 0 $.  Note that the equality  $\restrict \lambda {\alpha^-} = \restrict \muskew {\alpha^-} $  and  the $k=0$ case
				$$\restrict{\mu^{\alpha^-}_{(\xi,x)}}{\beta} = \restrict{\lambda^{\alpha^-}_{(\xi,x)}}{\beta}$$
		follow from the construction of $\lambda$ and that $ \alpha^-\le \beta$.  Thus, as $F^k\beta$ generates the point partition for $k\ge 0$, showing \eqref {eq:17} for all $k\ge 1$ is sufficient to prove that $\mu= \lambda$.

As noted above, the maps  $F\colon (\Sigma\times M, \muskew)\to  (\Sigma\times M, \muskew)$ and $\sigma \colon (\Sigma, \nunaught^\Z) \to (\Sigma, \nunaught^\Z)$ may have infinite entropy.  Thus it is necessary in the below  argument to approximate $(\Sigma, \nunaught^\Z)$ by a finite entropy sub-system.  
Fix an increasing family of partitions $\A_n$, $n\in \N$,  of $(\diff^2(M), \nunaught)$ with the following properties:
\begin{enumerate}
	\item $\A_n$ contains $n$ elements;
	\item $\A_{n+1} \ge \A_n$;
	\item $\A_n$ increases to the point partition on $(\diff^2(M), \nunaught).$
\end{enumerate}
Let $\bar \A_n$ be the partition of $(\Sigma, \nunaught^\Z)$ defined by $\bar \A_n(\xi) = \{ \eta \mid \eta_0\in \A_n (\xi_0)\}$.  
Define the partition $\Q_n$ on $\Sigma \times M$ by $$\Q_n(\xi,x) = \{ (\eta,y)\mid \eta_k \in \A_n(\xi_k) \text{ for all } k\ge 0\}.$$
Continue to write $\pi\colon \Sigma\times M\to \Sigma$.  
Then $\Q_n= (\pi\inv \bar \A_n)^-$.  
We have $$h_{\nunaught^\Z} (\sigma,\bar \A_n) =h_\mu(F, \pi\inv \bar  \A_n)= H_\muskew (F\Q_n\mid \Q_n) \le \log(n).$$

Given $i\le j\in \Z$ and $n\in \N$ define a (finite) partition $\calR^{[i,j]}_n$ of $\Sigma\times M$ by 
	$$\calR_n^{[i,j]}(\xi,x):= \{(\eta, y): \eta_\ell\in \A_n(\xi_\ell) \text{ for all } i\le \ell \le j\}.$$
We have  $\calR_n^{[-k,m]}\nearrow F^k( \Q_n)\nearrow F^k(\Q)$, respectively, as $m\to \infty$ and $n\to \infty$.  

For fixed $(\xi,x)$ and $k\ge 0$, consider the sequence 
\begin{equation}\label{eq:888}\dfrac {\lambda^{\alpha^-}_{(\xi,x)}(\calR^{[-k, m]}_m(\eta,y))}
	{\mu^{\alpha^-}_{(\xi,x)}(\calR^{[-k, m]}_m(\eta,y))}\end{equation}
as $(\eta,y)$ varies over $\alpha^-(\xi,x)$.  
For fixed $k$, this forms a non-negative supermartingale (on $\left(\alpha^-(\xi,x), \mu^{\alpha^-}_{(\xi,x)}\right)$, indexed by $m$) and hence converges pointwise. 

From 
 \eqref{eq:key}, \eqref{eq:alphaisgood} and the fact that $\Q\le F^k\Q$ we have 
\begin{equation}\label{eq:lampshade} \alpha^-\vee F^k\Q =  \alpha^-\vee F^k\Q \vee \Q= F^k\beta.\end{equation}
As the $\sigma$-algebras generated by $\calR^{[-k, m]}_m$ increase to the algebra generated by $F^k\Q$ as $m\to \infty$, 
by a theorem of Anderson and Jesson (\cite {MR0028376}, see also \cite{MR898154, MR501787} for statements) the pointwise limit of \eqref{eq:888} is the Radon--Nikodym derivative
$$\lim_{m\to \infty} \dfrac {\lambda^{\alpha^-}_{(\xi,x)}(\calR^{[-k, m]}_m(\eta,y))}
	{\mu^{\alpha^-}_{(\xi,x)}(\calR^{[-k, m]}_m(\eta,y))}= \dfrac {d \restrict {\lambda^{\alpha^-}_{(\xi,x)}}{F^k\beta}}{d \restrict {\mu^{\alpha^-}_{(\xi,x)}}{F^k\beta}}(\eta,y)
.$$

Note that $\calR_n^{[0,m]}\le \beta$ for all $m\ge 0 $  and $n\ge 1$ and hence   $$\lambda _{(\xi,x)}^{\alpha-}(\calR_n^{[0,m]}(\xi,x)) = \mu^{\alpha-} _{(\xi,x)}(\calR_n^{[0,m]}(\xi,x))$$ for any $m\ge 0$.  For $(\eta,y) \in \alpha^-(\xi,x)\cap \calR^{[0,m]}_n(\xi,x)$ we have
$$
 \dfrac {\lambda^{{\alpha^- \vee \calR^{[0, m]}_n}}_{(\xi,x)}(\calR^{[-k, m]}_n(\eta,y))}
	{\mu^{{\alpha^- \vee \calR^{[0, m]}_n}}_{(\xi,x)}(\calR^{[-k, m]}_n(\eta,y))}=
	 \dfrac {\lambda^{\alpha^-}_{(\xi,x)}(\calR^{[-k, m]}_n(\eta,y))}
	{\mu^{\alpha^-}_{(\xi,x)}(\calR^{[-k, m]}_n(\eta,y))}
\cdot
 \dfrac
	{\mu^{\alpha^-}_{(\xi,x)}(\calR^{[0, m]}_n(\xi,x))}
	 {\lambda^{\alpha^-}_{(\xi,x)}(\calR^{[0, m]}_n(\xi,x))}.
$$
Thus $$ \dfrac {\lambda^{{\alpha^- \vee \calR^{[0, m]}_n}}_{(\xi,x)}(\calR^{[-k, m]}_n(\eta,y))}
	{\mu^{{\alpha^- \vee \calR^{[0, m]}_n}}_{(\xi,x)}(\calR^{[-k, m]}_n(\eta,y))}=
	 \dfrac {\lambda^{\alpha^-}_{(\xi,x)}(\calR^{[-k, m]}_n(\eta,y))}
	{\mu^{\alpha^-}_{(\xi,x)}(\calR^{[-k, m]}_n(\eta,y))}.$$
	 
For every $k,n,m$ and $(\xi,x)$ we have
$$\displaystyle\int_{\left(\alpha^- \vee \calR^{[0, m]}_n\right)(\xi,x)} \dfrac {\lambda^{{\alpha^- \vee \calR^{[0, m]}_n}}_{(\xi,x)}(\calR^{[-k, m]}_n(\eta,y))}
	{\mu^{{\alpha^- \vee \calR^{[0, m]}_n}}_{(\xi,x)}(\calR^{[-k, m]}_n(\eta,y))}
	 \ d \mu_{(\xi,x)}^{\alpha^- \vee \calR^{[0, m]}_n}(\eta,y) \le 1.$$
Consider the expressions
$$I_1(n,m) = \int \int \log  \left(\lambda^{\alpha^-\vee \calR^{[0, m]}_n}_{(\xi,x)}(\calR^{[-k, m]}_n(\eta,y))\right)\ d \mu_{(\xi,x)}^{\alpha^- \vee \calR^{[0, m]}_n}(\eta,y)\ d \muskew(\xi,x)$$
and 
$$ I_2(n,m) =\int \int \log \left(\mu^{\alpha^-\vee \calR^{[0, m]}_n}_{(\xi,x)}(\calR^{[-k, m]}_n(\eta,y))\right) \ d \mu_{(\xi,x)}^{\alpha^- \vee \calR^{[0, m]}_n}(\eta,y) \ d \muskew(\xi,x).$$
From the above inequality and Jensen's inequality, for every $k,n$ and $m$ we have 
that $I_1(n,m) - I_2(n,m)\le 0$.  
From the explicit form of $\lambda^\beta_{(\xi,x)}$, for $(\eta,y)\in \alpha^-(\xi,x) \vee \calR^{[0, m]}_n (\xi,x)$ we have for $k\ge 1$
	\begin{align*}
	{\lambda^{\alpha^-\vee \calR^{[0, m]}_n}_{(\xi,x)}(\calR^{[-k, m]}_n(\eta,y))}
	& =   \prod_{i = 1}^k \nunaught \left(\A_n(\eta_{-i})\right) \\&=
\mu^{\Q_n}_{(\eta,y)} \left(F^k\Q_n(\eta,y)\right)
\end{align*}
whence \begin{align*}I_1(n,m)&= \int \left(\log \mu^{\Q_n}_{(\eta,y)} \left(F^k\Q_n(\eta,y)\right) \right)\ d \mu (\eta,y) 
= - H(F^k\Q_n\mid \Q_n) \\ & =- h_\mu( F^k, \pi\inv(\bar \A_n)).\end{align*}

On the other hand, we have 
\begin{align*}
I_2(n,m) &= \int \int \log \left ( \mu^{\alpha^-\vee \calR^{[0, m]}_n}_{(\xi,x)}(\calR^{[-k, m]}_n(\eta,y))\right) \ d \mu_{(\xi,x)}^{\alpha^- \vee \calR^{[0, m]}_n}(\eta,y) \ d \muskew(\xi,x)\\
&= \int \int \log\left(\mu^{\alpha^-\vee \calR^{[0, m]}_n}_{(\eta,y)}(\calR^{[-k, m]}_n(\eta,y))\right) \ d \mu_{(\xi,x)}^{\alpha^- \vee \calR^{[0, m]}_n}(\eta,y) \ d \muskew(\xi,x)\\
&= \int  \log\left(\mu^{\alpha^-\vee \calR^{[0, m]}_n}_{(\eta,y)}(\calR^{[-k, m]}_n(\eta,y))\right) \ d \muskew(\eta,y)\\
&= - H_\mu \left( \calR^{[-k, m]}_n\mid \alpha^-\vee \calR^{[0, m]}_n\right).
\end{align*}

Recall the facts about mean conditional entropy collected above.  We have the formula
\begin{align}
	H_\mu &\left( \calR^{[-k, m]}_n\vee F^k \alpha ^-\mid \alpha^-\vee \calR^{[0, m]}_n\right)\notag
		= H_\mu \left( \calR^{[-k, m]}_n\mid \alpha^-\vee \calR^{[0, m]}_n\right)  \\
	   & \quad \quad +  H_\mu \left( F^k \alpha ^-\mid \alpha^-\vee \calR^{[0, m]}_n\vee  \calR^{[-k, m]}_n\right).\label{eq:thisguyisfinite}
	\end{align}
As $H_\mu(\alpha)<\infty$ we have $H_\mu \left( F^k \alpha ^-\mid \alpha^-\right)<\infty$.  In particular, as $\calR^{[-k, m]}_n$ and $\calR^{[0, m]}_n$ are finite partitions,  both terms on the right hand side of \eqref{eq:thisguyisfinite} are finite.  

By \eqref{eq:lampshade} and the fact that that $\Q\le F^k\Q$, we have 
$H_\mu \left( F^k \alpha ^-\mid \alpha^-\vee F^k\Q\right) = 0$ .   
As 
\begin{align*}
H_\mu \left( F^k \alpha ^-\mid \alpha^-\vee  \calR^{[-k, m]}_n\right)&\underset{m\to \infty}{\searrow }
H_\mu \left( F^k \alpha ^-\mid \alpha^-\vee F^k\Q_n \right)\\&\underset{n\to \infty}{\searrow }
H_\mu \left( F^k \alpha ^-\mid \alpha^-\vee F^k\Q\right) \end{align*}given $\epsilon>0$ we may select $m_0$ so that 
$$H_\mu \left( F^k \alpha ^-\mid \alpha^-\vee  \calR^{[-k, m_0]}_{m_0}\right)<\epsilon.$$
Furthermore for any $n>0$
\begin{align*}
	H_\mu \left( \calR^{[-k, m]}_{n}\vee F^k \alpha ^-\mid \alpha^-\vee \calR^{[0, m]}_{n}\right) 
	&=H_\mu \left( \calR^{[-k,-1]}_{n}\vee F^k \alpha ^-\mid \alpha^-\vee \calR^{[0, m]}_{n}\right) \\
	&\underset{m\to \infty}{\searrow }
	H_\mu \left( \calR^{[-k,-1]}_{n}\vee F^k \alpha ^-\mid \alpha^-\vee \Q_{n}\right) \\
	&=	H_\mu \left( F^k\Q_{n}\vee F^k \alpha ^-\mid \alpha^-\vee \Q_{n}\right). \\
\end{align*}
But, for any $n$ 
$$H_\mu \left( F^k\Q_{n}\vee F^k \alpha ^-\mid \alpha^-\vee \Q_{n}\right) = h_\mu \left(F^k, \pi\inv (\bar \A_{n})\vee \alpha\right) \ge H_\mu \left( F^k\Q_{n}\mid \Q_{n}\right) $$
Thus for $m_0$ above we have
\begin{align*}
I_1(m_0,m_0) &- I_2(m_0,m_0) = - H_\mu \left( F^k\Q_{m_0}\mid \Q_{m_0}\right)  + 	H_\mu \left( \calR^{[-k, m_0]}_{m_0}\mid \alpha^-\vee \calR^{[0, m_0]}_{m_0}\right) 
\\
&=- H_\mu \left( F^k\Q_{m_0}\mid \Q_{m_0}\right)  + 	H_\mu \left( \calR^{[-k, m]}_n\vee F^k \alpha ^-\mid \alpha^-\vee \calR^{[0, m]}_n\right) \\
& \quad \quad - H_\mu \left( F^k \alpha ^-\mid \alpha^-\vee  \calR^{[-k, m_0]}_{m_0}\right)\\
&\ge- H_\mu \left( F^k\Q_{m_0}\mid \Q_{m_0}\right)  + H_\mu \left( F^k\Q_{m_0}\vee F^k \alpha ^-\mid \alpha^-\vee \Q_{{m_0}}\right)\\
& \quad \quad - H_\mu \left( F^k \alpha ^-\mid \alpha^-\vee  \calR^{[-k, m_0]}_{m_0}\right)\\
&\ge 
- H_\mu \left( F^k \alpha ^-\mid \alpha^-\vee  \calR^{[-k, m_0]}_{m_0}\right)\\
&\ge -\epsilon.
\end{align*}

It follows that  

\begin{align*}
	&\int \int \log \left(\dfrac {\lambda^{\alpha^-}_{(\xi,x)}(\calR^{[-k, m]}_m(\eta,y))}	
	{\mu^{\alpha^-}_{(\xi,x)}(\calR^{[-k, m]}_m(\eta,y))}
	\right)  \ d\mu^{\alpha^-}_{(\xi,x)}  (\eta, y)\ d  \mu(\xi,x)\\
	&= 
	\int \int \log \left(\dfrac {\lambda^{{\alpha^-\vee\calR_n^{[0,m]}}}_{(\xi,x)}(\calR^{[-k, m]}_m(\eta,y))}	
	{\mu^{{\alpha^-\vee\calR_n^{[0,m]}}}_{(\xi,x)}(\calR^{[-k, m]}_m(\eta,y))}
	\right)  \ d\mu^{\alpha^-\vee\calR_n^{[0,m]}}_{(\xi,x)}  (\eta, y)\ d  \mu(\xi,x)\\
&= I_1(m,m) - I_2(m,m)
\end{align*}
approaches $0$ as $m\to \infty.$

We have the following elementary claim.
\begin{claim}\label{claim:sillier}
Let $f_n$ be a sequence of positive, $\mu$-integrable functions. Assume $\int f_n \ d \mu \le 1$ for every $n$ and that $\int \log f_n \ d \mu \to 0$ as $n\to \infty$.  Then $f_n$ converges to $1$ in measure.
\end{claim}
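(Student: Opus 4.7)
The plan is to derive convergence in measure from a standard Kullback--Leibler-style inequality. The key observation is the elementary pointwise inequality $\log x \le x - 1$ for $x > 0$, which gives rise to the non-negative function $\phi(x) := x - 1 - \log x$, vanishing only at $x = 1$. I will use the two hypotheses to show $\int \phi(f_n) \, d\mu \to 0$, and then exploit the fact that $\phi$ is bounded below away from $1$.

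First I would write
\[
\int \phi(f_n) \, d\mu = \int f_n \, d\mu - 1 - \int \log f_n \, d\mu.
\]
By hypothesis, $\int f_n \, d\mu - 1 \le 0$ and $\int \log f_n \, d\mu \to 0$. Since $\phi(f_n) \ge 0$ pointwise, the integrals $\int \phi(f_n) \, d\mu$ are non-negative, and the right-hand side is bounded above by $-\int \log f_n \, d\mu \to 0$. Hence $\int \phi(f_n)\, d\mu \to 0$.

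Next, for a fixed $0 < \delta < 1$, since $\phi$ is strictly decreasing on $(0,1)$, strictly increasing on $(1, \infty)$, and continuous with $\phi(1) = 0$, setting
\[
c_\delta := \min\bigl(\phi(1 - \delta),\, \phi(1 + \delta)\bigr) > 0,
\]
we have $\phi(x) \ge c_\delta$ whenever $|x - 1| \ge \delta$. Therefore
\[
c_\delta \cdot \mu\bigl(\{|f_n - 1| \ge \delta\}\bigr) \le \int \phi(f_n)\, d\mu \longrightarrow 0,
\]
so $\mu(\{|f_n - 1| \ge \delta\}) \to 0$, which is convergence in measure.

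Since the argument is essentially an application of Jensen's inequality combined with the strict convexity of $-\log$, there is no real obstacle; the only point to be careful about is that we do not need (nor can we expect) any uniform integrability of $f_n$, since convergence in measure is the only conclusion sought.
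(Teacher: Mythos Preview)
Your proof is correct and is essentially the same argument as the paper's: both use that $\phi(x)=x-1-\log x\ge 0$ is bounded below by a positive constant $c_\delta$ on $\{|x-1|\ge\delta\}$, combine this with $\int f_n\,d\mu\le 1$ and $\int\log f_n\,d\mu\to 0$, and conclude via a Markov-type estimate. The paper simply writes the same inequality as $\log x\le x-1-c_\delta$ on $\{|x-1|>\delta\}$ and integrates directly, but the content is identical.
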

\begin{proof}
Given $\delta>0$, there is a $c_\delta>0$ such that for all $x\in (0,\infty)$ with $|x- 1|>\delta$ 
we have $\log x \le x-1 - c_\delta$. Then for every $n$,
	\begin{align*}\int \log f_n  \ d \mu 
					& \le \int  f_n   \ d \mu - 1 - \mu \left(\{x: |f_n(x) - 1| > \delta \}\right)c_\delta\\
					& \le - \mu \left(\{x: |f_n(x) - 1| > \delta \}\right)c_\delta.
	\end{align*}
As $\int \log f_n \ d \mu  \to 0$ we have $\mu \left(\{x: |f_n(x) - 1| > \delta \}\right)\to 0$ as $n \to \infty$.  
\end{proof}
As $\dfrac {\lambda^{\alpha^-}_{(\xi,x)}(\calR^{[-k, m]}_m(\eta,y))}{\mu^{\alpha^-}_{(\xi,x)}(\calR^{[-k, m]}_m(\eta,y))}\to \dfrac {d \restrict {\lambda^{\alpha^-}_{(\xi,x)}}{F^k\beta}}{d \restrict {\mu^{\alpha^-}_{(\xi,x)}}{F^k\beta}}(\eta,y)$ it follows from Claim \ref{claim:sillier} that 
$$\dfrac {d \restrict {\lambda^{\alpha^-}_{(\xi,x)}}{F^k\beta}}{d \restrict {\mu^{\alpha^-}_{(\xi,x)}}{F^k\beta}}(\eta,y)= 1$$ for $\mu$-\ae $(\xi,x)$ and $\mu^{\alpha^-}_{(\xi,x)}$-\ae $(\eta,y)$. 
Taking $k\to \infty$ it follows that $\lambda = \mu$.  

 Now consider an atom of $\Q(\xi,x) $.  We have the canonical product representation 
 $\Q(\xi,x) = \Sigmalocs(\xi)\times M$.  Let $\bar \mu^\Q_{(\xi,x)}$ denote the projection of $\mu^\Q_{(\xi,x)}$ on $\Sigmalocs(\xi) \times M$ onto $M$.  Using that $\mu = \lambda$, in these coordinates we have for $\eta\in \Sigmalocs(\xi)$ and $y\in M$ that 
 	$$d\mu^\Q_{(\xi,x)} (\eta,y) = \ d \nunaught (\eta_{-1})\ d \nunaught (\eta_{-2})\dots  d \bar \mu^\Q_{(\xi,x)}(y)$$
Then we have the natural identification $\mu_\eta= \mu_\xi = \bar \mu^\Q_{(\xi,x)}$ for $\nunaught^\N$-\ae $\eta\in \Sigmalocs(\xi)$.  
In particular, the function $\xi\mapsto \mu_\xi$ is a.s.-constant on almost every local stable set.    As  $\xi\mapsto \mu_\xi$ is a.s.-constant on almost every local  unstable set in $\Sigma$, an argument similar to Proposition \ref{prop:hopf}   shows that $\xi\mapsto \mu_\xi$ is  \as constant  on $\Sigma$.  
\end{proof}

\def\diam{\mathrm{diam}}

\subsection{Proof of Lemma \ref{lem:existsnicepart}.}
We remark  that we continue to assume $M$ to be a compact, $d$-dimensional manifold.  For $\nunaught$ a measure on $\diff^2(M)$ satisfying \eqref{eq:IC2a}, we take $\munaught$ to be an ergodic, $\nunaught$-stationary measure.  We further assume that $\munaught$ is hyperbolic.  
Take $\kappa>0$ so that $\munaught$ has no exponents in the interval $[-\kappa, \kappa]$.

\subsubsection{One-sided Lyapunov charts and stable manifolds as Lipschitz graphs}
\label{sec:stabmanifold}
 Let $k $ be the almost-surely constant value of $\dim E^s_\omega(x)$.  
  Given $v\in \R^d= \R^k\times \R^{d-k}$ decompose $v = v_1+ v_2$ and write $|v|_i = |v_i|$ and $|v| = \max\{|v|_i\}$.
We will write $d_{\R^d}(\cdot,\cdot)$ for the induced metric on $\R^d$ and $d(\cdot,\cdot)$ for the metric on $M$.  
We use the notation $\R^d(r)$ to denote the ball of radius $r$ centered at $0$.  
To emphasize the one-sidedness of our constructions we work on $\Sigma_+\times M$.   Recall the associated skew product $\hat F\colon  \Sigma_+\times M \to \Sigma_+\times M$ and  the corresponding  $\hat F$-invariant measure $\nunaught^\N\times \munaught$ on $\Sigma_+\times M$.  
 
 \def\wtd{\widetilde}

As outlined in \cite[(4.1)]{MR968818}, 
for every sufficiently small $\epsilon>0$, there is a measurable function $l\colon \Sigma_+ \times M\to [1,\infty)$  and a full  measure set $\Lambda \subset \Sigma_+ \times M$ such that  
\begin{enumerate}
\item for $(\omega,x)\in \Lambda$ and every $n\in \N$, there exists a diffeomorphism $\phi_n$ defined on a small neighborhood of $\cocycle[\omega][n](x)$ whose range is $ \R^d(\ell(\omega,x)\inv e^{-n\epsilon})$ with 
\begin{enumerate}
	\item $\phi_0(\omega,x) (x) = 0$;
	\item $D\phi_0 (\omega,x)E^s_\omega (x) = \R^k \times \{0\}$;
	\item $D\phi_0 (\omega,x)\left (E^s_\omega (x)\right)^\perp = \{0\} \times \R^{d-k}$;
\end{enumerate}
\item for $n\ge 1$, writing $\td f_n(\omega, x)= \phi_{n+1} (\omega, x) \circ f_{\sigma^{n-1}(\omega)} \circ \phi_n (\omega, x) \inv$ where defined, for all $n\ge 0$ we have 
\begin{enumerate}
	\item $\td f_n (\omega, x) (0) = 0$;
	\item $D_0  \td f_n (\omega, x)  =  \left(\begin{array}{cc}A_n & 0 \\0 & B_n \end{array}\right)$
	where $A_n\in \Gl(k, \R) $, $B_n\in \Gl(d-k, \R) $  and
		 $ |A_nv|\le e ^{-\kappa+ \epsilon} |v|, v\in \R^k$, 
	$e ^{\kappa- \epsilon} |v|\le |B_nv|,  v\in \R^{d-k}$;
	\item $\lip(\td f_n(\omega, x) - D_0  \td f_n (\omega, x))<\epsilon$\end{enumerate} 
where $\lip(\cdot)$ denotes the Lipschitz constant of a map on its domain;
%
%
%
%
\item $\ell(\omega, x)\inv  e^{-n\epsilon}  \le \lip(\phi_n(\omega,x))\le \ell(\omega, x) e^{n\epsilon} $.
\end{enumerate}

Note that the domain of $\phi_n(\omega,x)$ contains a ball of radius $\ell(\omega, x)^{-2} e^{-2n \epsilon}$ centered at $\cocycle [\omega][n](x)$ in $M$.  
We remark that while the Lipschitz constant of $\td f_n$, norm of $B_n$, and conorm of $A_n$ need not be bounded, the hyperbolicity of $D_0\td f_n$ and the Lipschitz closeness of $\td f_n $ to $D_0\td f_n$ is uniform in $n$.  

Relative to the  charts $\phi_n (\omega,x)$, one may apply the Perron--Irwin method of constructing stable manifolds through each point of the orbit $\{\cocycle [\omega] [n] (x), n\ge 0\}$.   See the proof of Theorem 3.1 in \cite{MR1369243} or the similar   proof of \cite[Theorem V.4.2]{MR2542186}.  Choosing $\epsilon>0$ above sufficiently small, the outcome is the following.
\begin{proposition}
For $(\omega,x)\in \Lambda$ and every $n\ge 0$ there is a Lipschitz function $$h_n (\omega, x) \colon \R^k \left(\ell(\omega,x)\inv e^{-n\epsilon}\right) \to \R^{d-k}$$ with 
\begin{enumerate}
	\item $h_n(\omega, x)(0) = 0 $;
	\item $\Lip (h_n(\omega, x)) \le 1$;
	\item $\td f_n (\graph(h_n(\omega,x))) \subset  \graph (h_{n+1}(\omega,x))$ and if $y,z\in \graph(h_n(\omega,x))$ then 
		$$|\td f_n (\omega,x) (y) - \td f_n (\omega,x)(z) | \le\left( e^{-\kappa + \epsilon } + \epsilon \right)|y-z|;$$
\end{enumerate}
\end{proposition}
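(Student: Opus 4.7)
The plan is to construct the stable graphs $h_n(\omega,x)$ via the Hadamard graph transform method in the one-sided Lyapunov charts $\phi_n(\omega,x)$, following the standard Perron--Irwin/Pesin scheme and exploiting the fact that, although the charts depend only on $\cocycle[\omega][n]$ for $n \geq 0$ and their domains $\R^d(\ell(\omega,x)^{-1} e^{-n\epsilon})$ shrink subexponentially, the hyperbolicity rates of the linearizations $D_0 \td f_n$ are uniform: contraction by at most $e^{-\kappa+\epsilon}$ on $\R^k$ and expansion by at least $e^{\kappa-\epsilon}$ on $\R^{d-k}$, with $\Lip(\td f_n - D_0\td f_n) < \epsilon$ and $\epsilon$ chosen small compared to $\kappa$.

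First I would establish a \emph{backward graph transform}. Given a $1$-Lipschitz $g \colon D_{n+1} \to \R^{d-k}$ whose graph lies in $\R^d(\ell(\omega,x)^{-1} e^{-(n+1)\epsilon})$, the preimage $\td f_n^{-1}(\graph g)$ is again the graph of a $1$-Lipschitz function $\mathcal{T}_n g \colon D_n \to \R^{d-k}$. This is the standard horizontal cone argument: the linear part $D_0 \td f_n$ maps horizontal $1$-cones strictly inside themselves under $\td f_n^{-1}$ because $\|A_n\| \cdot \|B_n^{-1}\| \leq e^{-2(\kappa-\epsilon)}$; the $\epsilon$-Lipschitz perturbation preserves this invariance because $\epsilon$ is small against the spectral gap; and the same cone bound shows $\td f_n^{-1}$ stretches the $\R^k$-projection by a factor $\geq e^{\kappa-2\epsilon}$, which (combined with the domain sizes prescribed by the chart construction) guarantees $D_n \supset \R^k(\ell(\omega,x)^{-1} e^{-n\epsilon})$.

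Second, I would carry out the standard limiting procedure: for each $N \geq n$ set $g^{(N)}_N \equiv 0$ and define $g^{(N)}_j = \mathcal{T}_j g^{(N)}_{j+1}$ for $n \leq j < N$. Each $g^{(N)}_n$ is a $1$-Lipschitz function on $\R^k(\ell(\omega,x)^{-1} e^{-n\epsilon})$ with $g^{(N)}_n(0) = 0$ (since $\td f_j(0) = 0$ for all $j$). The family $\{g^{(N)}_n\}_{N \geq n}$ is Cauchy in the uniform topology: for $N < N'$, the graphs of $g^{(N)}_n$ and $g^{(N')}_n$ are pulled back over $N-n$ iterates from graphs whose fiberwise $\R^{d-k}$-separation is bounded by the chart domain at time $N$; the uniform expansion on $\R^{d-k}$ by $\td f_j$ forces the pulled-back separation at time $n$ to be at most $e^{-(\kappa - 2\epsilon)(N-n)}$ times this, which tends to $0$. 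The pointwise limit $h_n := \lim_N g^{(N)}_n$ is therefore $1$-Lipschitz, vanishes at $0$, and satisfies $\td f_n(\graph h_n) \subset \graph h_{n+1}$ by passing to the limit in $\td f_n(\graph g^{(N)}_n) = \graph g^{(N)}_{n+1}$. The contraction bound in (3) then follows directly from $|\td f_n(y) - \td f_n(z)| \leq |D_0\td f_n(y-z)| + \epsilon|y-z|$ combined with $1$-Lipschitzness of the graph (so $|y-z|$ equals its $\R^k$-component up to a universal factor absorbed by the norm choice), giving the factor $e^{-\kappa+\epsilon} + \epsilon$.

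The principal technical obstacle is that in the one-sided setting neither $|\td f_n|_{C^1}$ nor the Lipschitz constants of the chart transitions are uniformly bounded in $n$, so one cannot invoke the Hadamard graph transform off-the-shelf. What rescues the argument is the deliberate choice of chart domain $\ell(\omega,x)^{-1} e^{-n\epsilon}$ with $\epsilon \ll \kappa$: this tempering is precisely what keeps the admissible horizontal graphs inside the chart domains throughout the pullback and makes the Cauchy estimates for $\{g^{(N)}_n\}$ work. Once this domain bookkeeping is carried out carefully, the rest of the proof reduces to the standard cone and contraction arguments as in \cite{MR1369243} or \cite{MR2542186}.
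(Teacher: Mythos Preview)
Your proposal is correct and more detailed than what the paper provides. The paper does not actually prove this proposition: it simply states that one may apply the Perron--Irwin method in the one-sided charts and refers to \cite{MR1369243} and \cite{MR2542186} for the details. You instead sketch the Hadamard graph transform (backward pullback of the zero graph from time $N$, Cauchy estimate via contraction of vertical separation under $\td f_n^{-1}$). Both methods are standard and either works here; the paper itself remarks earlier (just before Theorem~\ref{thm:locPesinStab}) that one may use either Perron--Irwin or Hadamard. Your identification of the domain bookkeeping as the main technical point is exactly right: the one-sided chart domains shrink like $e^{-n\epsilon}$, but the backward horizontal stretching by a factor at least $(e^{-\kappa+\epsilon}+\epsilon)^{-1}$ beats this, so the pulled-back graphs cover the required base $\R^k(\ell(\omega,x)^{-1}e^{-n\epsilon})$.

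One small imprecision in your final step: in deriving the contraction bound in (3) you write $|\td f_n(y)-\td f_n(z)| \le |D_0\td f_n(y-z)| + \epsilon|y-z|$ and then read off $e^{-\kappa+\epsilon}+\epsilon$. But $|D_0\td f_n(y-z)|$ is not itself bounded by $e^{-\kappa+\epsilon}|y-z|$, since the $B_n$-block expands the $\R^{d-k}$-component. The correct argument (which your parenthetical gestures at) is that both $y,z$ and $\td f_n(y),\td f_n(z)$ lie on $1$-Lipschitz graphs over $\R^k$, so in the max norm $|y-z|=|y-z|_1$ and $|\td f_n(y)-\td f_n(z)|=|\td f_n(y)-\td f_n(z)|_1$; bounding only the first coordinate then gives $|A_n(y-z)_1| + \epsilon|y-z| \le (e^{-\kappa+\epsilon}+\epsilon)|y-z|$, as desired.
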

Note that we have that $\graph(h_n(\omega,x))$ is contained in the domain of $\td f_n$.  
We have that $\phi_n\inv (\graph(h_n(\omega,x))$ is an open subset of $W^s_{\sigma^n(\omega)}(\cocycle [\omega][n]( x))$.  

\subsubsection{Divergence from the stable manifold in local charts}
We have the following claim.  
\begin{claim}\label{claim:nonsense}
Fix $(\omega, x)\in \Lambda$ and suppose $y \in \R^d(\ell(\omega,x)\inv e^{-n\epsilon})$ is in the domain of $\td f_n(\omega, x)$.  Write $y= (u,v)$ and $f(y) = (u', v')$.  Then 
$$|v' - h_{n+1}(\omega, x) (u') |_2  \ge \left(e^{\kappa-\epsilon} - 2\epsilon\right) 
|v - h_{n}(\omega, x) (u) |_2.$$
\end{claim}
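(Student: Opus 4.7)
The plan is to compare the orbit of $y$ under $\td f_n$ to the orbit of the ``shadow'' point $y_0 = (u, h_n(\omega,x)(u))$ lying on the stable graph directly below $y$, and to exploit the hyperbolic splitting of the linear part $D_0 \td f_n$ together with the fact that $\td f_n$ differs from its linearization by a map with Lipschitz constant less than $\epsilon$.

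First I would write $\td f_n(y_0) = (u_0', h_{n+1}(\omega,x)(u_0'))$ for some $u_0' \in \R^k$, which makes sense because $\td f_n$ sends $\graph(h_n(\omega,x))$ into $\graph(h_{n+1}(\omega,x))$. Setting $g = \td f_n - D_0\td f_n$ (so $\lip(g) < \epsilon$) and using the block-diagonal form of $D_0\td f_n$, one has
\[
\td f_n(y) - \td f_n(y_0) = \bigl(0,\, B_n(v - h_n(\omega,x)(u))\bigr) + \bigl(g(y) - g(y_0)\bigr),
\]
because $y - y_0 = (0, v - h_n(\omega,x)(u))$. Reading off the two components and using the cone-style estimate $|B_n w|_2 \ge e^{\kappa - \epsilon}|w|_2$ together with $|y - y_0| = |v - h_n(\omega,x)(u)|_2$ yields
\[
|v' - h_{n+1}(\omega,x)(u_0')|_2 \ge (e^{\kappa - \epsilon} - \epsilon)\,|v - h_n(\omega,x)(u)|_2,
\qquad |u' - u_0'| \le \epsilon\,|v - h_n(\omega,x)(u)|_2.
\]

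Then I would convert the comparison at $u_0'$ into the desired comparison at $u'$ by a triangle inequality and the fact that $\Lip(h_{n+1}(\omega,x)) \le 1$:
\[
|v' - h_{n+1}(\omega,x)(u')|_2 \ge |v' - h_{n+1}(\omega,x)(u_0')|_2 - |u' - u_0'|.
\]
Combining the two previous displays gives exactly
\[
|v' - h_{n+1}(\omega,x)(u')|_2 \ge (e^{\kappa - \epsilon} - 2\epsilon)\,|v - h_n(\omega,x)(u)|_2,
\]
which is the claim.

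There is no real obstacle here; the only subtlety is bookkeeping with the $|\cdot|_2$ (second-coordinate) norm versus the full norm $|\cdot|$, and making sure the Lipschitz bound $\Lip(h_{n+1}(\omega,x)) \le 1$ together with $\lip(g) < \epsilon$ is invoked at the right stage. Everything else is routine once the shadow point $y_0$ on the graph is introduced.
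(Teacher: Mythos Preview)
Your proof is correct and is essentially the same argument as the paper's: introduce the shadow point on $\graph(h_n)$ (your $y_0$, the paper's $z$), use the block-diagonal linear part plus the $\epsilon$-Lipschitz remainder to control the first and second components of $\td f_n(y)-\td f_n(y_0)$, and then apply the $1$-Lipschitz bound on $h_{n+1}$ together with the triangle inequality to pass from $u_0'$ to $u'$. The only difference is notational.
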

\begin{proof}
Write  $z = (u, h_n(\omega,x)(u))$ and  $(\hat u, \hat v) = \td f_n(\omega, x)(z)$.  Then 
$$|v' - h_{n+1}(\omega, x) (u') |_2\ge |v' - \hat v |_2 - |\hat v - h_{n+1}(\omega, x) (u') |_2.$$
As $$\td f_n (\omega ,x )( y) -\td f_n (\omega ,x )( z)  = D_0 \td f_n (\omega ,x )( y - z)  + w$$
where $|w| \le \epsilon |y-z|$ we have  
\begin{enumerate}
\item $|u' - \hat u|_1 = |\td f_n (\omega ,x )( y) -\td f_n (\omega ,x )( z) |_1 \le \epsilon |y-z|$; 
\item $|v'- \hat v|_2 = |\td f_n (\omega ,x )( y) -\td f_n (\omega ,x )( z) |_2= |\td f_n (\omega ,x )( y )- \td f_n (\omega ,x )(z) | \ge e^{\kappa - \epsilon} |y-z| - \epsilon |y-z|$.  
\end{enumerate}
As $\Lip(h_n(\omega, x))\le 1$ and as $\td f_n(\omega,x)(z) \in \graph (h_{n+1}(\omega,x))$ we have
$$| \hat v- h_{n+1}(\omega, x)(u')|_2 = | h_{n+1}(\omega,x)(\hat u)- h_{n+1}(\omega, x)(u')|_2 \le |u'- \hat u|_1\le \epsilon |y - z|.$$ 
As  $|y-z| = |v - h_{n}(\omega, x) (u) |_2$, the claim follows. 
\end{proof}
Note that having taken  $\epsilon>0$ sufficiently small we can arrange that $e^{\kappa -  \epsilon}- 2 \epsilon \ge e^{\kappa - 3 \epsilon}$. 

We write $\wtd {W}^s_m(\omega,x):=\graph(h_m(\omega,x)) $ for the remainder.  Note that 
$\wtd {W}^s_m(\omega,x)$ is the path-connected component of $$\phi_m(\omega,x)(W^s_{\sigma^m(\omega)}(\cocycle [\omega ][m] (x)))$$ in $\R^d (\ell(\omega, x)\inv e^{-m\epsilon})$ containing $0$.

\subsubsection{Radius function and related estimates}
Fix $K_0\subset \Sigma_+\times M$ with positive measure on which the function $\ell(\omega, x)$ is bounded above by some $\ell>10$.  
Fix $m_0\in \N$ so that $$\chi:= \left(e^{-m_0(\kappa - 4\epsilon)}\right)2 \ell ^2<1.$$

For $(\omega,x)\in K_0$ define $n(\omega,x)$ to be the $m_0$th return of $(\omega, x)$ to $K_0$.   
We define $\rho\colon \Sigma\times M \to (0,\infty)$ as 
$$
\rho(\omega,x) = \begin{cases}
\dfrac 1 {4} \ell^{-4 } e^{-2\epsilon n(\omega,x)  }  \left( \displaystyle \prod _{k= 0 }^{n(\omega,x)-1}  \left (|f_{\sigma^k (\omega)}|_{C^1}\right)\inv \right)  
& (\omega,x) \in K_0,\\
\ell\inv & (\omega,x)\notin K_0.
\end{cases}
$$

Consider $(\omega, x)\in K_0$ and $y\in M$ with $d(x,y)< \rho(\omega, x)$.  Let $n = n(\omega, x)$ and for $ 0\le j\le n$ write $x_j = \cocycle [\omega][j](x)$ and $y_j = \cocycle [\omega][j](y)$.   
For all $0\le j\le n$ we have $d(x_j, y_j)  
\le \frac 1 4 \ell^{-4 } e^{-2\epsilon n  }  $ hence $y_j$ is in the domain of $\phi_j(\omega,x)$; it follows that   for $0\le j\le n-1$ we have that 
$\phi_j(\omega,x)(y_j) $ is in the domain of $\td f_j(\omega, x)$.  
We claim 
\begin{equation}\label{eq:mcfly} d_{\R^d} \left(\phi_0(\omega, x)(y), \wtd W^s_0(\omega, x)\right)\le 2 e^{-n( \kappa - 3\epsilon)} 
d_{\R^d} \left(\phi_n(\omega, x)(y_n), \wtd W^s_n(\omega, x)\right)\end{equation}
Indeed write $(u_j, v_j) = \phi_j(\omega, x)(y_j)$.  By Claim \ref{claim:nonsense} and the fact that 
$\wtd W^s_n(\omega, x)$ is a graph of the  1-Lipschitz function 
we have 
\begin{align*}
2d_{\R^d} \left(\phi_n(\omega, x)(y_n), \wtd  W^s_n(\omega, x)\right) 
&\ge  |v_n - h_n(\omega, x) (u_n)|_2\\
&\ge e^{n(\kappa-3\epsilon)}   |v_0 - h_0(\omega, x) (u_0)|_2\\
&\ge  e^{n(\kappa-3\epsilon)} d_{\R^d} \left(\phi_0(\omega, x)(y), \wtd W^s_0(\omega, x)\right).
\end{align*}

We now consider the transition between the charts  $\phi_n(\omega,x)$ and $\phi_0(\hat F^n(\omega, x))$.  Recall $n = n(\omega,x)$ and write $\hat x = x_n, \hat y = y_n$ and $\hat \omega = \sigma^n(\omega)$.  Recall that $(\hat \omega, \hat x) \in K_0$.  
As $d(\hat x, \hat y) \le \frac 1 4 \ell^{-4} e^{-2\epsilon n}$  we have that $\hat y $ is in the domain of $\phi_0(\hat \omega, \hat x)$.  
Furthermore, as $|\phi_0(\hat \omega
  , \hat x)(\hat y)| \le \ell^{-3} \le {.01} {\ell}\inv $, we can find  $z\in \stabM {{\hat x}} {{\hat \omega}}$ 
 such that $$d_{\R^d}\left(\phi_0(\hat \omega, \hat x)(\hat y), \wtd W^s_0(\hat \omega, \hat x)\right) = 
d_{\R^d}\left(\phi_0(\hat \omega, \hat x)(\hat y),  \phi_0(\hat \omega, \hat x)(z) \right).$$
Let $ \phi_0(\hat \omega, \hat x)(z)   = (u,v) = (u, h_0(\hat \omega, \hat x)(u))$.  
As $h_0(\hat \omega, \hat x) $ has  Lipschitz constant less than $1$, for $t\in [0,1]$ we have 
$$|(tu, h_0(\hat \omega, \hat x)(tu))| = |(tu, h_0(\hat \omega, \hat x)(tu))|_1 \le |u|_1  = |(u, v)|.$$ 
Then for any $0\le t\le 1$,  writing $$z(t) = \phi_0(\hat \omega, \hat x) \inv \big(tu, h_0(\hat \omega, \hat x)(u)\big)$$ we have
\begin{align*}
d(\hat x,z(t)) &\le \ell  |\phi_0(\hat \omega, \hat x)(z)|\\
&\le \ell\big(d_{\R^d}\left(0, \phi_0(\hat \omega, \hat x)(\hat y)\right)+ d_{\R^d}\left(\phi_0(\hat \omega, \hat x)(\hat y),  \phi_0(\hat \omega, \hat x)(z)\right)\big)\\
& \le \ell 2 |\phi_0(\hat \omega, \hat x)(\hat y)|\\
&\le 2 \ell^2  d(\hat x, \hat y)\\
&\le \frac 1 2 \ell^{-2} e^{-2\epsilon n}.
\end{align*}
Thus $z(t)$ is in the domain of $\phi_n(\omega, x)$ for all $0\le t\le 1$ whence 
$\phi_n(\omega, x)(z(t)) \in \wtd W^s_n(\omega, x)$ for all $0\le t \le 1$.  
It follows that 
\begin{align*}
d_{\R^d} \left(\phi_n (\omega, x)(\hat y), \wtd W^s_n(\omega, x)\right)
&\le  d_{\R^d} \big(\phi_n (\omega, x)(\hat y), \phi_n (\omega, x)(z)\big) \\
& \le \ell e^{n\epsilon}
d\left(\hat y, z \right)\\ 
& \le \ell^2 e^{n\epsilon}
d_{\R^d}\left(\phi_0(\hat \omega, \hat x)(\hat y), \wtd W^s_0(\hat \omega, \hat x)\right). 
\end{align*}

Combining the above with \eqref{eq:mcfly}  we have 
\begin{align}
d_{\R^d}& \left(\phi_0(\omega, x)(y), \wtd W^s_0(\omega, x)\right) \notag  \\
&\le 2 e^{-n( \kappa - 3\epsilon)} 
\ell^2 e^{n\epsilon}
d_{\R^d}\left(\phi_0(\hat \omega, \hat x)(\hat y), \wtd W^s_0(\hat \omega, \hat x)\right) \notag \\
&\le \chi d_{\R^d}\left(\phi_0(\hat \omega, \hat x)(\hat y), \wtd W^s_0(\hat \omega, \hat x)\right).\label{eq:90}\end{align}

Now let $n_j $ denote the $(jm_0)$th return of $(\omega, x)$ to $K_0$.  Suppose for some $k$ that 
$d\left(\cocycle[\omega][n_j] (x), \cocycle[\omega][n_j] (y)\right)\le \rho(\hat F^{n_j}(\omega, x))$ for all $0\le j \le k$.
By induction on \eqref{eq:90} we have that 
$$d_{\R^d} \left(\phi_0(\omega, x)(y), \wtd W^s_0(\omega, x)\right) \le \chi^k
d_{\R^d}\left(\phi_0(\hat F^{n_k}( \omega, x))( \cocycle [\omega][n_k](y)), \wtd W^s_0(\hat F^{n_k}( \omega, x))\right).$$ 
This establishes  the following claim.  
\begin{claim}\label{claim:thisguywashardashelltoprove}
Let $(\omega,x)\in K_0$ and let $y\in M$ be such that $d(\cocycle[\omega][n] (x), \cocycle[\omega][n] (y))\le \rho(\hat F^n(\omega, x))$ for all $n\ge 0$.  Then 
$y \in \stabM x \omega$.  
\end{claim}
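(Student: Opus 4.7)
The plan is to iterate estimate \eqref{eq:90} indefinitely, using the contraction factor $\chi<1$ to force $\phi_0(\omega,x)(y)\in\wtd W^s_0(\omega,x)$, which then identifies $y$ with a point on the local stable manifold through $x$ and hence places $y$ in $\stabM{x}{\omega}$.

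First, since $(\nunaught^\N\times\munaught)(K_0)>0$ and $\hat F$ preserves $\nunaught^\N\times\munaught$, Poincar\'e recurrence gives a full-measure subset of $K_0$ on which $\hat F^n(\omega,x)$ returns to $K_0$ infinitely often; we restrict to such $(\omega,x)$ and define $n_j$ to be the $(jm_0)$-th return of $(\omega,x)$ to $K_0$. By the hypothesis of the claim applied at every $n\ge 0$ we have in particular $d(\cocycle[\omega][n_j](x),\cocycle[\omega][n_j](y))\le\rho(\hat F^{n_j}(\omega,x))$ for all $j$, which is precisely the condition needed to apply \eqref{eq:90} at each return. Inducting on $k$, we obtain
\[
d_{\R^d}\!\left(\phi_0(\omega,x)(y),\wtd W^s_0(\omega,x)\right)\le \chi^{k}\, d_{\R^d}\!\left(\phi_0(\hat F^{n_k}(\omega,x))(\cocycle[\omega][n_k](y)),\wtd W^s_0(\hat F^{n_k}(\omega,x))\right).
\]

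Next, I would bound the right-hand side uniformly in $k$. Because $\hat F^{n_k}(\omega,x)\in K_0$ we have $\ell(\hat F^{n_k}(\omega,x))\le \ell$, so both points $\phi_0(\hat F^{n_k}(\omega,x))(\cocycle[\omega][n_k](y))$ and $\wtd W^s_0(\hat F^{n_k}(\omega,x))$ lie inside the ball $\R^d(\ell\inv)$; hence the right-hand side is at most $2\chi^{k}\ell\inv$, which tends to $0$ as $k\to\infty$. Since the left-hand side does not depend on $k$, it must vanish, giving $\phi_0(\omega,x)(y)\in\wtd W^s_0(\omega,x)$. Applying $\phi_0(\omega,x)\inv$, the point $y$ lies on the local stable graph through $x$, and the contraction property of $\td f_n$ on $\graph(h_n(\omega,x))$ (item (3) of the Perron--Irwin proposition) ensures that $d(\cocycle[\omega][n](x),\cocycle[\omega][n](y))$ decays exponentially, so $y\in\stabM{x}{\omega}$.

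The main obstacle is really only verifying that the right-hand side of the iterated inequality remains bounded as $k\to\infty$; this is exactly what the careful choice of the radius function $\rho$ and the requirement that returns be to $K_0$ (where $\ell$ is uniformly controlled) were designed to achieve. The actual contractive iteration and the passage from the chart statement to the global stable manifold are then routine.
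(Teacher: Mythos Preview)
Your proposal is correct and follows exactly the paper's approach: the paper's proof is the paragraph immediately preceding the claim, which iterates \eqref{eq:90} along the sequence of $m_0$-th returns to $K_0$, obtains the same inequality you wrote, and lets $k\to\infty$. One minor slip: from $\ell(\hat F^{n_k}(\omega,x))\le\ell$ you get $\ell(\hat F^{n_k}(\omega,x))^{-1}\ge\ell^{-1}$, so the chart range and the graph $\wtd W^s_0$ need not sit inside $\R^d(\ell^{-1})$; but since $\ell(\cdot)\ge 1$ everything lies in $\R^d(1)$, so the right-hand side is still uniformly bounded and the argument goes through unchanged.
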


\subsubsection{Construction of the partition $\alpha$.}

Recall the  integrability hypothesis  \eqref{eq:IC2a}.  As $\omega\mapsto \log^+ |f_\omega|_{C^2}$ is integrable, it follows that $$\int |\log\rho(\omega, x))| \ d (\nunaught ^\Z \times \munaught)(\omega, x) < \infty.$$ 
We adapt \cite[Lemma 2]{MR627789}  to our $\rho$ to produce a finite entropy partition $\hat \alpha$ of $\Sigma_+\times M$ such that $\diam(\hat \alpha(\omega,x)\cap M_\omega)\le \rho(\omega,x)$ for almost every $(\omega,x)$.   
The only modification needed in the proof of \cite[Lemma 2]{MR627789} is  to replace, for each $r$, the family $\mathscr P_r$ at the top of page 97 with the partition $\bar {\mathscr P}_r = \{ \Sigma \times P\mid P\in \mathscr P_r\}$.  

Take $\alpha$ to be the preimage  of $\hat \alpha$ under the natural projection $\bar \pi_+\colon \Sigma\times M  \to \Sigma _+ \times M$. Then clearly $\alpha \le \beta$.   Furthermore,  if $(\eta,y) \in \Q \vee \alpha ^-(\xi,x)$ then \begin{enumerate}
\item there is an  $\omega\in \Sigma _+$ with $\bar \pi_+ (\eta, y ) = (\omega, y)$ and $\bar \pi_+ (\xi,x) = (\omega, x)$ and 
\item $\hat F^n (\omega, y) \in \hat \alpha (\hat F^n(\omega, x))$ for all $n\ge 0 $.  
\end{enumerate}
If $(\omega ,x)\in K_0$ then, by Claim \ref{claim:thisguywashardashelltoprove}, $y\in \stabM x \omega$.   
If $(\omega ,x)\notin K_0$ then take $n$ so that $\hat F^n(\omega, x)\in K_0$.  Then 
$\cocycle [\omega ][n] (y) \in  \stabM {{\cocycle [\omega ][n] (x)}} {{\sigma ^n(\omega)}}$ whence $y\in \stabM x \omega$.
This completes the proof of Lemma \ref{lem:existsnicepart}.

\section{Proof of Theorem \ref{thm:atomicfibertoinvariant}}\label{sec:atom} 

We continue to work in the case $\Omega = \Sigma$.  
As remarked earlier, the $\F$-measurability of $(\xi,x)\mapsto E^s_\xi(s)$ holds trivially if all exponents of $\munaught$ are negative and the $\nunaught$-\as invariance of $\munaught$ follows from the invariance principle of \cite{MR2651382} if all exponents of $\munaught$ are positive.  We thus assume $\munaught$ has two exponents, one of each sign $\lambda^s<0<\lambda^u$.  Moreover, assume  that the map  $(\xi, x) \mapsto E^s_\xi(x)$ is not $\F$-measurable.  

As above, let $\P$ be a measurable partition of $\Sigma\times M $ satisfying \eqref{eq:partprop}.  
 We show that if $\muskew^\P_{(\xi,x)}$ is not supported on a set of the form $\Sigmalocs(\xi)\times \{x\}$ then the measures $\mu_\xi$ are non-atomic. 
From this contradiction and Proposition \ref{prop:entropyrigidity}, the finiteness and $\nunaught$-\as invariance of $\munaught$ follows.  
 The non-atomicity of the measures $\mu_\xi$ is established, under the above hypotheses, through a procedure similar to the proof of Proposition \ref{lem:main}.

We introduce one piece of new notation in the specific case $\Omega = \Sigma$.  
\begin{definition}
	Given $\xi = (\dots, \xi_{-1}, \xi_{0}, \xi_{1}, \xi_{2}, \dots)$ and $\eta = (\dots,  \eta_{-1}, \eta_{0}, \eta_{1}, \eta_{2}, \dots)$ in $\Sigma$ define 
		$$[ \xi, \eta]:= (\dots, \xi_{-2}, \xi_{-1}, \eta_{0}, \eta_{1}, \eta_{2}, \dots).$$
\end{definition}

\newcommand\Sigmalocuu{\Sigma^{+}_{\text {loc},-1}}
Recall that in Section \ref{sec:modifyF} we replaced the $\sigma$-algebra of $\Sigmalocu$-saturated sets with  its preimage under $\sigma$. 
 Let $$\Sigmalocuu(\xi) = \sigma\inv (\Sigmalocu(\sigma(\xi)))= \{ \eta\in \Sigma : \eta_j= \xi_j \text{ for all $j\le 0$} \}.$$ 
Then  $\hat \Fol$ as modified in Section \ref{sec:modifyF}  is the sub-$\sigma$-algebra of $\Sigmalocuu$-saturated sets.  

The proof of Theorem \ref{thm:mainAtominv} is a simplified version of the proof of the Theorem \ref{thm:skewproductABS}  except our initial points $p$ and $q$ remain fixed and, as $p$ and $q$ are in the same total stable space, we use only positive times. 
In particular,  the open set $U$, the choice of $M_\delta, m_\delta$, and the estimates in Section \ref{sec:times} are not used here.

\begin{proof}[Proof of Theorem \ref{thm:mainAtominv}]  

We assume in the setting of  Theorem \ref{thm:atomicfibertoinvariant} that the map  $(\xi, x) \mapsto E^s_\xi(x)$ is not $\F$-measurable.  

We recall all constructions and notations from Sections  \ref{sec:lemMainPrep} and \ref{sec:lemMainProof} in the   case that  $\Omega = \Sigma$. 
In particular we retain the notation $Y = (\R \times \Sigma \times M )/\sim$ equipped with the measurable parametrization, $\Phi^t$ the suspension flow, $\ess_p(t)$ and $\Psi^s$ the time change and corresponding flow, $\mususp$ and $\mualt$ the $\Phi^t$- and  $\Psi^s$-invariant measures,  and $\tau_{p, \delta, \epsilon},$ and $L_{p, \delta, \epsilon}$ the stopping times.  

  Recall the choice of   $\kappa_1, \kappa_2$ in Section  \ref{sec:choices} and take $\alpha =\left( \dfrac {\kappa_1}{5(\kappa_1 +\kappa_2)}\right)$. %
Recall the choices of various  parameters 
$$M_0, \hat M , \gamma_1, \gamma_2, r_0, r_1, \hat r, C_1, C_2, C_3, D_0,D_1, L_1, a_0, \hat L, \hat T, T_0$$  in Section  \ref{sec:choices}  as well as the sets $K_0, S_0, S_{\hat M}, \scrA, \rec (T_0)$ and the $\sigma$-algebras $\calS$, $\calS^m$.  
In this section, the constants $r_0, r_1,  C_1, C_3, D_1$ are chosen so that Lemma \ref{ninetynine} holds and we take $\rec (T_0) \subset K$ where $K$ is defined below.  

We assume for the sake of contradiction that the  measures $\muskew_\xi$ are finitely supported $\nunaught^\Z$-\as but that $\munaught$ is  not $\nunaught$- \as invariant.   
 By ergodicity, each 
$\muskew_{\xi}$ is supported on a finite set $F(\xi)\subset M$ with the same cardinality \as   We fix a compact $\Lambda'''\subset \Sigma \times M$ such that $\muskew_{\xi}$ has an atom at $(\xi,x)$ for every $(\xi,x)\in \Lambda'''$ and 
$$\min\{d(x, y)\mid (\xi,x)\in \Lambda''', y \in F(\xi)\sm \{x\}\}$$
is bounded below by some $\epsilon_1>0$.  

By choosing the above parameters so that the associated sets have sufficiently large measures, we can take the  compact set   
$$K =  K_0 \cap \Lambda'' \cap \big([0,1)\times \Lambda' \big)\cap \big([0,1)\times \Omega'\times M\big)\cap ([0,1)\times\Lambda'''\big),$$
where $K_0$,  $\Lambda'$, $\Lambda''$, $\Omega'$ are as in Section  \ref{sec:choices}, to be such that  $$\mususp(K)>1 - \frac \alpha {10} \text{ and } \mualt(K)>1 - \frac { \alpha}{20N_0} .$$
We have the same estimates as in Claim \ref{claim:silly} (with $U = \emptyset$.)

As we assume the measure $\munaught$ is  not $\nunaught$-\as invariant, by Proposition \ref{prop:entropyrigidity}, it follows that the measures $\{\muskew_{\xi}^{\P}\}$ are not supported on sets of the form $\Sigmalocs(\xi)\times \{x\}$ where $\P$ is a partition of $\Sigma\times M$ 
satisfying \eqref{eq:partprop}.  
Recall the set $\Rec(T_0)\subset K$ in \ref{19}.  We may find  $p = (\pt,\xi,x)$ and $q = (\pt,\zeta, y)$ in $Y$ with \begin{itemize}
\item $p\in  \Rec(T_0)$, $q\in  \Rec(T_0)$;
\item  $\zeta\in \Sigmalocs( \xi)$;
\item $y\in \locstabM [r_1] x \xi \sm \{x\}$.  
\end{itemize}
Fix $\delta = \|H^s _p(y)\|>0.$  We may  assume  $\delta<\epsilon_1/(2C_2C_3M_0^6).$ 

As in Claim \ref{prop:densities} we have the following.  Note that unlike in Claim \ref{prop:densities}, $\ell_j>0.$

\begin{claim}\label{lem:goodtimes}
The exists a sequence $\{\ell_j\}$ with $\ell_j\to \infty$  such that  
	\begin{enumlemma}
		\item $\Phi^{\ell_j}(p)\in K \cap S_0 \cap \scrA$;
		\item {$\Phi^{\ell_j}(q)\in K\cap S_0 $;}  
		\item $\Phi^{L_{p,\delta, \delta}( \ell_j)}(p)\in K\cap S_{\hat M}$;
		\item $\Phi^{L_{p,\delta, \delta} (\ell_j)}(q)\in K\cap S_{\hat M}$.
	\end{enumlemma}
\end{claim}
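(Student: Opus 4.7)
The plan is to argue essentially as in the proof of Claim~\ref{prop:densities}, but working on a one-sided interval $[0,T]$ with $T \to \infty$ instead of the two-sided interval $[M_\delta,m_\delta]$. The key inputs are the Birkhoff-type density estimates encoded by $p,q \in \Rec(T_0)$ together with the bi-Lipschitz estimates on $L = L_{p,\delta,\delta}$ from Lemma~\ref{lem:bilipest} and Claim~\ref{claim:bilipRND}. I first define, in analogy with Claim~\ref{prop:densities}, the sets
\begin{align*}
F_1 &:= \{ t \in \R : \Phi^t(p) \in K \cap \scrA \cap S_0\}, \\
F_2 &:= \{ t \in \R : \Phi^t(q) \in K \cap S_0\}, \\
F_3 &:= \{ t \in \R : \Phi^t(p) \in K \cap S_{\hat M}\}, \\
F_4 &:= \{ t \in \R : \Phi^t(q) \in K \cap S_{\hat M}\}.
\end{align*}

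Second, for each $T \ge T_0$ I would use that $p,q\in \Rec(T_0)$ to bound separately the forward density of visits of $\Phi^t(p)$ and $\Phi^t(q)$ to each of $K$, $\scrA$, $S_0$ and $S_{\hat M}$ by $1-\alpha$, yielding
\[ \Leb([0,T]\cap F_1) \ge (1-3\alpha)T,\qquad \Leb([0,T]\cap F_2) \ge (1-2\alpha)T.\]
Since $L \colon \R \to \R$ is an orientation-preserving homeomorphism with $L(0)=0$ (because $\tau_{p,\delta,\delta}(0)=0$), and since $L$ is bi-Lipschitz with constants $\kappa_1,\kappa_2$ by Lemma~\ref{lem:bilipest}, it maps $[0,T]$ onto $[0,L(T)]\subset [0,\kappa_2 T]$ with $L(T)\ge \kappa_1 T$. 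Thus $L(T)\ge T_0$ for $T$ large, and applying the $\Rec(T_0)$ bound inside $[0,L(T)]$ together with Claim~\ref{claim:bilipRND} gives
\[ \Leb([0,T] \setminus L^{-1}(F_3)) \le 2\alpha \kappa_1^{-1}\kappa_2 T,\qquad \Leb([0,T]\setminus L^{-1}(F_4))\le 2\alpha \kappa_1^{-1}\kappa_2 T.\]

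Finally, intersecting these four estimates yields
\[ \Leb\bigl([0,T]\cap F_1 \cap F_2 \cap L^{-1}(F_3) \cap L^{-1}(F_4)\bigr) \ge \bigl(1 - 5\alpha - 4\alpha\kappa_1^{-1}\kappa_2\bigr) T. \]
The choice $\alpha = \kappa_1 / \bigl(5(\kappa_1+\kappa_2)\bigr)$ at the start of the section was made precisely so that $5\alpha + 4\alpha\kappa_1^{-1}\kappa_2 < 1$; hence the right-hand side grows linearly in $T$, and in particular the set of good times is unbounded. I then extract any sequence $\ell_j \to \infty$ from this set, and each such $\ell_j$ satisfies the four conclusions of the claim. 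The only real bookkeeping step is the bi-Lipschitz transfer of densities between $[0,T]$ and $L([0,T])$, which is precisely the one already carried out in Claim~\ref{prop:densities}, so there is no genuinely new obstacle.
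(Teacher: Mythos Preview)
Your proof is correct and follows essentially the same approach as the paper's own proof: define the same sets $F_1,\dots,F_4$, use $p,q\in\Rec(T_0)$ to get the density bound $\Leb([0,T]\cap F_1\cap F_2)\ge(1-5\alpha)T$, use $L_{p,\delta,\delta}(0)=0$ together with the bi-Lipschitz estimates from Lemma~\ref{lem:bilipest} and Claim~\ref{claim:bilipRND} to transfer the density of $F_3\cap F_4$ back through $L^{-1}$, and conclude with the same final inequality $(1-5\alpha-4\alpha\kappa_1^{-1}\kappa_2)T$. The only cosmetic difference is that you split $F_1,F_2$ and $F_3,F_4$ into separate estimates before combining, whereas the paper bundles them together from the start.
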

\begin{proof}
Let $F_k$ be as in Claim \ref{prop:densities} (with $q=q_j$).   
Then, as in the proof of  Claim \ref{prop:densities}, for our fixed  $T_0$ and any $T>T_0$ with $L_{p,\delta,\delta}(T)>T_0$ we have 
$$\leb\left( [0, T] \cap F_1\cap F_2\right)\ge (1 - 5\alpha)T$$
and, as $L_{p, \delta, \delta}(0)=\tau_{p, \delta, \delta}(0) = 0$, 
 $$\leb \left(L_{p, \delta, \delta}\left([0, T]\right)\sm  (F_3\cap  F_4)\right)
		\le  (4\alpha) \leb\left(L_{p, \delta, \delta}\left([0, T]\right)\right)\le 4\alpha \kappa_2 T$$
whence, $$\Leb \left([0, T] \sm  L_{p, \delta, \delta}\inv\left( F_3\cap F_4\right)\right)\le 4\alpha \kappa_1\inv  \kappa_2 T.$$ 
Then
$$\leb\left([0,T]\cap  F_1\cap F_2\cap  L_{p,\delta, \delta}\inv(F_3) \cap L_{p,\delta, \delta}\inv (F_4) \right)> (1 -5\alpha - 4\alpha \kappa_1\inv  \kappa_2) T.$$
The choice of $\alpha$ guarantees $(1 - 5\alpha - 4\alpha \kappa_1\inv  \kappa_2) T\to \infty $ as $T\to \infty$.   
\end{proof}

Let $\{\ell_j\}$ be a sequence of times satisfying Claim \ref{lem:goodtimes}. As in Section \ref{sec:lemMainProof}, for  each $j$ write 
$\hat p_j=(\hat \pt_j, \hat \xi_j,\hat  x_j )= \Phi^{\ell_j}(p)$, $\hat q_j =(\hat \pt_j, \hat \zeta_j,\hat  y_j )=  \Phi^{\ell_j}(q)$, $\td p_j=(\td \pt_j, \td \xi_j,\td  x_j )= \Phi^{L_{p,\delta,\delta}(\ell_j)}(p)$, $\td q_j =(\td \pt_j, \td \zeta_j,\td  y_j )=   \Phi^{L_{p,\delta,\delta}(\ell_j)}(p)$, $s'_j = \ess _{\hat p_j}(\tau_{p,\delta, \delta}(\ell_j))$, $s''_j = \ess _{\hat q_j}(\tau_{p,\delta, \delta}(\ell_j)).$

Note that $\tau_{p, \delta, \delta}(\ell_j) \to \infty$ as $\ell_j\to \infty$.  Then for $\ell_j$ large enough,
 $$ s_j'',s_j'\ge (\lambda^u-\eps)\tau_{p, \delta, \delta}(\ell_j)\ge \hat M$$ and, since $\td p_j, \td q_j \in S_{\hat M}$, 
it follows that $$\Exp_\mualt(1_{K} |\Sal^{ s_j'})\left(\td p_j \right) >.9, \quad \quad \Exp_\mualt(1_{K} |\Sal^{ s_j''})\left(\td q_j \right) >.9.$$
As in Section \ref{sec:lemMainProof} we have 
		$ \hat p_j, \hat q_j\in K$,
		$\mucondS_{\hat p_j}(K)>.9$,
	 	$\nunaught^\N 
		( A_{\gamma_2}(\hat p_j))>.9$,
	 	$\mucondS_{\hat p_j}( \Psi^{-s'_j}(K))>.9$,
		$\mucondS_{\hat q_j}(K)>.9$, and
	 	$\mucondS_{\hat q_j}( \Phi^{-s''_j}(K))>.9$.

The measures $\mucondS_{\hat p_j}$ and $\mucondS_{\hat q_j}$ are, respectively,  canonically identified with $\nunaught ^\N$ (the $\hat \Fol$-conditional measure) on $\Sigmalocuu(\hat \xi_j)$ and $\Sigmalocuu(\hat \zeta_j).$ 
Furthermore, the natural identification 
	$$\Sigmalocuu(\hat \xi_j) \to \Sigmalocuu(\hat \zeta_j), \quad \quad \eta \mapsto \eta'= [\hat  \zeta_j,\eta]$$ preserves the measure $\nunaught^\N$.  
Thus the set of $\hat \eta_j\in \Sigmalocuu(\hat \xi_j) $ such that 
\begin{enumerate}
	\item $\hat \eta_j \in A_{\gamma_2}(\hat p_j)$,
	\item $\bar p_j:= (\hat \pt_j, \hat \eta_j, \hat x_j) \in K \cap  \Psi^{-s'_j}(K)$, 
	\item  $\bar q_j:= (\hat \pt_j, \eta'_j, \hat y_j) \in K \cap  \Psi^{-s''_j}(K)$
\end{enumerate}
where $\eta'_j= [\hat  \zeta_j,\hat \eta_j]$, has $\nunaught^\N$-measure at least $1/2$.  For each $j$, fix such a pair $\hat \eta_j$ and $\eta'_j= [\hat  \zeta_j, \hat \eta_j]$.

As before, write  $ t_j' = \ess_{\bar p_j }\inv(s_j')$,  $t''_j =\ess_{\bar q_j}\inv(s_j'')$ and define
%
$p'_j =  (\pt_j', \xi_j', x_j') := \Psi^{s'_j}(\bar p_j) =\Phi^{t'_j}(\bar p_j) \in K$,
$q''_j= (\pt_j'', \zeta_j'', y_j'') := \Psi^{s''_j}(\bar q_j)= \Phi^{t''_j}(\bar q_j)\in K$, and 
  	$q'_j = (\pt_j', \zeta_j', y_j'):=\Phi^{t'_j}(\bar q_j) $. 
For $\ell_j$ sufficiently large we have $d(\hat x_j, \hat y_j)<r_0$.  For such $\ell_j$, as $\bar p_j,\bar q_j\in K$  let 
$${\hat v _j  
= \locstabM[r_1] {\hat y_j}{\eta'_j}\cap \locunstM[r_1] {\hat x_j}{\hat \xi_j}.}$$
Since $\hat y_j \in \locstabM[r_1]{\hat x_j}{\hat \xi_j}$, by   \ref{itme3'} of Lemma \ref{ninetynine}  
we have $$\frac 1{C_3 } \|H^s_{\hat p_j}(\hat y_j) \|\le \|H^u_{\hat p_j}(\hat v_j) \|\le C_3 \|H^s_{\hat p_j}(\hat y_j) \|.$$
Exactly as in Claim \ref{claim:annulus} we have 
$$\frac 1 {C_3M_0^6} \delta \le  \| H^u_{p_j'} (v_j') \|  \le C_3M_0^6\delta$$
where $
(\pt'_j, \xi'_j, v'_j) = \Phi^{t_j'}(\hat \pt_j,  \hat \eta_j, \hat v_j).$
(We take $\hat z_j = \hat y_j$ in the proof).
Hence
$$\frac 1 {C_2C_3M_0^6} \delta \le  d(x_j', v_j')  \le C_2C_3M_0^6\delta$$

As in Lemma \ref{lem:key}\ref{lemitem:0} we have 
$q'_j = \Phi^{\hat t_j} (q''_j)$ for some $|\hat t_j| \le \hat T$.
To adapt the proof to the current  setting, we replace the estimate  \eqref{eq:thisguy} with
\begin{align}
&\dfrac {\| \restrict { D\Phi^{t''_j}}{E^u(\bar q_j)}\|} 
	{\| \restrict { D\Phi^{t''_j}}{E^u(\bar p_j)}\|}
		= 
	\dfrac {\| \restrict { D\cocycle[ \eta'_j][n'']}{T_{\hat y_j} \unstM   {\hat y_j} { \eta'_j}}\|}
		{\| \restrict { D\cocycle[\hat \eta_j][n'']}{T_{\hat x_j} \unstM   {\hat x_j} {\hat \eta_j}}\|}
\notag\\
		&\quad \quad
			= 
	\dfrac {\| \restrict { D\cocycle[ \eta_j'][n'']}{T_{\hat y_j} \unstM   {\hat y_j} { \eta_j'}}\|}
		{\| \restrict { D\cocycle[\hat \eta_j][n'']}{T_{\hat v_j} \unstM   {\hat x_j} {\hat \eta_j}}\|}
			\cdot
	\dfrac {\| \restrict { D\cocycle[ \xi'_j][-n']}{T_{ x_j'} \unstM   { x'_j} { \xi'_j}}\|}
		{\| \restrict { D\cocycle[ \xi'_j][-n']}{T_{ v_j'} \unstM   { x'_j} { \xi'_j}}\|}
		\cdot 
	\dfrac{\| \restrict { D\cocycle[ \xi'_j][-(n'- n'')]}{T_{ v_j'} \unstM   { x'_j} { \xi'_j}}\|}
	 {\| \restrict { D\cocycle[ \xi'_j][-(n'-n'')]}{T_{ x_j'} \unstM   { x'_j} { \xi'_j}}\|}			
\label{eq:bananashark}
\end{align}
and similarly modify   \eqref{eq:thisguy2}.  
 Note that the bound on the first term of  \eqref{eq:bananashark} now follows from Lemma \ref{ninetynine}\ref{item8:2'} as $\pi_+(\eta'_j) = \pi_+(\hat \eta_j)$.

Consider  an accumulation point $p_0 = (\pt_0, \xi_0, x_0)$ of $\{p_j'\}$ and $B\subset \N$ such that $\lim_{j\in B\to \infty} p_j' = p_0$.  Then the measure $\mususp_{(\pt_0, \xi_0)}$ has an atom at $p_0$.   

 Note that, 
 as $\xi'_{j_k}\to \xi_0$ for some subsequence $\{j_k\}$, we have $\zeta'_{j_k}\to \xi_0$.  Indeed  for any $j$ and  any $n\in \N$ with $n\le \ell_j$ we have that $\hat \xi_j$ and $\hat \zeta_j$, and  hence $\hat \eta_j$ and $\eta'_j$, agree in the $k$th index for all $-n \le k \le \infty$.  As as $s'_j>0$, $\xi_j'$ and $\zeta_j'$  agree in the $k$th index for all $-n \le k \le \infty$. 

Thus, as in the proof of Lemma \ref{lem:here2},  passing to subsequences   of $B$ there are  accumulation points $q_0 = (\pt_0, \xi_0, y_0)$ of $\{q'_j\}$ and $q_1 = (\pt_1, \xi_1, y_1)\in K$ of $\{q''_j\}$ and a $\hat t \in [-\hat T, \hat T]$  
such that $\Phi^{\hat t}( q_1)  = q_0$.  
 As $v_j \in \locstabM [r_1] {\hat y_j} {\eta'_j}  $, $\bar q_j\in K$, and $t'_j\to \infty$ and as $\cocycle[\eta'_j][n] =\cocycle[\hat \eta_j][n] $ for $n\ge 0$, 
 we have $d(v'_j, y'_j) \to 0$ hence  $d(x_0,y_0)\ge \frac 1 {C_2C_3M_0^6} \delta$.  
 Since $q_1\in K \subset [0,1)\times \Lambda'''$, the measure $\mususp_{(\pt_1, \xi_1)}$ has an atom at  $q_1$.   By the invariance of $\mususp$, it follows that  $\mususp_{(\pt_0, \xi_0)}$ has an atom at $q_0$.  On the other hand,  $x_0 \neq y_0$ yet $ d(x_0, y_0)\le C_2C_3M_0^6\delta<\epsilon_1$.  As $p_0\in K\subset[0,1)\times \Lambda'''$, this   contradicts  the choice of $\epsilon_1$.  
\end{proof}
\begin{remark}  
In the above proof, we have that $q_0\in \locunstp [r_1] {p_0}$.  Thus one can modify the above proof to conclude that the skew-product $F\colon (X,\muskew) \to (X,\muskew)$ has positive fiber-wise entropy.  In this way, one can show that for any hyperbolic, $\nunaught$-stationary measure $\munaught$ such that 
\begin{enumerate}
\item $E^s_\omega(x)$ is not non-random, and 
\item $\munaught$ is not $\nunaught$-\as invariant
\end{enumerate}
that the $\munaught$ entropy $h_\munaught(\MP^+(M,\nunaught))$ is positive.  Under the positive entropy hypothesis, the authors showed in an earlier version of this paper that $\munaught$ must then be SRB.  However, one still need to perform the more detailed analysis in Section \ref{sec:lemMainProof} to rule out the existence of a $\nunaught$-\as invariant, hyperbolic measure $\munaught$ with zero entropy and such that $E^s_\omega(x)$ is not non-random to derive the full result in Theorem \ref{thm:1+}.  
\end{remark}

\section{Proofs of remaining theorems}\label{sec:left}
\subsection{Proof of Theorem \ref{thm:3}}
\begin{proof}
Let $\munaught$ be as in Theorem \ref{thm:3}, and assume $\munaught$ is not finitely supported and that  the stable distribution $E^s_\omega(x)$ is non-random.  It follows from Theorem \ref{thm:1+} that $\munaught$ is SRB.  
\def\B{\mathcal B}
Let $F\colon \Sigma \times M\to \Sigma \times M$ be the canonical skew product  constructed in Section \ref{sec:skewRDS} and let $\mu$ be the $F$-invariant measure defined by Proposition \ref{prop:mudef}. Then   
the conditional measures of $\mu$ along almost every unstable manifold $\unst \xi x$ for the skew product $F$ are absolutely continuous.  
Define the \emph{ergodic basin}  $B\subset \Sigma\times M$  of $\mu$ to be the set of $(\xi,x) \in \Sigma\times M$ such that 
 $$ \lim_{n\to \infty}\frac 1 N \sum _{n = 0}^{N-1} \phi(\cocycle  [\xi] [n](x))= \int \phi \ d \munaught$$
 for all $\phi\colon M \to \R$ continuous. 
By the pointwise ergodic theorem and the separability of $C^0(M)$, we have $\mu(B) =1.$
Furthermore, for points $(\xi, x)\in B$  whose fiber-wise stable manifold $\stab x \xi $ is defined 
we have $$\stab x \xi \subset B .$$

We have  the following ``transverse'' absolute continuity property.   
Given a typical $\xi\in \Sigma$ and a certain continuous families of 
fiber-wise local stable manifolds $\mathcal S:= \{\locstabM x \xi \}_{x\in Q}$, 
consider two manifolds $T_1$ and $T_2$ everywhere uniformly transverse to the collection $\mathcal S$.  Define the holonomy map from $T_1$ to $T_2$ by ``sliding along'' elements of $\mathcal S$.  Such holonomy maps 
were shown by Pesin to be absolutely continuous in the deterministic volume-preserving setting \cite{MR0458490}.  
For fiber-wise stable manifolds associated to skew products satisfying \eqref{eq:IC2}, such holonomy maps are also known to be absolutely continuous.  See \cite[(4.2)]{MR968818} or \cite[III.5]{MR1369243} for further details and references   to proofs.

The above absolute continuity property implies that  if $\munaught$ is SRB (whence $\mu$ is fiber-wise SRB) and if $A\subset\Sigma\times M$ is any set with $\mu(A)>0$ then for a positive measure subset of $\xi$, 
$$\bigcup_{(\xi,x) \in A\cap M_\xi}\stabM x \xi \subset M_\xi$$
has positive Lebesgue measure in $M_\xi$.   
It follows that for the ergodic basin $B$, $$(\nunaught^\Z\times m)( B)>0.$$
We note that if $\eta\in \Sigmalocs(\xi)$ then (under the natural identification of subsets of $M_\eta$ and $M_\xi$) 
$$B\cap M_\eta = B\cap M_\xi$$
since $\cocycle[\xi][n]= \cocycle[\eta][n] $ for $n\ge 0$.  
Define $\hat B$ to be the ergodic basin of $\nu^\N\times \munaught$ for the skew product $\hat F\colon \Sigma_+\times M$; that is $(\omega, x)\in \hat B$ if 
$$ \lim_{n\to \infty}\frac 1 N \sum _{n = 0}^{N-1} \phi(\cocycle  [\omega] [n](x))= \int \phi \ d \munaught$$
 for all continuous $\phi\colon M \to \R$ continuous. 
 We have that $\hat B$ is the image of $B$ under the natural projection $\Sigma\times M \to \Sigma _+ \times M$ whence 
 $ {(\nunaught^\N\times m)(\hat  B)} >0$.

Define a measure $$\hat m= \tfrac 1 {(\nunaught^\N\times m)(\hat  B)} \restrict {(\nunaught^\N\times m)}{ \hat B}$$ on $\Sigma_+\times M$.  Since both the set $\hat  B$ and the measure $\nunaught^\N\times m$ are $\hat F$-invariant (recall that $m$ is $\nunaught$-\as invariant) the measure $\hat m$ is $\hat F$-invariant.  
  Furthermore, for $\hat m$-\ae $(\omega, x)$ and any continuous $\phi\colon M \to \R,$ the Birkhoff sums satisfy $$\lim_{n\to \infty}\frac 1 N \sum _{n = 0}^{N-1} \phi(\cocycle  [\omega] [n] (x)) = \int \phi \ d \munaught$$ which  implies that $\hat m$ is ergodic for $F$ and, in particular, is an ergodic component of $\nunaught^\Z\times m$.  This implies (see e.g.\ \cite[Proposition I.2.1]{MR884892}) that $\hat m $ is of the form $\hat m= \nunaught^\Z\times m_0$ for $m_0$ an ergodic component of $m$ for $\MP^+(M, \nu)$.  

Then, for any continuous function $\phi\colon M\to \R$,  $\nunaught^\N$-\ae $\omega\in \Sigma_+$, and $m_0$-a.e. $x\in M$, we have 
$$\lim_{n\to \infty}\frac 1 N \sum _{n = 0}^{N-1} \phi(\cocycle [\omega]  [n] (x) )= \int \phi \ d \munaught.$$
Furthermore, since  $\nunaught \times m_0$ is invariant and ergodic  for $\hat F$, for $\nunaught^\N$-\ae $\omega\in \Sigma_+$  and $m_0$-a.e. $x\in M$   we also have that 
$$\lim_{n\to \infty}\frac 1 N \sum _{n = 0}^{N-1} \phi(\cocycle [\omega]  [n] (x)) = \int \phi \ d m_0.$$
In particular, $ \int \phi \ d \munaught =  \int \phi \ d m_0$ for all $\phi\colon M\to \R$, whence $\munaught= m_0$.  
\end{proof}

\subsection{Proof of Theorem \ref{thm:4}}\label{sec134}
Let $M$ be a compact surface and let $\mu$ be a non-atomic Borel probability on $M$.  Let $f \in \diff_\mu^2(M)$ as in Theorem \ref{thm:4}.
In particular,  $f$ is ergodic, hyperbolic, and, as $\mu$ has no atoms, $f$ has one positive and one negative  exponent which we denote by $\lambda^s_f <0<\lambda^u_f$.  
%
%

\subsubsection{Preliminary constructions and observation}
\def\M{\mathcal{M}}
 Let $K\subset \diff^2_\mu(M)$ be a fixed compact  subset with $f\in K$.  Moreover, assume that $K$ is symmetric in that if $g\in K$ then $g\inv \in K$.  For   this section  set $$\Sigma := \Sigma_K = K^\Z.$$  
Let $\sigma\colon \Sigma\to \Sigma$ be the left shift, $F\colon \Sigma\times M\to \Sigma\times M$   the canonical invertible skew products, and $DF\colon\Sigma\times TM\to \Sigma\times TM$  the fiber-wise derivative.  
With $X= \Sigma \times M$, we observe that $F$ and $DF$ are continuous transformations of $X$ and $TX$.  
In what follows, we will study the fiber-wise exponents of the cocycle $DF$ as the measures on $\Sigma $ changes.  We rely on  tools developed in  the study of continuity properties of Lyapunov exponents appearing in many  sources including \cite{1507.08978, MR1981401, Bocker-Neto:2010aa, viana2014lectures}.

Write $\M(K)$ for the space of all Borel probability measures on $K$. 
Given $\nu \in \M(K)$, equip $\Sigma$ with the shift-invariant measure $\nu^\Z$.  
For any $\nu\in \M(K) $, we have that 
$\mu$ is $\nu$-stationary.  Moreover, as $\mu$ is preserved by every element of $K$, the measure $\nu^\Z\times \mu$ is $F$-invariant and coincides with the measure given by Proposition \ref{prop:mudef}.  
We will say that $\mu$ is ergodic for $\nu$ if it is ergodic as a $\nu$-stationary measure.   



We make some  preliminary  observations.  

\begin{claim}
Let $\nu \in \M(K)$ with  $\nu (f) >0$.  Then $\mu$ is ergodic for  $\nu$.  
\end{claim}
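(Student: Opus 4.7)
The plan is to apply the standard Markov-operator criterion (cf.\ Kifer \cite{MR884892}): the $\nu$-stationary measure $\mu$ is ergodic if and only if every bounded function $\psi\in L^\infty(\mu)$ which is $P$-\emph{harmonic} --- meaning
\[
\psi(x)=\int_K \psi(g(x))\,d\nu(g) =: P\psi(x)
\]
for $\mu$-a.e.\ $x$ --- is $\mu$-almost-everywhere constant. So I would start with such a harmonic $\psi$ and aim to show it is $f$-invariant, at which point the $f$-ergodicity of $\mu$ finishes the job.

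The key step exploits the hypothesis that every $g\in K$ preserves $\mu$. Apply Jensen's inequality pointwise:
\[
|\psi(x)|^2=|P\psi(x)|^2\le P(|\psi|^2)(x).
\]
Integrating both sides against $\mu$ and using $g_*\mu=\mu$ for every $g\in K$ (so $\int P(|\psi|^2)\,d\mu=\int |\psi|^2\,d\mu$), both sides equal $\int|\psi|^2\,d\mu$; hence equality in Jensen's inequality holds $\mu$-a.e. This means that for $\mu$-a.e.\ $x$ the random variable $g\mapsto\psi(g(x))$ has zero variance under $\nu$, and is therefore $\nu$-almost-surely equal to its mean $P\psi(x)=\psi(x)$. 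Since $\nu(\{f\})>0$ by hypothesis, $f$ is a positive-mass atom of $\nu$, and we conclude $\psi(f(x))=\psi(x)$ for $\mu$-a.e.\ $x$. The assumed $f$-ergodicity of $\mu$ then forces $\psi$ to be $\mu$-a.s.\ constant.

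I do not expect any serious obstacle; the argument is a short formal calculation that combines the two essential inputs --- the $K$-invariance of $\mu$ (which lets us absorb the Markov operator $P$ into the integral against $\mu$) and the atomic condition $\nu(\{f\})>0$ (which extracts honest $f$-invariance from the stationarity identity). Equivalently, one could argue on the skew product $\hat F\colon \Sigma_+\times M\to\Sigma_+\times M$ via Proposition \ref{prop:charStatmeas}: an $\hat F$-invariant set $A$ yields a $\sigma$-invariant hence $\nu^\N$-a.s.\ constant function $\omega\mapsto\mu(A_\omega)$, and the same Jensen/variance argument applied to $\psi(x)=\nu^\N\{\omega:(\omega,x)\in A\}$ gives $f$-invariance of $\{\psi=1\}$, but the $P$-harmonic formulation above is the most transparent.
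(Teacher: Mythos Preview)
Your proof is correct and takes a genuinely different route from the paper's. The paper argues by contradiction on the level of measures: if $\mu=\mu_1+\mu_2$ were a nontrivial decomposition into mutually singular $\nu$-stationary pieces, then the stationarity identity $\mu_1=\int_{g\neq f}g_*\mu_1\,d\nu(g)+\nu(f)f_*\mu_1$ forces $f_*\mu_1\ll\mu_1$, hence $f_*\mu_1\perp\mu_2$; but $f$-ergodicity of $\mu$ prevents $f_*\mu_1$ from being singular to $\mu_2$, a contradiction. Your argument instead works at the level of functions via the transfer operator, and the Jensen/variance step actually proves the stronger intermediate fact that when $\mu$ is $\nu$-a.s.\ invariant (not merely stationary), every bounded $P$-harmonic function is $\nu$-a.s.\ $g$-invariant --- a clean statement worth isolating. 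The paper's proof is shorter and entirely self-contained (no appeal to the harmonic-function criterion), while yours is more conceptual and yields a reusable lemma; both hinge on exactly the same two inputs, namely $g_*\mu=\mu$ for all $g\in K$ and the atom $\nu(\{f\})>0$ with $f$ ergodic.
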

\begin{proof}
Suppose $\mu = \mu_1 + \mu_2$ where $\mu_i$ are nontrivial, $\nu$-stationary, mutually singular measures.  
Then 
	$$\mu_1 = \int_{g\neq f} g_\ast\mu_1 \ d\nu(g)+ \nu(f) f_*\mu_1.$$
	By the $f$-ergodicity of $\mu$, $f_*\mu_1$ is not mutually singular with respect to $\mu_2$.  This contradicts that $\mu_1$ is mutually singular with respect to $\mu_2$.
\end{proof}


For $\nu \in \M(K)$, we recall the definition of Lyapunov exponents guaranteed by Proposition \ref{prop:OMT} for the stationary measure $\mu$.  We recall that the exponent is $\nu^\N$-\as independent of choice of word.  We write $$\lambda^1_{\nu}(x)\le \lambda^2_\nu(x)$$ for the Lyapunov exponents  of $\mu$ for words defined by $\nu$ at the point $x$ with the convention that if $\mu$ has only one exponent at $x$ we declare $\lambda^1_{\nu}(x)= \lambda^2_\nu(x)$.  
Given $\nu$ and $x$ with $\lambda^1_{\nu}(x)\neq  \lambda^2_\nu(x)$ and $\xi \in \Sigma = K^\Z$ write  \begin{equation}T_x M = E^1_\xi(x)\oplus E^2_\xi(x)\label{eq:OscSplitting}\end{equation} for the associated Lyapunov splitting. 
If $\lambda^1_{\nu}(x)=  \lambda^2_\nu(x)$ then we write   $E^1_\xi(x) =  E^2_\xi(x)= T_xM$.  

Consider an involution on $\M(K)$ defined as follows.  For $g\in K$ define 
define $\theta(g):= g\inv$.  
For $\nu \in \M(K)$, $\theta_*\nu$ is the measure 
\begin{equation}\theta_*\nu (A) : = \nu(\theta (A))\label{eq:invol}\end{equation}
for $A\subset K$.  We have that $\theta_*\colon  \M(K)\to  \M(K)$ is involutive.  

\begin{lemma}
For  $\nu \in \M(K)$ and $\mu$-\ae $x$  we have 
	$$\lambda^1_{\nu}(x)  = -\lambda^2_{\theta (\nu)} (x).$$
\end{lemma}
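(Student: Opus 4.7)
The plan is to identify the backward cocycle of the $\nu$-random walk with the forward cocycle of the $\theta_*\nu$-random walk. Since every $g\in K$ preserves $\mu$, the measure $\nu^\Z\times\mu$ is $F$-invariant for every $\nu\in\M(K)$, and the skew product $F$ on $(\Sigma\times M,\nu^\Z\times\mu)$ is invertible. First I would invoke the invertible Oseledec multiplicative ergodic theorem applied to the fiber-wise cocycle $DF$: for $\nu^\Z$-a.e.\ $\xi$ and $\mu$-a.e.\ $x$, the splitting \eqref{eq:OscSplitting} is well-defined and both of the limits
\[
\lim_{n\to\infty}\tfrac{1}{n}\log\|D_x\cocycle[\xi][n]v\| \qquad \text{and} \qquad \lim_{n\to\infty}\tfrac{-1}{n}\log\|D_x\cocycle[\xi][-n]v\|
\]
exist and equal $\lambda^k_\nu(x)$ for $v\in E^k_\xi(x)\sm\{0\}$.

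Next I would rewrite $\cocycle[\xi][-n]$ as a forward cocycle. Writing $\xi=(\dots,g_{-2},g_{-1},g_0,g_1,\dots)\in K^\Z$ with $f_{\sigma^i(\xi)}=g_i$, one has
\[
\cocycle[\xi][-n] \;=\; g_{-n}^{-1}\circ g_{-n+1}^{-1}\circ\cdots\circ g_{-1}^{-1}.
\]
Setting $\eta_i:=g_{-i-1}^{-1}$ and $\eta=(\eta_0,\eta_1,\dots)\in K^\N$, a direct computation gives
\[
f^{\,n}_\eta \;:=\; \eta_{n-1}\circ\cdots\circ\eta_0 \;=\; g_{-n}^{-1}\circ\cdots\circ g_{-1}^{-1} \;=\; \cocycle[\xi][-n].
\]
Under $\nu^\Z$, the sequence $(g_{-1},g_{-2},\dots)$ is i.i.d.\ with law $\nu$, so $\eta$ is $(\theta_*\nu)^\N$-distributed. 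This correspondence pushes $\nu^\Z\times\mu$-a.e.\ pairs $(\xi,x)$ to $(\theta_*\nu)^\N\times\mu$-a.e.\ pairs $(\eta,x)$, so Proposition \ref{prop:OMT} applied to $\theta_*\nu$ holds at such $\eta$.

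Combining the two steps, for $v\in E^k_\xi(x)\sm\{0\}$ I obtain
\[
\lim_{n\to\infty}\tfrac{1}{n}\log\|D_x f^{\,n}_\eta v\| \;=\; \lim_{n\to\infty}\tfrac{1}{n}\log\|D_x\cocycle[\xi][-n]v\| \;=\; -\lambda^k_\nu(x).
\]
Proposition \ref{prop:OMT} applied to $\theta_*\nu$ forces this limit to take values in $\{\lambda^1_{\theta_*\nu}(x),\lambda^2_{\theta_*\nu}(x)\}$, so the unordered pairs $\{-\lambda^1_\nu(x),-\lambda^2_\nu(x)\}$ and $\{\lambda^1_{\theta_*\nu}(x),\lambda^2_{\theta_*\nu}(x)\}$ coincide. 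Matching the order $\lambda^1\le\lambda^2$ on each side then yields $\lambda^1_\nu(x)=-\lambda^2_{\theta_*\nu}(x)$.

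The only subtle point is correctly tracking the sign and order conventions and verifying that the map $\xi\mapsto\eta$ transports $\nu^\Z$ to $(\theta_*\nu)^\N$ on the relevant half-infinite coordinates, which is immediate from the i.i.d.\ nature of both laws. There is no substantive technical obstacle once the cocycle identity $f^{\,n}_\eta=\cocycle[\xi][-n]$ is written down.
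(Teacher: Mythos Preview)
Your argument is correct and is essentially the paper's own proof: both identify the backward cocycle $\cocycle[\xi][-n]$ with the forward cocycle of a word distributed according to $(\theta_*\nu)$, use the two-sided Oseledec theorem to compute the growth rate along each $E^k_\xi(x)$, and then match exponents by ordering. The only cosmetic difference is that the paper packages the correspondence as an involution $\Psi\colon\Sigma\to\Sigma$ on the two-sided shift, whereas you work directly with the one-sided word $\eta\in K^\N$; your version is slightly more explicit about why the unordered pairs coincide and how the ordering forces $\lambda^1_\nu(x)=-\lambda^2_{\theta_*\nu}(x)$.
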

\begin{proof}
On $\Sigma:= K^\Z$, define  the involution
$\Psi\colon \Sigma \to \Sigma$ given by $$\Psi\colon (\dots, g_{-2},  g_{-1}.  g_{0},  g_{1}, \dots )\mapsto 
(\dots, g_{1}\inv,  g_{0}\inv.  g_{-1}\inv,  g_{2}\inv, \dots ). $$
We have  $\Psi_*(\nu^\Z) = (\theta_*\nu)^\Z$.  

Consider a $\mu$-generic $x$ and $\nu^\Z$-generic $\xi$.  Then 
$$\lim_{n\to \infty} \dfrac 1 n \log \|\restrict{Df^n_{\Psi(\xi)}}{E^1_\xi(x)} \|= 
\lim_{n\to \infty} \dfrac 1 n \log \|\restrict{Df^{-n}_{\xi}} {E^1_\xi(x)} \|= - \lambda^1_\xi(x).$$

Similarly 
$$\lim_{n\to \infty} \dfrac 1 n \log \|\restrict{Df^n_{\Psi(\xi)}}{E^2_\xi(x)}\| =  - \lambda^2_\xi(x).$$
Since $\Psi$ takes $\nu^\Z$-generic words to $(\theta_*\nu)^\Z$-generic words, this completes the proof.  
\end{proof}


Consider  $\nu \in \M(K)$.  We remark that if the fiber-wise exponents  were both positive or both negative, the measure $\mu$ would necessarily by atomic. Hence
for $\nu \in \M(K)$ we have $\lambda^1_{\nu}(x)\le 0\le \lambda_{\nu}^2(x)$.  
%
%


\subsubsection{Invariant measures for the projectivized cocycle}
With $X:= \Sigma \times M$ and $TX = \Sigma \times TM$, let $\Pp  TX$ denote the projectivized tangent bundle $$\Pp  TX: = \Sigma \times \Pp  TM.$$
Given   $(\xi,x,v)$ in $TX$ with  $v\neq 0$,  write $(\xi,x,[v])$  for the class in $\Pp TX$.  
We write $\Pp DF\colon \Pp TX\to \Pp TX$ to denote the action induced by $DF$ on $\Pp TX$.  

For a fixed $\nu\in \M(K)$ let $\eta$ be a $\Pp DF$-invariant, Borel probability measure on $\Pp TX$ which projects to $\nu^\Z\times \mu$ under the natural projection $\Pp TX\to X$.  
Given such an $\eta$ write $\{\eta_{\xi}\}$ for the family of conditional measures induced by the projection $\Pp TX \to \Sigma$. 

Given $\zeta,\xi\in \Sigma$ we have a natural identification of $\{\xi \}\times \Pp T M$ and $\{\zeta \}\times \Pp T M$ and   we view   $\xi\mapsto \eta_{\xi}$ as a measurable map from $X$ to the space of measures on $ \Pp TM$. 

Recall we have two natural partitions of $\Sigma$: the partition into local stable and unstable sets $\{\Sigmalocs\}$ and $\{\Sigmalocu\}$.  The conditional measures on $\Sigma$ induced by either partition is naturally identified with $\nu^\N$.  
Recall (c.f. Section \ref{sec:skewreint} and ignoring the modification in Section \ref{sec:modifyF}) that  we write $\hat \Fol$ for the $\sigma$-algebra of local unstable sets on $\Sigma$.  
We will  say $\eta$ is a \emph{$u$-measure} if 
$\xi  \mapsto \eta_{\xi }$ is $\hat \Fol$-measurable.  
Alternatively $\eta$ is a {$u$-measure} if 
for   $\nu^\Z$-\ae $\xi\in \Sigma$  
 and $\nu^\N$-\ae $\zeta \in \Sigmalocu(\xi)$, we have $$ \eta_{\xi} = \eta_{\zeta}.$$ We similarly define \emph{$s$-measures} and define $\eta$ to be an \emph{$su$-measure} if it is simultaneously an $s$- and $u$-measure.
We remark that $u$-measures correspond to $\nu$-stationary measures on $\Pp TM $ projecting to $\mu$.  (See \cite[Chapter 5]{viana2014lectures} for more details.)

If $\eta$ is an $su$-measure then  the level sets of the map $\zeta\mapsto \eta_{\zeta}$ are essentially saturated by local stable and  local unstable sets.  Since the measure $\nu^\Z$ has product structure, if $\eta$ is an $su$-measure the assignment  $\zeta \mapsto \eta_{\zeta}$ is $\nu^\Z$-\as constant. 
 In particular, if $\nu$ is an $su$-measure, there is a measure $\eta_0$ on $\Pp TM$ projecting to $\mu$ on $M$ with $\eta = \nu^\Z\times \eta_0$.  If $\eta$ is assumed $\Pp DF$-invariant it follows that $\eta_0$ is $\nu$-\as invariant. 

\begin{claim}\label{claim:smeasuresclosed}
Let $\nu_j\in \M(K)$ converge to $\nu$ in the weak-$*$ topology.
Let $\eta_j$ be a sequence of $\Pp  DF$-invariant $u$-measures, projecting to  $(\nu_j)^\Z\times \mu$.  
Then the set of weak-$*$ accumulation points of  $\{\eta_j \}$ is non-empty and consists of $\Pp DF$-invariant $u$-measures projecting to $\nu^\Z\times \mu$.  
\end{claim}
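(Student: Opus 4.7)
The plan is to combine weak-$*$ compactness with the characterization of $u$-measures provided by Proposition \ref{prop:mudef}. Since $K$ is compact in $\diff^2_\mu(M)$, the space $\Sigma \times \Pp TM$ is compact, so by Banach--Alaoglu any subsequence of $\{\eta_j\}$ admits a further subsequence $\eta_{j_k}$ converging weak-$*$ to some Borel probability measure $\eta$ on $\Sigma \times \Pp TM$. I will show any such accumulation point is a $\Pp DF$-invariant $u$-measure projecting to $\nu^\Z \times \mu$.

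First I would verify the standard properties of $\eta$. The map $\Pp DF \colon (\xi, x, [v]) \mapsto (\sigma \xi, f_\xi(x), [D_x f_\xi \cdot v])$ is jointly continuous, since $\xi \mapsto f_\xi = \xi_0$ is continuous into the compact set $K \subset \diff^2_\mu(M)$ and the induced evaluation on $K \times \Pp TM$ is continuous. Consequently $\Pp DF$-invariance passes to the weak-$*$ limit. Similarly the canonical projection $\Sigma \times \Pp TM \to \Sigma \times M$ is continuous, and $(\nu_{j_k})^\Z \times \mu \to \nu^\Z \times \mu$ in weak-$*$ on $\Sigma \times M$ (product measures depend continuously on their factors), so $\eta$ projects to $\nu^\Z \times \mu$.

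The main step is showing $\eta$ is a $u$-measure. Here I plan to apply Proposition \ref{prop:mudef} to the skew product $\Pp DF$ with base $(\Sigma, \nu^\Z, \sigma)$ and fiber $\Pp TM$: given any $\nu$-stationary probability $\bar \eta$ on $\Pp TM$ projecting to $\mu$ on $M$, there is a unique $\Pp DF$-invariant probability on $\Sigma \times \Pp TM$ whose image on $\Sigma_+ \times \Pp TM$ equals $\nu^\N \times \bar \eta$, and by the explicit limit formula \eqref{eq:mulim} this measure is a $u$-measure. Let $\bar \eta_j$ and $\bar \eta$ denote the projections of $\eta_j$ and $\eta$ to $\Pp TM$, so that $\bar \eta_{j_k} \to \bar \eta$ weak-$*$. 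Since each $\eta_j$ is a $u$-measure for $\nu_j$, each $\bar \eta_j$ is $\nu_j$-stationary with projection $\mu$ on $M$; joint weak-$*$ continuity of the averaging operation $(\nu, \bar \eta) \mapsto \int f_* \bar \eta \, d\nu(f)$, a consequence of continuity of $(f, [v]) \mapsto [Df \cdot v]$ on the compact set $K \times \Pp TM$, then implies that $\bar \eta$ is $\nu$-stationary with projection $\mu$. Finally, the $u$-measure property of each $\eta_j$ gives that its image on $\Sigma_+ \times \Pp TM$ equals $(\nu_j)^\N \times \bar \eta_j$; passing this identity through the weak-$*$ limit shows that the image of $\eta$ on $\Sigma_+ \times \Pp TM$ is $\nu^\N \times \bar \eta$, so Proposition \ref{prop:mudef} identifies $\eta$ as the $u$-measure associated to $\bar \eta$.

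The main obstacle I anticipate is verifying the joint weak-$*$ continuity statements in the $C^2$ topology on $K$, used both to obtain continuity of $\Pp DF$ on $\Sigma \times \Pp TM$ and to promote the stationarity of $\bar \eta$ together with the product representation of the $\Sigma_+$-image to the weak-$*$ limit. These properties are standard consequences of compactness of $K$ in $\diff^2_\mu(M)$, but must be invoked carefully since both the driving measure $\nu_{j_k}$ and the candidate stationary measure $\bar\eta_{j_k}$ are varying simultaneously.
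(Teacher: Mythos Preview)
Your argument is correct and essentially matches the standard proof (the paper itself does not prove the claim but defers to \cite[Proposition~5.18]{viana2014lectures}). Your key observation---that the $u$-measure property is equivalent, via Proposition~\ref{prop:mudef} applied with $\Pp TM$ in place of $M$, to the projection onto $\Sigma_+\times\Pp TM$ being the product $\nu^\N\times\bar\eta$ for a $\nu$-stationary $\bar\eta$---is exactly the correspondence the paper alludes to just before the claim, and both the product form and stationarity pass to weak-$*$ limits by the joint continuity you identify on the compact set $K$.
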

\noindent The above claim follows for instance from   \cite[Proposition 5.18]{viana2014lectures}.

We note that there always exist $\Pp DF$-invariant $s$- and $u$-measures.  
However, the existence of $\Pp DF$-invariant $su$-measures is unexpected, absent the existence of  a  $\nu$-\as invariant subbundle $V\subset TM$.  
However, there is a dynamical situation where \emph{every} $\Pp DF$-invariant measure is an $su$-measure.  

\begin{prop}[{\cite{MR850070, MR2651382}}]\label{prop:ledrappier}
Suppose $\nu\in \M(K)$ is such that $\lambda_\nu^1(x)  = 0 = \lambda_\nu^2(x)$ for $\mu$-\ae $x$.  Then any $\Pp DF$-invariant measure $\eta$ for the projectivized cocycle $\Pp D F\colon \Pp TX \to  \Pp TX$ projecting to $\nu^\Z \times \mu$ is an $su$-measure.
\end{prop}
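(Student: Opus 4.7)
The plan is to recognize Proposition \ref{prop:ledrappier} as an instance of the Ledrappier--Avila--Viana invariance principle for smooth (or in our case, locally constant over an i.i.d.\ base) linear cocycles: if the fiber-wise Lyapunov exponents of the derivative cocycle both vanish, then any invariant measure of the projectivization which projects to $\nu^\Z\times \mu$ must be invariant under both stable and unstable holonomies of the cocycle. Since we work over the i.i.d.\ base $(\Sigma, \sigma, \nu^\Z)$, the cocycle $\Pp DF$ is \emph{locally constant} in the base variable in the following sense: whenever $\xi,\zeta\in \Sigma$ agree on all non-negative (resp.\ non-positive) coordinates, the maps $\cocycle = \cocycle[\zeta][n]$ coincide for all $n\ge 0$ (resp.\ $n\le 0$). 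In this setting, the stable and unstable holonomies reduce to the identity on fibers, and invariance under holonomies is precisely the statement that the disintegration $\xi\mapsto \eta_\xi$ is $\hat\Gol$- and $\hat\Fol$-measurable, i.e.\ that $\eta$ is an $su$-measure.

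First I would disintegrate $\eta$ along the projection $\Pp TX\to \Sigma$ to obtain a measurable family $\{\eta_\xi\}$ of probabilities on $\Pp TM$, and fix a continuous test function $\phi\colon \Pp TM\to \R$, setting $\Phi(\xi) := \int \phi\, d\eta_\xi$. The $\Pp DF$-invariance of $\eta$ translates into the equivariance $(\Pp Df_\xi)_*\eta_\xi = \eta_{\sigma(\xi)}$. To prove that $\eta$ is a $u$-measure, I would run the reverse martingale argument from Section \ref{subsec:MCT}: the sequence of conditional expectations $\Exp(\Phi\mid F^{-n}\hat\Fol)$ is a bounded reverse martingale and converges almost surely to an $\hat\Fol_{-\infty}$-measurable limit, and this limit is identified with $\Phi$ itself precisely when no fiber-wise Oseledec direction is exponentially contracted or expanded by $\Pp DF^{-n}$.

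The role of the zero-exponent hypothesis is exactly here. When $\lambda^1_\nu=\lambda^2_\nu=0$, the Oseledec splitting is trivial and there is no asymptotic loss of information; the martingale equals its limit, so $\Phi$ is $\hat\Fol$-measurable, i.e.\ $\eta$ is a $u$-measure. The same argument applied to $F\inv$ with $\hat \Gol$ in place of $\hat \Fol$ shows that $\eta$ is an $s$-measure, concluding that $\eta$ is an $su$-measure. The conclusion that $\eta=\nu^\Z\times\eta_0$ for a $\nu$-\as invariant $\eta_0$ on $\Pp TM$ then follows from the product structure of $\nu^\Z$ as already noted in the discussion preceding the proposition.

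The main obstacle I anticipate is the careful identification, in the zero-exponent regime, of the limit of $\Exp(\Phi\mid F^{-n}\hat\Fol)$ with $\Phi$ itself. In the uniformly hyperbolic or partially hyperbolic settings of \cite{MR850070, MR2651382} this is achieved by an \emph{a priori} Hölder estimate on the cocycle's stable/unstable holonomies; in our  random dynamical system framework, with only the $\log$-integrability \eqref{eq:IC2a} assumed, one must instead exploit tempered distortion bounds (Lemma \ref{prop:tempered}) together with the locally-constant structure of the cocycle to verify that the martingale limit agrees with $\Phi$ on a full measure set. Once this step is in place the rest is bookkeeping, and the result then follows by quoting \cite{MR2651382} directly for the general principle and checking that its hypotheses are met in the $C^2$, $\log$-integrable setting under consideration.
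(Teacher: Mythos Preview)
The paper does not prove this proposition: it is stated with the citations \cite{MR850070, MR2651382} and no argument is given. So there is nothing to compare against in the paper itself beyond the attribution.

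Your identification of the statement as an instance of the Ledrappier/Avila--Viana invariance principle is exactly right, and your observation that in the i.i.d.\ setting the cocycle is locally constant along $\Sigmalocs(\xi)\times\{x\}$ and $\Sigmalocu(\xi)\times\{x\}$---so that the stable and unstable holonomies of the cocycle are literally the identity and holonomy-invariance reduces to $\hat\Gol$- and $\hat\Fol$-measurability of $\xi\mapsto\eta_\xi$---is the correct way to translate the conclusion of \cite{MR2651382} into the language of $su$-measures used here. That is all the paper is implicitly asking the reader to do.

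Your sketched proof via a reverse martingale is more ambitious than necessary and also not quite how the cited references proceed. Ledrappier's argument in \cite{MR850070} (see also \cite[Chapter~7]{viana2014lectures}) goes through Furstenberg's formula: one shows that for any $\Pp DF$-invariant $\eta$ projecting to $\nu^\Z\times\mu$, the integral $\int\log\|\restrict{D_xf_\xi}{E}\|\,d\eta$ lies in $[\lambda^1_\nu,\lambda^2_\nu]$, with equality to $\lambda^2_\nu$ characterizing $u$-measures and equality to $\lambda^1_\nu$ characterizing $s$-measures; when $\lambda^1_\nu=\lambda^2_\nu$ every such $\eta$ is forced to be both. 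The Avila--Viana version in \cite{MR2651382} is an energy/entropy argument rather than a martingale convergence. Your martingale heuristic is not wrong in spirit, but the step you flag as the ``main obstacle''---identifying the limit with $\Phi$---is precisely the content of the theorem, and would require essentially reproving \cite{MR2651382}. Since the paper is content to cite, you should be too: simply note that the hypotheses of the invariance principle (continuous cocycle over a hyperbolic base with local product structure and invariant holonomies, here trivially the identity) are satisfied, and invoke the result.
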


In what follows, we will primarily focus on measures $\nu$ such that $\mu$ is ergodic and has two distinct Lyapunov exponents $\lambda^1_\nu< \lambda^2_\nu$ for $DF$.  
In this case we have two canonical measures $\eta^1_{\nu}$ and $\eta^2_{\nu}$ given by 
\begin{equation} \label{eq:msrs}d \eta^j_{\nu}(\xi,x, [v]): = \ d \delta_{E^j_{\xi}(x) } ([v]) \ d \mu(x) \ d \nu^\Z(\xi)\end{equation}
where  ${E^j_{\xi}(x) } $ is the associated subspace of the Lyapunov splitting  \eqref{eq:OscSplitting}.  
By the $DF$-invariance of the distributions ${E^j_{\xi}(x) } $, we have that the measures $\eta^j_{\nu}$ are $\Pp  DF$-invariant.  Furthermore, it follows from Proposition \ref{prop:measurability} that $\eta^2_{\nu}$ is a $u$-measure and $\eta^1_{\nu}$ is an $s$-measure.

In the above setting, the measures defined by \eqref{eq:msrs} are the only ergodic $\Pp DF$-invariant measures  on $\Pp TX$ projecting to $\nu^\Z\times \mu$. Indeed,
\begin{claim}\label{lem:decomp}
Let $\nu\in \M(K)$ be   such that  $\mu$ is ergodic for $\nu$ and has  two distinct Lyapunov exponents.  
  Then any $\Pp DF$-invariant probability measure $\eta$ projecting to $\nu^\Z \times \mu$ is of the form 
$$\eta = a \eta^1_{\nu} +(1-a) \eta^2_{\nu}$$
for some $a\in [0,1]$.
\end{claim}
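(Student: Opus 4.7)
The plan is to use the ergodic decomposition of $\eta$ together with Birkhoff's theorem applied to both the forward and backward dynamics of $\Pp DF$. First, I decompose $\eta$ into $\Pp DF$-ergodic components $\eta = \int \eta^\omega \, d\P(\omega)$. Each $\eta^\omega$ is $\Pp DF$-invariant, and its pushforward to $X$ is $F$-invariant. Since $\nu^\Z \times \mu$ is $F$-ergodic (by the hypothesis that $\mu$ is $\nu$-ergodic) and $\int (\pi_X)_*\eta^\omega \, d\P(\omega) = \nu^\Z \times \mu$, each component satisfies $(\pi_X)_*\eta^\omega = \nu^\Z \times \mu$. So it suffices to show that any $\Pp DF$-ergodic measure $\hat \eta$ projecting to $\nu^\Z \times \mu$ coincides with either $\eta^1_\nu$ or $\eta^2_\nu$.

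Fix such an ergodic $\hat\eta$ and consider the function $\phi\colon \Pp TX \to \R$ defined by
\[
\phi(\xi,x,[v]) := \log \frac{\|Df_\xi v\|}{\|v\|}.
\]
The integrability condition \eqref{eq:IC2} and compactness of $K$ ensure $\phi \in L^1(\hat\eta)$. Setting $\lambda_{\hat\eta} := \int \phi \, d\hat\eta$, Birkhoff applied to $\Pp DF$ (forward and backward) together with the telescoping computation $\sum_{k=0}^{n-1}\phi((\Pp DF)^k(\xi,x,[v])) = \log(\|Df^n_\xi v\|/\|v\|)$ gives, for $\hat\eta$-a.e. $(\xi,x,[v])$, both
\[
\lim_{n\to\infty}\tfrac 1 n\log\|Df^n_\xi v\| = \lambda_{\hat\eta} \qquad\text{and}\qquad \lim_{n\to\infty}\tfrac 1 n \log\|Df^{-n}_\xi v\| = -\lambda_{\hat\eta}.
\]

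Now I invoke Oseledec's splitting: for $\nu^\Z\times\mu$-a.e. $(\xi,x)$ and any nonzero $v\in T_xM$, the forward and backward rates above take the explicit values
\begin{enumerate}
\item $(\lambda^1,-\lambda^1)$ if $v\in E^1_\xi(x)$,
\item $(\lambda^2,-\lambda^2)$ if $v\in E^2_\xi(x)$,
\item $(\lambda^2,-\lambda^1)$ otherwise.
\end{enumerate}
The only way both equations can hold simultaneously is $v\in E^1_\xi(x)$ (forcing $\lambda_{\hat\eta}=\lambda^1$) or $v\in E^2_\xi(x)$ (forcing $\lambda_{\hat\eta}=\lambda^2$). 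Thus $\hat\eta$ is supported on the graph of either $[E^1]$ or $[E^2]$ in $\Pp TX$. Since each such graph admits a unique lift of $\nu^\Z\times\mu$ given by the Dirac disintegrations in \eqref{eq:msrs}, we conclude $\hat\eta = \eta^1_\nu$ or $\hat\eta = \eta^2_\nu$. Integrating over the ergodic decomposition yields $\eta = a\eta^1_\nu + (1-a)\eta^2_\nu$.

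The main subtle point is the third case above: the forward Birkhoff condition alone does not distinguish $E^2_\xi(x)$ from generic directions, since both exhibit forward growth rate $\lambda^2$. This is the reason the argument crucially requires the \emph{backward} Birkhoff limit, which is available because $F$ (and hence $\Pp DF$) is invertible; the backward exponent pins down $v$ to lie in the slower backward subspace. Measurability of the Oseledec splitting and checking that the sets where the Lyapunov limits fail to exist are $\hat\eta$-null (using $(\pi_X)_*\hat\eta = \nu^\Z\times\mu$) are routine verifications I would need to record but do not anticipate causing difficulty.
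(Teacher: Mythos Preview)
Your argument is correct. The paper takes a different and somewhat more direct route: instead of passing to ergodic components and invoking two-sided Birkhoff averages, it defines the ratio function $\psi(\xi,x,[v]) = \|v^1\|/\|v^2\|$ on $\Pp TX \setminus \{[E^1]\}$ (where $v = v^1 + v^2$ is the Oseledec decomposition), observes that $\psi(\Pp DF^n(\xi,x,[v])) \to 0$ for every point with $v \notin E^1_\xi(x)$ because $\lambda^1 < \lambda^2$, and then applies Poincar\'e recurrence to conclude $\eta(\psi^{-1}(0,\infty)) = 0$. This forces $\eta$ to be supported on $\{[E^1]\} \cup \{[E^2]\}$, and the convex decomposition follows immediately since each of these is a graph over $\nu^\Z \times \mu$.

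The paper's approach is shorter and uses only forward dynamics plus recurrence---no ergodic decomposition and no backward limit. Your approach, on the other hand, gives the additional structural information that the $\Pp DF$-ergodic measures over $\nu^\Z \times \mu$ are exactly $\eta^1_\nu$ and $\eta^2_\nu$, distinguished by the value of $\int \phi \, d\hat\eta$; this is a natural viewpoint when one subsequently wants to understand limits of such measures (as in the proof of Proposition~\ref{prop:discont}). Both arguments ultimately hinge on the same Oseledec dichotomy, but they extract it by different mechanisms.
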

\begin{proof}
For $ (\xi,x, v)\in TX\sm E^1(\xi,x)$  write 
 $v = v^1+ v^2$ with $ v^j\in E^j_{\xi} (x)$ and define 
$\psi( \xi,x, v)\in [0,\infty)$ by 
$$ \psi( \xi,x, v) = \dfrac{\|v^1\|}{\|v^2\|}.$$ 
As $\psi( \xi,x, tv) = \psi( \xi,x, v)$, $\psi$ descends to a   function $$\psi\colon \Pp TX \sm E^1(\xi,x)  \to [0,\infty).$$

For $(\xi,x, v)\in TX\sm E^1(\xi,x)$, we have that 
$$DF^n(\xi,x, v) = \left(\sigma^n(\xi), f_\xi^n(x), D_xf_\xi^n v^1+ D_xf_\xi^n  v^2\right)$$
and hence for any sufficiently small $\epsilon>0$ and $\nu^\Z\times \mu$-\ae $(\xi,x)$ there is a $c$ with 
	$$\psi\left(\Pp DF^n(\xi,x, [v])\right) \le  c\exp \left( n(\lambda^1_\nu-\lambda^2_\nu + 2\epsilon)\right)\psi( \xi,x, [v]).$$
In particular, for almost every $(\xi,x, [v])\in \Pp TX\sm E^1(\xi,x)$ we have  $$\psi\left(\Pp DF^n(\xi,x, [v])\right) \to 0$$ 
as $n\to \infty$.  
By Poincar\'e recurrence, we conclude that $\eta(\psi\inv (0,\infty)) = 0 $.  
\end{proof}

In the context of Theorem \ref{thm:4} we have the following characterization of $su$-measures.  
\begin{lemma}\label{lem:nosu}
Let $\nu\in \M(K) $ be such that $\nu(f)>0$.  Then there exists a $\Pp DF$-invariant, $su$-measure projecting to  $\nu^\Z\times \mu$ if and only if one of the subbundles  $\{E^u_f, E^s_f\}$ or their union $E^u_f\cup E^s_f$ is $\nu$-\as invariant. 
\end{lemma}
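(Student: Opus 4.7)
The plan is to prove both directions by exploiting the product structure forced on $su$-measures together with Claim \ref{lem:decomp} applied to the single diffeomorphism $f$.

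For the backward direction, I would simply construct the $su$-measure. If $E^\sigma_f$ ($\sigma\in\{s,u\}$) is $\nu$-a.s.\ invariant, put $\eta_0 := \int \delta_{E^\sigma_f(x)}\, d\mu(x)$ on $\Pp TM$, so that $(Dg)_*\eta_0 = \eta_0$ for $\nu$-a.e.\ $g$; then $\eta := \nu^\Z\times \eta_0$ is $\Pp DF$-invariant by direct inspection of the definition of $\Pp DF$, and its product form makes it trivially both an $s$- and a $u$-measure. If instead only the union $E^u_f\cup E^s_f$ is $\nu$-a.s.\ invariant (so each $Dg$ may permute the two distributions), replace $\eta_0$ by $\tfrac12\int (\delta_{E^s_f(x)}+\delta_{E^u_f(x)})\, d\mu(x)$ and the same argument goes through since $Dg$ permutes the two atoms of the fiber measure while preserving their equal masses.

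For the forward direction, suppose $\eta$ is a $\Pp DF$-invariant $su$-measure projecting to $\nu^\Z\times \mu$. By the definition of $su$-measure the map $\xi\mapsto \eta_\xi$ is measurable with respect to both the local-unstable- and local-stable-saturated sub-$\sigma$-algebras of $\Sigma$. Because $\nu^\Z$ is a product measure, the intersection of these two sub-$\sigma$-algebras is trivial modulo $\nu^\Z$ (the factor-$\Sigma$ analogue of Proposition \ref{prop:hopf}), so $\xi \mapsto \eta_\xi$ is $\nu^\Z$-a.s.\ constant, equal to some $\eta_0 \in \M(\Pp TM)$, and $\eta = \nu^\Z\times \eta_0$. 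Invariance of $\eta$ under $\Pp DF$ is then equivalent to $(Dg)_*\eta_0 = \eta_0$ for $\nu$-a.e.\ $g$; in particular, using $\nu(f) > 0$, the measure $\eta_0$ is $Df$-invariant.

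I would then apply Claim \ref{lem:decomp} to the degenerate skew product with $\nu$ replaced by $\delta_f$, using that $\mu$ is $f$-ergodic and hyperbolic with two distinct exponents, to conclude
\[
\eta_0 \;=\; a \int \delta_{E^s_f(x)}\, d\mu(x) \;+\; (1-a)\int \delta_{E^u_f(x)}\, d\mu(x)
\]
for some $a\in[0,1]$. A fiberwise analysis of $(Dg_x)_*\eta_0 = \eta_0$ then finishes the proof: the cases $a=1$ and $a=0$ force $\nu$-a.s.\ invariance of $E^s_f$ and $E^u_f$ respectively, while for $0<a<1$ the matching of atom weights forces $Dg$ either to preserve both atoms individually or to interchange them. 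In the first subcase each of $E^s_f, E^u_f$ is individually $\nu$-a.s.\ invariant; the interchange subcase is compatible with the measure identity only when $a=1/2$, and in that case $Dg$ preserves the union $E^s_f\cup E^u_f$ for $\nu$-a.e.\ $g$. The only delicate point is this $a=1/2$ subcase, where the conclusion really is only about the union---precisely why the lemma lists the union as a separate alternative.
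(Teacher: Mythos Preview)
Your proof is correct and follows essentially the same approach as the paper: in the backward direction you construct the same explicit $su$-measures, and in the forward direction you use the product form $\eta=\nu^\Z\times\eta_0$ of an $su$-measure together with the $Df$-invariance of $\eta_0$ and Claim~\ref{lem:decomp} (applied with $\nu=\delta_f$) to see that $\eta_0$ is supported on $E^s_f\cup E^u_f$. The paper stops there, concluding directly from the support of $\eta_0$, while you additionally carry out the case analysis on the mixing coefficient $a$; this extra detail is fine but not needed, since $Dg$-invariance of $\eta_0$ already forces $Dg$ to preserve its support for $\nu$-a.e.\ $g$.
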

\begin{proof}
The only if case is clear.  Indeed if $E^u_f$ is $\nu$-\as invariant then $\eta$ defined by $d \eta = \ d \delta _{E^u_f }\ d \mu \ d \nu^\Z$ is an $su$-measure.  If  the union $E^u_f\cup E^s_f$ is $\nu$-\as invariant we may take $$d \eta = \tfrac 1 2 \ d \delta _{E^u_f }\ d \mu \ d \nu^\Z + 
\tfrac 1 2 \ d \delta _{E^s_f }\ d \mu \ d \nu^\Z.$$  

To prove the converse, suppose $\eta$ is a $\Pp DF$-invariant, $su$-measure on $\Pp TX$.  
As remarked above, there is a measure  $\eta_0$ on $\Pp TM$, projecting to $\mu$, such that $\eta = \nu^\Z \times \eta_0$.  Furthermore, such $\eta_0$ is $Dg$-invariant for $\nu$-\ae $g$.  Since $\nu(f)>0$ we have  $Df_*(\eta_0) = \eta_0$.  However, by the hyperbolicity of $f$ and arguments analogous to the proof of Claim \ref{lem:decomp},
the only such measures are supported on $E^u_f \cup E^s_f$.  
\end{proof}

\subsubsection{Characterization of discontinuity of exponents}
\def \erg{\mathrm{erg}}
Let $\M_\erg(K)\subset \M(K)$  be the set of $\nu$ such that $\mu$ is ergodic for $\nu$.  
Then for $\nu\in\M_\erg(K)$ the Lyapunov exponents 
$\lambda^1_{\nu} \le \lambda^2_{\nu}$ are independent of $x$.  
We study the  continuity properties of  the maps $$\lambda^j_{(\cdot)} \colon \M_\erg \to \R$$ 
as $\nu$ varies in $\M_\erg(K)$ with the weak-$*$ topology.  
 The arguments here are well known. (See for example  \cite{Bocker-Neto:2010aa,viana2014lectures} and references therein.)

\begin{prop}\label{prop:discont}
Let $\nu\in \M_\erg(K)$ be a point of discontinuity for one of $\lambda^1_{(\cdot)}  $, $ \lambda^2_{(\cdot)} $. 
Then \begin{enumerate}
	\item $\lambda^1_{\nu}< \lambda^2_{\nu}$, and 
	\item there exists a $\Pp DF$-invariant $su$-measure $\eta$ projecting to $\nu^\Z\times \mu$.  
\end{enumerate}
\end{prop}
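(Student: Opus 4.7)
The plan is to analyze a sequence $\nu_n \to \nu$ in $\M_\erg(K)$ witnessing the discontinuity, using a variational formula for the Lyapunov exponents together with Claims \ref{claim:smeasuresclosed} and \ref{lem:decomp}. Define $\Phi\colon \Pp TX \to \R$ by
$$\Phi(\xi,x,[v]) := \log \frac{\|D_x f_\xi v\|}{\|v\|}.$$
Since $K$ is compact and symmetric, $\Phi$ is bounded and continuous. For any ergodic $\Pp DF$-invariant probability $\eta$ projecting to $\nu^\Z\times \mu$, the Birkhoff ergodic theorem identifies $\int \Phi\, d\eta$ with the fiber-wise Lyapunov exponent in the $\eta$-almost-sure direction; in particular $\int \Phi\, d\eta^j_\nu = \lambda^j_\nu$ for $j=1,2$. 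Combined with Claim \ref{claim:smeasuresclosed} (closedness of $u$-measures under weak-$*$ limits) and Claim \ref{lem:decomp} (the integral of $\Phi$ against any invariant measure is a convex combination of the two exponents whenever they differ), this yields that $\lambda^2_{(\cdot)}$ is upper semicontinuous on $\M_\erg(K)$ and, symmetrically, $\lambda^1_{(\cdot)}$ is lower semicontinuous.

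Without loss of generality (via the involution $\theta$ of \eqref{eq:invol}, which exchanges $u$- and $s$-measures and sends $\lambda^2$ to $-\lambda^1$), assume $\nu$ is a discontinuity point of $\lambda^2_{(\cdot)}$. By upper semicontinuity, choose $\nu_n \to \nu$ in $\M_\erg(K)$ with $\lim_n \lambda^2_{\nu_n} < \lambda^2_\nu$; after passing to a subsequence, $\eta^2_{\nu_n} \to \eta$ weak-$*$. By Claim \ref{claim:smeasuresclosed}, $\eta$ is a $\Pp DF$-invariant $u$-measure projecting to $\nu^\Z\times \mu$, and
$$\int \Phi\, d\eta \;=\; \lim_n \int \Phi \, d\eta^2_{\nu_n} \;=\; \lim_n \lambda^2_{\nu_n} \;<\; \lambda^2_\nu.$$

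For conclusion (1): if $\lambda^1_\nu = \lambda^2_\nu$, then every ergodic $\Pp DF$-invariant measure $\eta'$ projecting to $\nu^\Z\times \mu$ concentrates on a direction whose fiber exponent equals the common value $\lambda^2_\nu$, so $\int \Phi\, d\eta' = \lambda^2_\nu$; an ergodic decomposition of $\eta$ extends this equality to $\eta$ and contradicts the strict inequality above. Therefore $\lambda^1_\nu < \lambda^2_\nu$. For conclusion (2), Claim \ref{lem:decomp} now applies to $\eta$ and yields $\eta = a\eta^1_\nu + (1-a)\eta^2_\nu$ with $a \in [0,1]$; since $\int \Phi\, d\eta = a\lambda^1_\nu + (1-a)\lambda^2_\nu$, the strict inequality forces $a > 0$. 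The support $S_2 = \{(\xi,x,[E^u_\xi(x)])\}$ of $\eta^2_\nu$ is $\hat \F$-measurable by Proposition \ref{prop:measurability}, so $S_2^c$ is $\hat \F$-measurable as well; since $\eta$ sits on $S_1 \cup S_2$ with $S_1 = \{(\xi,x,[E^s_\xi(x)])\}$ disjoint from $S_2$, we have $\eta^1_\nu = a^{-1}\,\eta|_{S_2^c}$. Restricting the $u$-measure $\eta$ to the $\hat\F$-measurable set $S_2^c$ preserves $\hat\F$-measurability of the $\xi$-conditionals, so $\eta^1_\nu$ is a $u$-measure. Since $\eta^1_\nu$ concentrates on $E^s$, which depends only on the forward itinerary, it is automatically an $s$-measure, and hence the desired $su$-measure.

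The main obstacle is the verification in (2) that the $u$-measure property descends from $\eta$ to the component $\eta^1_\nu$: this exploits the asymmetry recorded in Proposition \ref{prop:measurability}, namely that $E^u$ is $\hat\F$-measurable while $E^s$ is only $\hat\G$-measurable, so that $\eta^1_\nu$ can be extracted from $\eta$ by restriction to a $\hat\F$-measurable subset rather than from some more delicate disintegration. Everything else is a standard semicontinuity-plus-ergodic-decomposition package.
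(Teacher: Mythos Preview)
Your argument is correct and follows essentially the same strategy as the paper: semicontinuity of the exponents is obtained by passing to weak-$*$ limits of $s$- or $u$-measures on $\Pp TX$ and invoking the decomposition of Claim~\ref{lem:decomp}. You run the argument from the dual side---taking $u$-measures $\eta^2_{\nu_n}$ and concluding that $\eta^1_\nu$ is $su$---whereas the paper takes $s$-measures $\eta^1_{\nu_j}$ and concludes that $\eta^2_\nu$ is $su$; the two are interchangeable via the involution you invoke.

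Two small remarks. First, when $\nu_n$ has equal exponents the measure $\eta^2_{\nu_n}$ of \eqref{eq:msrs} is not defined; the paper handles this by taking any $\Pp DF$-invariant measure over $\nu_n^\Z\times\mu$, which by Proposition~\ref{prop:ledrappier} is automatically an $su$-measure with integral $0=\lambda^2_{\nu_n}$. Second, your extraction of $\eta^1_\nu$ as a $u$-measure via restriction to the $\hat\Fol$-measurable set $S_2^c$ is correct but more elaborate than necessary: since both $\eta$ and $\eta^2_\nu$ are $u$-measures and the $u$-measure condition is preserved under linear combinations, the identity $\eta^1_\nu = a^{-1}\bigl(\eta-(1-a)\eta^2_\nu\bigr)$ already shows $\eta^1_\nu$ is a $u$-measure. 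This is exactly how the paper concludes (on the $s$-side) that $\eta^2_\nu=\beta^{-1}(\eta_0-\alpha\eta^1_\nu)$ is an $s$-measure.
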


\begin{proof}
We first consider the case where $\lambda^1_{\nu}= \lambda^2_{\nu}$.  Recall then that   $\lambda^1_{\nu} = \lambda^2_{\nu}= 0.$
 Suppose $\lambda^1_{(\cdot)}$ is discontinuous at $\nu\in M_\erg(K)$.  
 Then there is some $\epsilon>0$ and  a sequence $\displaystyle {\nu_j\to \nu}$ in $\M_\erg(K)$ 
with  $\lambda^1_{\nu_j}<-\epsilon< 0$  for every $j$.  
For such $j$, we have two distinct exponents $\lambda^1_{\nu_j}<0\le \lambda^2_{\nu_j}. $   By the pointwise ergodic theorem we have
\[\lambda^1_{\nu_j}= \int \log \|\restrict{D_x f_\xi}{E^1_\xi(x)} \| \ d \mu(x) \ d\nu^\Z(\xi)=
\int \log {\restrict{\|D_x f_\xi}{[ v]}\|}  \ d \eta_{\nu_j}^1(\xi,x,[v])\]
where $\eta_{\nu_j}^1$ are as defined in \eqref{eq:msrs}.
Let $\eta_0$ be an  accumulation point of $\{ \eta_{\nu_j}^1\}.$   Passing to subsequences assume $\eta_{\nu_j}^1 \to \eta_0$.  Since each $\eta_{\nu_j}$ is $\Pp DF$-invariant, it follows that $\eta_0$ is $\Pp DF$-invariant.  

Note that $(\xi,x, E) \mapsto  \| \restrict{D_x f_\xi }{E } \|$ is a continuous 
function on $\Pp TX$.  
By 
 weak-$*$ convergence we have 
\begin{align}-\epsilon
\ge \lim_{j\to \infty}  
\int \log {\restrict{\|D_x f_\xi}{[ v]}\|}  \ d \eta_{\nu_j}^1(\xi,x,[v])
=
\int \log {\restrict{\|D_x f_\xi}{[ v]}\|}  \ d \eta_{0}(\xi,x,[v]).
\label{eq:oddity}
\end{align}
From the pointwise ergodic theorem, \eqref{eq:oddity} implies that for $(\nu^\Z\times \mu)$-\ae $(\xi,x)\in \Sigma \times M$ there is a $v \in T_x M$ with 
$$\lim_{n\to \infty}\frac 1 n \|D\cocycle [\xi] [n] (v) \| <-\epsilon$$ contradicting that  $  \lambda^1_\nu= 0$.  This shows that if $ \lambda^1_\nu= 0$, 
then $\lambda^1_{(\cdot)}$ is continuous at $\nu$.  Similarly,  $\lambda^2_{(\cdot)}$ is continuous at $\nu$ if $ \lambda^2_\nu= 0$.

We  now assume that $\lambda^1_{\nu}< \lambda^2_{\nu}$.   Suppose again that $\lambda^1_{(\cdot)}$ is discontinuous at $\nu$.  
 Then there is a convergent sequence   $\nu_j\to\nu$ in $\M_\erg(K)$ with 
$$\lim _{j\to \infty}\lambda^1_{\nu_j} \neq \lambda^1_{\nu}.$$
We may then select a sequence of $\Pp DF$-invariant  $s$-measures $\eta_j$ projecting to $\nu_j^\Z\times \mu$ with 
\[\lambda^1_{\nu_j}: = \int \log {\restrict{\|D_x f_\xi}{[ v]}\|}    \ d \eta_j(\xi,x,[v]).\]
Indeed if $\lambda^1_{\nu_j}< \lambda^2_{\nu_j}$ we may take the canonical $s$-measures $\eta_j= \eta^1_j$.  Otherwise 
we have $\lambda^1_{\nu_j}= \lambda^2_{\nu_j}=0$  and hence, by Proposition \ref{prop:ledrappier}, we may take $\eta_j$ to be any $\Pp DF$-invariant measure with projection $\nu_j^\Z\times \mu$.  

Let $\eta_0$ be any accumulation point of $\{\eta_j\}$.  Again, $\eta_0$ is $\Pp DF$-invariant and by Lemma \ref{lem:decomp} we have 
\[\eta_0 = \alpha \eta^1_{\nu} +  \beta \eta^2_{\nu},\quad \alpha +\beta=1.\] Moreover, by weak-$*$ convergence we have 
\begin{align*} \alpha \lambda^1_\nu + \beta \lambda^2_\nu &= 
\int \log {\restrict{\|D_x f_\xi}{[ v]}\|}  \ d (\alpha \eta^1_{\nu} +  \beta \eta^2_{\nu} ) (\xi,x,[v]) \\&=
\int \log {\restrict{\|D_x f_\xi}{[ v]}\|}   \ d \eta_0(\xi,x,[v]) &\\&= \lim _{j\to \infty} 
\int \log {\restrict{\|D_x f_\xi}{[ v]}\|}   \ d \eta_j(\xi,x,[v]) &\\&= \lim _{j\to \infty}  \lambda^1_{\nu_j} \neq \lambda^1_{\nu}.
\end{align*}
\noindent It follows that $\alpha\neq 1$, whence $\beta\neq 0$.  
By Claim \ref{claim:smeasuresclosed},   $\eta_0$ is an $s$-measure.  
On the other hand, we have that $\eta^1_{\nu}$ is an $s$-measure and $\eta^2_{\nu}$ is an $u$-measure whence 
$$\eta^2_{\nu} = \frac{1}\beta (\eta_0  - \alpha\eta^1_{\nu} )$$ is an $su$-measure. 
\end{proof}


\newcommand{\inter}[1]{\mathrm{int}(#1)}

\subsubsection{Proof of Theorem \ref{thm:4}: irreducible case} 
We   prove the  conclusion of Theorem \ref{thm:4}\ref{thm:4:1}.  
Let $f$ be as in Theorem \ref{thm:4}.
Under the hypotheses of Theorem \ref{thm:4}\ref{thm:4:1} we 
 may find $g_1, g_2\in \Gamma$ with 
$Dg_1E_f^u \not\subset E_f^s\cup E^u_f$
and $Dg_2E^s_f \not \subset  E_f^s\cup E^u_f$.   
Indeed, without loss of generality we may assume there is $g_2\in \Gamma$ with 
 $Dg_2E_f^s (x) \not\subset \{E^u_f(g_2(x)),E^s_f(g_2(x))\} $ for all $x\in A$ with $\mu(A)>0$.  
  Let $g\in \Gamma$ be such that $DgE_f^u (x) \neq E^u_f(x)$ for all $x\in B$ with $\mu(B)>0$.  If 
$DgE_f^u (x) = E^s_f(g(x)) $ for almost every $x\in B$ then there is some $k$ with $\mu(f^k(g(B)) \cap A)>0
$.  Then take $g_1= g_2\circ f^k\circ g$.



Let $K = \{ f,f\inv, g_1, g_1\inv, g_2, g_2\inv\}$.  Then 
$ \M(K)$  is the  simplex $\Delta$ given by the  convex hull of 
\[\left\{\delta_f, \delta_{f\inv}, \delta_{g_1}, \delta_{g_1\inv}, \delta_{g_2}, \delta_{g_2\inv}\right\}.\]
We write $\inter \Delta$ for the interior of the simplex $\Delta$.  

\begin{proof}[Proof of Theorem \ref{thm:4}\ref{thm:4:1}]
Note that for $\nu\in \inter \Delta$ we have $\nu(f)>0$, whence $\mu$ is ergodic for $\nu$.
It follows from the choice of $g_i$, Proposition \ref{prop:discont}, and Lemma \ref{lem:nosu} that every $\nu \in \inter \Delta$ is a continuity point of the functions $\nu\mapsto \lambda^1_{\nu},\nu\mapsto  \lambda^2_{\nu}$.  Indeed, were $\nu$ a discontinuity point, there would exist a $\Pp DF$-invariant $su$-measure projecting to $\nu^\Z\times \mu$ which by Lemma \ref{lem:nosu} would imply union of the two  distributions $E^s_f\cup E^u_f$ is $Dg_1$ and $Dg_2$ invariant.  
Moreover,  for $\nu \in \inter \Delta$, at least one  $\lambda^1_\nu$, $\lambda^2_{\nu}$ is non-zero.  Indeed by Proposition \ref{prop:ledrappier}, if $\lambda^1_\nu= \lambda^2_{\nu}=0$ then there exists a $\Pp DF$-invariant $su$-measure over $\nu^\Z\times \mu$ which again, by Lemma \ref{lem:nosu}, contradicts the choice of $g_i$.   

Let $P,N\subset \Delta$ be the sets
$$P= \{ \nu\in \Delta \mid \lambda_\nu^2 >0\},\quad N= \{ \nu\in \Delta \mid \lambda_\nu^1 <0\} .$$  By the continuity of $\lambda^j$ the sets $P$ and $N$ are open in $\inter \Delta$.  Furthermore, the simplex $\Delta$ is invariant under the involution \eqref{eq:invol} whence $P$ is non-empty if and only if $N$ is non-empty.  Since there are no $\nu \in \inter \Delta$ with all exponents of $\mu$ of the same sign or all zero, it follows that $\{P,N\}$ is an open cover of $\inter \Delta$.  In particular there exist a $\nu_0 \in \inter \Delta$ such that $\lambda^1_\nu< 0<\lambda^2_\nu$.  

The conclusion then follows from Theorem  \ref{thm:skewproductABS} for $\nu_0$.  
  Indeed 
  we have that $\mu$ is an ergodic, hyperbolic, $\nu_0$-stationary measure that is not finitely supported. 
  Recall that sub-$\sigma$-algebras $\Fol$ and $\Gol$ on $\Sigma \times M$. 
  If $(\xi,x)\mapsto E^1_\xi(x)$  were $\Fol$-measurable, then since is  it $\Gol$-measurable, we have  $E^1_\xi(x) = V(x)$ for some $\nu_0$-\as invariant $\mu$-measurable line field  $V\subset TM$.  
As $\nu_0(f)>0$, by the hyperbolicity of $f$, we can conclude that $V(x)$ coincides with either 
 $E^u_f(x)$ or $E^s_f(x)$ for almost every $x$.  By the ergodicity of $ f$ and $f$-invariance of $V$, $E^u_f$ and $E^s_f$, it follows that $V(x) = E^s_f(x)$ \as or $V(x) = E^u_f(x)$ \as
 The hypotheses on $g_i$ ensure no such $V(x)$ exists and thus the measure  
 $\nu_0^\Z\times \mu$ is fiber-wise-SRB for the skew product $F$. 

%
 
 Repeating the above argument, and using the fact that $\mu$ is $\nu_0$-\as invariant 
 we conclude that 
$\nu^\Z\times \mu$ is fiber-wise-SRB for the skew product $F\inv$.  It follows from the transverse absolute continuity property of stable and unstable manifolds discussed in the proof of Theorem \ref{thm:3}  that $\mu$ is absolutely continuous.  \end{proof}

%



\subsubsection{Proof of Theorem \ref{thm:4}: reducible case}
We  prove Theorem \ref{thm:4}\ref{thm:4:2}. Theorem  \ref{thm:4}\ref{thm:4:3} is proved similarly.   Note that 
in this case, the continuity of exponents  follows immediately from the hypotheses.  

Let $f$ be as in  Theorem \ref{thm:4}\ref{thm:4:2} and take $g\in \Gamma$ with $Dg E^s_f(x)\neq E^s_f(g(x))$ for a positive measure set of $x$.  Let $K = \{ f, f\inv, g, g\inv\}.$
For $t\in [0,1]$ write 
$$\nu_t:= t\delta_f + (1-t) \delta_g.$$
Note that for $t> 0$ we have $\nu_t\in \M_\erg(K)$.  

Write $V(x)= E^u_f(x)$.   By hypotheses, the line field $V$ is preserved by $f$ and $g$.  
Define \begin{equation}\label{eq:jew}\chi(t):= \int t \log \|\restrict{D_xf} {V_x}\| + (1-t) \log \|\restrict{D_xg} {V_x}\| \ d \mu(x).\end{equation}
It follows that $\chi(t)$ is a Lyapunov exponent for the $\nu_t$-stationary measure $\mu$.   Fixing a Riemannian structure on $M$, 
define the average Jacobian $J(\nu_t) = \int t \log |\det D_xf| + (1-t)\log |\det D_xg| \ d \mu(x)$. Then from \eqref{eq:jacobians}
\begin{equation}\label{eq:gentile}\td \chi(t) : = J(\nu_t) - \chi(t)\end{equation} is also a Lyapunov exponent.  
This establishes   the following.  
\begin{claim}\label{cor:contSRB}
For $t\in (0,1]$ the Lyapunov exponents $\lambda_{\nu_t}^j $ are continuous.  
\end{claim}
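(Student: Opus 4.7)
The plan is to observe that we already have explicit formulas for both Lyapunov exponents of $\mu$ as a $\nu_t$-stationary measure, and these formulas are manifestly continuous (in fact affine) in $t$.

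First, for $t\in(0,1]$ we have $\nu_t(f)=t>0$, so by the ergodicity claim established earlier in this section, $\mu$ is ergodic for $\nu_t$. Hence the Lyapunov exponents $\lambda^1_{\nu_t}\le \lambda^2_{\nu_t}$ are $\nu_t^\Z\times\mu$-almost surely constant. Next, by hypothesis in Theorem \ref{thm:4}\ref{thm:4:2}, the line field $V=E^u_f$ is preserved by $f$ and by $g$, so $V$ is a $\nu_t$-almost surely invariant measurable subbundle. Therefore, for $\nu_t^\N$-a.e.\ word $\omega$ and $\mu$-a.e.\ $x$, the subspace $V(x)$ is contained in one of the subspaces of the filtration \eqref{eq:filtration}, and the exponent along $V$ is a Lyapunov exponent of $\mu$ at $x$.

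Applying Birkhoff's ergodic theorem to the ergodic system $(F,\nu_t^\Z\times\mu)$ and the function $(\xi,x)\mapsto \log\|\restrict{D_xf_\xi}{V(x)}\|$, this exponent is given precisely by the right-hand side of \eqref{eq:jew}, i.e.\ $\chi(t)$. Thus $\chi(t)\in\{\lambda^1_{\nu_t},\lambda^2_{\nu_t}\}$. The identity \eqref{eq:jacobians} from Proposition \ref{prop:OMT} says that the sum of the exponents (with multiplicities) equals the average log-Jacobian $J(\nu_t)$, so the remaining exponent is $\tilde\chi(t)=J(\nu_t)-\chi(t)$ as in \eqref{eq:gentile}.

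Now both $\chi(t)$ and $J(\nu_t)$ are affine functions of $t\in[0,1]$, hence continuous, and so $\tilde\chi(t)$ is continuous. Since $\{\lambda^1_{\nu_t},\lambda^2_{\nu_t}\}=\{\chi(t),\tilde\chi(t)\}$, we have
\[
\lambda^1_{\nu_t}=\min\{\chi(t),\tilde\chi(t)\},\qquad \lambda^2_{\nu_t}=\max\{\chi(t),\tilde\chi(t)\},
\]
and continuity on $(0,1]$ follows immediately. There is no real obstacle here: the only point requiring attention is verifying that the a.s.\ invariant line field $V$ does produce a genuine Lyapunov exponent (so that $\chi(t)$ is actually one of the $\lambda^j_{\nu_t}$ rather than a spurious quantity), which is standard from Oseledec's theorem applied to the invariant subbundle $V$.
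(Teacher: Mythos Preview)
Your proof is correct and follows exactly the paper's approach: the paper observes (in the paragraph immediately preceding the claim) that $\chi(t)$ and $\tilde\chi(t)=J(\nu_t)-\chi(t)$ are the two Lyapunov exponents and are manifestly affine in $t$, and then states that ``this establishes'' the claim. You have simply made explicit the min/max step and the ergodicity verification that the paper leaves implicit.
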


We continue the proof of the theorem.  
 
\begin{proof}[Proof of Theorem \ref{thm:4}\ref{thm:4:2}]  
Let $t\to 1$.  By 
the hyperbolicity of $f$, from \eqref{eq:jew} and \eqref{eq:gentile},  for $t$ sufficiently close to $1$ $\mu$ has one positive and one negative exponent. 
Moreover, for $t$ sufficient close to $1$, it follows that   the stable bundle for the 
random dynamics  $E^s_\omega(x)$ does not coincide with $E^u_f(x)$ on a set of positive measure.  Thus, were $E^s_\omega(x)$ non-random, as $\nu_t(f)>0$ by the ergodicity of $f$ the line bundle $E^s_\omega(x)$ would have to coincide with $E^s_f$.  
As $g$ does not preserve $E^s_f$, we conclude that $E^s_\omega(x)$ is not non-random.  

As $\mu$ is not finitely supported, by Theorem \ref{thm:main}  it follows  that $\mu$ is an  SRB  $\nu_t$-stationary measure for all sufficiently large $t<1$.  We show $\mu$ is SRB for $f$.   Let $\delta^u$ denote the unstable dimension of $\mu$ with respect to the single diffeomorphism $f\colon M\to M$.  We show below that $\delta^u = 1$ which implies $\mu$ is SRB for $f$.  This  follows from the following entropy trick.

Let $D = \dim (\mu)$.   Recall the fiber-wise entropy and dimension formulae for skew products given by Proposition \ref{prop:ento}.  Similar formulas hold for the individual diffeomorphism $f.$  Suppose for the sake of contradiction that $\delta^u <1$.  Then 
given any $\epsilon>0$, for all sufficiently large $0<t<1$, 
\[\delta^u\lambda^2_{\nu_1} =( D-\delta^u) (-\lambda^1_{\nu_1})> (D- 1) (-\lambda^1_{\nu_t})  = \lambda^2_{\nu_t}  \ge \lambda^2_{\nu_1} - \epsilon.\]
As  $\epsilon \to 0$ as $t\to 1$ this yields a contradiction.  
 We thus have $\delta^u = 1$.  
\end{proof}

\subsection{Proof of  Proposition \ref{thm:perturb} and  Theorem \ref{thm:perturbSRB}}\label{sec:left3}
Recall the joint cone condition and relevant  notation from   Section \ref{sec:alge}.  

\begin{proof}[Proof of Proposition \ref{thm:perturb}]
If $A$ and $B$ don't commute, it follows that $E^s_A\neq E^s_B$ and $E^u_A\neq E^u_B$. Then for $n>0$ large enough, we have  $A^{-n}C^s$ and $B^{-n} C^s$ are disjoint.  We take $f$ and $g$ sufficiently close to $L_A $ and $L_B$ so that  for some $\kappa>1$ and any $x\in M$
\begin{enumerate}
\item $D_{f(x)} f^{-1} C^s\subset C^s$ and  $D_{g(x)} g^{-1} C^s\subset C^s$;
\item if $v\in C^s$ then $\|D_{g(x)} g^{-1} v\| > \kappa \|v\| $ and   $\|D_{f(x)} f^{-1} v\| > \kappa \|v\| $;
\item $D_{f^n(x)} f^{-n} C^s$ and $D_{g^n(x)} g^{-n} C^s$ are disjoint in $T_x \T^2$.
\end{enumerate}
We further assume analogous  properties to the above hold relative to the unstable cones.

Let $\Sigma_{+}= \{f,g\}^\N$.  
Given  $\omega = (f_0, f_1, f_2, \dots)\in \Sigma_+$ define 
$$E^s_\omega( x):= \bigcap_{i= 0 }^M  
D_{(f_M  \circ \dots \circ f_0)(x)}\left(f_M  \circ \dots \circ f_0\right) \inv (C^s).$$ 

The set  $E^s_\omega( x)$ is invariant under scaling; moreover,  the cone conditions ensure $E^s_\omega( x)$ is non-empty for every $\omega$ and every $x$.    
Note that if $v\in E^s_\omega( x)$ then for any $j\ge 0 $, $\| D(f_j\circ\cdots \circ f_0) v \| \in C^s$  hence we have $$\| D(f_j\circ\cdots \circ f_0) v \|  \le \kappa ^{-j} \|v\|.$$ Similarly, if $u\in C^u$ then 
$\| D(f_j\circ\cdots \circ f_0) u \| \in C^u$ for any $j$ and hence we have $\| D(f_j\circ\cdots \circ f_0) v \|  \ge \kappa ^{j} \|v\|$.  It follows that every $\nu$-stationary measure is hyperbolic with one exponent of  each sign.  
We claim that $E^s_\omega( x)$ is a 1-dimensional subspace.  Indeed, if otherwise  there are non-zero $v,u\in E^s_\omega( x)$ with $v =u+w$ for $w\in C^u$.  
But then  for $M$ sufficiently large we obtain a contradiction as 
 \begin{align*}
 \kappa^M \|w\| \le  \| D(f_M\circ\cdots \circ f_0) w \|&\le \| D(f_M\circ\cdots \circ f_0) u \| + \|D(f_M\circ\cdots \circ f_0) v\| \\&\le \kappa^{-M} (\|v\|+ \|u\|).\end{align*}
 In particular, for any $\nu$-stationary measure, $E^s_\omega( x)$ coincides with the stable Lyapunov subspace  for the word $\omega$ at $x$.

Recall that the cones $D_{f^n(x)}f^{-n}C^s$ and $D_{g^n(x)}g^{-n}C^s$ are disjoint.  As the  set of words $\omega = (f_0, f_1, f_2, \dots) \in \Sigma$ with $f_i= f$ for $0\le i\le n$ and  set of words $\omega = (f_0, f_1, f_2, \dots) \in \Sigma$ with $f_i= g$ for all $0\le i\le n$ have positive $\nu^\N$-measure, it follows that the distribution $E^s_\omega( x)$ is not non-random for every $\nu$-stationary measure.  

%

It then follows from Theorem \ref{thm:1+} that any ergodic, $\nu$-stationary measure $\mu$ on $\T^2$ is either SRB or finitely supported.
Moreover, fixing $f$, by choosing a generic perturbation $g$, for any periodic point $p$ for $f$  we may further assume that  $p$ is not a periodic point  for $g$.  Then, as $f$ and $g$ have no common finite invariant subsets,  there are no finitely supported $\nu$-stationary measures.  
\end{proof}

 \begin{proof}[Proof of Theorem \ref{thm:perturbSRB}]
Recall in the statement of 
Theorem \ref{thm:perturbSRB} we set $\nu_0= \sum p_k \delta_{L_{A_k}}$.  We take $\td \nu_0 =  \sum p_k \delta_{{A_k}}$ on $\Sl(2,\Z)$. 
Consider $\mu$ any $\nu_0$-stationary measure. The Lyapunov exponents of $\mu$ coincide with the Lyapunov exponents of the random product of matrices given by $\td \nu_0$.  
In particular, the Lyapunov exponents of $\mu$ are constant \as and independent of the choice of $\mu$.  
As $\Gamma\subset \Sl(2,\Z)$ is infinite and does not have $\Z$ as a finite-index subgroup, it follows that $\Gamma$ is not contained in a compact subgroup and that  any line $L\in \R\Pp^1$ has infinite $\Gamma$-orbit.  By a theorem of Furstenberg (\cite[Theorem 8.6]{MR0163345}, see also \cite[Theorem 6.11]{viana2014lectures}) it follows that the random product of matrices given by $\td \nu_0$  has one positive and one negative Lyapunov exponent.  The same is  then true for  any $\nu_0$-stationary measure on $\T^2$.
Moreover, as $\Gamma$ is not virtually-$\Z$, one can find hyperbolic elements $B_1, B_2\in\Gamma$ that satisfy a joint cone property (defined in  Section \ref{sec:alge}) and such that $B_1$ and $B_2$ do not commute. 
Write  \begin{equation}\label{eq:generate}B_1 = A_{i_1} A_{i_2} \dots A_{i_\ell},\quad  B_2 = A_{j_1} A_{j_2} \dots A_{j_p}\end{equation}in terms of the generators.  

For each $1\le k\le n$ take a neighborhood $ L_{A_k}\in U_k\subset \diff^2(\T^2)$ sufficiently small so that 
\begin{enumerate}
\item $|g_k^{\pm1}|_{C^2} \le C$ for all $g_k\in U_k$ and some $C>0$, and
\item  
writing 
$$f_1 = g_{i_1} \circ \dots \circ g_{i_\ell}, \quad f_2= g_{j_1} \circ \dots \circ g_{j_p} $$
as in \eqref{eq:generate} for any choice of  $g_{i_\ell}\in U_{i_\ell}$ and  $g_{j_m}\in U_{j_m}$, 
$f_1$ and $f_2$ are sufficiently close to $B_1$ and $B_2$ so that Proposition \ref{thm:perturb} holds.  
\end{enumerate} 
In particular, such $f_1$ and $f_2$ satisfy a joint cone condition, are Anosov diffeomorphisms of $\T^2$, and $E^s_{f_1} (x) \neq E^s_{f_2}(x)$ and $E^u_{f_1} (x) \neq E^u_{f_2}(x)$ for any $x\in \T^2$.  
%
%
%
%

Take $U\subset \diff^2(\T^2)$ in the  theorem to be the set $U= \{ g\in \diff^2(\T^2) : |g|_{C^2} < C\}$.  
Let $\nu$ be a probability measure on  $U$. 
We moreover assume $\nu$ is sufficiently close to $\nu_0$ so that $\nu(U_k)>0$ for each $1\le k\le n$.

We introduce some notation.  Given $f\in \Diff^2(\T^2)$, consider $\Pp D f $ acting on the projectivized tangent bundle $\Pp T\T^2$.   We naturally identify  $\nu$ with a measure on $\{\Pp D f: f\in \diff^2(\T^2)\}$.  
Consider  a $\nu$-stationary probability measure $\eta$ on $\Pp T\T^2$.  Note that the projection of $\eta$ onto $\T^2$ is also a $\nu$-stationary measure.  

Given $f\in \diff^2(\T^2)$, write 
$$ \Phi(f,x,E)  = \log(\|\restrict{D_xf}{E}\|).$$
Note that $\Phi \colon \diff^2(\T^2) \times \Pp T\T^2\to \R$ is continuous and uniformly bounded on $U$.  

\begin{lemma}\label{lem:skylight}
For all $\nu$  sufficiently close to $\nu_0$,  every ergodic, $\nu$-stationary measure  on $\T^2$ has a positive Lyapunov exponent.   
\end{lemma}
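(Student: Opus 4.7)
The plan is to exploit the fact that for $\nu_0$ the derivative cocycle $(\omega,x)\mapsto D_xL_{A_{\omega_0}}$ is $x$-independent, so every Lyapunov exponent of a $\nu_0$-stationary measure on $\T^2$ reduces to a Lyapunov exponent of the random matrix product driven by $\td\nu_0:=\sum p_k\delta_{A_k}$; this rigidity will then be upgraded to all nearby $\nu$ by a weak-$*$ compactness argument on the projectivised tangent bundle.

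First I would verify that $\Gamma\subset\SL(2,\Z)$, being infinite and not virtually-$\Z$, is both strongly irreducible and proximal on $\R^2$: a subgroup of $\SL(2,\Z)$ preserving a line lies in either a Cartan or a Borel subgroup of $\SL(2,\R)$, each of which intersects $\SL(2,\Z)$ in a virtually-cyclic group, and the same hypothesis also prevents $\Gamma$ from consisting entirely of parabolic and torsion elements (which would share a common fixed line in $\Pp^1$). Furstenberg's theorem then produces a unique $\td\nu_0$-stationary probability $\bar\eta_0$ on $\Pp^1$ with
\[\lambda_0:=\int\log\frac{\|Av\|}{\|v\|}\,d\td\nu_0(A)\,d\bar\eta_0(v)>0.\]

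Identifying $\Pp T\T^2$ with $\T^2\times\Pp^1$, I would set $\Phi(f,x,E):=\log\|D_xf|_E\|$, which is continuous and bounded on $U\times\T^2\times\Pp^1$, and $I(\eta,\nu):=\int\Phi\,d\nu\,d\eta$. Because $f\mapsto\Pp Df$ is continuous from the $C^2$ topology to $C^0(\Pp T\T^2,\Pp T\T^2)$ and $U$ has uniformly bounded $C^2$ norm, the stationarity operator $(\nu,\eta)\mapsto\int(\Pp Df)_*\eta\,d\nu(f)$ and the functional $I$ are jointly weak-$*$ continuous; in particular every weak-$*$ accumulation point of a sequence of $\nu_j$-stationary measures $\eta_j$ with $\nu_j\to\nu_0$ is $\nu_0$-stationary. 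The rigidity identity I aim to exploit is that every $\nu_0$-stationary $\eta$ on $\Pp T\T^2$ projects to a $\td\nu_0$-stationary probability on $\Pp^1$, necessarily equal to $\bar\eta_0$; since $\Phi(L_A,x,E)$ carries no $x$-dependence, this forces $I(\eta,\nu_0)=\lambda_0>0$.

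To finish, I would argue by contradiction: suppose there exist $\nu_j\to\nu_0$ and ergodic $\nu_j$-stationary $\mu_j$ with top Lyapunov exponent $\lambda_j^+\le 0$. For each $j$, forming $\eta_j$ as a weak-$*$ accumulation point of Cesaro averages of $\Pp Df$-pushforwards of $\mu_j\times\mathrm{Leb}_{\Pp^1}$ produces a $\nu_j$-stationary measure on $\Pp T\T^2$ projecting to $\mu_j$. Passing to the associated $F$-invariant measure $\hat\eta_j$ on $\Sigma\times\Pp T\T^2$ via Proposition \ref{prop:mudef} and applying Birkhoff to the telescoping cocycle $\log\|Df_\xi^n|_E\|$, the pointwise limit is bounded above by $\lambda_j^+$ almost surely, so $I(\eta_j,\nu_j)=\int\Phi\,d\hat\eta_j\le\lambda_j^+\le 0$. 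Extracting a subsequential weak-$*$ limit $\eta_\infty$ of $\eta_j$, joint continuity of $I$ gives $I(\eta_\infty,\nu_0)\le 0$, contradicting the rigidity identity $I(\eta_\infty,\nu_0)=\lambda_0>0$. The main obstacle is precisely that rigidity identity; once Furstenberg uniqueness on $\Pp^1$ (which forces the structural hypotheses on $\Gamma$ into play) is in hand, the rest of the argument is soft weak-$*$ compactness and continuity.
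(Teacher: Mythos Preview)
Your proposal is correct and follows essentially the same route as the paper: argue by contradiction, build $\nu_j$-stationary measures $\eta_j$ on $\Pp T\T^2$ with non-positive drift, pass to a weak-$*$ limit which is $\nu_0$-stationary, project to $\R\Pp^1$ using that $D_xL_A$ is $x$-independent, and contradict Furstenberg. The only cosmetic differences are that the paper constructs $\eta_k$ from the Oseledec subspaces (as in \eqref{eq:msrs}) rather than via Ces\`aro averages, and invokes only that every $\td\nu_0$-stationary measure on $\R\Pp^1$ integrates to the positive top exponent (\cite[Theorem~8.5]{MR0163345}, \cite[Theorem~6.8]{viana2014lectures}) rather than uniqueness of the Furstenberg measure.
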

\begin{proof}
Suppose $\nu_k\to \nu_0$ on $U$  in the weak-$*$ topology and that for each $k$, there is an ergodic, $\nu_k$-stationary measure $\mu_k$ with only non-positive exponents.  For each $k$ we may select a  $\nu_k$-stationary probability measure $\eta_k$ on $\Pp T\T^2$ projecting to $\mu_k$ such that $$\int \int  \Phi(f,x,E)  \ d \eta_k(x, E) \ d \nu_k(f)\le 0.$$
Indeed, the existence and construction of $\eta_k$  is essentially the same  as in the proof of Proposition \ref{prop:discont} and \eqref{eq:msrs}.  

As $\Pp T \T^2$ is compact, let $\eta_0$ be an accumulation point of $\{\eta_k\}$.  Then (see for example \cite[Proposition 5.9]{viana2014lectures}) $\eta_0$ is $\nu_0$-stationary.  Moreover,  $\eta_0$ projects to a $\nu_0$-stationary measure $\mu_0$ on $\T^2$ and by weak-$*$ convergence (and boundedness of $ \Phi(f,x,E)  $ on $U$ )
$$\int \int  \Phi(f,x,E)  \ d \eta_0(x,E)\ d \nu_0(f)\le 0.$$  

Recall we define  $\td \nu_0 = \sum p_k \delta _{A_k}$ to be a measure on $\Gamma\subset \Sl(2,\Z)$.  
Note that $T \T^2$ is parallelizable so $\Pp T\T^2 = \T^2\times  \R\Pp^1$. 
Then 
define a factor measure $\td \eta_0$ on $\R\Pp ^1$ by $$\td \eta_0(D):=\eta_0(\T^2 \times D).$$
We have that $\td \eta_0$ is a $\td \nu_0$-stationary measure for the natural action of $\Sl (2, \R)$ on $\R\Pp^1$.  
Moreover, with $\td \Phi(A,E) = \log \| \restrict A E\|$ we have 
$$\int \int \td \Phi (A,E ) \ d \td \eta(E) \ d \td \nu_0 (A) \le 0$$
On the other hand, by a  theorem of Furstenberg (\cite[Theorem 8.5]{MR0163345},\cite[Theorem 6.8]{viana2014lectures}) this is impossible under our hypotheses on $\Gamma$.
\end{proof}

Take $\nu$ sufficiently close to $\nu_0$ so that every ergodic, $\nu$-stationary measure  on $\T^2$ has a positive Lyapunov exponent.   
Consider  $\mu$  an ergodic, $\nu$-stationary measure on $\T^2$.  Suppose that all exponents of $\mu$ were non-negative.  By the invariance principle  in \cite{MR2651382}, it would follow that  $\mu$ is invariant for $\nu$-\ae $f\in \diff^2(\T_2)$.  In particular, the sets of $f_1$ and $f_2$ constructed above for which 
$\mu$ is simultaneously $f_1$- and $f_2$-invariant have positive measure.   
As  $f_1$ does not preserve $E^s_{f_2}$, $E^u_{f_2}$, or their union,  Theorem \ref{thm:4} implies that either $\mu$ is atomic or is absolutely continuous.  If $\mu$ were absolutely continuous then, as $\nu$-a.e.\ $f$ preserves an absolutely continuous measure it follows from \eqref{eq:jacobians} that $\mu$ is necessarily hyperbolic.  Hence, for all $\nu$ satisfying Lemma \ref{lem:skylight}, every ergodic, $\nu$-stationary measure is either atomic or is hyperbolic with one exponent of each sign.  

%

In the case that   $\mu$  is hyperbolic with one exponent of each sign,  
we claim that the  stable line fields for $\mu$ are not non-random.  
\begin{definition}
$\nu$ is \emph{strongly expanding} if, for any $\nu$-stationary measure $\eta$ on $\Pp T \T^2$,
$$\int \int \Phi(f,x,E) \ d \eta (x, E) d \nu(f)>0.$$
\end{definition}

Let $\mu$ be an ergodic, hyperbolic, $\nu$-stationary measure with one exponent of each sign.  Suppose the stable line bundle is non-random.  That is,  $E^s_\omega (x) = V(x)$ for some measurable line-field $V(x)$ on $T\T^2$.  
Let $\eta$ be the measure on $\Pp T \T^2$ defined as follows: for measurable $\psi\colon\Pp T \T^2 \to \R$ set
 $$\int \psi(x,E) \ d \eta (x,E)  = \int \psi(x, V(x)) \ d \mu(x).$$
 It follows from the invariance of $E^s_\omega(x)$ that $\eta $ is a $\nu$-stationary measure.  Moreover, from the pointwise ergodic theorem we have 
 $$\int \int \Phi \ d \eta \ d \nu<0.$$
Thus, \begin{claim}
If $\nu$ is {strongly expanding} then the stable line bundle for any hyperbolic, $\nu$-stationary measure $\mu$ is not non-random.  
\end{claim}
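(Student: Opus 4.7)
My plan is to argue by contradiction: suppose $\mu$ is a hyperbolic $\nu$-stationary measure whose stable distribution $E^s_\omega(x)$ is non-random, i.e., there is a $\mu$-measurable, $\nu$-a.s.\ invariant line field $V \subset T\T^2$ with $E^s_\omega(x) = V(x)$ for $(\nu^\N\times \mu)$-a.e.\ $(\omega,x)$. I will use $V$ to produce an explicit $\nu$-stationary measure on the projectivized tangent bundle whose averaged logarithmic expansion coefficient equals the negative Lyapunov exponent of $\mu$, contradicting strong expansion.

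The first step is to define the candidate measure $\eta$ on $\Pp T\T^2$ by
\[
\int \psi(x,E)\, d\eta(x,E) := \int \psi(x, V(x))\, d\mu(x)
\]
for $\psi$ bounded measurable. Concretely, $\eta$ is the pushforward of $\mu$ under the measurable section $x\mapsto (x, V(x))$. The second step is to verify that $\eta$ is $\nu$-stationary. This uses two facts: the $\nu$-stationarity of $\mu$, and the $\nu$-a.s.\ invariance of $V$, which gives $D_xf(V(x)) = V(f(x))$ for $\nu$-a.e.\ $f$ and $\mu$-a.e.\ $x$. Together these yield, for any bounded measurable $\psi$ on $\Pp T\T^2$,
\[
\int\!\!\int \psi\bigl(\Pp Df(x,E)\bigr)\, d\eta(x,E)\, d\nu(f)
= \int\!\!\int \psi\bigl(f(x), V(f(x))\bigr)\, d\mu(x)\, d\nu(f) = \int \psi\, d\eta,
\]
so $\eta$ is indeed $\nu$-stationary.

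The third step is to compute $\int\!\int \Phi(f,x,E)\, d\eta(x,E)\, d\nu(f)$ and show it equals $\lambda^s < 0$. Unwinding the definition of $\eta$, this integral equals
\[
\int\!\!\int \log\|D_x f|_{V(x)}\|\, d\mu(x)\, d\nu(f).
\]
By the random Oseledec theorem (Proposition \ref{prop:OMT}) applied along $\nu^\N$-a.e.\ word $\omega$, and since $V(x) = E^s_\omega(x)$, the Birkhoff/Kingman averages $\tfrac1n \log\|D_x f_\omega^n|_{V(x)}\|$ converge $\mu$-a.s.\ to $\lambda^s$. On the other hand, the quantity just displayed is precisely the integrand's expected one-step growth, and standard cocycle arguments (iterating $\nu$-stationarity of $\mu$ and using $\log\|Df\|\in L^1(\nu\times\mu)$ by \eqref{eq:IC2a}) identify this expectation with $\lambda^s$. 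Since $\mu$ is hyperbolic with one exponent of each sign, $\lambda^s < 0$.

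This contradicts the hypothesis that $\nu$ is strongly expanding, which demands $\int\!\int \Phi\, d\eta\, d\nu > 0$ for every $\nu$-stationary $\eta$ on $\Pp T\T^2$. The only potentially delicate point is the integrability and the rigorous identification of the one-step average with $\lambda^s$; this is routine given \eqref{eq:IC2a} and the invariance of $V$, but is the one place where care is needed to avoid hand-waving. Once this is established, the contradiction completes the proof.
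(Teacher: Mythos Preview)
Your proof is correct and follows essentially the same approach as the paper: define $\eta$ as the pushforward of $\mu$ under the section $x\mapsto (x,V(x))$, verify $\nu$-stationarity from the $\nu$-a.s.\ invariance of $V$, and use the pointwise ergodic theorem to identify $\int\!\int \Phi\, d\eta\, d\nu$ with $\lambda^s<0$, contradicting strong expansion. Your write-up is in fact somewhat more detailed than the paper's, which simply asserts the stationarity of $\eta$ and invokes the pointwise ergodic theorem for the negativity.
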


As in the previous lemma we have
\begin{lemma}
Every $\nu$ sufficiently close to $\nu_0$ is strongly expanding.
\end{lemma}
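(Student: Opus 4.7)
The plan is to mimic the proof of Lemma \ref{lem:skylight} almost verbatim, replacing the inequality obstructing a non-positive exponent by the inequality obstructing a non-positive integral of $\Phi$ over $\eta\times\nu$. First I would argue by contradiction: suppose there is a sequence $\nu_k\to \nu_0$ in the weak-$*$ topology (with each $\nu_k$ supported in $U$) such that each $\nu_k$ fails to be strongly expanding. Then for each $k$ I can select a $\nu_k$-stationary probability measure $\eta_k$ on $\Pp T\T^2$ with
\[
\int\!\!\int \Phi(f,x,E)\; d\eta_k(x,E)\; d\nu_k(f)\le 0.
\]
Existence of such $\eta_k$ comes from the compactness of $\Pp T\T^2$ together with a standard Krylov--Bogolyubov construction applied to the projective cocycle; the sign condition can be arranged by taking $\eta_k$ to be the measure concentrated on the most contracted direction (the construction used to define $\eta^1_\nu$ in \eqref{eq:msrs} of the excerpt).

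Next I would extract a weak-$*$ accumulation point $\eta_0$ of $\{\eta_k\}$. By the stationarity-preservation result already used in the proof of Lemma \ref{lem:skylight}, $\eta_0$ is $\nu_0$-stationary. Using that $\Phi$ is continuous and uniformly bounded on $U\times \Pp T\T^2$ (recall $U=\{g:|g^{\pm1}|_{C^2}\le C\}$), passing to the limit yields
\[
\int\!\!\int \Phi(f,x,E)\; d\eta_0(x,E)\; d\nu_0(f)\le 0.
\]

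Then I would pass to the projective factor. Since $T\T^2$ is parallelizable we have the identification $\Pp T\T^2\cong \T^2\times \R\Pp^1$; pushing $\eta_0$ forward to the $\R\Pp^1$ factor gives a probability measure $\tilde\eta_0$ on $\R\Pp^1$. Because $\nu_0=\sum p_k\delta_{L_{A_k}}$ and the differential of $L_{A_k}$ acts on each fiber as the constant linear map $A_k$, the measure $\tilde\eta_0$ is $\tilde\nu_0$-stationary for the induced action of $\mathrm{SL}(2,\R)$ on $\R\Pp^1$, and
\[
\int\!\!\int \tilde\Phi(A,E)\; d\tilde\eta_0(E)\; d\tilde\nu_0(A)\le 0,
\]
with $\tilde\Phi(A,E)=\log\|A|_E\|$. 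Under our standing hypotheses on $\Gamma\subset \mathrm{SL}(2,\Z)$ (infinite, not virtually $\Z$, hence not relatively compact and with every $\R\Pp^1$-orbit infinite), Furstenberg's theorem (\cite[Theorem 8.5]{MR0163345}, \cite[Theorem 6.8]{viana2014lectures}), which was invoked in the previous lemma, asserts that this integral must be strictly positive, a contradiction.

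The only real obstacle is bookkeeping: one must verify that $\eta_0$ is genuinely $\nu_0$-stationary under weak-$*$ limits (handled by uniform continuity of $(\Pp Df)_*\eta$ in $f$ on the bounded set $U$), and that the projection to the $\R\Pp^1$ factor commutes with stationarity, which is immediate because each $L_{A_k}$ acts fiberwise as the constant matrix $A_k$. All other steps are direct parallels of the proof of Lemma \ref{lem:skylight} already carried out in the excerpt.
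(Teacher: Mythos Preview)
Your approach is correct and is exactly what the paper intends: it states the lemma with the remark ``As in the previous lemma we have'' and gives no further proof, so the argument is meant to be a verbatim repetition of the proof of Lemma~\ref{lem:skylight}. One small redundancy: once you assume $\nu_k$ fails to be strongly expanding, the existence of a $\nu_k$-stationary $\eta_k$ on $\Pp T\T^2$ with $\int\!\!\int \Phi\,d\eta_k\,d\nu_k\le 0$ is immediate from the \emph{definition} of strongly expanding---no Krylov--Bogolyubov or most-contracted-direction construction is needed at that step.
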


From the above, it follows  that for all $\nu$ sufficiently close to $\nu_0$,  any ergodic $\nu$-stationary measure $\mu$ which is not atomic   is hyperbolic with one exponent of each sign and, moreover, the stable  line-field  for $\mu $ is not non-random.  From Theorem \ref{thm:1+}, it follows that if $\mu$ is non-atomic then  $\mu$ is  SRB for $\nu$.  
\end{proof}

%
\end{document}